\definecolor{cite}{rgb}{0.30,0.60,1.00}
\definecolor{url}{rgb}{0.00,0.00,0.80}
\definecolor{link}{rgb}{0.90,0.10,0.20}
\DeclareSymbolFont{cyrletters}{OT2}{wncyr}{m}{n}
\DeclareMathSymbol{\Sha}{\mathalpha}{cyrletters}{"58}
\numberwithin{equation}{section}
\theoremstyle{plain}
\newtheorem{proposition}{Proposition}[section]
\newtheorem{corollary}[proposition]{Corollary}
\newtheorem{lem}[proposition]{Lemma}
\newtheorem{theorem}[proposition]{Theorem}
\theoremstyle{definition}
\newtheorem{definition}[proposition]{Definition}
\theoremstyle{remark}
\newtheorem{remark}[proposition]{Remark}
\newtheorem{example}[proposition]{Example}
\renewcommand{\b}[1]{\mathbf{#1}}
\renewcommand{\c}[1]{\mathcal{#1}}
\renewcommand{\d}[1]{\mathbb{#1}}
\newcommand{\f}[1]{\mathfrak{#1}}
\renewcommand{\r}[1]{\mathrm{#1}}
\newcommand{\s}[1]{\mathscr{#1}}
\renewcommand{\(}{\left(}
\renewcommand{\)}{\right)}
\newcommand{\res}{\mathbin{|}}
\newcommand{\Sec}{\S}
\newcommand{\bA}{\b A}
\newcommand{\bC}{\b C}
\newcommand{\bD}{\b D}
\newcommand{\bE}{\b E}
\newcommand{\bF}{\b F}
\newcommand{\bG}{\b G}
\newcommand{\bP}{\b P}
\newcommand{\bQ}{\b Q}
\newcommand{\bR}{\b R}
\newcommand{\bZ}{\b Z}
\newcommand{\cC}{\c C}
\newcommand{\cD}{\c D}
\newcommand{\cE}{\c E}
\newcommand{\cF}{\c F}
\newcommand{\cH}{\c H}
\newcommand{\cL}{\c L}
\newcommand{\cM}{\c M}
\newcommand{\cO}{\c O}
\newcommand{\cP}{\c P}
\newcommand{\cS}{\c S}
\newcommand{\cT}{\c T}
\newcommand{\cU}{\c U}
\newcommand{\cV}{\c V}
\newcommand{\cW}{\c W}
\newcommand{\cX}{\c X}
\newcommand{\cY}{\c Y}
\newcommand{\cZ}{\c Z}
\newcommand{\dL}{\d L}
\newcommand{\fE}{\f E}
\newcommand{\fP}{\f P}
\newcommand{\fR}{\f R}
\newcommand{\fS}{\f S}
\newcommand{\fV}{\f V}
\newcommand{\fX}{\f X}
\newcommand{\fY}{\f Y}
\newcommand{\fc}{\f c}
\newcommand{\rR}{\r R}
\newcommand{\ra}{\r a}
\newcommand{\rd}{\r d}
\newcommand{\sA}{\s A}
\newcommand{\sD}{\s D}
\newcommand{\sF}{\s F}
\newcommand{\sG}{\s G}
\newcommand{\sK}{\s K}
\newcommand{\sL}{\s L}
\newcommand{\sO}{\s O}
\newcommand{\sT}{\s T}
\newcommand{\pres}[2]{\prescript{#1}{}{#2}}
\newcommand{\ad}{\r{ad}}
\newcommand{\an}{\r{an}}
\newcommand{\cl}{\r{cl}}
\newcommand{\cris}{\r{cris}}
\newcommand{\dr}{\r{dR}}
\newcommand{\et}{\acute{\r{e}}\r{t}}
\newcommand{\Et}{\acute{\r{E}}\r{t}}
\newcommand{\fEt}{\r{f}\acute{\r{E}}\r{t}}
\newcommand{\logcris}{\r{log}\text{-}\r{cris}}
\newcommand{\rig}{\r{rig}}
\DeclareMathOperator{\CH}{CH}
\DeclareMathOperator{\DIV}{div}
\DeclareMathOperator{\Fr}{Fr}
\DeclareMathOperator{\Hom}{Hom}
\DeclareMathOperator{\IM}{im}
\DeclareMathOperator{\Ker}{ker}
\DeclareMathOperator{\Pic}{Pic}
\DeclareMathOperator{\Spec}{Spec}
\DeclareMathOperator{\Spf}{Spf}
\DeclareMathOperator{\Tot}{Tot}
\DeclareMathOperator{\Tr}{Tr}
\DeclareMathOperator{\trop}{trop}
\begin{document}

\title[Weight decomposition and tropical cycle classes]
{Weight decomposition of de Rham cohomology sheaves and tropical cycle
classes for non-Archimedean spaces}

\author{Yifeng Liu}
\address{Department of Mathematics, Northwestern University, Evanston IL 60208, United States}
\email{liuyf@math.northwestern.edu}

\date{\today}
\subjclass[2010]{14G22}

\begin{abstract}
We construct a functorial decomposition of de Rham cohomology sheaves,
called weight decomposition, for smooth analytic spaces over non-Archimedean
fields embeddable into $\bC_p$, which generalizes a construction of Berkovich
and solves a question raised by him. We then investigate complexes of
real tropical differential forms and currents introduced by Chambert-Loir and
Ducros, by establishing a relation with the weight decomposition and defining
tropical cycle maps with values in the corresponding Dolbeault
cohomology. As an application, we show that algebraic cycles that are
cohomologically trivial in the algebraic de Rham cohomology are
cohomologically trivial in the Dolbeault cohomology of currents as well.
\end{abstract}

\maketitle

%\renewcommand\abstractname{R\'esum\'e}
%\begin{abstract}
%[D\'ecomposition par le poids des faisceaux de cohomologie de de Rham et classes de cycle tropicales pour les espaces non-archim\'ediens]
%Nous construisons une d\'ecomposition fonctorielle des faisceaux de cohomologie de de Rham, appel\'ee d\'ecomposition par le poids, pour les %espaces analytiques lisses sur les corps non-archim\'edien plongeables dans $\bC_p$, qui g\'en\'eralise une construction de Berkovich et %r\'esout une question pos\'ee par lui. Nous \'etudions ensuite les complexes de formes diff\'erentielles et de courants tropicaux r\'eels %introduits par Chambert-Loir et Ducros, en \'etablissant une relation avec la d\'ecomposition par le poids et en d\'efinissant des applications %de cycle tropicales \`a valeurs dans la cohomologie de Dolbeault correspondante. Comme application, nous montrons que les cycles alg\'ebriques %qui sont cohomologiquement triviaux dans la cohomologie de Rham alg\'ebrique sont cohomologiquement triviaux dans la cohomologie de Dolbeault %des courants aussi.
%\end{abstract}

\setcounter{tocdepth}{1} \tableofcontents

\section{Introduction}
\label{ss:0}

Let $K$ be a complete non-Archimedean field of characteristic zero with a
nontrivial valuation. Let $X$ be a smooth $K$-analytic space in the sense of
Berkovich. Let $\cO_X$ (resp.\ $\fc_X$) be the structure sheaf (resp.\ the
sheaf of constant analytic functions \cite{Berk04}*{\Sec 8}) of $X$ in either
analytic or \'{e}tale topology. We have the following complex of
$\fc_X$-modules in either analytic or \'{e}tale topology:
\begin{align}\label{eq:derham}
\Omega^\bullet_X\colon 0\to \cO_X=\Omega^0_X \xrightarrow{\rd}
\Omega^1_X \xrightarrow{\rd}\Omega^2_X
\xrightarrow{\rd}\cdots,
\end{align}
known as the \emph{de Rham complex}, which satisfies that
$\fc_X=\Ker(\rd\colon\cO_X\to\Omega^1_X)$. It is \emph{not} exact from the term
$\Omega^1_X$ if $\dim(X)\geq 1$. The cohomology sheaves of the de Rham
complex $\Omega^{q,\cl}_X/\rd\Omega^{q-1}_X$ are called \emph{de Rham
cohomology sheaves}. For $q\geq 0$, denote by $\Upsilon^q_X$ the subsheaf of
$\Omega^{q,\cl}_X/\rd\Omega^{q-1}_X$ generated by sections of the form
\[\sum c_i\frac{\rd f_{i1}}{f_{i1}}\wedge\cdots\wedge\frac{\rd f_{iq}}{f_{iq}}\]
where the sum is finite, $c_i$ are sections of $\fc_X$, and $f_{ij}$ are
sections of $\cO^*_X$. In particular, we have $\Upsilon^0_X=\fc_X$, and that
$\Upsilon^1_X$ is simply the sheaf $\Upsilon_X$ defined in
\cite{Berk07}*{\Sec 4.3} in the case of \'{e}tale topology.

\begin{theorem}\label{th:1}
Suppose that $K$ is embeddable into $\bC_p$. Let $X$ be a smooth $K$-analytic
space. Then for every $q\geq 0$, we have a decomposition
\[\Omega^{q,\cl}_X/\rd\Omega^{q-1}_X=\bigoplus_{w\in\bZ}(\Omega^{q,\cl}_X/\rd\Omega^{q-1}_X)_w\]
of $\fc_X$-modules in either analytic or \'{e}tale topology. It satisfies
that
\begin{enumerate}[(i)]
  \item $(\Omega^{q,\cl}_X/\rd\Omega^{q-1}_X)_w=0$ unless $q\leq w\leq
      2q$;

  \item $\Upsilon^q_X\subset(\Omega^{q,\cl}_X/\rd\Omega^{q-1}_X)_{2q}$,
      and they are equal if $q=1$;

  \item $(\Omega^{1,\cl}_X/\rd\cO_X)_1$ coincides with the sheaf $\Psi_X$
      defined in \cite{Berk07}*{\Sec 4.5} in the case of \'{e}tale
      topology.
\end{enumerate}
Such decomposition is stable under base change, cup product, and functorial in $X$.
\end{theorem}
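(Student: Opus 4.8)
The plan is to construct the decomposition locally on $X$ in a manifestly canonical fashion, then to glue and to read off the stated properties. The main preliminary input --- which amounts to extending Berkovich's integration of one-forms to all degrees --- is a tower of sheaves $\cO_X=\sS^0_X\subset\sS^1_X\subset\cdots$ (in either topology) of ``iterated primitives'', on which a branch of the $p$-adic logarithm acts, together with a logarithmic Poincar\'e lemma: every closed $q$-form is, locally, $\rd\beta$ for some $\sS^\bullet_X$-valued $(q-1)$-form $\beta$. This is where the hypothesis $K\hookrightarrow\bC_p$ enters, exactly as in \cite{Berk07}. Since $\Omega^{q,\cl}_X/\rd\Omega^{q-1}_X$ is a local invariant that pulls back along \'etale morphisms, and since a smooth $K$-analytic space locally admits an \'etale morphism to a product $T$ of open annuli and an open polydisc, it suffices to define the decomposition functorially and to verify everything on the models $T$.

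To an $\sS^\bullet_X$-valued form $\beta$ attach its \emph{logarithmic weight}, namely the number of factors $\rd\log f$ it carries, counted both in the exterior part of $\beta$ and in the log-degree accumulated by its $\sS^\bullet_X$-coefficients --- so that the transcendental ``non-logarithmic'' primitives responsible for $\Psi_X$ contribute $0$, while $c\log f$ contributes $1$. Taking locally the minimal logarithmic weight over all primitives of a given class gives a canonical increasing filtration $0=W_{q-1}\subset W_q\subset\cdots\subset W_{2q}=\Omega^{q,\cl}_X/\rd\Omega^{q-1}_X$, where exhaustiveness is the logarithmic Poincar\'e lemma, and the right-hand equality and the vanishing of $W_w$ outside $[q,2q]$ are checked on $T$. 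By construction this filtration is functorial in $X$, compatible with base change, and multiplicative, since wedge products add logarithmic weights; moreover $\Upsilon^q_X\subset W_{2q}$, the generators $\sum c_i\,\rd\log f_{i1}\wedge\cdots\wedge\rd\log f_{iq}$ being purely logarithmic.

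The substance of the theorem is that $W_\bullet$ splits canonically. I would set $(\Omega^{q,\cl}_X/\rd\Omega^{q-1}_X)_w$ to be the $\fc_X$-submodule generated by the classes $[\rd\beta]$ with $\beta$ of logarithmic weight exactly $w-q$ and with analytic coefficients, and then prove that $\bigoplus_w(\Omega^{q,\cl}_X/\rd\Omega^{q-1}_X)_w\to\Omega^{q,\cl}_X/\rd\Omega^{q-1}_X$ is an isomorphism refining $W_\bullet$. Surjectivity follows by decomposing a primitive into its logarithmic-weight-homogeneous parts. Injectivity is the main obstacle, and is where $K\hookrightarrow\bC_p$ is indispensable, exactly as in Berkovich's case $q=1$ (his splitting $\Omega^{1,\cl}_X/\rd\cO_X=\Upsilon_X\oplus\Psi_X$): I would establish it on the models $T$, writing the $\sS^\bullet$-primitives of monomial forms explicitly and using the convergence behaviour of the $p$-adic logarithm on $\bC_p$ to keep the logarithmic-weight strata disjoint --- the prototype being that on an annulus a residue-free closed $1$-form need not be exact, its class living in weight $1$ and not colliding with the $\rd\log$-part in weight $2$. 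The dependence on the branch of $\log$ is dealt with as in \cite{Berk07} and is immaterial for the subsheaves in question.

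Granting the decomposition, property (i) is the statement that $W_w=0$ outside $[q,2q]$, verified on $T$; property (ii) combines the inclusion $\Upsilon^q_X\subset(\Omega^{q,\cl}_X/\rd\Omega^{q-1}_X)_{2q}$ noted above with Berkovich's equality $\Upsilon_X=(\Omega^{1,\cl}_X/\rd\cO_X)_2$ for $q=1$ --- for $q\ge 2$ the inclusion is in general strict, witnessed on $T$ by weight-$2q$ classes whose primitive carries nonconstant iterated-integral coefficients; property (iii) is immediate, since $(\Omega^{1,\cl}_X/\rd\cO_X)_1$ is by construction the non-logarithmic part of the degree-$1$ de Rham cohomology sheaf, i.e.\ Berkovich's $\Psi_X$. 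Finally, stability under base change, cup product, and functoriality in $X$ are built into the definitions of $W_\bullet$ and of the splitting subsheaves, and hence hold for the decomposition.
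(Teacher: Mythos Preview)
Your approach is genuinely different from the paper's --- you propose an integration-theoretic construction extending \cite{Berk07}, whereas the paper defines the weight pieces via Frobenius weights on rigid cohomology, using ``fundamental charts'' built from marked semi-stable pairs (Definition~\ref{de:chart}) and the weight theory of \cite{Chi98}. However, your outline has several real gaps.

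\textbf{The reduction to the local models $T$ fails.} You write that since $X$ locally admits an \'etale morphism to a product $T$ of annuli and a polydisc, it suffices to verify everything on $T$. But an \'etale morphism $\alpha\colon X\to T$ does \emph{not} induce an isomorphism $\alpha^{-1}(\Omega^{q,\cl}_T/\rd\Omega^{q-1}_T)\to\Omega^{q,\cl}_X/\rd\Omega^{q-1}_X$: the de Rham cohomology sheaves are computed in the analytic (or \'etale) topology, not as coherent pullbacks. Concretely, at a point $x\in X$ with $s(x)>0$ the stalk involves the rigid cohomology of a positive-dimensional variety over the residue field (this is exactly what the paper's fundamental charts capture, via \eqref{eq:decomposition}), and this cohomology can have genuine weight-$w$ pieces for every $q\le w\le 2q$. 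On $T$ itself the stalks are much simpler --- essentially pure of weight $2q$ --- so $T$ cannot serve as a testing ground for the full decomposition. The paper circumvents this by working with \'etale neighbourhoods \emph{of} $x$ (coming towards $X$, not away from it) modelled on tubes in semi-stable schemes, and the hard technical core is Lemma~\ref{le:stalk}, which uses alterations and explicit Frobenius liftings to show that the Frobenius weight of a class is well-defined independently of the chart.

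\textbf{The higher logarithmic Poincar\'e lemma is not available.} Your ``main preliminary input'' --- a tower $\sS^\bullet_X$ of iterated primitives with a Poincar\'e lemma in all degrees --- amounts to a full higher-degree generalisation of \cite{Berk07}. That is a substantial result in its own right and is not in the literature; treating it as an input hides most of the difficulty.

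\textbf{The $q=1$ equality you invoke is the theorem, not a hypothesis.} You appeal to ``Berkovich's equality $\Upsilon_X=(\Omega^{1,\cl}_X/\rd\cO_X)_2$'' for property (ii). But Berkovich only proves $\Upsilon_X\oplus\Psi_X\subset\Omega^{1,\cl}_X/\rd\cO_X$; whether this inclusion is an equality is precisely the open question in \cite{Berk07}*{Remark 4.5.5} that Corollary~\ref{co:1} answers. The paper's argument for the surjection $(\Omega^{1,\cl}_X/\rd\cO_X)_2\subset\Upsilon^1_X$ is nontrivial: it passes through the Gysin sequence \eqref{eq:gysin}, the isomorphism \eqref{eq:gysin1}, and finiteness of $\Pic^0$ over a finite field to produce the required logarithmic primitive. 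Your notion of ``logarithmic weight'' does not obviously reproduce this.
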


The proof of this theorem will be given at the end of Section \ref{ss:log}. We
call the decomposition in the above theorem the \emph{weight decomposition of
de Rham cohomology sheaves}.

\begin{corollary}\label{co:1}
Suppose that $K$ is embeddable into $\bC_p$. Then for every smooth $K$-analytic
space $X$, we have $\Omega^{1,\cl}_X/\rd\cO_X=\Upsilon_X\oplus\Psi_X$ in \'{e}tale topology. This answers the question in
\cite{Berk07}*{Remark 4.5.5} for such $K$.
\end{corollary}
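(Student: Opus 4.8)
The plan is to read this off directly from Theorem \ref{th:1} applied with $q = 1$ in the étale topology. First I would invoke part (i): since $(\Omega^{1,\cl}_X/\rd\cO_X)_w = 0$ unless $1 \le w \le 2$, the weight decomposition collapses to the two-term direct sum
\[
\Omega^{1,\cl}_X/\rd\cO_X = (\Omega^{1,\cl}_X/\rd\cO_X)_1 \oplus (\Omega^{1,\cl}_X/\rd\cO_X)_2
\]
of $\fc_X$-modules. Then I would identify the two summands: by part (ii), together with the identification $\Upsilon^1_X = \Upsilon_X$ in the étale topology recorded just before the statement of Theorem \ref{th:1}, the weight-$2$ piece equals $\Upsilon_X$; and by part (iii) the weight-$1$ piece equals $\Psi_X$. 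This yields $\Omega^{1,\cl}_X/\rd\cO_X = \Upsilon_X \oplus \Psi_X$, and the clause of Theorem \ref{th:1} asserting stability under base change, cup product, and functoriality in $X$ transfers verbatim.

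Next I would explain why this settles \cite{Berk07}*{Remark 4.5.5}. There the subsheaves $\Upsilon_X, \Psi_X \subset \Omega^{1,\cl}_X/\rd\cO_X$ are already in place, and the question is precisely whether the canonical map $\Upsilon_X \oplus \Psi_X \to \Omega^{1,\cl}_X/\rd\cO_X$ given by the sum of the two inclusions is an isomorphism. The displayed identity says exactly that it is, for every smooth $K$-analytic space $X$ when $K$ is embeddable into $\bC_p$.

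I do not expect a genuine obstacle here, since the corollary is a formal consequence of Theorem \ref{th:1}. The single point that deserves a word is that the weight-$2$ piece is \emph{equal} to $\Upsilon_X$ and not merely contains it --- that is, that $\Upsilon_X$ is split off as a direct summand --- which is exactly the equality clause ``they are equal if $q = 1$'' in part (ii) of Theorem \ref{th:1}, and this is where the hypothesis $K \hookrightarrow \bC_p$ is used. All of the substantive work is therefore already contained in the proof of Theorem \ref{th:1}, carried out at the end of Section \ref{ss:log}.
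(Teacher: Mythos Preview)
Your proposal is correct and is exactly the intended deduction: the paper states Corollary~\ref{co:1} without a separate proof, as it follows immediately from Theorem~\ref{th:1} with $q=1$ via parts (i), (ii), and (iii) precisely as you outline. There is nothing to add.
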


\begin{remark}
We expect that Theorem \ref{th:1} and thus Corollary \ref{co:1} hold by only
requiring that the residue field of $K$ is algebraic over a finite field (and
$K$ is of characteristic zero).
\end{remark}

For the rest of Introduction, we work in the analytic topology only. In
particular, the de Rham complex $(\Omega^\bullet_X,\rd)$ is a complex of
sheaves on (the underlying topological space of) $X$.\\

In \cite{CLD12}, Chambert-Loir and Ducros define, for every $K$-analytic
space $X$, a bicomplex $(\sA_X^{\bullet,\bullet},\rd',\rd'')$ of sheaves of
real vector spaces on $X$ concentrated in the first quadrant. It is a
non-Archimedean analogue of the bicomplex of $(p,q)$-forms on complex
manifolds. In particular, we may define analogously the \emph{Dolbeault
cohomology} (of forms) as
\[H^{q,q'}_{\sA}(X)\coloneqq\frac{\Ker(\rd''\colon\sA_X^{q,q'}(X)\to\sA_X^{q,q'+1}(X))}
{\IM(\rd''\colon\sA_X^{q,q'-1}(X)\to\sA_X^{q,q'}(X))}.\] By \cite{CLD12} and
\cite{Jel16}, we know that for every $q\geq 0$, the complex
$(\sA_X^{q,\bullet},\rd'')$ is a fine resolution of the sheaf
$\Ker(\rd''\colon\sA_X^{q,0}\to\sA_X^{q,1})$. Thus, $H^{q,q'}_{\sA}(X)$ is
canonically isomorphic to the sheaf cohomology
$H^{q'}(X,\Ker(\rd''\colon\sA_X^{q,0}\to\sA_X^{q,1}))$. If $X$ is of
dimension $n$ and without boundary, then we may define the integration
\[\int_X\omega\]
for every top form $\omega\in\sA^{n,n}_X(X)$ with compact support. In
particular, if $X$ is moreover compact, then the integration induces a real
linear functional on $H^{n,n}_\sA(X)$.

The next theorem reveals a connection between
$\Ker(\rd''\colon\sA_X^{q,0}\to\sA_X^{q,1})$ and the algebraic de Rham
cohomology sheaves of $X$.

\begin{theorem}[Lemma \ref{le:log}, Theorem \ref{th:kernel}]
Let $K$ be a non-Archimedean field embeddable into $\bC_p$ and $X$ a smooth
$K$-analytic space. Let $\sL_X^q$ be the subsheaf of $\bQ$-vector spaces of
$\Omega^{q,\cl}_X/\rd\Omega^{q-1}_X$ generated by sections of the form
$\frac{\rd f_1}{f_1}\wedge\cdots\wedge\frac{\rd f_q}{f_q}$ where $f_j$ are
sections of $\cO^*_X$. Then
\begin{enumerate}
  \item the canonical map $\sL_X^q\otimes_\bQ\fc_X\to\Upsilon_X^q$ is an
      isomorphism;
  \item there is a canonical isomorphism $\sL_X^q\otimes_\bQ\bR\simeq
      \Ker(\rd''\colon\sA_X^{q,0}\to\sA_X^{q,1})$.
\end{enumerate}
\end{theorem}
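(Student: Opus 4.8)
The plan is to prove the two isomorphisms in sequence, using the local structure of smooth analytic spaces together with the already-stated weight decomposition (Theorem \ref{th:1}). For part (1), I would first observe that the map $\sL_X^q\otimes_\bQ\fc_X\to\Upsilon_X^q$ is well-defined and surjective essentially by the definition of $\Upsilon_X^q$: the generating sections $\sum c_i\,\frac{\rd f_{i1}}{f_{i1}}\wedge\cdots\wedge\frac{\rd f_{iq}}{f_{iq}}$ of $\Upsilon_X^q$ are by construction $\fc_X$-linear combinations of the generating sections $\frac{\rd f_1}{f_1}\wedge\cdots\wedge\frac{\rd f_q}{f_q}$ of $\sL_X^q$. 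The content is injectivity, which I would check on stalks. Here I would invoke the weight decomposition: by Theorem \ref{th:1}(ii), $\Upsilon_X^q$ lands in the top weight piece $(\Omega^{q,\cl}_X/\rd\Omega^{q-1}_X)_{2q}$, and $\sL_X^q$ maps there too. The key input is a description of the stalk of the weight-$2q$ piece — morally, over a residually algebraic point it should be a free $\fc_X$-module on the "dlog" symbols modulo the Steinberg-type relations, these relations being already visible in $\sL_X^q$ (which is defined as an honest quotient sheaf of $\bQ$-vector spaces inside the de Rham cohomology sheaf, so it already carries exactly the relations among dlog forms with $\cO_X^*$-coefficients). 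Concretely I would reduce, via the local description of $X$ and base change stability of the weight decomposition, to the case of (an open in) a torus or a polydisc times a torus, compute both sides there, and match them; the $\fc_X$-structure is flat-over-$\bQ$ enough that tensoring $\sL_X^q$ with $\fc_X$ does not introduce or kill relations.

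For part (2), I would construct the map $\sL_X^q\otimes_\bQ\bR\to\Ker(\rd''\colon\sA_X^{q,0}\to\sA_X^{q,1})$ by sending a symbol $\frac{\rd f_1}{f_1}\wedge\cdots\wedge\frac{\rd f_q}{f_q}$ to the associated smooth tropical $(q,0)$-form, namely the pullback under a suitable tropicalization (moment) map of $f_1,\dots,f_q$ of the standard form $\rd' \log|x_1|\wedge\cdots\wedge\rd'\log|x_q|$ on the relevant real vector space — equivalently $\rd' \log|f_1|\wedge\cdots\wedge\rd'\log|f_q|$ in the Chambert-Loir--Ducros calculus. This is closed under $\rd''$ because each $\log|f_j|$ is pluriharmonic in their sense (it is a tropical function pulled back from tropicalization), so it defines a class in the kernel sheaf; one checks the assignment factors through the relations defining $\sL_X^q$ (antisymmetry, additivity $\log|fg| = \log|f| + \log|g|$, and the Steinberg relation, which vanishes for the same combinatorial reason it does for Milnor $K$-theory symbols, since the tropicalization of $f$ and $1-f$ satisfies the tropical Steinberg identity). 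Then $\bR$-linear extension gives the map.

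Surjectivity and injectivity of this map is where I expect the real work to lie, and it is the main obstacle. The surjectivity rests on the structure theory from \cite{CLD12} and \cite{Jel16}: every $\rd''$-closed $(q,0)$-form is locally, after passing to a suitable analytic domain on which there is a nice tropicalization, a combination of pullbacks of constant-coefficient forms $\rd'\log|x_I|$ under monomial maps, i.e. comes from dlog symbols with $\cO_X^*$-coefficients — but making "locally" precise and globalizing (the kernel is a genuine sheaf) requires care. For injectivity I would again compare stalks: a nonzero element of $\sL_X^q\otimes\bR$ at a point $x$, written in terms of a maximal independent set of units near $x$, maps to a form whose value I can read off on the skeleton / via the local tropical chart, and nonvanishing there gives injectivity. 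A clean way to organize both directions is to reduce everything to the case $X = \bG_{m,K}^{\an,n}$ (or an open polydisc-times-torus), where $\Ker(\rd''\colon\sA_X^{q,0}\to\sA_X^{q,1})$ is explicitly the constant sheaf $\bigwedge^q\bR^n$ pulled back from the tropicalization $\bR^n$, and $\sL_X^q\otimes\bR$ is visibly the same, then transport along the local coordinate description of smooth $K$-analytic spaces and invoke functoriality and base-change compatibility (part (1), Theorem \ref{th:1}) to patch. Part (1) enters part (2) as well, since identifying $\sL_X^q$ intrinsically inside the de Rham cohomology sheaf is what lets one compare the two kernel sheaves without ambiguity about which relations hold.
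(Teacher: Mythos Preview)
There is a genuine gap in your approach to both parts, and it stems from one misunderstanding: the relations defining $\sL_X^q$ are \emph{not} just antisymmetry, bilinearity, and Steinberg. By definition $\sL_X^q=\sK_X^q/\Ker\lambda_X^q$, so $\sL_X^q$ carries \emph{all} relations that hold among dlog forms inside $\Omega^{q,\cl}_X/\rd\Omega^{q-1}_X$. Likewise $\sT_X^q=\sK_X^q/\Ker\tau_X^q$ carries all tropical relations. Your sentence ``one checks the assignment factors through the relations defining $\sL_X^q$ (antisymmetry, additivity, and the Steinberg relation)'' therefore only verifies the map $\sK_X^q\to\Ker(\rd'')$ is well-defined; it does not touch the real question, which is $\Ker\lambda_X^q=\Ker\tau_X^q$. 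Your proposed reduction to $\bG_m^n$ or a polydisc-times-torus would identify $\sT_X^q\otimes\bR$ with $\Ker(\rd'')$ (that part is essentially Proposition~\ref{pr:kernel}, and the paper does use the tropical-chart description from \cite{JSS15} for it), but it cannot compare $\sT_X^q$ with $\sL_X^q$: smooth $K$-analytic spaces are \emph{not} locally isomorphic to tori in any sense that trivializes the de Rham cohomology sheaves --- the stalks are governed by rigid cohomology of special fibres of semistable models, not by torus cohomology.

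The paper's mechanism, which you are missing, is a common combinatorial target that detects both kernels simultaneously. After passing (via alteration) to a proper strictly semistable model with distinguished component $\cE$, Proposition~\ref{pr:log} computes the iterated residue map $\xi^0_q$ on a dlog form $\frac{\rd f_1}{f_1}\wedge\cdots\wedge\frac{\rd f_q}{f_q}$ as the purely combinatorial class $\cl^\heartsuit(\DIV(f_1)\wedge\cdots\wedge\DIV(f_q))$ living in $\bigoplus_{|J|=q}H^0_\rig(\cE_J^\heartsuit/k)$. Since $\xi^0_q$ is injective on the weight-$2q$ part (Lemma~\ref{le:highest}), this combinatorial data determines $\lambda_X^q(F)$; and the \emph{same} divisor data visibly determines the tropical form $\tau_X^q(F)$ on the skeleton. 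That is what proves $\Ker\lambda_X^q=\Ker\tau_X^q$ (Theorem~\ref{th:kernel}). For part~(1), your flatness remark is beside the point: flatness of $\fc_X$ over $\bQ$ does not prevent $\bQ$-independent sections of $\sL_X^q$ from becoming $\fc_X$-dependent inside $\Omega^{q,\cl}_X/\rd\Omega^{q-1}_X$. What does prevent it is again the combinatorial target: the groups $H^0_\rig(\cE_J^\heartsuit/k)\simeq\bQ^{\pi_0(\cE_J^\heartsuit)}\otimes k$ have a natural $\bQ$-structure, so any $\fc_X$-relation among the divisor classes forces a $\bQ$-relation, which pulls back to a $\bQ$-relation among the $\lambda_X^q(F^l)$.
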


The above theorem implies that the Dolbeault cohomology $H^{q,q'}_\sA(X)$ for
$X$ in the theorem has a canonical rational structure through the isomorphism
$H^{q,q'}_\sA(X)\simeq H^{q'}(X,\sL_X^q)\otimes_\bQ\bR$.\\

Recall that in the complex world, for a smooth complex algebraic variety
$\cX$, we have a cycle class map from $\CH^q(\cX)$ to the classical Dolbeault
cohomology $H^{q,q}_{\overline\partial}(\cX^\an)$ of the associated complex
manifold $\cX^\an$. Over a non-Archimedean field $K$, we may associate a
scheme $\cX$ of finite type over $K$ a $K$-analytic space $\cX^\an$. The
following theorem is an analogue of the above cycle class map in the
non-Archimedean world.

\begin{theorem}[Definition \ref{de:tropical}, Theorem \ref{th:cycle}, Corollary \ref{co:cycle}]
Let $K$ be a non-Archimedean field and $\cX$ a smooth scheme over $K$ of
dimension $n$. Then there is a \emph{tropical cycle class map}
\[\cl_\sA\colon\CH^q(\cX)\to H^{q,q}_\sA(\cX^\an),\]
functorial in $\cX$ and $K$, such that for every algebraic cycle $\cZ$ of
$\cX$ of codimension $q$,
\begin{align}\label{eq:integral}
\int_{\cX^\an}\cl_\sA(\cZ)\wedge\omega=\int_{\cZ^\an}\omega
\end{align}
for every $\rd''$-closed form $\omega\in\sA^{n-q,n-q}_{\cX^\an}(\cX^\an)$
with compact support.

In particular, if $\cX$ is proper and $\cZ$ is of dimension $0$, then
\[\int_{\cX^\an}\cl_\sA(\cZ)=\deg\cZ.\]
\end{theorem}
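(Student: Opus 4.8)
The plan is to construct $\cl_\sA$ by first treating the local situation and then globalizing via the rational structure supplied by the preceding theorem. I would begin with the Dolbeault sheaf $\sL^q_{\cX^\an}$, whose sections are generated by symbols $\frac{\rd f_1}{f_1}\wedge\cdots\wedge\frac{\rd f_q}{f_q}$ with $f_j\in\cO^*$; by part (1) of the previous theorem this is the rational structure on $\Upsilon^q$, and by part (2) it computes $H^{q,q}_\sA$ after tensoring with $\bR$. The first step is therefore to produce a class in $H^q(\cX^\an,\sL^q_{\cX^\an})$ associated to an algebraic cycle $\cZ$ of codimension $q$. For a divisor this is classical: the line bundle $\cO(\cZ)$, trivialized on an open cover $\{U_i\}$ by local equations $f_i$, yields a $1$-cocycle $\frac{\rd f_i}{f_i}-\frac{\rd f_j}{f_j}=\rd\log(f_i/f_j)$ which is a $\rd$-closed logarithmic form, hence a Čech $1$-cocycle with values in $\sL^1$. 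For higher codimension I would either reduce to complete intersections locally and take cup products of divisorial classes — using the stability of the weight decomposition under cup product from Theorem \ref{th:1} and the compatibility of $\sL^\bullet$ with wedge product — or, more robustly, invoke a Gysin/deformation-to-the-normal-cone argument to define $\cl_\sA$ on $\CH^q$ directly and check it factors through rational equivalence. Composing with $\sL^q_{\cX^\an}\otimes_\bQ\bR\xrightarrow{\sim}\Ker(\rd''\colon\sA^{q,0}\to\sA^{q,1})$ and the identification $H^{q,q}_\sA(\cX^\an)\simeq H^q(\cX^\an,\Ker(\rd''\colon\sA^{q,0}\to\sA^{q,1}))$ from \cite{CLD12} and \cite{Jel16} then gives the desired map.

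The second step is the integration formula \eqref{eq:integral}. Here I would pass to an explicit representative of $\cl_\sA(\cZ)$ as a $\rd''$-closed $(q,q)$-form and compare $\int_{\cX^\an}\cl_\sA(\cZ)\wedge\omega$ with $\int_{\cZ^\an}\omega$ for $\omega\in\sA^{n-q,n-q}_{\cX^\an}(\cX^\an)$ of compact support. The natural mechanism is a Poincaré–Lelong type statement in the Chambert-Loir–Ducros formalism: the tropical current associated to $\cZ$ is represented by integration over $\cZ^\an$, and the current $\rd'\rd''[\log|f|]$ computes the current of integration along a divisor $\{f=0\}$ (this is already in \cite{CLD12}). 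One reduces to the case $\cZ$ irreducible, then (by functoriality in $\cX$, which I would verify from the naturality of the symbols $\rd f/f$ and of the isomorphisms in the previous theorem) to $\cZ$ the zero locus of a regular sequence on an open of $\cX$, where iterated Poincaré–Lelong gives the claim; a limiting/partition-of-unity argument handles the general smooth $\cZ$ using resolution of the cycle by its smooth locus together with the fact that the singular locus has higher codimension and hence does not affect the integral against a smooth test form. The "in particular" statement is immediate: take $q=n$, so $\omega$ is a locally constant function which on the proper (hence compact, boundaryless after the usual reduction) space $\cX^\an$ can be taken to be $1$, and $\int_{\cZ^\an}1=\deg\cZ$ by definition of the degree of a $0$-cycle together with $\int_{\Spec L}1=[L:K]$-type normalization built into the CLD integration.

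Functoriality in $\cX$ follows by tracking the construction: pullback of line bundles, local equations, and logarithmic symbols along $\cX'\to\cX$ is compatible with $\rd\log$, the weight decomposition is functorial by Theorem \ref{th:1}, and the isomorphism $\sL^q\otimes\bR\simeq\Ker\rd''$ is canonical hence functorial; the Čech construction of the cohomology class is likewise natural. Functoriality in $K$ (for base change to a larger non-Archimedean field) follows from the base-change stability asserted in Theorem \ref{th:1} together with the compatibility of the CLD bicomplex and integration with ground field extension.

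I expect the main obstacle to be the integration formula \eqref{eq:integral}, specifically making the comparison between the cohomological cycle class (a Čech class that has been moved into the Dolbeault complex via abstract isomorphisms) and the geometric current of integration $[\cZ^\an]$ completely explicit. The abstract isomorphisms from the previous theorem and from \cite{Jel16} identify sheaves but do not, on their face, identify a chosen cocycle representing $\cl_\sA(\cZ)$ with a chosen smooth form whose associated current is $[\cZ^\an]$. Bridging this gap requires either a careful local model — writing out the $(q,q)$-form attached to $\rd\log f_1\wedge\cdots\wedge\rd\log f_q$ under the CLD correspondence and recognizing it, after $\rd'\rd''$, as the tropical current of the complete intersection — or an axiomatic characterization: showing any natural transformation $\CH^q\to H^{q,q}_\sA$ satisfying the projection formula and normalization on divisors must satisfy \eqref{eq:integral}, which in turn rests on the non-Archimedean Poincaré–Lelong formula of Chambert-Loir–Ducros. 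I would pursue the local-model route, isolating the key computation as a separate lemma on logarithmic forms and their tropicalizations.
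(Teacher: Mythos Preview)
Your construction has a genuine gap in generality: you route the definition of $\cl_\sA$ through $\sL^q_{\cX^\an}$ and the isomorphism $\sL^q_X\otimes_\bQ\bR\simeq\Ker(\rd''\colon\sA^{q,0}_X\to\sA^{q,1}_X)$, but that isomorphism (Theorem~\ref{th:kernel}) is only proved for $K$ embeddable into $\bC_p$, whereas the statement is for an arbitrary non-Archimedean field. The paper avoids this by working with a different sheaf. It introduces the Milnor $K$-theory sheaf $\sK^q_X$ and the \emph{tropical} map $\tau^q_X\colon\sK^q_X\to\Ker\rd''$ sending $\{f_1,\dots,f_q\}$ to $\rd x_1\wedge\cdots\wedge\rd x_q$ with $x_i=-\log|f_i|$; the image sheaf $\sT^q_X$ satisfies $\sT^q_X\otimes_\bQ\bR\simeq\Ker\rd''$ for \emph{every} $K$ (Proposition~\ref{pr:kernel}), because its proof needs only the tropical description of $\sA^{q,0}$ and no weight theory. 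The map $\cl_\sA$ is then the composite $\CH^q(\cX)_\bQ\xrightarrow{\sim}H^q(\cX,\sK^q_\cX)\to H^q(\cX^\an,\sK^q_{\cX^\an})\to H^q(\cX^\an,\sT^q_{\cX^\an})\otimes_\bQ\bR\simeq H^{q,q}_\sA(\cX^\an)$, where the first isomorphism is Soul\'e's Bloch--Quillen type result $\CH^q\otimes\bQ\simeq H^q(\cX,\sK^q_\cX)$. This also replaces your somewhat ad hoc ``locally a complete intersection, then cup divisorial classes, then check independence'' step: the well-definedness and factorization through rational equivalence are packaged entirely in Soul\'e's theorem, and the explicit local description you want (the class attached to a regular sequence $f_1,\dots,f_q$) is exactly the symbol $\{f_1,\dots,f_q\}$ viewed in $H^q_\cZ(\cX,\sK^q_\cX)$.

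For the integration formula the paper's argument is quite different from your Poincar\'e--Lelong plan. Rather than computing $\rd'\rd''[\log|f|]$, it works entirely with $\rd''$: it passes from the \v{C}ech cocycle $\tau^q(\{f_1,\dots,f_q\})$ to an explicit Dolbeault representative $\theta\in\sA^{q,q-1}(U\setminus Z)$ by the standard \v{C}ech-to-Dolbeault staircase, then evaluates $\int_{U\setminus Z}\theta\wedge\rd''\omega$ by Stokes' theorem on the complement of a shrinking polyannular tube $V^\emptyset_\epsilon=\{|f_j|\le\epsilon\}$. Iterating Stokes across the stratification $V^I_\epsilon=\{|f_i|=\epsilon\ (i\in I),\ |f_j|\le\epsilon\ (j\notin I)\}$ collapses the \v{C}ech staircase and reduces everything to $\int_{V^{\{1,\dots,q\}}_\epsilon}\tau^q(\{f_1,\dots,f_q\})\wedge\omega$, which is then identified with $\int_Z\omega$ via a tropical product decomposition of the tube (essentially that $\trop$ of a small tube looks like $\trop(Z)\times[-\log\epsilon,\infty)^q$). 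Your iterated Poincar\'e--Lelong strategy is plausible and morally parallel, but you would still need exactly this kind of tubular/tropical computation to make the currents match; the paper's route has the advantage of staying inside the $\rd''$-complex and never needing to produce a global smooth $(q,q)$-form representing $\cl_\sA(\cZ)$, which is precisely the obstacle you flagged.
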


\begin{remark}
Let the situation be as in the above theorem.
\begin{enumerate}
  \item Our construction actually shows that the image of $\cl_\sA$ is in
      the canonical rational subspace $H^q(\cX^\an,\sL_{\cX^\an}^q)$.

  \item The tropical cycle class respects products on both sides. More
      precisely, for $\cZ_i\in\CH^{q_i}(\cX)$ with $i=1,2$, we have
      $\cl_\sA(\cZ_1\cdot\cZ_2)=\cl_\sA(\cZ_1)\cup\cl_\sA(\cZ_2)$.

  \item We may regard the formula \eqref{eq:integral} as a tropical
      version of Cauchy formula in multi-variable complex analysis.

  \item Even when $\cX$ is proper, one can \emph{not} use
      \eqref{eq:integral} to define $\cl_\sA(\cZ)$ as we do not know
      whether the pairing
      \[H^{q,q}_\sA(\cX^\an)\times
      H^{n-q,n-q}_\sA(\cX^\an)\xrightarrow{\cup}H^{n,n}_\sA(\cX^\an)\xrightarrow{\int_X}\bR\]
      is perfect or not at this moment.
\end{enumerate}
\end{remark}

For a proper smooth scheme $\cX$ of dimension $n$ over a general field $K$ of
characteristic zero, we have a cycle class map $\cl_\dr\colon\CH^q(\cX)\to
H^{2q}_\dr(\cX)$ with values in the algebraic de Rham cohomology. It is known
that when $K=\bC$, the kernel of $\cl_\dr$ coincides with the kernel of the
cycle class map valued in Dolbeault cohomology. In particular, if
$\cl_\dr(\cZ)=0$, then $\int_{\cZ^\an}\omega=0$ for every
$\overline\partial$-closed $(n-q,n-q)$-form $\omega$ on $\cX^\an$. In the
following theorem, we prove that the same conclusion holds in the
non-Archimedean setting as well, with mild restriction on the field $K$.

\begin{theorem}(Theorem \ref{th:trivial})\label{th:2}
Let $K\subset\bC_p$ be a finite extension of $\bQ_p$ and $\cX$ a proper
smooth scheme over $K$ of dimension $n$. Let $\cZ$ be an algebraic cycle of
$\cX$ of codimension $q$ such that $\cl_\dr(\cZ)=0$. Then
\[\int_{(\cZ\otimes_K\bC_p)^\an}\omega=0\]
for every $\rd''$-closed form
$\omega\in\sA^{n-q,n-q}((\cX\otimes_K\bC_p)^\an)$.
\end{theorem}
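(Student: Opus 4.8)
The plan is to reduce the statement to the comparison between the de Rham cycle class and the tropical cycle class, and then to exploit the rational structure on Dolbeault cohomology coming from the sheaf $\sL_{\cX^\an}^q$ together with a compatibility between the two cycle class maps over $\bC_p$. First I would base change to $\bC_p$: set $\cX_{\bC_p}=\cX\otimes_K\bC_p$ and $\cZ_{\bC_p}=\cZ\otimes_K\bC_p$. Since $\cl_\dr$ is functorial for field extensions and $K$ has characteristic zero, $\cl_\dr(\cZ)=0$ implies $\cl_\dr(\cZ_{\bC_p})=0$ in $H^{2q}_\dr(\cX_{\bC_p})$. By the integration formula \eqref{eq:integral} applied over $\bC_p$, it suffices to prove that $\cl_\sA(\cZ_{\bC_p})=0$ in $H^{q,q}_\sA(\cX_{\bC_p}^\an)$; indeed then $\int_{(\cZ\otimes_K\bC_p)^\an}\omega=\int_{\cX_{\bC_p}^\an}\cl_\sA(\cZ_{\bC_p})\wedge\omega=0$ for every $\rd''$-closed $\omega$.

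The crux is therefore to show that $\cl_\dr(\cZ_{\bC_p})=0$ forces $\cl_\sA(\cZ_{\bC_p})=0$. The mechanism I would use is the weight decomposition of Theorem \ref{th:1} together with the identification $H^{q,q}_\sA(\cX_{\bC_p}^\an)\simeq H^q(\cX_{\bC_p}^\an,\sL_{\cX_{\bC_p}^\an}^q)\otimes_\bQ\bR$ from the middle theorem, under which (by Remark (1) after the tropical cycle class theorem) $\cl_\sA(\cZ_{\bC_p})$ lies in the image of the rational class in $H^q(\cX_{\bC_p}^\an,\sL_{\cX_{\bC_p}^\an}^q)$. Since $\sL_{\cX^\an}^q\otimes_\bQ\fc_{\cX^\an}\cong\Upsilon_{\cX^\an}^q$ and $\Upsilon_{\cX^\an}^q\subset(\Omega^{q,\cl}_{\cX^\an}/\rd\Omega^{q-1}_{\cX^\an})_{2q}$ sits inside the de Rham cohomology sheaves as the top weight graded piece, there should be a map of spectral sequences (or of the associated hypercohomology) relating $H^q(\cX_{\bC_p}^\an,\sL^q_{\cX_{\bC_p}^\an})$, computed via the de Rham complex of sheaves on the analytic space, to the algebraic de Rham cohomology $H^{2q}_\dr(\cX_{\bC_p})$. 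Concretely, one compares the hypercohomology of $\Omega^\bullet_{\cX_{\bC_p}^\an}$ — whose $E_2$-page involves $H^q(\cX_{\bC_p}^\an,\Omega^{q,\cl}/\rd\Omega^{q-1})$ and hence its top-weight summand containing the tropical class — with GAGA-type comparison $H^{2q}_\dr(\cX_{\bC_p})\simeq H^{2q}(\cX_{\bC_p}^\an,\Omega^\bullet_{\cX_{\bC_p}^\an})$ valid for proper smooth $\cX$. The de Rham cycle class $\cl_\dr(\cZ_{\bC_p})$ and the tropical class $\cl_\sA(\cZ_{\bC_p})$ are both constructed from the same local data $\frac{\rd f_1}{f_1}\wedge\cdots\wedge\frac{\rd f_q}{f_q}$ along $\cZ$ (logarithmic/Gysin type representatives), so they map to a common element under the natural maps; the vanishing of $\cl_\dr(\cZ_{\bC_p})$ then propagates to the vanishing of its component in the top-weight piece, which is exactly (the $\bR$-scalar extension of) $\cl_\sA(\cZ_{\bC_p})$. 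That the top-weight component of $\cl_\dr$ in the analytic de Rham hypercohomology equals the tropical class is where one invokes that $\cl_\dr$ is represented, after refining to a Čech/logarithmic cocycle supported near $\cZ$, by exactly the sections generating $\sL^q$, so passing to $\Upsilon^q\subset(\cdots)_{2q}$ and then tensoring down to $\bR$ lands on $\cl_\sA$.

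The main obstacle I anticipate is making precise the comparison between $\cl_\dr$ and $\cl_\sA$ — i.e., producing a commutative diagram in which $\cl_\dr(\cZ_{\bC_p})$, viewed inside the analytic de Rham hypercohomology via GAGA, has a well-defined image in the top-weight summand $H^q(\cX_{\bC_p}^\an,(\Omega^{q,\cl}/\rd\Omega^{q-1})_{2q})$, and checking that this image is the image of $\cl_\sA(\cZ_{\bC_p})$ under $\sL^q\hookrightarrow\fc$-linearly into $\Upsilon^q$ followed by $\otimes\bR$. This requires (a) the GAGA comparison for algebraic de Rham cohomology of a proper smooth scheme over $\bC_p$ with the analytic de Rham hypercohomology, which I would cite or establish via a finiteness/spectral-sequence argument using that the weight decomposition is functorial and compatible with the de Rham differential; (b) a careful local description of both cycle classes in terms of the generators of $\sL^q$, using that near a codimension-$q$ subvariety one has local equations exhibiting the Gysin class as a wedge of logarithmic forms; and (c) verifying that the weight-$2q$ projection of the de Rham class is compatible with base change along $K\hookrightarrow\bC_p$, so that the hypothesis over $K$ can be used after base change. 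Steps (b) and (c) are essentially bookkeeping once the structural results of the paper are in hand; step (a), the GAGA comparison enabling us to detect $\cl_\dr$ inside analytic hypercohomology and then project to the top weight, is the part I expect to require the most care. Once that is in place, the conclusion is immediate: $\cl_\dr(\cZ)=0\Rightarrow\cl_\dr(\cZ_{\bC_p})=0\Rightarrow$ its weight-$2q$ component vanishes $\Rightarrow\cl_\sA(\cZ_{\bC_p})=0\Rightarrow\int_{(\cZ\otimes_K\bC_p)^\an}\omega=0$ for all $\rd''$-closed $\omega\in\sA^{n-q,n-q}((\cX\otimes_K\bC_p)^\an)$.
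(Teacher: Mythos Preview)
Your reduction to proving $\cl_\sA(\cZ_{\bC_p})=0$ is stronger than what the theorem asserts, and the paper explicitly remarks right after stating this theorem that it does \emph{not} know whether $\cl_\sA(\cZ)=0$ in general (only for $q=1$, via a separate argument). So your strategy is aiming at a statement the authors regard as open.

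The specific gap is in the step where you want to ``project $\cl_\dr$ to its weight-$2q$ component'' and identify that with $\cl_\sA$. The weight decomposition of Theorem~\ref{th:1} lives on the cohomology \emph{sheaves} $\Omega^{q,\cl}_X/\rd\Omega^{q-1}_X$, not on the hypercohomology $H^{2q}_\dr(X)$. In the spectral sequence $E_2^{p,q}=H^p(X,\Omega^{q,\cl}_X/\rd\Omega^{q-1}_X)\Rightarrow H^{p+q}_\dr(X)$, the term $E_\infty^{q,q}$ is only a \emph{subquotient} of $H^{2q}_\dr(X)$: there is neither a canonical section $H^{2q}_\dr(X)\to E_2^{q,q}$ nor any reason for the natural map $H^q(X,\sL^q_X)\to H^{2q}_\dr(X)$ to be injective. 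Concretely, $\cl_\dr(\cZ)=0$ forces the image of $\cl_\sT(\cZ)$ in $E_\infty^{q,q}$ to vanish, i.e.\ $\cl_\sT(\cZ)$ lies in the image of some higher differential $d_r$ with $r\geq 2$; it does \emph{not} force $\cl_\sT(\cZ)=0$ in $E_2^{q,q}$. Your proposed step~(a), producing a GAGA comparison that lets you project to the top-weight piece, would essentially require controlling all these differentials, and no mechanism for that is given. (The $q=1$ case works precisely because the only obstruction is $d_2\colon E_2^{1,1}\to H^3(X,\bC_p)$, which factors through $H^3(\cX,\bC_p)=0$.)

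The paper circumvents this by never asserting $\cl_\sA(\cZ)=0$. Instead it works directly with the \emph{pairing}: using Theorem~\ref{th:cycle} and the trace map (Proposition~\ref{pr:trace}), it reduces to showing that the cup product of $\cl_\sT(\cZ)$ with any class $\omega\in H^{n-q}(X,\sT^{n-q}_X)$ vanishes in $H^n(X,\Omega^n_X/\rd\Omega^{n-1}_X)$. After descending $\omega$ to a finite extension of $\bQ_p$ and passing to the adic space, it takes an alteration $f\colon\cX'\to\cX$ to a strictly semi-stable model and uses the Hyodo--Kato log-crystalline Frobenius on the \emph{entire} complex $(\Omega^\bullet_{X',\cY'},\rd)$ to show (by a weight argument on the spectral sequence for $X'$) that $f^*\omega$ lifts to $H^{2n-2q}(X'^\ad,\tau_{\leq n-q}\Omega^\bullet_{X'^\ad})$. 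A formal cup-product adjunction then rewrites the pairing as $\cl_\dr(f^*\cZ)\cup(\text{something})$ in full de Rham cohomology, which vanishes since $\cl_\dr(\cZ)=0$. The point is that the Frobenius acts on the whole complex, so one can control the spectral-sequence differentials landing \emph{on} the dual side (the $\omega$ side), rather than trying to kill differentials emanating from $\cl_\sT(\cZ)$.
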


We emphasize again that in the above theorem, we do not know whether
$\cl_\sA(\cZ)=0$ or not. If we know the Poincar\'{e} duality for
$H^{\bullet,\bullet}_\sA(\cX^\an)$, then $\cl_\sA(\cZ)=0$. Nevertheless, we
have the following result for lower degree.

\begin{theorem}(Theorem \ref{th:line})
Let $\cX$ be a proper smooth scheme over $\bC_p$. Then
\begin{enumerate}
  \item $H^{1,1}_\sA(\cX^\an)$ is finite dimensional;

  \item for a line bundle $\cL$ on $\cX$ such that $\cl_\dr(\cL)=0$, we
      have $\cl_\sA(\cL)=0$.
\end{enumerate}
\end{theorem}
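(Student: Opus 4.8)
The plan is to deduce both parts from the structural results already established, together with standard facts about line bundles and $p$-adic cohomology. For part (1), I would start from the canonical rational structure $H^{1,1}_\sA(\cX^\an)\simeq H^1(\cX^\an,\sL^1_{\cX^\an})\otimes_\bQ\bR$ coming from the theorem identifying $\sL^q_X\otimes_\bQ\bR$ with $\Ker(\rd''\colon\sA^{q,0}_X\to\sA^{q,1}_X)$. Next I would use the weight decomposition of de Rham cohomology sheaves: by Theorem \ref{th:1}(ii), $\Upsilon^1_{\cX^\an}=(\Omega^{1,\cl}_{\cX^\an}/\rd\cO_{\cX^\an})_2$, and by part (1) of the theorem on $\sL^q_X$ we have $\sL^1_{\cX^\an}\otimes_\bQ\fc_{\cX^\an}\xrightarrow{\sim}\Upsilon^1_{\cX^\an}$. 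So $\sL^1_{\cX^\an}$ sits inside the weight-$2$ piece of the de Rham cohomology sheaf $\Omega^{1,\cl}/\rd\cO$, and I would compute $H^1(\cX^\an,\sL^1_{\cX^\an})$ via the hypercohomology of the truncated de Rham complex, relating it to pieces of $H^2_\dr(\cX)$ (or to Berkovich's comparison between the analytic de Rham cohomology and a mixture of singular/étale and coherent cohomology). Since $\cX$ is proper and smooth over $\bC_p$, all the relevant algebraic and étale cohomology groups are finite dimensional, hence so is $H^1(\cX^\an,\sL^1_{\cX^\an})$ and therefore $H^{1,1}_\sA(\cX^\an)$.

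For part (2), I would use that $\cl_\dr$ and $\cl_\sA$ are both built compatibly from the same geometric input, and that for a line bundle the cycle class is the image of $[\cL]$ under the connecting map $\Pic(\cX)=H^1(\cX,\cO^*_{\cX})\to H^1(\cX^\an,\Upsilon^1_{\cX^\an})$ (resp.\ into $H^1(\cX^\an,\sL^1_{\cX^\an})$) induced by $f\mapsto \rd f/f$. Concretely, I would exhibit $\cl_\sA(\cL)$ as the image of $\cl_\dr(\cL)\in H^2_\dr(\cX)$ under a natural map arising from the weight decomposition: projection of $H^2_\dr$ onto its weight-$2$ part lands in $H^1(\cX^\an,\Upsilon^1_{\cX^\an})=H^1(\cX^\an,\sL^1_{\cX^\an})\otimes_\bQ\fc_{\cX^\an}$, and further onto $H^1(\cX^\an,\sL^1_{\cX^\an})\otimes_\bQ\bR\simeq H^{1,1}_\sA(\cX^\an)$. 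The key commutative diagram to check is that $\cl_\sA(\cL)$ agrees with this composite applied to $\cl_\dr(\cL)$; this follows by tracing both through the Kummer/dlog construction on $\cO^*_{\cX}$ and using functoriality of the weight decomposition asserted in Theorem \ref{th:1}. Once that compatibility is in hand, $\cl_\dr(\cL)=0$ immediately forces $\cl_\sா(\cL)=0$.

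The main obstacle I expect is the precise identification of $H^1(\cX^\an,\sL^1_{\cX^\an})$ (equivalently $H^{1,1}_\sA$) with a genuinely finite-dimensional piece of classical cohomology, and the careful comparison of the two cycle class constructions through the weight filtration. The subtlety is that $\sL^1_{\cX^\an}$ is a sheaf of $\bQ$-vector spaces that is \emph{not} coherent, so one cannot directly invoke GAGA; instead one must use Berkovich's results on the de Rham cohomology sheaves together with the weight decomposition to control $H^1$ in degrees $\leq 1$, where the contribution is exactly governed by $\Pic$ and the Néron–Severi-type data, both of which are finitely generated for a proper smooth variety. The degree-$1$ restriction in part (2) is essential precisely because it is only in this range that we have an explicit generator-level description ($\sL^1 = \Upsilon^1$-worth of $\rd f/f$'s) and the matching identity $\Upsilon^1=(\ \cdot\ )_2$ from Theorem \ref{th:1}(ii); in higher degree the weight-$2q$ part may be strictly larger than $\Upsilon^q$, which is exactly why the general statement (Theorem \ref{th:2}) is only phrased as a vanishing of integrals rather than $\cl_\sA(\cZ)=0$.
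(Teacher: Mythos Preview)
Your plan for part (1) is essentially the paper's argument, though you are vague about the precise finiteness inputs. The paper embeds $H^1(X,\sL^1_X)\otimes_\bQ\bC_p$ as a direct summand of $H^1(X,\Omega^{1,\cl}_X/\rd\cO_X)$ via Lemma \ref{le:log} and Theorem \ref{th:1}, and then bounds the latter using the spectral sequence $E^{p,q}_2=H^p(X,\Omega^{q,\cl}_X/\rd\Omega^{q-1}_X)\Rightarrow H^{p+q}_\dr(X)$. The two finiteness facts actually needed are (i) $\dim_{\bC_p}H^3(X,\bC_p)<\infty$, which uses Berkovich's theorem that $X$ has the homotopy type of a finite CW complex (this is \emph{singular} cohomology of the Berkovich space, not \'etale cohomology), and (ii) $\dim_{\bC_p}H^2(X,\Omega^\bullet_X)<\infty$, which is GAGA. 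Your reference to ``N\'eron--Severi-type data'' is a red herring here; no such input is needed.

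For part (2) there is a genuine gap: the direction of your map is wrong. You propose to produce a ``projection of $H^2_\dr$ onto its weight-$2$ part'' landing in $H^1(X,\Upsilon^1_X)$, and then read $\cl_\sA(\cL)$ as the image of $\cl_\dr(\cL)$. But the weight decomposition of Theorem \ref{th:1} is a decomposition of the \emph{sheaves} $\Omega^{q,\cl}_X/\rd\Omega^{q-1}_X$, not of $H^2_\dr(\cX)$; there is no natural map $H^2_\dr(\cX)\to H^1(X,\Upsilon^1_X)$. In the spectral sequence, $E_2^{1,1}=H^1(X,\Omega^{1,\cl}_X/\rd\cO_X)$ is a \emph{subquotient} of $H^2_\dr(X)$, not a quotient. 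The paper's argument goes the other way: one starts from $\cl(\cL)\in H^1(X,\sL^1_X)\hookrightarrow H^1(X,\Omega^{1,\cl}_X/\rd\cO_X)$ (the inclusion being injective because $\sL^1_X\otimes_\bQ\bC_p$ is a direct summand), then checks that $\cl(\cL)$ lies in the kernel of the differential $\delta\colon E_2^{1,1}\to E_2^{3,0}=H^3(X,\bC_p)$ (this uses that the composite $\CH^1(\cX)_\bQ\to H^3(X,\bC_p)$ factors through the Zariski group $H^3(\cX,\bC_p)=0$), so that $\cl(\cL)\in E_\infty^{1,1}$, which \emph{injects} into $H^2_\dr(X)/\IM(H^2(X,\bC_p))$. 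Finally one identifies the image there with the class of $\cl_\dr(\cL)$. Thus $\cl_\dr(\cL)=0$ forces $\cl(\cL)=0$ in $H^1(X,\Omega^{1,\cl}_X/\rd\cO_X)$, hence in the summand $H^1(X,\sL^1_X)\otimes_\bQ\bC_p$, hence $\cl_\sA(\cL)=0$. Your $\rd\log$ description of the cycle class is correct and is exactly what makes this identification work, but you must run the spectral sequence in the right direction.
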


To the best of our knowledge, the first conclusion in the above theorem is
the only known case of the finiteness of $\dim H^{q,q'}_\sA(\cX^\an)$ when
both $q,q'$ are positive and $\cX$ is of general dimension. Note that in the
above theorem, we do not require that $\cX$ can be defined over a finite
extension of $\bQ_p$.

\begin{remark}
We can interpret Theorem \ref{th:2} in the following way. Let $k$ be a number
field. Let $\cX$ be a proper smooth scheme over $k$ of dimension $n$, and
$\cZ$ an algebraic cycle of $\cX$ of codimension $q$. Suppose that there
exists \emph{one} embedding $\iota_\infty\colon k\hookrightarrow\bC$ such
that
\[\int_{(\cZ\otimes_{k,\iota_\infty}\bC)^\an}\omega=0\]
for every $\overline\partial$-closed $(n-q,n-q)$-form $\omega$ on
$(\cX\otimes_{k,\iota_\infty}\bC)^\an$. Then for \emph{every} prime $p$ and
\emph{every} embedding $\iota_p\colon k\hookrightarrow\bC_p$, we have
\[\int_{(\cZ\otimes_{k,\iota_p}\bC_p)^\an}\omega=0\]
for every $\rd''$-closed $(n-q,n-q)$-form $\omega$ on
$(\cZ\otimes_{k,\iota_p}\bC_p)^\an$.\\
\end{remark}

The article is organized as follows. We review the basic theory of rigid
cohomology in Section \ref{ss:rigid}, which is one of the main tools in our
work. We construct the weight decomposition of de Rham cohomology sheaves in
the \'{e}tale topology in Section \ref{ss:weight}. In Section \ref{ss:log},
we study the behavior of logarithmic differential forms in rigid cohomology
and deduce Theorem \ref{th:1} for both topologies. We will not use \'{e}tale
topology after this point. We start Section \ref{ss:cycle} by reviewing the
theory of real forms developed by Chambert-Loir and Ducros; and then we study
its relation with de Rham cohomology sheaves. Next, we define the tropical
cycle class maps and establish their relation with integration of real forms.
In the last Section \ref{ss:triviality}, we study algebraic cycles that are
cohomologically trivial in the sense of algebraic de Rham cohomology. In
particular, we show that they are cohomologically trivial in the sense of
Dolbeault cohomology of currents (of forms if they are of codimension $1$).

\subsection*{Conventions and Notation}

\begin{itemize}
  \item Throughout the article, by a \emph{non-Archimedean field} we mean
      a complete topological field of characteristic \emph{zero} whose
      topology is induced by a nontrivial non-Archimedean valuation
      $|\;|$ of rank $1$. If the valuation is discrete, then we say that
      it is a \emph{discrete non-Archimedean field} by abuse of
      terminology.

  \item Let $K$ be a non-Archimedean field. Put
      \[K^\circ=\{x\in K\res
      |x|\leq 1\},\quad K^{\circ\circ}=\{x\in K\res |x|<1\},\quad
      \widetilde{K}=K^\circ/K^{\circ\circ}.\] Denote by $K^\ra$ the
      algebraic closure of $K$ and $\widehat{K^\ra}$ its completion. A
      \emph{residually algebraic} extension of $K$ is an extension $K'/K$
      of non-Archimedean fields such that the induced extension
      $\widetilde{K'}/\widetilde{K}$ is algebraic. In the text, discrete
      non-Archimedean fields are usually denoted by lower-case letters
      like $k,k'$, etc. And $\varpi$ will always be a uniformizer of a
      discrete non-Archimedean field, though we will still remind readers
      of this.

  \item Let $K$ be a non-Archimedean field, and $A$ an affinoid
      $K$-algebra. We then have the $K$-analytic space $\cM(A)$. Denote
      by $A^\circ$ the subring of power-bounded elements, which is a
      $K^\circ$-algebra. Put
      $\widetilde{A}=A^\circ\otimes_{K^\circ}\widetilde{K}$. We say that
      $A$ is \emph{integrally smooth} if $A$ is strictly $K$-affinoid and
      $\Spf A^\circ$ is a smooth formal $K^\circ$-scheme.

  \item Let $K$ be a non-Archimedean field. For a real number $r>0$, we
      denote by $D(0;r)$ the open disc over $K$ with center at zero of
      radius $r$. For real numbers $R>r>0$, we denote by $B(0;r,R)$ the
      open annulus over $K$ with center at zero of inner radius $r$ and
      outer radius $R$. An \emph{open poly-disc (of dimension $n$)} over
      $K$ is the product of finitely many open discs $D(0;r_i)$ (of
      number $n$).

  \item For a non-Archimedean field $K$, all $K$-analytic (Berkovich)
      spaces are assumed to be Hausdorff and strictly $K$-analytic
      \cite{Berk93}*{1.2.15}. Suppose that $K'/K$ is an extension of
      non-Archimedean fields. For a $K$-analytic space $X$ and a
      $K'$-analytic space $Y$, we put
      \[X\widehat\otimes_KK'=X\times_{\cM(K)}\cM(K'),\quad
      Y\times_KX=Y\times_{\cM(K')}(X\widehat\otimes_KK');\] and for a formal
      $K^\circ$-scheme $\fX$ and a formal $K'^\circ$-scheme $\fY$, we put
      \[\fX\widehat\otimes_{K^\circ}K'^\circ=\fX\times_{\Spf
      K^\circ}\Spf K'^\circ,\quad
      \fY\times_{K^\circ}\fX=\fY\times_{\Spf
      K'^\circ}(\fX\widehat\otimes_{K^\circ}K'^\circ).\]

  \item If $k$ is a discrete non-Archimedean field and $\fX$ is a special
      formal $k^\circ$-scheme in the sense of \cite{Berk96}, then we have
      the notion $\fX_\eta$, the generic fiber of $\fX$, which is a
      $k$-analytic space; and $\fX_s$, the closed fiber of $\fX$, which
      is a scheme locally of finite type over $\widetilde{k}$; and a
      reduction map $\pi\colon\fX_\eta\to\fX_s$. For a general
      non-Archimedean field $K$, we say a formal $K^\circ$-scheme $\fX$
      is special if there exist a discrete non-Archimedean field
      $k\subset K$ and a special formal $k^\circ$-scheme $\fX'$ such that
      $\fX\simeq\fX'\widehat\otimes_{k^\circ}K^\circ$. For a special
      formal $K^\circ$-scheme, we have similar notion
      $\pi\colon\fX_\eta\to\fX_s$ which is canonically defined. In this
      article, all formal $K^\circ$-schemes will be special. Note that if
      $\cZ$ is a subscheme of $\fX_s$, then $\pi^{-1}\cZ$ is usually
      denoted as $]\cZ[_{\fX_\eta}$ in rigid analytic geometry.

  \item If $\cX$ is a scheme over an affine scheme $\Spec A$ and $B$ is
      an $A$-algebra, then we put $\cX_B=\cX\times_{\Spec A}\Spec B$.
      Such abbreviation will be applied only to schemes, neither formal
      schemes nor analytic spaces. If $\cX$ is a scheme over $\Spec
      K^\circ$ for a non-Archimedean field $K$, then we write $\cX_s$ for
      $\cX_{\widetilde{K}}$.

  \item Let $K$ be a non-Archimedean field and $X$ a $K$-analytic space.
      For a point $x\in X$, one may associate nonnegative integers
      $s_K(x),t_K(x)$ as in \cite{Berk90}*{\Sec 9.1}. For readers'
      convenience, we recall the definition. The number $s_K(x)$ is equal
      to the transcendence degree of $\widetilde{\cH(x)}$ over
      $\widetilde{K}$, and the number $t_K(x)$ is equal to to the
      dimension of the $\bQ$-vector space
      $\sqrt{|\cH(x)^*|}/\sqrt{|K^*|}$, where $\cH(x)$ is the completed
      residue field of $x$. In the text, the field $K$ will always be
      clear so will be suppressed in the notation $s_K(x),t_K(x)$.

  \item Let $X$ be a site. Whenever we have a suitable notion of de Rham
      complex $(\Omega^\bullet_X,\rd)$ on $X$, we denote by
      $H^\bullet_\dr(X)\coloneqq H^\bullet(X,\Omega^\bullet_X)$ the
      corresponding de Rham cohomology of $X$, as the hypercohomology of
      the de Rham complex.
\end{itemize}

\subsection*{Acknowledgements}

The author is partially supported by NSF grant DMS--1602149. He thanks Weizhe~Zheng for helpful discussions.

\section{Review of rigid cohomology}
\label{ss:rigid}

In this section, we review the theory of rigid cohomology developed in, for
example, \cite{Bert97} and \cite{LS07}.

Let $\fR$ be the category of triples $(K,X,Z)$ where $K$ is a non-Archimedean
field; $X$ is a scheme of finite type over $\widetilde{K}$; and $Z$ is a
Zariski closed subset of $X$. A morphism from $(K',X',Z')$ to $(K,X,Z)$
consists of a field extension $K'/K$ and a morphism $X'\to
X\otimes_{\widetilde{K}}\widetilde{K'}$ whose restriction to $Z'$ factors
through $Z\otimes_{\widetilde{K}}\widetilde{K'}$. Let $\fV$ be the category
of pairs $(K,V^\bullet)$ where $K$ is a non-Archimedean field and $V^\bullet$
is a graded $K$-vector space. A morphism from $(K,V^\bullet)$ to
$(K',V'^\bullet)$ consists of a field extension $K'/K$ and a graded linear
map $V^\bullet\otimes_KK'\to V'^\bullet$.

We have a functor of \emph{rigid cohomology with support}:
$\fR^{\r{opp}}\to\fV$ sending $(K,X,Z)$ to $H^\bullet_{Z,\rig}(X/K)$. Put
$H^\bullet_\rig(X/K)=H^\bullet_{X,\rig}(X/K)$ for simplicity. We list the
following properties which will be used extensively in this article:
\begin{itemize}
  \item Suppose that we have a morphism $(K',X',Z')\to (K,X,Z)$ with
      $X'\simeq X\otimes_{\widetilde{K}}\widetilde{K'}$ and $Z'\simeq
      Z\otimes_{\widetilde{K}}\widetilde{K'}$. Then the induced map
      $H^\bullet_{Z,\rig}(X/K)\otimes_KK'\to H^\bullet_{Z',\rig}(X'/K')$
      is an isomorphism of finite dimensional graded $K'$-vector spaces
      (\cite{GK02}*{Corollary 3.8} and \cite{Berk07}*{Corollary 5.5.2}).

  \item For $Y=X\backslash Z$, we have a long exact sequence:
      \begin{align}\label{eq:support}
      \cdots\to H^i_{Z,\rig}(X/K) \to H^i_\rig(X/K) \to H^i_\rig(Y/K) \to H^{i+1}_{Z,\rig}(X/K)\to \cdots.
      \end{align}

  \item If both $X$, $Z$ are smooth, and $Z$ is of codimension $r$ in
      $X$, then we have a Gysin isomorphism $H^i_{Z,\rig}(X/K)\simeq
      H^{i-2r}_\rig(Z/K)$.

  \item Suppose that $K$ is residually algebraic over $\bQ_p$ (in other
      words, $\widetilde{K}$ is a finite extension of $\bF_p$). Then the
      sequence \eqref{eq:support} is equipped with a Frobenius action of
      sufficiently large degree. In particular, each item $V$ in
      \eqref{eq:support} admits a direct sum decomposition
      $V=\bigoplus_{w\in\bZ}V_w$ where $V_w$ consists of vectors of
      generalized weight $w$ (\cite{Chi98}*{\Sec 1 \& \Sec 2}).

  \item Suppose that $X$ is smooth and $Z$ is of codimension $r$, then
      $H^i_{Z,\rig}(X/K)_w=0$ unless $i\leq w\leq 2(i-r)$
      (\cite{Chi98}*{Theorem 2.3}).\\
\end{itemize}

We will extensively use the notion of $K$-analytic germs (\cite{Berk07}*{\Sec
5.1}), rather than $K$-dagger spaces. Roughly speaking, a $K$-analytic germ
is a pair $(X,S)$ where $X$ is a $K$-analytic space and $S\subset X$ is a
subset. We say that $(X,S)$ is a strictly $K$-affinoid germ if $S$ is a
strictly affinoid domain. We say that $(X,S)$ is smooth if $X$ is smooth in
an open neighborhood of $S$. We have the structure sheaf $\cO_{(X,S)}$, and
the de Rham complex $\Omega_{(X,S)}^\bullet$ when $(X,S)$ is smooth. (See
\cite{Berk07}*{\Sec 5.2} for details.) In particular, we have the de Rham
cohomology $H^\bullet_\dr(X,S)$ when $(X,S)$ is smooth. For a smooth
$K$-analytic germ $(X,S)$ where $S=\cM(A)$ for an integrally smooth
$K$-affinoid algebra $A$, we have a canonical functorial isomorphism
$H^\bullet_\dr(X,S)\simeq H^\bullet_\rig(\Spec\widetilde{A}/K)$ (see
\cite{Bert97}*{Proposition 1.10}, whose proof actually works for general
$K$).

The following lemma generalizes the construction in \cite{GK02}*{Lemma 2}.

\begin{lem}\label{le:dagger}
Let $(X_1, Y_1)$ and $(X_2,Y_2)$ be two smooth strictly $K$-affinoid germs.
Then for a morphism $\phi\colon Y_2\to Y_1$ of strictly $K$-affinoid domains,
there is a canonical restriction map $\phi^*\colon H^\bullet_\dr(X_1,Y_1)\to
H^\bullet_\dr(X_2,Y_2)$. It satisfies the following conditions:
\begin{enumerate}[(i)]
  \item if $\phi$ extends to a morphism $(X_2,Y_2)\to (X_1,Y_1)$ of
      germs, then $\phi^*$ coincides with the usual pullback;

  \item for a finite extension $K'$ of $K$, if we write $X'_i$ (resp.\
      $Y'_i$) for $X_i\widehat\otimes_KK'$ (resp.\
      $Y_i\widehat\otimes_KK'$) for $i=1,2$ and $\phi'$ for
      $\phi\widehat\otimes_KK'$, then $\phi'^*$ coincides with the scalar
      extension of $\phi^*$, in which we identify
      $H^\bullet_\dr(X'_i,Y'_i)$ with $H^\bullet_\dr(X_i,Y_i)\otimes_KK'$
      for $i=1,2$;

  \item if $Y_1=\cM(A_1)$ and $Y_2=\cM(A_2)$ for some integrally smooth
      $K$-affinoid algebras $A_1$ and $A_2$, then $\phi^*$ coincides with
      $\widetilde\phi^*\colon H^\bullet_\rig(\Spec \widetilde{A_1}/K)\to
      H^\bullet_\rig(\Spec \widetilde{A_2}/K)$ under the canonical
      isomorphism $H^\bullet_\dr(X_i,Y_i)\simeq
      H^\bullet_\rig(\Spec\widetilde{A_i}/K)$ for $i=1,2$, where
      $\widetilde\phi\colon\Spec\widetilde{A_2}\to\Spec\widetilde{A_1}$
      is the induced morphism;

  \item if $(X_3,Y_3)$ is another smooth strictly $K$-affinoid germ with
      a morphism $\psi\colon Y_3\to Y_2$, then
      $(\phi\circ\psi)^*=\psi^*\circ\phi^*$.
\end{enumerate}
\end{lem}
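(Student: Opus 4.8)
The plan is to construct the restriction map $\phi^*$ by a limit/comparison argument, reducing to the case where it is already defined, namely the usual pullback of germs. First I would recall that for a smooth strictly $K$-affinoid germ $(X,Y)$, the de Rham cohomology $H^\bullet_\dr(X,Y)$ depends only on a cofinal system of open neighborhoods of $Y$ in $X$; concretely, it is computed as a colimit $\varinjlim_V H^\bullet_\dr(V)$ over open neighborhoods $V$ of $Y$. So given $\phi\colon Y_2\to Y_1$, the issue is that $\phi$ need not extend to a morphism of ambient spaces. The key point, following \cite{GK02}*{Lemma 2}, is that because $Y_1$ is a strictly $K$-affinoid domain and $X_1$ is smooth in a neighborhood of $Y_1$, one can embed a neighborhood of $Y_1$ into (a neighborhood of the origin in) a closed polydisc, and then use the fact that $Y_2$ is affinoid together with Kiehl-type extension results to spread out the morphism $\phi$ (given by finitely many bounded analytic functions on $Y_2$) to a morphism from some open neighborhood $V_2$ of $Y_2$ to an open neighborhood $V_1$ of $Y_1$. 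Passing to colimits over shrinking neighborhoods then yields the desired $\phi^*$; independence of all the choices follows because any two such extensions agree after further shrinking.

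Concretely, I would proceed in the following steps. \textbf{Step 1:} Reduce to the local picture. Cover $Y_1$ by finitely many strictly affinoid domains over which a closed immersion into a polydisc (times an affinoid) is available; by a Mayer--Vietoris/descent argument it suffices to treat one such piece, so we may assume $Y_1$ sits as a closed strictly affinoid subspace of a smooth space admitting a dominant étale map to a polydisc, or simply that $X_1$ is (a neighborhood of $Y_1$ in) affine space. \textbf{Step 2:} Spread out $\phi$. Write the components of $\phi$ as power-bounded analytic functions on $Y_2$; since $Y_2$ is a strictly affinoid domain in $X_2$ and $X_2$ is smooth near $Y_2$, these functions extend to some open neighborhood $V_2$ of $Y_2$, and after shrinking $V_2$ the extended map lands in a prescribed open neighborhood $V_1$ of $Y_1$ in $X_1$. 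This gives a genuine morphism of analytic spaces $\widetilde\phi\colon V_2\to V_1$ restricting to $\phi$ on $Y_2$. \textbf{Step 3:} Define $\phi^*$ as the composite $H^\bullet_\dr(X_1,Y_1)=\varinjlim_{V_1}H^\bullet_\dr(V_1)\xrightarrow{\widetilde\phi^*}\varinjlim_{V_2}H^\bullet_\dr(V_2)=H^\bullet_\dr(X_2,Y_2)$, and check it is independent of the choice of $\widetilde\phi$: two extensions of $\phi$ agree on $Y_2$, hence on a neighborhood of $Y_2$ after shrinking, so induce the same map on the colimit.

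For the four listed compatibilities: (i) is immediate from the construction, since if $\phi$ already extends we may take that extension as our $\widetilde\phi$. (ii) follows because all the constructions — extending functions, shrinking neighborhoods, computing de Rham cohomology as a colimit — commute with the finite (hence finite flat, in particular flat and "nice") base change $-\widehat\otimes_KK'$; here one uses that $H^\bullet_\dr(X_i,Y_i)\otimes_KK'\xrightarrow{\sim}H^\bullet_\dr(X_i',Y_i')$, which holds for germs of integrally smooth affinoids by the comparison with rigid cohomology and the base-change property of rigid cohomology recalled in Section \ref{ss:rigid}, and more generally for smooth affinoid germs. (iii) is the compatibility with $\widetilde\phi^*$ in rigid cohomology under the comparison isomorphism $H^\bullet_\dr(X_i,Y_i)\simeq H^\bullet_\rig(\Spec\widetilde{A_i}/K)$; here I would note that a morphism $\phi\colon Y_2\to Y_1$ of integrally smooth affinoids induces a morphism of the associated smooth formal models (after possibly passing to an admissible blow-up / using that $A_i^\circ$ is determined by $A_i$), hence a morphism $\widetilde\phi\colon\Spec\widetilde{A_2}\to\Spec\widetilde{A_1}$, and functoriality of the comparison isomorphism (Berthelot's Proposition 1.10, which works over general $K$ as noted) gives the claim; this is the point where one must be a little careful that the formal-model morphism exists, and it is the step I expect to be the main obstacle. (iv) is then formal: choosing compatible extensions $\widetilde\psi$, $\widetilde\phi$ (possible after enough shrinking), one has $(\widetilde\phi\circ\widetilde\psi)^*=\widetilde\psi^*\circ\widetilde\phi^*$ on the level of honest morphisms, and passing to colimits gives $(\phi\circ\psi)^*=\psi^*\circ\phi^*$. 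The main obstacle, as indicated, is (iii): producing the morphism of formal models (or, equivalently, checking that the ad hoc de Rham restriction map agrees with the rigid-cohomology functoriality), since a morphism of affinoid algebras need not preserve the rings of power-bounded elements on the nose, so one may need to invoke a normalization or blow-up argument and the functoriality of Berthelot's comparison theorem in that generality.
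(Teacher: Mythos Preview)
Your Step~2 contains a genuine gap: a morphism $\phi\colon Y_2\to Y_1$ of strictly $K$-affinoid domains does \emph{not} in general extend to a morphism between open neighborhoods. The components of $\phi$ are elements of $\cO(Y_2)$, but the restriction map $\cO(X_2,Y_2)=\varinjlim_V\cO(V)\to\cO(Y_2)$ is not surjective; a function on a closed affinoid need not overconverge. (For a concrete obstruction: take $Y_2=\cM(K\langle T\rangle)$ inside $X_2=D(0;r)$ for $r>1$, and any $f=\sum a_nT^n\in K\langle T\rangle$ whose radius of convergence is exactly~$1$.) Smoothness of $X_2$ near $Y_2$ does not help. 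Your independence argument in Step~3 has the same defect: two analytic functions on a neighborhood of $Y_2$ that agree on $Y_2$ need not agree on any smaller neighborhood, so even if extensions existed you would still need a homotopy-invariance argument (of Monsky--Washnitzer type) rather than a literal identity of maps.

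The paper's proof circumvents exactly this obstruction by passing to the graph: one sets $X=X_1\times_KX_2$, $Y=Y_1\times_KY_2$, and lets $\Delta\subset Y$ be the graph of $\phi$, isomorphic to $Y_2$ via the second projection. The projections $a_i\colon X\to X_i$ are honest morphisms of ambient spaces, so $a_1^*$ and $a_2^*$ into $\varinjlim_V H^\bullet_\dr(V)$ (over neighborhoods $V$ of $\Delta$) are defined without any extension. The substantive point is that $a_2^*$ is an isomorphism, proved by a local computation: near $\Delta$ one has coordinates $t_1,\dots,t_m$ on $X_1$, and the functions $t_j\otimes1-1\otimes\phi^*t_j$ cut out $\Delta$ and exhibit a strict neighborhood $U_\epsilon$ as a closed polydisc over $\Delta$, whence $H^\bullet_\dr(X,U_\epsilon\cap Y)\simeq H^\bullet_\dr(X_2,Y_2)$ by the triviality of de Rham cohomology of polydiscs. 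One then defines $\phi^*=(a_2^*)^{-1}\circ a_1^*$. This is canonical, so no independence check is needed, and property~(iii) follows because the composite $\Spf((E)^\circ)\to\Spf A_i^\circ$ \emph{does} exist on formal models (your worry here was misplaced: any $K$-homomorphism of affinoid algebras preserves power-bounded elements automatically).
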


\begin{proof}
Put $X=X_1\times_K X_2$, $Y=Y_1\times_K Y_2$, and $\Delta\subseteq Y$ the
graph of $\phi$, which is isomorphic to $Y_2$ via the projection to the
second factor. Denote by $a_i\colon X\to X_i$ the projection morphism. We
have maps
\[H^\bullet_\dr(X_1,Y_1)\xrightarrow{a_1^*}\varinjlim_V H^\bullet_\dr(V)
\xleftarrow{a_2^*}H^\bullet_\dr(X_2,Y_2),\]
where $V$ runs through open neighborhoods of $\Delta$ in $X$. We show that
$a^*_2$ is an isomorphism. Then we define $\phi^*$ as $(a_2^*)^{-1}\circ
a_1^*$.

The proof is similar to that of \cite{GK02}*{Lemma 2}. To show that $a^*_2$
is an isomorphism is a local problem. Thus we may assume that there are
elements $t_1,\dots,t_m\in\sO_{X_1}(X_1)$ such that $\rd t_1,\dots,\rd t_m$
form a basis of $\Omega^1(X_1,Y_1)$ over $\sO(X_1,Y_1)$, and there exist a
strictly $K$-affinoid neighborhood $U_\epsilon\subset X$ of $\Delta$ with an
element $\epsilon\in|K^\times|$, and an isomorphism
\[ U_\epsilon\cap Y\xrightarrow{\sim} \cM(K\langle\epsilon^{-1}Z_1,\dots\epsilon^{-1}Z_m\rangle)\times_K\Delta,\]
in which $\epsilon^{-1}Z_i$ is sent to
$\epsilon^{-1}(t_i\otimes1-1\otimes\phi^*(t_i))$. Note that
$K\langle\epsilon^{-1}Z_1,\dots\epsilon^{-1}Z_m\rangle$ is an integrally
smooth $K$-affinoid algebra, and
$\Spec\widetilde{K\langle\epsilon^{-1}Z_1,\dots\epsilon^{-1}Z_m\rangle}$ is
canonically isomorphic to $\bA^m_{\widetilde{K}}$. Thus by \cite{GK02}*{Lemma
2}, the restriction map $H^\bullet_\dr(X_2,Y_2)\to
H^\bullet_\dr(X,U_\epsilon\cap Y)$ is an isomorphism. We may choose a
sequence of such $U_\epsilon$ with $\bigcap_\epsilon U_\epsilon=\Delta$. Then
$\varinjlim_\epsilon H^\bullet_\dr(X,U_\epsilon\cap Y)\simeq \varinjlim_V
H^\bullet_\dr(V)$ and thus $a_2^*$ is an isomorphism.

Properties (i) and (ii) follow easily from the construction. Property (iv) is
straightforward but tedious to check; we will leave it to readers. We now
check Property (iii), as it is important for our later argument. The induced
projection morphism
\[\cM(K\langle\epsilon^{-1}Z_1,\dots\epsilon^{-1}Z_m\rangle)\times_K\Delta\simeq
U_\epsilon\cap Y\to Y_i\] extends canonically to a morphism of formal
$K^\circ$-schemes
\[\Spf(K\langle\epsilon^{-1}Z_1,\dots\epsilon^{-1}Z_m\rangle\widehat\otimes_KA_\Delta)^\circ\to\Spf A_i^\circ,\]
where $A_\Delta$ is the coordinate $K$-affinoid algebra of $\Delta$ which is
isomorphic to $A_2$. Therefore, the restriction map
$H^\bullet_\dr(X_i,Y_i)\to H^\bullet_\dr(X,U_\epsilon\cap Y)$ coincides with
the map
\[\widetilde{a_i}^*\colon H^\bullet_\rig(\Spec \widetilde{A_i}/K)\to
H^\bullet_\rig(\Spec\widetilde{K\langle\epsilon^{-1}Z_1,\dots\epsilon^{-1}Z_m\rangle\widehat\otimes_KA_\Delta}/K)\]
induced from the homomorphism
$\widetilde{A_i}\to\widetilde{K\langle\epsilon^{-1}Z_1,\dots\epsilon^{-1}Z_m\rangle\widehat\otimes_KA_\Delta}$
of $\widetilde{K}$-algebras. Note that $\widetilde{a_2}^*$ is an isomorphism,
and that $(\widetilde{a_2}^*)^{-1}$ coincides with the restriction map
\[H^\bullet_\rig(\Spec\widetilde{K\langle\epsilon^{-1}Z_1,\dots\epsilon^{-1}Z_m\rangle\widehat\otimes_KA_\Delta}/K)
\to H^\bullet_\rig(\Spec \widetilde{A_2}/K)\] induced from the homomorphism
$\widetilde{K\langle\epsilon^{-1}Z_1,\dots\epsilon^{-1}Z_m\rangle\widehat\otimes_KA_\Delta}\to\widetilde{A_2}$
sending $\epsilon^{-1}Z_i$ to $0$ for all $i$. Property (iii) follows.
\end{proof}

The following example will be used in the computation later.

\begin{example}\label{ex:torus}
Let $K$ be a non-Archimedean field. For an integer $t\geq 0$ and an element
$\varpi\in K$, define the formal $K^\circ$-scheme
\[\fE^t_\varpi=\Spf K^\circ[[T_0,\dots,T_t]]/(T_0\cdots T_t-\varpi)\]
and let $\bE^t_\varpi$ be its generic fiber. Let $E^t_\varpi$ be the
$K$-affinoid algebra
\[K\langle |\varpi|^{-\frac{1}{t+1}}T_0,\cdots,|\varpi|^{-\frac{1}{t+1}}T_t,
|\varpi|^{\frac{1}{t+1}}T_0^{-1},\cdots,|\varpi|^{\frac{1}{t+1}}T_t^{-1}\rangle/(T_0\cdots
T_t-\pi),\] which is integrally smooth. Moreover, $\cM(E^t_\varpi)$ is
canonically a strictly $K$-affinoid domain in $\bE^t_\varpi$, and the
restriction map
\[H^\bullet_\dr(\bE^t_\varpi)\to H^\bullet_\dr(\bE^t_\varpi,\cM(E^t_\varpi))\simeq H^\bullet_\rig(\Spec\widetilde{E^t_\varpi}/K)\]
is an isomorphism by \cite{GK02}*{Lemma 3}. If $K$ is residually algebraic
over $\bQ_p$, then we have
$H^q_\rig(\Spec\widetilde{E^t_\varpi}/K)=H^q_\rig(\Spec\widetilde{E^t_\varpi}/K)_{2q}$.
\end{example}

\section{Weight decomposition in \'{e}tale topology}
\label{ss:weight}

In this section, we construct the weight decomposition of de Rham cohomology
sheaves in the \'{e}tale topology. Therefore, in this section, sheaves like
$\cO_X$, $\fc_X$, and the de Rham complex $(\Omega^\bullet_X,\rd)$ are
understood in the \'{e}tale topology.

\begin{definition}[Marked pair]\label{de:marked}
Let $k$ be a discrete non-Archimedean field.
\begin{enumerate}
  \item We say that a scheme $\cX$ over $k^\circ$ is \emph{strictly
      semi-stable of dimension $n$} if $\cX$ is locally of finite
      presentation, Zariski locally \'{e}tale over $\Spec
      K^\circ[T_0,\dots,T_n]/(T_0\cdots T_t-\varpi)$ for some uniformizer
      $\varpi$ of $k$, and $\cX_k$ is smooth over $k$. For every $0\leq
      t\leq n$, denote by $\cX_s^{[t]}$ the union of intersection of
      $t+1$ distinct irreducible components of $\cX_s$. It is a closed
      subscheme of $\cX_s$ with each irreducible component smooth.

  \item A \emph{marked $k$-pair $(\cX,\cD)$ of dimension $n$ and depth
      $t$} consists of an affine strictly semi-stable scheme $\cX$ over
      $k^\circ$ of dimension $n$, and an irreducible component $\cD$ of
      $\cX_s^{[t]}$ that is geometrically irreducible.
\end{enumerate}
\end{definition}

We start from the following lemma, which generalizes \cite{Berk07}*{Lemma
2.1.2}.

\begin{lem}\label{le:212}
Suppose that $K$ is embeddable into $\widehat{k^\ra}$ for some discrete
non-Archimedean field $k$. Let $X$ be a smooth $K$-analytic space, and $x$ a
point of $X$ with $s(x)+t(x)=\dim_x(X)$. Given a morphism of strictly
$K$-analytic spaces $X\to\fY_\eta$, where $\fY$ is a special formal
$K^\circ$-scheme, there exist
\begin{itemize}
  \item a finite extension $K'$ of $K$, a finite extension $k'$ of $k$
      contained in $K'$,

  \item a marked $k'$-pair $(\cX,\cD)$ of dimension $\dim_x(X)$ and depth
      $t(x)$,

  \item an open neighborhood $U$ of
      $(\widehat\cX_{/\cD})_\eta\widehat\otimes_{k'}K'$ in
      $\cX_{K'}^\an$,

  \item a point $x'\in(\widehat\cX_{/\cD})_\eta\widehat\otimes_{k'}K'$,

  \item a morphism of $K$-analytic spaces $\varphi\colon U\to X$, and

  \item a morphism of formal $K^\circ$-schemes
      $\widehat\cX_{/\cD}\widehat\otimes_{k'^\circ}K'^\circ\to\fY$,
\end{itemize}
such that the following are true:
\begin{enumerate}[(i)]
  \item $\varphi$ is \'{e}tale and $\varphi(x')=x$;

  \item the induced morphism
      $(\widehat\cX_{/\cD})_\eta\widehat\otimes_{k'}K'\to\fY_\eta$
      coincides with the composition
      \[(\widehat\cX_{/\cD})_\eta\widehat\otimes_{k'}K'\hookrightarrow
      U\xrightarrow{\varphi}X\to\fY_\eta.\]
\end{enumerate}
\end{lem}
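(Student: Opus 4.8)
The plan is to reduce Lemma \ref{le:212} to a local uniformization statement that can be established by a careful elementary manipulation at the point $x$, following the strategy of \cite{Berk07}*{Lemma 2.1.2} but carrying along the extra data of the map to $\fY_\eta$. First I would choose local coordinates: since $X$ is smooth at $x$ with $s(x)+t(x)=\dim_x(X)=:n$, I would pick affinoid coordinates $T_1,\dots,T_s$ whose residues generate a transcendence basis of $\widetilde{\cH(x)}$ over $\widetilde K$, together with invertible functions $f_1,\dots,f_t$ whose absolute values at $x$ generate a free subgroup of $\sqrt{|\cH(x)^\times|}/\sqrt{|K^\times|}$ of rank $t$. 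After replacing $K$ by a suitable finite extension $K'$ (and correspondingly enlarging $k$ to a finite extension $k'\subseteq K'$ so that $t$ of the coordinate valuations land in $|k'^\times|$, using that any finite set of real numbers that are $\bQ$-linearly independent modulo $\sqrt{|K^\times|}$ can be rendered rational powers of a single element $\varpi\in k'^\circ$ after a finite base change), the functions $f_1,\dots,f_t$ satisfy an approximate multiplicative relation $f_1\cdots f_t\equiv\varpi$ near $x$, which is exactly the local shape of a strictly semi-stable scheme along a depth-$t$ stratum.

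Next I would package this into the marked pair. The functions $(T_i,f_j)$ define a morphism from an affinoid neighborhood of $x$ to (an affinoid domain in) the generic fiber of $\Spf k'^\circ[T_0,\dots,T_n]/(T_0\cdots T_t-\varpi)$; smoothness of $X$ at $x$ makes this morphism étale at $x$ after shrinking. One then spreads out: the étale morphism, being given by finitely many functions satisfying polynomial relations with coefficients in a subring finitely generated over $k'^\circ$, descends to an étale morphism of affine $k'^\circ$-schemes $\cX'\to\Spec k'^\circ[T_\bullet]/(T_0\cdots T_t-\varpi)$, which is therefore strictly semi-stable; and $x$ lies over the depth-$t$ stratum, so it specializes into a component $\cD$ of $\cX'^{[t]}_s$. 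After a further finite extension of $k'$ I may assume $\cD$ is geometrically irreducible, giving the marked $k'$-pair $(\cX,\cD)$. The neighborhood $U$ of $(\widehat\cX_{/\cD})_\eta\widehat\otimes_{k'}K'$ in $\cX_{K'}^{\an}$ and the étale map $\varphi\colon U\to X$ are then obtained by analytifying and using that the analytic germ of $\cX_{K'}^{\an}$ along the tube of $\cD$ is, by construction, étale-locally isomorphic to a neighborhood of $x$ in $X$; the point $x'$ is the preimage of $x$ in the tube. Conclusion (i) is immediate from this construction.

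For conclusion (ii) I would use the universal property of formal completion. The composite $(\widehat\cX_{/\cD})_\eta\widehat\otimes_{k'}K'\hookrightarrow U\xrightarrow{\varphi}X\to\fY_\eta$ is a morphism from the generic fiber of the special formal scheme $\widehat\cX_{/\cD}\widehat\otimes_{k'^\circ}K'^\circ$; since this formal scheme is, Zariski-locally, topologically of finite type over $K'^\circ$ with reduced special fiber a piece of $\cX_s$, and since $\fY$ is special, the given morphism on generic fibers is induced by a unique morphism of formal $K^\circ$-schemes $\widehat\cX_{/\cD}\widehat\otimes_{k'^\circ}K'^\circ\to\fY$ (one extends the bounded functions pulled back from $\fY$ over the formal model, using that $\cO^\circ$ of the tube equals the completed local ring; this is the same mechanism as in \cite{Berk96}). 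That morphism is the one in the statement, and compatibility in (ii) holds by construction.

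The main obstacle I anticipate is the bookkeeping in the first two steps: arranging simultaneously that (a) the chosen $t$ valuations become rational powers of one uniformizer $\varpi$ of a \emph{discrete} field $k'\subseteq K'$, (b) the approximate relation $f_1\cdots f_t\approx\varpi$ can be turned into an exact strictly semi-stable presentation after an étale localization, and (c) all of this descends from the analytic/affinoid level to an actual finite-type $k'^\circ$-scheme $\cX$ — i.e. the spreading-out argument — while keeping track of the auxiliary map to $\fY$. Each ingredient is standard (the valuation-theoretic normalization is as in \cite{Berk07}, the semi-stable reduction shape is classical, and spreading out is routine), but combining them without losing the marked-pair structure or the compatibility with $\fY_\eta$ requires care; this is where I would spend the bulk of the proof.
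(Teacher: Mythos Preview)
Your strategy bypasses the central difficulty. The spreading-out step --- asserting that an analytic \'etale map from an affinoid neighborhood of $x$ to the generic fiber of $\Spec k'^\circ[T_0,\dots,T_n]/(T_0\cdots T_t-\varpi)$ ``descends to an \'etale morphism of affine $k'^\circ$-schemes'' --- does not work as stated. The source affinoid is merely a piece of the smooth $K$-analytic space $X$; it carries no algebraic structure over $k'^\circ$ a priori, and the fact that the map is described by finitely many analytic functions does nothing to algebraize the source. Spreading out is a technique for schemes over filtered limits of rings, not a mechanism for producing integral $k'^\circ$-models of analytic affinoids. Likewise, your claim that the map to $\fY_\eta$ extends to formal schemes by a ``universal property of formal completion'' is not valid in this generality: a morphism on generic fibers does not lift to a morphism of special formal schemes without some compatibility on the special fiber, which your construction has not arranged.

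The paper's proof (following \cite{Berk07}*{Lemma 2.1.2} closely) proceeds quite differently. First, \cite{Berk07}*{Proposition 2.3.1} reduces, after finite extensions, to $X=(B\times_k Y)\widehat\otimes_kK$ with $B$ a product of open annuli and $Y$ smooth of dimension $s$ over the discrete field $k$. The annulus factor is handled by explicit blowups of $\bP^1_{k^\circ}$. For the $Y$-factor one embeds into a proper flat $k^\circ$-scheme and then invokes \emph{de Jong's alteration theorem} \cite{dJ96} to obtain a projective strictly semi-stable $\cX'$ dominating it; this alteration is precisely what manufactures an algebraic object with strictly semi-stable reduction, and there is no known elementary substitute. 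Further blowups along ideals of the form $(T_j,\varpi)$ then adjust the depth so that the relevant stratum becomes a component of $\cX_s^{[t]}$. Finally, the morphism to $\fY$ is not obtained from any universal property: the entire construction is carried out relative to an integral model $\cY$ whose special fiber already receives the reduction of $X\to\fY_\eta$, and compatibility (ii) holds because the alteration has been arranged so that $\phi(\cD)\subseteq\cW$.
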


\begin{proof}
Put $t=t(x)$, $s=s(x)$, and $n=t+s$. By \cite{Berk07}*{Proposition 2.3.1}, by
possibly taking finite extensions of $k$ (and $K$), we may replace $X$ by
$(B\times_k Y)\widehat\otimes_kK$, where $B=\prod_{j=1}^t B(0;r_j,R_j)$ for
some $0<r_j<R_j$ and $Y$ is a smooth $k$-analytic space of dimension $s$, and
$x$ projects to $b\in B$ with $t(b)=t$ and $y\in Y$ with $s(y)=s$. Denote by
$\cP$ the $k^\circ$-scheme $\bP^1_{k^\circ}$ with the point $0$ on the
special fiber blown up, and by $\fP$ the formal completion of $\cP$ along the
open subscheme $\cP_s\backslash\{\pi(0),\pi(\infty)\}$, which is isomorphic
to $\Spf k^\circ\langle X,Y\rangle/(XY-\varpi)$ for some uniformizer $\varpi$
of $k$. By taking further finite extensions of $k$ (and $K$), we may assume
that there is an open immersion $\prod_t\fP_\eta\subset B$ containing $b$
such that $\pi(b)=\b{0}$, where $\b{0}$ is the closed point in $\prod_t\cP_s$
that is nodal in every component.

For $Y$, we proceed exactly as in the Step 1 of the proof of
\cite{Berk07}*{Lemma 2.1.2}. We obtain two strictly $k$-affinoid domains
$Z'\subset W'\subset Y$. As in the beginning of Step 3 of the proof of
\cite{Berk07}*{Lemma 2.1.2}, we also get an integral scheme $\cY'$ proper and
flat over $k^\circ$ with an embedding $Y\subset\cY^{\prime\an}_\eta$, open
subschemes $\cZ'\subset\cW'\subset\cY'_s$, such that $Z'=\pi^{-1}\cZ'$,
$W'=\pi^{-1}\cW'$.

Now we put two parts together. Define $\cY=\prod_t\cP\times\cY'$ where the
fiber product is taken over $k^\circ$, and $\cW=\prod_t\fP_s\times\cW'$ where
the fiber product is taken over $\widetilde{k}$. Then
$W\coloneqq\prod_t\fP_\eta\times W'$ coincides with $\pi^{-1}\cW$ in
$\cY_k^\an$. Moreover, $W_K$ is an open neighborhood of $x$ where $W_K$
denotes the inverse image of $W$ in $X=(B\times_k Y)\widehat\otimes_kK$.
Write $W=\bigcup_{i=1}^l W_i$ as in Step 2 of the proof of
\cite{Berk07}*{Lemma 2.1.2}. By \cite{Berk07}*{Lemma 2.1.3 (ii)}, we may
assume that $W_i$ are all $k$-affinoid by taking finite extensions of $k$
(and $K$). Making a finite number of additional blow-ups, we may also assume
that there are open subschemes $\cW_i\subset\cW$ with $W_i=\pi^{-1}\cW_i$ and
$\cW=\bigcup_{i=1}^l\cW_i$.

Now we proceed as in Step 4 of the proof of \cite{Berk07}*{Lemma 2.1.2}. Take
an alteration $\phi\colon\cX'\to\cY$ after further finite extensions of $k$
(and $K$), and a point $x'\in\cX'^\an_K$ such that $\phi(x')=x$. By a similar
argument, one can show that
$\pi(x')\in\cX'_s\otimes_{\widetilde{k}}\widetilde{K}$ has dimension at least
$s$. On the other hand, we have $s(x')\geq s$ and $t(x')\geq t$. Thus,
$s(x')=s$ and $t(x')=t$. Denote by $\cC$ the Zariski closure of $\pi(x')$ in
$\cX'_s$, equipped with the reduced induced scheme structure. Suppose that
$\cC$ is contained in $t'$ distinct irreducible components of $\cX'_s$. Then
$t'\leq t$. We take an open subscheme $\cU'$ of $\cX'$ satisfying:
$\cD'\coloneqq\cU'\cap\cC$ is open dense in $\cC$; $\phi(\cD')$ is contained
in $\cW$; $\cU'$ is \'{e}tale over $\Spec k^\circ[T_0,\dots,T_n]/(T_0\cdots
T_{t'}-\pi)$ for some uniformizer $\varpi$ of $k$ such that $\cD'$ is the
zero locus of the ideal generated by $(T_0,\dots,T_t,\pi)$. Now we blow up
the closed ideal generated by $(T_{t'+1},\varpi)$, and then the strict
transform of the closed ideal generated by $(T_{t'+2},\varpi)$, and continue
to obtain an affine strictly semi-stable scheme $\cX$ over $k^\circ$ such
that the strict transform $\cD$ of $\cD'$ is an irreducible component of
$\cX^{[t]}_s$. After further finite extensions of $k$ (and $K$) and replacing
$\cX$ by an affine open subscheme such that $\cX_s\cap\cD$ is dense in $\cD$,
we obtain a marked $k$-pair $(\cX,\cD)$ of dimension $n$ and depth $t$ such
that $\phi\colon\cX\to\cY$ is \'{e}tale on the generic fiber. Note that
$(\phi_K^\an)^{-1}W_K$ is a neighborhood of $x'$ containing $\pi^{-1}\cD$ as
$\phi(\cD)\subseteq\cW$. Here, $x'\in\cX^\an_K$ is an arbitrary preimage of
the original $x'\in\cX'^\an_K$, which exists by construction.

We take $U$ to be an arbitrary open neighborhood of $\pi^{-1}\cD$ contained
in $(\phi_K^\an)^{-1}W_K$, and $\varphi$ to be $\phi_K^\an\res_U$. By the
same argument in Step 5 of the proof of \cite{Berk07}*{Lemma 2.1.2}, $\phi$
induces a morphism of $K^\circ$-formal schemes
$\widehat\cX_{/\phi^{-1}\cW}\widehat\otimes_{k^\circ}K^\circ\to\fY$ and thus
a morphism $\widehat\cX_{/\cD}\widehat\otimes_{k^\circ}K^\circ\to\fY$. The
conclusions of the lemma are all satisfied by the construction.
\end{proof}

From now on, we assume that $K$ is a residually algebraic extension of
$\bQ_p$.

\begin{definition}[Fundamental chart]\label{de:chart}
Let $X$ be a $K$-analytic space and $x\in X$ a point. A \emph{fundamental
chart} of $(X;x)$ consists of data $(\bD,(\cY,\cD),(D,\delta),W,\alpha;y)$
where
\begin{itemize}
        \item $(\cY,\cD)$ is a marked $k$-pair of dimension $t(x)+s(x)$
            and depth $t(x)$, where $k$ is a finite extension of $\bQ_p$,

        \item $\bD$ is an open poly-disc over $L$ of dimension
            $\dim_x(X)-t(x)-s(x)$, where $L$ is simultaneously a finite
            extension of $K$ and a (residually algebraic) extension of
            $k$,

        \item $D$ is an integrally smooth affinoid $k$-algebra, and
            \begin{align}\label{eq:splitting}
            \delta\colon\Spf D^\circ[[T_0,\dots,T_t]]/(T_0\cdots
            T_t-\varpi)\xrightarrow{\sim}\widehat\cX_{/\cD}
            \end{align}
            is an isomorphism of formal $k^\circ$-schemes, where $\varpi$
            is a uniformizer of $k$,

        \item $W$ is an open neighborhood of
            $(\cY_{/\cD})_\eta\widehat\otimes_kL=\pi^{-1}\cD_{\widetilde{L}}$
            in $\cY_L^\an$,

        \item $y$ is a point in $\bD\times_L W$ such that it projects to
            $0$ in $\bD$ and a point in $W$ whose reduction is the
            generic point of $\cD_{\widetilde{L}}$,

        \item $\alpha\colon\bD\times_L W\to X$ is an \'{e}tale morphism
            of $K$-analytic spaces.
\end{itemize}
Note that the fields $k$ and $L$ will be implicit from the notation (as they
are not important).
\end{definition}

The isomorphism \eqref{eq:splitting} induces an isomorphism
$\Spec\widetilde{D}\simeq\cD$ of $\widetilde{k}$-schemes, and an isomorphism
\begin{align}\label{eq:decomposition}
\delta^*\colon H^q_\dr(\bD\times_L(W,\pi^{-1}\cD_{\widetilde{L}}))\xrightarrow{\sim}
\bigoplus_{j=0}^q H^j_\rig(\cD/k)\otimes_k H^{q-j}_\dr(\bE^t_\varpi)\otimes_kL
\end{align}
of $L$-vector spaces \cite{GK02}*{Lemmas 2 \& 3} and \cite{Berk07}*{Corollary
5.5.2}. Here, $\bE^t_\varpi$ is the $k$-analytic space defined in Example
\ref{ex:torus}. Denote by $H^q_w(\bD,(\cY,\cD),(D,\delta),W)$ the subspace of
the left-hand side of \eqref{eq:decomposition} corresponding to the subspace
\[\bigoplus_{j=0}^q H^j_\rig(\cD/k)_{w-2(q-j)}\otimes_k H^{q-j}_\dr(\bE^t_\varpi)\otimes_kL\]
on the right-hand side. In particular, all elements in
$H^{q-j}_\dr(\bE^t_\varpi)$ are regarded to be of weight $2(q-j)$. Then we
have a direct sum decomposition
\begin{align}\label{eq:decomp}
H^q_\dr(\bD\times_L(W,\pi^{-1}\cD_{\widetilde{L}}))=\bigoplus_{w\in\bZ}H^q_w(\bD,(\cY,\cD),(D,\delta),W).
\end{align}
Finally, we denote by $H^q_{(w)}(\bD,(\cY,\cD),(D,\delta),W)$ the subspace of
$H^q_\dr(\bD\times_LW)$ as the inverse image of
$H^q_w(\bD,(\cY,\cD),(D,\delta),W)$ under the restriction map
\[H^q_\dr(\bD\times_LW)\to
H^q_\dr(\bD\times_L(W,\pi^{-1}\cD_{\widetilde{L}})).\] In what follows, if $\bD$ is of
dimension $0$, then we suppress it from all notations.

\begin{remark}\label{re:range}
Note that $H^q_w(\bD,(\cY,\cD),(D,\delta),W)=0$ unless $q\leq w\leq 2q$, and
the decomposition \eqref{eq:decomp} is stable under base change along a
residually algebraic extension of $K$ (and $L$ accordingly). We warn that the
decomposition \eqref{eq:decomp} depends on all of the data
$(\bD,(\cY,\cD),(D,\delta),W)$, not just the $L$-analytic germ
$\bD\times_L(W,\pi^{-1}\cD_{\widetilde{L}})$. (However, the dependence on
$\bD$ and $W$ is very weak.)
\end{remark}

\begin{definition}
Let $X$ be a $K$-analytic space and $x\in X$ a point.
\begin{enumerate}
  \item Let $\fEt(X;x)$ be the category whose objects are fundamental
      charts of $(X;x)$, and a morphism
      \[\phi\colon(\bD_2,(\cY_2,\cD_2),(D_2,\delta_2),W_2,\alpha_2;y_2)\to
      (\bD_1,(\cY_1,\cD_1),(D_1,\delta_1),W_1,\alpha_1;y_1)\] consists
      implicitly extensions of related fields $K\subset L_1\subset L_2$
      such that $k_1\subset k_2$, and a morphism $\Phi(\phi)\colon
      \bD_2\times_{L_2}W_2\to\bD_1\times_{L_1}W_1$ of $L_1$-analytic
      spaces sending $y_2$ to $y_1$, and such that
      \[\Phi(\phi)^*H^q_{(w)}(\bD_1,(\cY_1,\cD_1),(D_1,\delta_1),W_1)\subset
      H^q_{(w)}(\bD_2,(\cY_2,\cD_2),(D_2,\delta_2),W_2)\] for all $q,w\in
      \bZ$. Note that $\Phi(\phi)$ needs \emph{not} to respect each
      factors.

  \item Let $\Et(X;x)$ be the category of \'{e}tale neighborhoods of
      $(X;x)$. Recall that its objects are triples $(Y,\alpha;y)$ where
      $\alpha\colon Y\to X$ is an \'{e}tale morphism sending $y\in Y$ to
      $x$, and morphisms are defined in the obvious way. In the notation
      $(Y,\alpha;y)$, the morphism $\alpha$ will be suppressed if it is
      not relevant. For a presheaf $\sF$ on $X_{\et}$, the stalk of $\sF$
      at $x$ is defined to be $\sF_x\coloneqq \varinjlim_{(Y,\alpha;y)}
      \sF(Y)$ where the colimit is taken over the category $\Et(X;x)$.

  \item We have a functor $\Phi\colon\fEt(X;x)\to\Et(X;x)$ sending an
      object $(\bD,(\cY,\cD),(D,\delta),W,\alpha;y)$ of $\fEt(X;x)$ to
      $(\bD\times_LW,\alpha;y)$, and a morphism $\phi$ to $\Phi(\phi)$.
\end{enumerate}
\end{definition}

The following lemma generalizes \cite{Berk07}*{Proposition 2.1.1}.

\begin{lem}\label{le:initial}
Suppose that $K$ is embeddable into $\bC_p$ and $X$ is a smooth $K$-analytic
space. Fix an arbitrary point $x\in X$ and let $(Y,\alpha_0;y_0)$ be an
object of $\Et(X;x)$. Then
\begin{enumerate}
  \item there exists an object
      $(\bD,(\cY,\cD),(D,\delta),W,\alpha;y)\in\fEt(X;x)$ such that its
      image under $\Phi$ admits a morphism to $(Y,\alpha_0;y_0)$;

  \item given two morphisms
      $\beta_i\colon\Phi(\bD_i,(\cY_i,\cD_i),(D_i,\delta_i),W_i,\alpha_i;y_i)
      \to(Y,\alpha_0;y_0)$ in $\Et(X;x)$ for $i=1,2$, there exists an
      object $(\bD,(\cY,\cD),(D,\delta),W,\alpha;y)\in\fEt(X;x)$ together
      with morphisms $\phi_i$ to
      $(\bD_i,(\cY_i,\cD_i),(D_i,\delta_i),W_i,\alpha_i;y_i)$ in
      $\fEt(X;x)$ for $i=1,2$ such that the following diagram
      \[\xymatrix{
      &  \Phi(\bD_1,(\cY_1,\cD_1),(D_1,\delta_1),W_1,\alpha_1;y_1) \ar[rd]^-{\beta_1} \\
      \Phi(\bD,(\cY,\cD),(D,\delta),W,\alpha;y) \ar[ru]^-{\Phi(\phi_1)}\ar[rd]_-{\Phi(\phi_2)} && (Y,\alpha_0;y_0) \\
      &  \Phi(\bD_2,(\cY_2,\cD_2),(D_2,\delta_2),W_2,\alpha_2;y_2) \ar[ru]_-{\beta_2}
      }\] commutes.
\end{enumerate}
In particular, the functor $\Phi\colon\fEt(X;x)\to\Et(X;x)$ is initial.
\end{lem}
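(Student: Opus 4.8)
The plan is to prove Lemma \ref{le:initial} by reducing everything to Lemma \ref{le:212} and then checking the extra compatibility with the weight subspaces $H^q_{(w)}$ that is demanded by the definition of $\fEt(X;x)$ (which is the only genuinely new ingredient compared to Berkovich's \cite{Berk07}*{Proposition 2.1.1}). I will treat (1) and (2) in turn, using (2) essentially the same construction applied to a suitable fiber product, and then observe that (1)+(2) formally imply that $\Phi$ is initial (a functor is initial iff it is "connected and cofiltered from below" — every object receives a morphism from the image of $\fEt$, and any two such morphisms are dominated by a third).

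For part (1): given $(Y,\alpha_0;y_0)$, first replace $X$ by $Y$ (étale morphisms compose, and $x$ lifts to $y_0$ with $s(y_0)=s(x)$, $t(y_0)=t(x)$, $\dim_{y_0}Y=\dim_x X$ since $\alpha_0$ is étale). Since $X$ is smooth of dimension $n$ at $x$, after an étale localization we may split off an open poly-disc: there is an étale map from $\bD'\times_K X'$ onto a neighborhood of $x$ with $\bD'$ an open poly-disc of dimension $n-s(x)-t(x)$ and $\dim_{x'}X' = s(x)+t(x)$, $s(x')=s(x)$, $t(x')=t(x)$; this is Berkovich's \cite{Berk07}*{Proposition 2.3.1} again, exactly as at the start of the proof of Lemma \ref{le:212}. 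Now apply Lemma \ref{le:212} to $X'$ and the point $x'$ (taking for $\fY$ the trivial formal scheme $\Spf K^\circ$, i.e.\ dropping the formal-scheme data, or equivalently taking $\fY$ to be any special formal scheme with $X' \to \fY_\eta$): it produces, after finite extensions $k\subset k'$ and $K\subset K'$, a marked $k'$-pair $(\cX,\cD)$ of dimension $s(x)+t(x)$ and depth $t(x)$, a point $x''\in\pi^{-1}\cD$, and an étale morphism $\varphi\colon U\to X'$ with $\varphi(x'')=x'$, where $U$ is an open neighborhood of $\pi^{-1}\cD_{\widetilde{K'}}$ in $\cX_{K'}^\an$. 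Shrinking $\cX$ to an affine open on which a coordinate chart $\cX \to \Spec k'^\circ[T_0,\dots]/(T_0\cdots T_t - \varpi)$ is étale and $\cD$ is cut out by $(T_0,\dots,T_t,\varpi)$, one gets $D := $ an integrally smooth affinoid $k'$-algebra with $\Spec\widetilde{D}\simeq\cD$ and the splitting isomorphism $\delta$ of \eqref{eq:splitting}; this is where the geometric irreducibility of $\cD$ and Definition \ref{de:marked} are used. Take $\cY=\cX$, $L=K'$, $W=U$ (after possibly shrinking so that $W$ is a neighborhood of $\pi^{-1}\cD_{\widetilde L}$), $\bD=\bD'\widehat\otimes_K L$, $\alpha$ the composite $\bD\times_L W \to \bD'\times_K X' \to X$, and $y$ the evident point over $x''$. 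One checks the point conditions (projects to $0$ in $\bD$, reduces to the generic point of $\cD_{\widetilde L}$) directly from the construction. This gives an object of $\fEt(X;x)$ whose image under $\Phi$ maps to $(Y,\alpha_0;y_0)$.

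For part (2): given the two maps $\beta_i$, form the fiber product $\Phi(\ldots)_1\times_{(Y,\alpha_0;y_0)}\Phi(\ldots)_2$ in $\Et(X;x)$ (an étale neighborhood of $(X;x)$, with an obvious base point over $y_1,y_2$); then apply part (1) to this new étale neighborhood to obtain a fundamental chart $(\bD,(\cY,\cD),(D,\delta),W,\alpha;y)$ dominating it, and hence equipped with maps $\Phi(\phi_i)\colon \bD\times_L W \to \bD_i\times_{L_i}W_i$ of analytic spaces making the square commute. The content of the lemma is that $\phi_i$ actually defines a morphism in $\fEt(X;x)$, i.e.\ that $\Phi(\phi_i)^* H^q_{(w)}(\bD_i,\ldots) \subseteq H^q_{(w)}(\bD,\ldots)$. **This is the main obstacle.** The point is that the weight filtration on $H^q_\dr$ of a fundamental chart is defined via the restriction to the germ $\bD\times_L(W,\pi^{-1}\cD_{\widetilde L})$, the identification \eqref{eq:decomposition} with rigid cohomology of $\cD$ tensored with $H^\bullet_\dr(\bE^t_\varpi)$, and the Frobenius weight decomposition there; the map $\Phi(\phi_i)$ need not respect the product structure, so the only leverage we have is the functoriality of rigid cohomology and its weight decomposition (Chiarellotto's results, recalled in Section \ref{ss:rigid}) together with Lemma \ref{le:dagger}, which tells us that on germs the restriction map $\Phi(\phi_i)^*$ is computed by a morphism of integrally smooth affinoid algebras and hence by a morphism of rigid cohomologies, which is strictly compatible with weights. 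Concretely, I would: (a) restrict the whole picture to germs and use Lemma \ref{le:dagger}(iii) to reinterpret $\Phi(\phi_i)^*$ on $H^q_\dr(\bD_?\times(W_?,\pi^{-1}\cD_?))$ as a pullback in $H^\bullet_\rig$ induced by a morphism of $\widetilde{k}$-schemes $\Spec\widetilde{A}\to\Spec\widetilde{A_i}$ deduced from $\Phi(\phi_i)$ and the reduction maps; (b) invoke that rigid cohomology pullback strictly preserves the Frobenius-weight grading $V=\bigoplus V_w$ (functoriality of the slope decomposition, Section \ref{ss:rigid}), so that $H^q_w$ of the source maps into $H^q_w$ of the target; (c) transport back along the (weight-filtered) restriction maps $H^q_\dr(\bD\times W)\to H^q_\dr(\bD\times(W,\pi^{-1}\cD_{\widetilde L}))$ to get the inclusion for $H^q_{(w)}$. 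The mild technical nuisances — that $\Phi(\phi_i)$ mixes the poly-disc and the $W$-directions, that $\bE^t_\varpi$-classes are declared to be pure of weight $2(q-j)$ and one must see this is preserved, and that one may need to shrink $\bD$ and $W$ so that everything is defined on honest strictly affinoid germs where Lemma \ref{le:dagger} applies — are handled exactly as in Example \ref{ex:torus} and the discussion around \eqref{eq:decomposition}, using that restriction from $\bE^t_\varpi$ to $\cM(E^t_\varpi)$ is an isomorphism and that $H^\bullet_\rig(\Spec\widetilde{E^t_\varpi}/K)$ is pure of the expected weight.

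Finally, for the "in particular" clause: a functor $\Phi\colon \fEt(X;x)\to\Et(X;x)$ is initial (cofinal from the source) precisely when for every object $N$ of $\Et(X;x)$ the comma category $\Phi\downarrow N$ is nonempty and connected. Nonemptiness is part (1) (applied to $N$, after noting that if $N$ receives a map then so does any object dominating it). Connectedness follows from part (2): given two objects $(C_1,\beta_1)$, $(C_2,\beta_2)$ of $\Phi\downarrow N$ — i.e.\ fundamental charts $C_i$ with $\beta_i\colon\Phi(C_i)\to N$ — part (2) produces a fundamental chart $C$ with $\phi_i\colon C\to C_i$ in $\fEt(X;x)$ and $\beta_1\circ\Phi(\phi_1)=\beta_2\circ\Phi(\phi_2)$, so $(C,\beta_1\circ\Phi(\phi_1))$ is a common object mapping to both $(C_i,\beta_i)$, giving a zig-zag connecting them. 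Hence $\Phi$ is initial, and therefore for any presheaf $\sF$ on $X_{\et}$ the colimit defining $\sF_x$ may be computed over $\fEt(X;x)$ via $\Phi$ — which is exactly what is needed to transport the weight decomposition from fundamental charts to stalks in the next step.
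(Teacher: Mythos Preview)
Your argument for part (1) is essentially the paper's: split off a poly-disc via \cite{Berk07}*{Proposition 2.3.1}, apply Lemma~\ref{le:212} with the trivial $\fY=\Spf K^\circ$, and produce $(D,\delta)$ from \cite{GK02}. That is fine.

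For part (2) there is a real gap. You propose to build the common refinement by simply applying part (1) to the fiber product $\Phi(\ldots)_1\times_Y\Phi(\ldots)_2$, and then to check weight compatibility by reinterpreting $\Phi(\phi_i)^*$ on germ cohomology as a rigid-cohomology pullback via Lemma~\ref{le:dagger}(iii). The problem is that part (1), as you have set it up, invokes Lemma~\ref{le:212} with the trivial formal target $\fY=\Spf K^\circ$; this gives you an \'etale morphism of analytic spaces but \emph{no} morphism of formal $K^\circ$-schemes from $\widehat\cY_{/\cD}\widehat\otimes K^\circ$ to $\widehat{\cY_i}_{/\cD_i}\widehat\otimes K^\circ$. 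Without such a $\psi_i$, you cannot identify $\Phi(\phi_i)^*$ with a map of rigid cohomologies of special fibers: Lemma~\ref{le:dagger}(iii) needs a morphism between integrally smooth affinoids, and the germs $\pi^{-1}\cD_i$ themselves are not of that form---one has to pass to $\cM(E^t_{\varpi}\widehat\otimes D)$ on each side, and an arbitrary analytic map on germs has no reason to send one such affinoid into the other.

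The failure is most visible for the torus classes. Under $\Phi(\phi_i)$ the class $\tfrac{\rd T}{T}$ (with $T$ a coordinate on $\bE^t_{\varpi_i}$) pulls back to $\tfrac{\rd f}{f}$ for some $f=\Phi(\phi_i)^*T\in\cO^*$, and you must show this lands in $H^1_{(2)}$ of the new chart. This is \emph{not} a formal consequence of ``rigid pullback preserves weights'' (there is no rigid pullback yet!), and you cannot cite Remark~\ref{re:decomp}, which is a consequence of the very lemma you are proving. The paper's proof actually uses the full strength of Lemma~\ref{le:212}: it takes $\fY=\fY_1\times_{K^\circ}\fY_2$ with $\fY_i$ the completion of a compactification of $\cY_i$, so that property (ii) of Lemma~\ref{le:212} furnishes formal-scheme morphisms $\psi_i\colon\widehat\cY_{/\cD}\widehat\otimes K^\circ\to\widehat{\cY_i}_{/\cD_i}\widehat\otimes K^\circ$. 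These $\psi_i$ are then used twice: once to get a genuine scheme map $\Spec\widetilde{E^t_\varpi}\times\cD_{\widetilde K}\to(\cD_i)_{\widetilde K}$ so that Lemma~\ref{le:dagger}(iii) applies to the $H^\bullet_\rig(\cD_i)$-classes, and once (after composing with a coordinate $T$) to see that the pullback of $T$ to the smooth formal scheme $\Spf((E^t_\varpi)^\circ\widehat\otimes D^\circ\widehat\otimes K^\circ)$ has constant absolute value, hence factors through an integrally smooth annulus with reduction $\bG_{\r m}$, forcing weight $2$. Your outline skips the choice of $\fY$ and with it the mechanism that makes both checks possible. The final initiality deduction from (1) and (2) is correct.
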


\begin{proof}
We may assume that $X$ is of dimension $n$. Put $t=t(x)$ and $s=s(x)$.

For (1), by \cite{Berk07}*{Proposition 2.3.1}, after taking a finite
extension of $K$, we may assume that $Y=\bD\times_K X'$ and $y_0=(0,x')$
(which makes sense) for a point $x'\in X'$ with $t(x')=t$ and $s(x')=s$,
where $X'$ is a smooth $K$-analytic space of dimension $s+t$. Now we only
need to apply Lemma \ref{le:212} to $\fY=\Spf K^\circ$, the pair $(X';x')$,
and the structure morphism $X'\to\fY_\eta=\cM(K)$. The existence of
$(D,\delta)$ is due to the argument in Part (iv) of the proof of
\cite{GK02}*{Theorem 2.3}.

For (2), we may assume that $K=L_1=L_2$. For $i=1,2$, we choose a relative
compactification $\cY_i\hookrightarrow\overline{\cY_i}$ over $k_i^\circ$,
where $\overline{\cY_i}$ is proper. Then $W_i$ is open in $\fY_{i\eta}$,
where $\fY_i=\widehat{\overline{\cY_i}}\widehat\otimes_{k_i^\circ}K^\circ$.
Consider the \'{e}tale morphism
\[\alpha'_0\colon Y'\coloneqq(\bD_1\times_KW_1)\times_Y(\bD_2\times_KW_2)
\to Y,\] and a point $y'_0\in Y'$ projecting to $y_1$ (resp.\ $y_2$) in the
first (resp.\ second) factor. Again by \cite{Berk07}*{Proposition 2.3.1}, we
may find an object of the form $(\bD\times_K X',\alpha';(0,x'))$ in
$\Et(X;x)$ as in (1) with a morphism to $(Y',\alpha'_0;y'_0)$. Now we apply
Lemma \ref{le:212} to $X'$, the point $x'$, $\fY=\fY_1\times_{K^\circ}\fY_2$,
the morphism
\[X'\xrightarrow{(\beta_1,\beta_2)} W_1\times_K W_2 \subset \fY_\eta,\]
where $\beta_i$ equals the composition
\[X'\simeq\{0\}\times_KX'\subset\bD\times_K X'\to\bD_i\times_K
W_i\to W_i \quad(i=1,2)\] with the last arrow being the projection. We obtain
a marked $k$-pair $(\cY,\cD)$ of dimension $s+t$ and depth $t$, for some
discrete non-Archimedean field $k$ containing $k_1,k_2$ and contained in
(possibly a finite extension of) $K$; an open neighborhood $W$ of
$(\widehat\cY_{/\cD})_\eta\widehat\otimes_kK$ in $\cY_K^\an$, a point $y'\in
W$, a morphism of $K$-analytic spaces $\varphi\colon W\to X'$ such that
$\varphi(y')=x'$, and a morphism of formal $K^\circ$-schemes
$\psi=(\psi_1,\psi_2)\colon\widehat\cY_{/\cD}\widehat\otimes_{k^\circ}K^\circ\to\fY_1\times_{K^\circ}\fY_2$
compatible with $\varphi$. As $\psi_i$ maps the generic point of
$\cD_{\widetilde{K}}$ to the generic point of $(\cD_i)_{\widetilde{K}}$, we
may replace $(\cY,\cD)$ by an affine open such that
$\psi_i(\cD_{\widetilde{K}})\subset(\cD_i)_{\widetilde{K}}$ for $i=1,2$. In
particular, we have morphisms
$\psi_i\colon\widehat\cY_{/\cD}\widehat\otimes_{k^\circ}K^\circ\to\widehat{\cY_i}_{/\cD_i}\widehat\otimes_{k_i^\circ}K^\circ$.
Note that $\psi_i$ does not necessarily descent to any finite extension of
$k$. By the proof of \cite{GK02}*{Theorem 2.3}, there is an integrally smooth
$k$-affinoid algebra $D$ and an isomorphism $\delta$ as in
\eqref{eq:splitting}.

Now the object $(\bD,(\cY,\cD),(D,\delta),W,\alpha;y)$ has been constructed
with $y=(0,y')$ and the obvious $\alpha$. Let $\Phi(\phi_i)$ be the composite
morphism $\bD\times_KW\to\bD\times_KX'\to\bD_i\times_K W_i$ for $i=1,2$. It
remains to show that
\begin{enumerate}[(i)]
  \item For $i=1,2$, every $q$, every $w$, and an element $\omega\in
      H^q_\rig(\cD_i/k_i)_w$, we have
      \[(\beta_i\circ\varphi)^*(\delta_i^*)^{-1}\omega\in H^q_{(w)}((\cY,\cD),(D,\delta),W).\]

  \item For $i=1,2$ and an arbitrary coordinate $T$ of $\bE^t_{\varpi_i}$
      (where $\varpi_i$ is a uniformizer of $k_i$), we have
      \[(\beta_i\circ\varphi)^*(\delta_i^*)^{-1}\frac{\rd T}{T}\in H^1_{(2)}((\cY,\cD),(D,\delta),W).\]
\end{enumerate}
Note that the composite morphism of formal $k^\circ$-schemes
\[\Spf((E^t_\varpi)^\circ\widehat\otimes_{k^\circ}D^\circ)\to\Spf D^\circ[[T_1,\dots,T_t]]/(T_1\cdots
T_t-\varpi)\xrightarrow{\delta}\widehat\cY_{/\cD}\] induces an isomorphism
\[H^q_\dr(W,\pi^{-1}\cD_{\widetilde{K}})\simeq H^q_\rig(\Spec\widetilde{E^t_\varpi}\times_{\widetilde{k}}\cD_{\widetilde{K}}/K)\]
under which
\[H^q_w((\cY,\cD),(D,\delta),W)=H^q_\rig(\Spec\widetilde{E^t_\varpi}\times_{\widetilde{k}}\cD_{\widetilde{K}}/K)_w\]
for every $q$ and every $w$.

For (i), as we have morphisms of formal $K^\circ$-schemes
\[\Spf((E^t_\varpi)^\circ\widehat\otimes_{k^\circ}D^\circ\widehat\otimes_{k^\circ}K^\circ)
\to\widehat{\cY}_{/\cD}\widehat\otimes_{k^\circ}K^\circ\xrightarrow{\psi_i}
\widehat{\cY_i}_{/\cD_i}\widehat\otimes_{k_i^\circ}K^\circ\to\Spf
D^\circ_i\widehat\otimes_{k_i^\circ}K^\circ,\] Lemma \ref{le:dagger} implies
that $(\beta_i\circ\varphi)^*(\delta_i^*)^{-1}\omega$ coincides with
$\varphi_i^*\omega$ in $H^q_\dr(W,\pi^{-1}\cD_{\widetilde{K}})$, where
\[\varphi_i\colon\Spec\widetilde{E^t_\varpi}\times_{\widetilde{k}}\cD_{\widetilde{K}}
\to(\cD_i)_{\widetilde{K}}\]
is the induced morphism of (affine smooth) $\widetilde{K}$-schemes.

For (ii), we may assume that $\Spf D^\circ_i$ has a $K^\circ$-point by
replacing $K$ by a finite extension (at the very beginning). Thus we have
morphisms of formal $K^\circ$-schemes
\[\Spf((E^t_\varpi)^\circ\widehat\otimes_{k^\circ}D^\circ\widehat\otimes_{k^\circ}K^\circ)
\to\widehat{\cY}_{/\cD}\widehat\otimes_{k^\circ}K^\circ\xrightarrow{\psi_i}
\widehat{\cY_i}_{/\cD_i}\widehat\otimes_{k_i^\circ}K^\circ\to\fE^t_{\varpi_i}\widehat\otimes_{k_i^\circ}K^\circ
\xrightarrow{T}\Spf K^\circ[[T]].\] On the generic fiber, the image of the
induced morphism $\cM(E^t_\varpi\widehat\otimes_kD\widehat\otimes_kK)\to
D(0;1)$ does not contain $0$, which implies that it factors through a
morphism $\cM(E^t_\varpi\widehat\otimes_kD\widehat\otimes_kK)\to\cM(K\langle
r^{-1}T,rT^{-1}\rangle)$ for a unique $r<1$ in $\sqrt{|K^\times|}$ as
$(E^t_\varpi)^\circ\widehat\otimes_{k^\circ}D^\circ$ is smooth over
$k^\circ$. By taking a finite extension of $K$, we may assume that
$r\in|K^{\circ\circ}|$. Then $K\langle r^{-1}T,rT^{-1}\rangle$ is integrally
smooth, and we have $\Spec\widetilde{K\langle
r^{-1}T,rT^{-1}\rangle}\simeq(\bG_\r{m})_{\widetilde{K}}$. Moreover,
\[H^1_\rig(\bG_\r{m}/K)_2=H^1_\rig(\bG_\r{m}/K)\simeq H^1_\dr(D(0,1),\cM(K\langle
r^{-1}T,rT^{-1}\rangle))=K\{\frac{\rd T}{T}\}.\] Thus Lemma \ref{le:dagger} implies (ii).
\end{proof}

\begin{remark}\label{re:decomp}
The above lemma with its proof implies the following: For part of the data
$(\bD,(\cY,\cD),(D,\delta),W)$ from Definition \ref{de:chart} and
$f\in\cO^*(\bD\times_LW)$, we have
\[\frac{\rd f}{f}\in H^1_{(2)}(\bD,(\cY,\cD),(D,\delta),W).\]
Here, we regard $\frac{\rd f}{f}$, a priori a closed $1$-form on
$\bD\times_LW$, as an element in $H^1_\dr(\bD\times_LW)$.
\end{remark}

Now we are ready to define the desired direct summand
$(\Omega^{q,\cl}_X/\rd\Omega^{q-1}_X)_w$ in the weight decomposition of de
Rham cohomology sheaves.

\begin{definition}[De Rham cohomology sheaves with weights]\label{de:weight}
Suppose that $K$ is residually algebraic over $\bQ_p$ and $X$ is a smooth
$K$-analytic space.

For every object $U$ of $X_{\et}$, define
$(\Omega^{q,\cl}_X/\rd\Omega^{q-1}_X)(U)^\r{pre}_w\subset(\Omega^{q,\cl}_X/\rd\Omega^{q-1}_X)(U)$
to be the image of elements $\omega\in\Omega^{q,\cl}_X(U)$ such that for
every point $u\in U$, there exists a fundamental chart
$(\bD,(\cY,\cD),(D,\delta),W,\alpha;y)$ of $(U;u)$ such that
$\alpha^*\omega$, regarded as an element in $H^q_\dr(\bD\times_L W)$, belongs
to $H^q_{(w)}(\bD,(\cY,\cD),(D,\delta),W)$. The assignment
$U\mapsto(\Omega^{q,\cl}_X/\rd\Omega^{q-1}_X)(U)^\r{pre}_w$ defines a
sub-presheaf $(\Omega^{q,\cl}_X/\rd\Omega^{q-1}_X)^\r{pre}_w$ of
$\Omega^{q,\cl}_X/\rd\Omega^{q-1}_X$.

We define $(\Omega^{q,\cl}_X/\rd\Omega^{q-1}_X)_w$ to be the sheafification
of $(\Omega^{q,\cl}_X/\rd\Omega^{q-1}_X)^\r{pre}_w$, which is canonically a
subsheaf of $\Omega^{q,\cl}_X/\rd\Omega^{q-1}_X$.
\end{definition}

The following lemma can be proved by the same way as for
\cite{Berk07}*{Corollary 5.5.3}.

\begin{lem}\label{le:base_change}
Let $K'/K$ be an extension such that $K'$ is embeddable into $\bC_p$. Let $X$
be a smooth $K$-analytic space and $\varsigma\colon X'\coloneqq
X\widehat\otimes_KK'\to X$ the canonical projection. Then the canonical map
of sheaves on $X'_{\et}$
\[\varsigma^{-1}(\Omega^{q,\cl}_X/\rd\Omega^{q-1}_X)\otimes_LK'\to\Omega^{q,\cl}_{X'}/\rd\Omega^{q-1}_{X'}\]
is an isomorphism, where $L$ is the algebraic closure of $K$ in $K'$.
\end{lem}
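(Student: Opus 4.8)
The plan is to reduce the statement to a stalkwise assertion and then use the explicit description of the de Rham cohomology sheaves via fundamental charts together with the base-change stability already established for rigid cohomology with support. First I would observe that both sides are sheaves on $X'_{\et}$, so it suffices to check that the map is an isomorphism on stalks at every geometric point $x'$ of $X'$, with image $x=\varsigma(x')$ in $X$. Here one must be a little careful: the tensor product $\varsigma^{-1}(\Omega^{q,\cl}_X/\rd\Omega^{q-1}_X)\otimes_LK'$ has stalk $(\Omega^{q,\cl}_X/\rd\Omega^{q-1}_X)_x\otimes_LK'$, where $L$ is the algebraic closure of $K$ in $K'$, because $\cO^*_X$-sections and sections of $\fc_X$ near $x$ involve at worst finite extensions of $K$ contained in $L$; the map in question is then the obvious one sending a germ $\omega$ together with $\lambda\in K'$ to $\lambda\cdot\varsigma^*\omega$.

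Next I would describe both stalks concretely. Using Lemma \ref{le:initial}, the stalk at $x$ can be computed through the cofinal family of fundamental charts: a germ in $(\Omega^{q,\cl}_X/\rd\Omega^{q-1}_X)_x$ is represented by a class in $H^q_\dr(\bD\times_L W)$ for some fundamental chart $(\bD,(\cY,\cD),(D,\delta),W,\alpha;y)$ of $(X;x)$. Pulling the chart back along $\varsigma$ — i.e.\ base-changing $\bD$, $W$, and $\alpha$ along $K'/K$ (and enlarging $L$ inside $K'$ if necessary) — produces a fundamental chart of $(X';x')$, and conversely every fundamental chart of $(X';x')$ dominates (up to the cofinality in Lemma \ref{le:initial}) one obtained this way after a further finite extension. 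So the stalk at $x'$ is the colimit, over the same index category of charts for $(X;x)$, of $H^q_\dr((\bD\times_L W)\widehat\otimes_K K')$. The key input is then that for a fixed chart, the restriction map
\[
H^q_\dr(\bD\times_L W)\otimes_L K'\xrightarrow{\ \sim\ } H^q_\dr((\bD\times_L W)\widehat\otimes_K K'),
\]
is an isomorphism: this is exactly the base-change statement of \cite{Berk07}*{Corollary 5.5.2} (equivalently \cite{GK02}*{Corollary 3.8}) applied through the decomposition \eqref{eq:decomposition}, whose factors $H^j_\rig(\cD/k)$ and $H^{q-j}_\dr(\bE^t_\varpi)$ are each stable under scalar extension by the properties of rigid cohomology recalled in Section \ref{ss:rigid}. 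Passing to the colimit over charts and identifying the two stalks, the map of the lemma is seen to be an isomorphism.

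I expect the main obstacle to be bookkeeping rather than a conceptual difficulty: one must track the field-of-definition data carefully — the fields $k\subset L$ attached to a chart, the algebraic closure $L$ of $K$ in $K'$, and the various finite extensions needed to apply Lemmas \ref{le:212} and \ref{le:initial} — and verify that the cofinal systems of charts for $(X;x)$ and $(X';x')$ match up under $\varsigma$ so that the two colimits are literally computed over the same category. One also needs to check compatibility of the weight filtrations under base change, so that the isomorphism respects the decomposition of Definition \ref{de:weight}; but this is immediate from Remark \ref{re:range}, which already records that the decomposition \eqref{eq:decomp} is stable under base change along a residually algebraic extension of $K$. With these compatibilities in hand the proof runs exactly parallel to that of \cite{Berk07}*{Corollary 5.5.3}, as claimed.
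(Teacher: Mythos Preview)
Your proposal is correct and matches the paper's approach: the paper gives no detailed proof but simply states that the lemma ``can be proved by the same way as for \cite{Berk07}*{Corollary 5.5.3},'' and your plan---reduce to stalks, compute these via a cofinal system of fundamental charts (Lemma~\ref{le:initial}), and invoke base change for the rigid/de Rham cohomology of each chart via \cite{GK02}*{Corollary 3.8} and \cite{Berk07}*{Corollary 5.5.2}---is precisely an elaboration of that strategy. One minor remark: the compatibility with the weight decomposition that you discuss at the end is not part of this lemma (it is the content of Theorem~\ref{th:weight}(1), proved afterward), so you can omit it here.
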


The following theorem establishes the functorial weight decomposition of de Rham cohomology sheaves in Theorem \ref{th:1} in the case
of \'{e}tale topology.

\begin{theorem}\label{th:weight}
If $K$ is embeddable into $\bC_p$ and $X$ is a smooth $K$-analytic space,
then we have that
\begin{enumerate}
  \item under the situation of Lemma \ref{le:base_change},
      \[\varsigma^{-1}(\Omega^{q,\cl}_X/\rd\Omega^{q-1}_X)_w\otimes_LK'=(\Omega^{q,\cl}_{X'}/\rd\Omega^{q-1}_{X'})_w,\]
      for every $w\in\bZ$;

  \item the image of the composite map
      \[(\Omega^{q_1,\cl}_X/\rd\Omega^{q_1-1}_X)_{w_1}\otimes(\Omega^{q_2,\cl}_X/\rd\Omega^{q_2-1}_X)_{w_2}
      \to\Omega^{q_1,\cl}_X/\rd\Omega^{q_1-1}_X\otimes\Omega^{q_2,\cl}_X/\rd\Omega^{q_2-1}_X
      \xrightarrow{\wedge}\Omega^{q_1+q_2,\cl}_X/\rd\Omega^{q_1+q_2-1}_X\]
      is contained in the subsheaf
      $(\Omega^{q_1+q_2,\cl}_X/\rd\Omega^{q_1+q_2-1}_X)_{w_1+w_2}$;

  \item the sheaf $(\Omega^{q,\cl}_X/\rd\Omega^{q-1}_X)_w$ is zero unless
      $q\leq w\leq 2q$;

  \item the canonical map
      \[\bigoplus_{w\in\bZ}(\Omega^{q,\cl}_X/\rd\Omega^{q-1}_X)_w\to\Omega^{q,\cl}_X/\rd\Omega^{q-1}_X\]
      is an isomorphism;

  \item for every morphism $f\colon Y\to X$ of smooth $K$-analytic
      spaces, we have
      \[f^\#(f^{-1}(\Omega^{q,\cl}_X/\rd\Omega^{q-1}_X)_w)\subset(\Omega^{q,\cl}_Y/\rd\Omega^{q-1}_Y)_w\]
      for every $w\in\bZ$. Here, $f^\#$ denotes the canonical map
      $f^{-1}\Omega_X^\bullet\to\Omega_Y^\bullet$ and induced maps of
      cohomology sheaves.
\end{enumerate}
\end{theorem}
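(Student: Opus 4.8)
The plan is to exploit the fact, established in Lemma \ref{le:initial}, that the functor $\Phi\colon\fEt(X;x)\to\Et(X;x)$ is initial, so that for any \'{e}tale sheaf $\sF$ we may compute the stalk $\sF_x$ as a colimit over the category $\fEt(X;x)$ of fundamental charts. All five assertions are statements about stalks (for (3) and (4) this is clear; for (1), (2), (5) one checks the induced map on stalks is the required inclusion or iso), so throughout I would fix $x\in X$ and pass to $\fEt(X;x)$. The key local input is the weight decomposition \eqref{eq:decomp} of $H^q_\dr(\bD\times_L(W,\pi^{-1}\cD_{\widetilde L}))$ coming from rigid cohomology together with Chiarellotto's weight bounds, and the transitivity/compatibility properties of the restriction maps $\phi^*$ from Lemma \ref{le:dagger}. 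The condition defining a morphism in $\fEt(X;x)$ is precisely that $\Phi(\phi)^*$ preserves the subspaces $H^q_{(w)}$, so the colimit over $\fEt(X;x)$ of the system $\{H^q_{(w)}(\bD,(\cY,\cD),(D,\delta),W)\}$ is well-defined, and by construction $(\Omega^{q,\cl}_X/\rd\Omega^{q-1}_X)_{w,x}$ is exactly the image of this colimit in the stalk $(\Omega^{q,\cl}_X/\rd\Omega^{q-1}_X)_x=\varinjlim_{\fEt(X;x)}H^q_\dr(\bD\times_LW)$.

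First I would prove (3): this is immediate from Remark \ref{re:range}, which records that $H^q_w(\bD,(\cY,\cD),(D,\delta),W)=0$ unless $q\le w\le 2q$, hence the same vanishing holds for the presheaf $(\Omega^{q,\cl}_X/\rd\Omega^{q-1}_X)^{\r{pre}}_w$ and therefore for its sheafification. Next (4): on each fundamental chart the decomposition \eqref{eq:decomp} gives $H^q_\dr(\bD\times_L(W,\pi^{-1}\cD_{\widetilde L}))=\bigoplus_w H^q_w$, and pulling back along the (injective, by the Gysin/localization discussion in Section \ref{ss:rigid}) restriction $H^q_\dr(\bD\times_LW)\to H^q_\dr(\bD\times_L(W,\pi^{-1}\cD_{\widetilde L}))$ shows the $H^q_{(w)}$ are in direct sum inside $H^q_\dr(\bD\times_LW)$ and exhaust it (the restriction map is itself an isomorphism here because $\bD\times_LW$ retracts onto the germ, or one argues via \cite{GK02} as in Example \ref{ex:torus}). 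Taking the colimit over $\fEt(X;x)$ and then sheafifying preserves finite direct sums, so $\bigoplus_w(\Omega^{q,\cl}_X/\rd\Omega^{q-1}_X)_w\to\Omega^{q,\cl}_X/\rd\Omega^{q-1}_X$ is an isomorphism on stalks. For (1), one combines Lemma \ref{le:base_change} (which handles the undecorated sheaf) with the observation in Remark \ref{re:range} that the decomposition \eqref{eq:decomp} is stable under residually algebraic base change $K\rightsquigarrow K'$: a fundamental chart for $(X;x)$ base-changes to a fundamental chart for $(X';x')$ with the weight subspaces going to weight subspaces, and conversely Lemma \ref{le:initial}(1) applied over $K'$ produces cofinally many charts that descend; chasing this through the definition of the sheafification gives the claimed equality.

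For (5), functoriality: given $f\colon Y\to X$ and $y\in Y$ with $f(y)=x$, I would start from a fundamental chart $(\bD,(\cY,\cD),(D,\delta),W,\alpha;y')$ computing the weight-$w$ part near $x$, pull the \'{e}tale neighborhood $\bD\times_LW$ back along $f$ to get an \'{e}tale neighborhood of $(Y;y)$, and then apply Lemma \ref{le:initial}(1) to refine it to a fundamental chart of $(Y;y)$ admitting a map to $f^{-1}(\bD\times_LW)$; the compatibility condition built into morphisms of $\fEt$, together with Lemma \ref{le:dagger}(i),(iv), forces the pulled-back class to lie in the corresponding $H^q_{(w)}$. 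The same mechanism gives (2): starting from two fundamental charts realizing $(\Omega^{q_i,\cl}_X/\rd\Omega^{q_i-1}_X)_{w_i}$, use Lemma \ref{le:initial}(2) to find a common refinement $(\bD,(\cY,\cD),(D,\delta),W,\alpha;y)$ mapping to both; on this chart the wedge product corresponds, under $\delta^*$ of \eqref{eq:decomposition}, to the cup product in $H^\bullet_\rig(\cD/k)\otimes H^\bullet_\dr(\bE^t_\varpi)$, which is multiplicative for the Frobenius weight grading (Chiarellotto), so weight $w_1$ cup weight $w_2$ lands in weight $w_1+w_2$, and Remark \ref{re:decomp} handles the $\frac{\rd f}{f}$-type generators contributing the $\bE^t_\varpi$-factors.

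I expect the main obstacle to be (2), the compatibility with cup products. The difficulty is that the decomposition \eqref{eq:decomp} on a fundamental chart is defined through the auxiliary splitting $\delta$ and the K\"unneth isomorphism \eqref{eq:decomposition}, and it is not a priori obvious that the wedge product on $H^\bullet_\dr(\bD\times_LW)$ is intertwined with the tensor product of weight gradings on $H^\bullet_\rig(\cD/k)\otimes H^\bullet_\dr(\bE^t_\varpi)\otimes_kL$ — one must check that the K\"unneth isomorphism is multiplicative and that the identification of $H^\bullet_\dr(\bE^t_\varpi)$ with a pure weight-$2q$ piece is compatible with products, which ultimately reduces to the multiplicativity of the Frobenius structure on rigid cohomology and the explicit description of $H^\bullet_\dr(\bE^t_\varpi)$ in terms of $\frac{\rd T_i}{T_i}$ from Example \ref{ex:torus}. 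Once one grants that $\delta^*$ is a ring isomorphism and that the weight grading on the target is multiplicative (both of which follow from the cited results on rigid cohomology), the rest of (2) is bookkeeping via Lemma \ref{le:initial}(2). The passage from presheaf statements to sheaf statements is harmless throughout since sheafification is exact and commutes with the relevant finite limits and colimits.
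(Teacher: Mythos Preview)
There is a genuine gap in your argument for (4), and it is precisely the gap that the paper identifies as ``the most crucial and difficult part'' of the proof. You assert that ``by construction $(\Omega^{q,\cl}_X/\rd\Omega^{q-1}_X)_{w,x}$ is exactly the image of $\varinjlim_{\fEt(X;x)}H^q_{(w)}(\bD,(\cY,\cD),(D,\delta),W)$ in the stalk.'' This is not what Definition \ref{de:weight} says. A section of the presheaf $(\Omega^{q,\cl}_X/\rd\Omega^{q-1}_X)^{\r{pre}}_w$ over $U$ is a class $[\omega]$ such that for \emph{every} point $u\in U$ there exists a fundamental chart \emph{at $u$} on which $\alpha^*\omega\in H^q_{(w)}$. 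So if you take a fundamental chart at $x$, decompose $\omega=\sum_w\omega_w$ via \eqref{eq:decomp}, and want to conclude that each $\omega_w$ lies in the stalk of the weight-$w$ sheaf, you must show that on some neighbourhood the weight condition holds at \emph{every} nearby point $u$ --- and the invariants $s(u),t(u)$ and hence the shape of fundamental charts at $u$ can be entirely different from those at $x$. Propagating the weight from one chart at $x$ to charts at all nearby $u$ is the content of Lemma \ref{le:stalk}; its proof occupies five steps, uses de Jong's alterations to produce a global projective strictly semi-stable model, constructs explicit Frobenius liftings satisfying $(\phi_i^*-p^{fw})^M\omega^\natural=0$, and then compares the K\"unneth weight decomposition at a deeper stratum $\cF\subset\cE$ with the Frobenius weight via an auxiliary affinoid $F_0$. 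None of this is visible in your plan, and without it surjectivity in (4) fails.

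Two smaller points. First, your claim that the restriction $H^q_\dr(\bD\times_LW)\to H^q_\dr(\bD\times_L(W,\pi^{-1}\cD_{\widetilde L}))$ is an isomorphism (or even injective) is unjustified for an arbitrary open neighbourhood $W$; this is why the paper defines $H^q_{(w)}$ as a \emph{preimage} rather than working directly with the germ. Second, you identify (2) as the main obstacle, but the paper dispatches (2) in one line from Lemma \ref{le:initial}(2): once a common refining chart exists, multiplicativity of the Frobenius weight on rigid cohomology is immediate. The genuine difficulty is (4), and specifically the spreading-out lemma you have not addressed.
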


\begin{proof}
Part (1) follows from the definition and Remark \ref{re:range}. Part (2)
follows from definition and Lemma \ref{le:initial} (2).

For the remaining parts, it suffices to work on stalks. Thus we fix a point
$x\in X$ with $t=t(x)$ and $s=s(x)$.

For (3), take an element $[\omega]$ in the stalk of
$(\Omega^{q,\cl}_X/\rd\Omega^{q-1}_X)_w$ at $x$ for some $w<q$ or $w>2q$. We
may assume that it has a representative $\omega\in\Omega^{q,\cl}_X(U)$ for
some \'{e}tale neighborhoods $(U;u)$ of $(X;x)$. By definition, we have a
fundamental chart $(\bD,(\cY,\cD),(D,\delta),W,\alpha;y)$ of $(U;u)$ such
that $\alpha^*\omega=0$ in
$H^q_\dr(\bD\times_L(W,\pi^{-1}\cD_{\widetilde{L}}))$ by Remark
\ref{re:range}. Then there exists an open neighborhood $W^-$ of
$\pi^{-1}\cD_{\widetilde{L}}$ in $W$, such that $\alpha^*\omega=0$ in
$H^q_\dr(\bD\times_LW)$. In other words, $[\omega]=0$ in the stalk of
$(\Omega^{q,\cl}_X/\rd\Omega^{q-1}_X)_w$ at $x$.

For (4), we first show that the map is injective. Let $[\omega]$ be an
element in the stalk $\Omega^{q,\cl}_{X,x}/\rd\Omega^{q-1}_{X,x}$. Suppose
that we have $[\omega]=\sum[\omega]^1_w=\sum[\omega]^2_w$ in which both
$[\omega]^1_w$ and $[\omega]^2_w$ are in the stalk of
$(\Omega^{q,\cl}_X/\rd\Omega^{q-1}_X)_w$ at $x$. We may choose an object
$(U;u)\in\Et(X,x)$ such that $[\omega]^i_w$ has a representative
$\omega^i_w\in(\Omega^{q,\cl}_X/\rd\Omega^{q-1}_X)(U)^\r{pre}_w$ for $i=1,2$
and every $w\in\bZ$, and $\sum\omega^1_w=\sum\omega^2_w$. In particular,
$[\omega]$ has a representative
$\omega\coloneqq\sum\omega^1_w=\sum\omega^2_w$ on $(U;u)$. Fix a weight
$w\in\bZ$. It suffices to show that $[\omega^1_w]=[\omega^2_w]$ in the stalk
at $x$. By Definition \ref{de:weight}, there exist two fundamental charts
$(\bD_i,(\cY_i,\cD_i),(D_i,\delta_i),W_i,\alpha_i;y_i)$ of $(U;u)$ such that
$\alpha_i^*\omega^i_w$ belongs to
$H^q_{(w)}(\bD_i,(\cY_i,\cD_i),(D_i,\delta_i),W_i)$ for $i=1,2$. By Lemma
\ref{le:initial}, we may find another fundamental chart
$(\bD,(\cY,\cD),(D,\delta),W,\alpha;y)\in\fEt(U;u)$ as in that lemma. Then we
have $\Phi(\phi_i)^*\omega^i_w\in H^q_{(w)}(\bD,(\cY,\cD),(D,\delta),W)$ for
both $i=1,2$. However, $\Phi(\phi_1)^*\omega^1_w$ and
$\Phi(\phi_2)^*\omega^2_w$, after restriction to
$H^q_\dr(\bD\times_L(W,\pi^{-1}\cD_{\widetilde{L}}))$, must be equal, as they
are both the weight $w$ component of $\alpha^*\omega$ in
$H^q_\dr(\bD\times_L(W,\pi^{-1}\cD_{\widetilde{L}}))$ under the decomposition
\eqref{eq:decomp}. As the map $H^q_\dr(\bD\times_LW)\to
(\Omega^{q,\cl}_X/\rd\Omega^{q-1}_X)_x$ factors through
$H^q_\dr(\bD\times_L(W,\pi^{-1}\cD_{\widetilde{L}}))$, we have
$[\omega]^1_w=[\omega]^2_w$. Finally, Lemma \ref{le:stalk} below implies that
the map in (4) is surjective as well.

For (5), we take a point $y\in Y$ such that $f(y)=x$. We may take a
fundamental chart $(\bD,(\cY,\cD),(D,\delta),W,\alpha;y)$ of $(X;x)$ and
replace $X$ by $\bD\times_LW$ and $x$ by a point $(0,x)$ where $x\in W$ with
$t(x)=t$ and $s(x)=s$ such that $\dim W=s+t$. By the same proof of Lemma
\ref{le:initial} (2), we may find a fundamental chart
$(\bD',(\cY',\cD'),(D',\delta'),W',\alpha';y')$ of $(Y;y)$ such that
$(f\circ\alpha')^*H^q_{(w)}(\bD,(\cY,\cD),(D,\delta),W)\subset
H^q_{(w)}(\bD',(\cY',\cD'),(D',\delta'),W')$. This confirms Part (5) since
$H^q_{(w)}(\bD,(\cY,\cD),(D,\delta),W)$ (resp.\
$H^q_{(w)}(\bD',(\cY',\cD'),(D',\delta'),W')$) restricts to the weight $w$
part in the stalk of $\Omega^{q,\cl}_X/\rd\Omega^{q-1}_X$ (resp.\
$\Omega^{q,\cl}_Y/\rd\Omega^{q-1}_Y$) at $x$ (resp.\ $y$), by Lemma
\ref{le:stalk} below.
\end{proof}

The following lemma is the most crucial and difficult part in the proof of
the weight decomposition.

\begin{lem}\label{le:stalk}
Let the assumptions be as in Theorem \ref{th:weight}. We take a point $x\in
X$. For any fixed weight $w$, an object
$(\bD,(\cY,\cD),(D,\delta),W,\alpha;y)\in\fEt(X,x)$, and an element
$\omega\in H^q_{(w)}(\bD,(\cY,\cD),(D,\delta),W)$, the induced class
$[\omega]\in\Omega^{q,\cl}_{X,x}/\rd\Omega^{q-1}_{X,x}$ belongs to the stalk
of $(\Omega^{q,\cl}_X/\rd\Omega^{q-1}_X)_w$ at $x$.
\end{lem}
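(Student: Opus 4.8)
The plan is to prove that the weight condition, which a priori is only tested at the single point $x$ using one fundamental chart, in fact propagates to a whole \'etale neighborhood of $x$. The content of the lemma is precisely this localization statement: an element of $H^q_{(w)}$ on a fundamental chart gives a section of the weight-$w$ presheaf on an \'etale neighborhood of $x$, hence lies in the stalk of the sheafification.

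First I would set up the geometry. Write $(\bD,(\cY,\cD),(D,\delta),W,\alpha;y)$ for the given fundamental chart, so $\alpha\colon\bD\times_L W\to X$ is \'etale with $\alpha(y)=x$, and $\omega\in H^q_{(w)}(\bD,(\cY,\cD),(D,\delta),W)\subset H^q_\dr(\bD\times_L W)$. Replacing $X$ by $\bD\times_L W$ (using Theorem~\ref{th:weight}(5), which we are entitled to invoke only if we are careful about the logical order — so in practice we argue directly with $\bD\times_L W$ as the ambient space and remember $\alpha$ at the end), we are reduced to showing: for the point $y$ and each point $u$ in a suitable open neighborhood of $y$ in $\bD\times_L W$, there is a fundamental chart of $(\bD\times_L W;u)$ through which the restriction of $\omega$ lands in $H^q_{(w)}$. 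The representative $\omega$ is a genuine closed $q$-form on $\bD\times_L W$, and its class in the de Rham cohomology \emph{sheaf} near $u$ is what we must control.

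The key step is a spreading-out argument for the weight decomposition along $\cD$. Recall that the germ $\bD\times_L(W,\pi^{-1}\cD_{\widetilde L})$ computes, via $\delta^*$ and \eqref{eq:decomposition}, a sum of rigid cohomology groups $H^j_\rig(\cD/k)\otimes H^{q-j}_\dr(\bE^t_\varpi)$, and the weight filtration there comes from the Frobenius weights on $H^\bullet_\rig(\cD/k)$. For a nearby point $u$, its reduction $\pi(u)$ lies in some locally closed smooth subscheme $\cD_u$ of $\cY_s^{[t_u]}$ with $t_u\le t$; by Lemma~\ref{le:212} applied to the identity-type situation (or more precisely by re-running the construction of a fundamental chart at $u$ inside $\bD\times_L W$) we obtain a fundamental chart $(\bD_u,(\cY_u,\cD_u),(D_u,\delta_u),W_u,\alpha_u;u)$ together with a morphism of formal schemes $\widehat{\cY_u}_{/\cD_u}\to\widehat\cY_{/\cD}$ — because $\cD_u$ specializes into $\cD$ — which by Lemma~\ref{le:dagger} (Property (iii), the compatibility with rigid cohomology restriction maps) and Chiarellotto's strictness of Frobenius on restriction maps, carries $H^q_{(w)}(\bD,(\cY,\cD),(D,\delta),W)$ into $H^q_{(w)}(\bD_u,(\cY_u,\cD_u),(D_u,\delta_u),W_u)$. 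The point is that the weight decomposition \eqref{eq:decomp} is functorial for restriction maps between such germs, since it is induced by the Frobenius-equivariant restriction maps on rigid cohomology, and Frobenius acts strictly with respect to the weight filtration. Thus $\alpha_u^*(\text{restriction of }\omega)$ lies in $H^q_{(w)}$ for the chart at $u$, which is exactly the condition in Definition~\ref{de:weight}; so $[\omega]$ is a section of $(\Omega^{q,\cl}_X/\rd\Omega^{q-1}_X)^\r{pre}_w$ on an \'etale neighborhood of $x$, hence lies in the stalk of the sheafification at $x$.

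The main obstacle is the second step: producing, for \emph{every} nearby point $u$ simultaneously (i.e.\ on a genuine open neighborhood, not just pointwise with possibly shrinking data), a fundamental chart at $u$ compatible with the original one via a morphism of formal completions. The issue is that as $u$ varies, the depth $t_u=t(u)$ and the stratum $\cD_u$ jump, and one must check that the construction in Lemma~\ref{le:212} / Lemma~\ref{le:initial}(2) can be carried out so that the resulting formal-scheme morphism $\widehat{\cY_u}_{/\cD_u}\to\widehat\cY_{/\cD}$ exists and is compatible with $\omega$ on the nose. Concretely one stratifies $W$ by the strata of $\cY_s$, handles the generic stratum (where $t_u=t$ and the chart at $u$ is obtained from the original by an \'etale localization, so compatibility is immediate) and then the deeper strata by blowing up further and invoking the compatibility in Lemma~\ref{le:dagger}(iii) together with the fact that restriction maps in rigid cohomology are strictly compatible with the weight filtration (this is where $K$ residually algebraic over $\bQ_p$, hence the existence of Frobenius, is essential). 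Once the pointwise statement is in hand over a dense enough constructible set and one knows the presheaf condition is checkable on each stratum, sheafification absorbs the remaining bookkeeping, and the lemma follows.
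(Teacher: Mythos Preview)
Your outline correctly identifies the target: one must check that the weight-$w$ condition holds not just at the chosen chart for $x$ but at a fundamental chart for \emph{every} nearby point $u$. However, the step where you claim that the restriction map from the germ at $x$ to the germ at $u$ preserves the weight decomposition ``by Lemma~\ref{le:dagger}(iii) and Chiarellotto's strictness'' is where the real difficulty lies, and your justification does not go through.

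The problem is this. When $\pi(u)$ lies in a strictly deeper stratum $\cF\subset\cD$ (a component of $\cY_s^{[t']}$ with $t'>t$), the de Rham restriction map from the germ along $\cD$ to the germ along $\cF$ is \emph{not} a map of rigid cohomology groups to which strictness applies. On each side the identification with rigid cohomology (times annulus factors) goes through the chosen splittings $\delta$ and $\delta_u$; these splittings are unrelated, and the restriction map mixes the $H^\bullet_\rig$ and $H^\bullet_\dr(\bE)$ factors. There is no a priori Frobenius action on the de Rham side that is simultaneously compatible with both identifications, so ``strictness of Frobenius on restriction maps'' has no purchase. The paper's proof treats exactly this point: when $u$ reduces to the open stratum $\cE^\natural$ the argument of Lemma~\ref{le:initial}(2) suffices, but for deeper strata one must (i) pass via alteration to a \emph{proper} strictly semi-stable model $\cS$, (ii) construct explicit Frobenius liftings $\phi_i$ on tubular neighborhoods satisfying $\phi_i^*f_{i,j}=f_{i,j}^{p^{2f}}$ on the transverse coordinates and annihilating $\omega^\natural$ by $(\phi_i^*-p^{fw})^M$, and (iii) verify by a direct computation (the paper's Step~5, using auxiliary affinoids $F_0,F_1,F_2$ and the maps $\rho_1,\rho_2$) that these $\phi_i$ transport to genuine Frobenius liftings on the deeper stratum $\cF$, so that the eigencondition $(\phi_i^*-p^{fw})^M\omega^\natural=0$ forces weight $w$ there. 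This is the missing idea in your proposal; without it the lemma is not proved.
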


\begin{proof}
We may assume $L=K$ where $L$ is the finite extension of $K$ implicitly
contained in the data of the fundamental chart. To simplify notation, we
denote by $V$ the strictly $K$-affinoid domain $\pi^{-1}\cD_{\widetilde{K}}$
in $W$. As $\bD$ will be irrelevant in the discussion, we will regard $y$ as
a point in $V$. Moreover, by possibly shrinking $(\cY,\cD)$, the
decomposition \eqref{eq:decomposition} and Remark \ref{re:decomp}, we may
assume that the image of $\omega$ in $H^q_\dr(\bD\times_K(W,V))$ is in
$H^q_\rig(\cD/K)_w$.

\textbf{Step 1.} We choose a smooth $k^\circ$-algebra $D^\natural$ (of
dimension $s$) such that its $\varpi$-adic completion is $D^\circ$, where we
recall that $\varpi$ is a uniformizer of the discrete non-Archimedean field
$k\subset K$. In particular, we may identify $(\Spec D^\natural)_s$ with
$\cD$, and $\cM(D)$ with a strictly $k$-affinoid domain in $(\Spec
D^\natural)_k^\an$. As in Lemma \ref{le:dagger}, we have germs $(W,V)$ and
$((\Spec D^\natural)_k^\an,\cM(D))$ and a morphism
$V\to\cM(D)\widehat\otimes_kK$ induced from $\delta$. We choose a
neighborhood $U_\epsilon$ of the graph of the previous morphism as in the
proof of Lemma \ref{le:dagger}, such that the induced map
\[H^\bullet_\dr(W,V)\to H^\bullet_\dr(W\times_k(\Spec
D^\natural)_k^\an,U_\epsilon\cap(V\times_k\cM(D)))\] is an isomorphism. By a
similar argument in the proof of \cite{GK02}*{Lemma 2}, we may replace $W$ by
a smaller open neighborhood of $V$ such that there is a morphism $W\to
U_\epsilon$ sending $V$ into $U_\epsilon\cap(V\times_k\cM(D))$ whose induced
map
\[H^\bullet_\dr(W\times_k(\Spec
D^\natural)_k^\an,U_\epsilon\cap(V\times_k\cM(D)))\to H^\bullet_\dr(W,V)\] is
the inverse of the previous isomorphism. In other words, we have a morphism
$\delta'\colon W\to(\Spec D^\natural)_K^\an$ sending $V$ into
$\cM(D)\widehat\otimes_kK$ such that, although $\delta'\res_{V}$ might not
coincide with the original morphism $V\to\cM(D)\widehat\otimes_kK$ induced
from $\delta$, we still have that the induced map
\[H^\bullet_\rig(\cD/K)\simeq H^\bullet_\dr((\Spec D^\natural)_k^\an,\cM(D))\otimes_kK
\xrightarrow{\delta'^*} H^\bullet_\dr(W,V)\] coincides with the map induced from
the K\"{u}nneth decomposition \eqref{eq:decomposition} (where $\bD$ is
trivial).

\textbf{Step 2.} We choose a compactification $(\Spec
D^\natural)_k\hookrightarrow\overline{\cS}_k$ over $k$, and define
$\overline{\cS}$ to be the $k^\circ$-scheme $\overline{\cS}_k\coprod_{(\Spec
D^\natural)_k}\Spec D^\natural$. Apply \cite{dJ96}*{Theorem 8.2} to the
$k^\circ$-variety $\overline{\cS}$ and $Z=\emptyset$. We obtain a finite
extension $k'/k$, an alteration $\cS^\natural\to\overline{\cS}_{k'^\circ}$
and a $k'^\circ$-compactification $\cS^\natural\hookrightarrow\cS$ where
$\cS$ is a projective strictly semi-stable scheme over $k'^\circ$ such that
$\cS\backslash\cS^\natural$ is a strict normal crossing divisor of $\cS$
(concentrated on the special fiber). We may further assume that all
irreducible components of $\cS_s$ are geometrically irreducible. To ease
notation, we replace $k$ by $k'$ and possibly $K$ by a finite extension. We
may fix an irreducible component $\cE$ of $\cS_s$ such that its generic point
belongs to $\cS^\natural_s$ and maps to the generic point of
$\overline{\cS}_s\simeq\cD$. Note that the complement of
$\cE^\natural\coloneqq\cE\cap\cS^\natural_s$ in $\cE$ is exactly
$\cS_s^{[1]}\cap\cE$. Denote by $\sigma_\cE$ the unique point in $\cS_K^\an$
who reduction is the geometric point of $\cE_{\widetilde{K}}$. Then
$\pi^{-1}\cE_{\widetilde{K}}$ is an open neighborhood of $\sigma_\cE$.

Define $W^\natural$ via the pullback square in the following diagram:
\[\xymatrix{
W^\natural \ar[r]\ar[d]\ar@/^2pc/[rr]^-{\delta^\natural}  &  (\cS^\natural)_K^\an \ar@{^(->}[r]\ar[d]  & \cS_K^\an \\
W  \ar[r]^-{\delta'}  &  (\overline{\cS})_K^\an, }\] and
$\delta^\natural\colon W^\natural\to\cS_K^\an$ as the composition in the
upper row. We may choose a point $y^\natural\in W^\natural$ which lifts $y$
and maps to $\sigma_\cE$ in $\cS_K^\an$. The image of the form $\omega$ in
$H^q_\rig(\cD/K)_w$ induces a class $[\omega^\natural]\in
H^q_\rig(\cE^\natural/K)_w$ via restriction along the alteration. By taking a
finite unramified extension of $k$ (and possibly a finite extension of $K$),
we may assume that $(\Fr^*-p^{fw})^N[\omega^\natural]=0$ for some integer
$N\geq 1$, where $\#\widetilde{k}=p^{2f}$ and $\Fr$ denotes the relative
Frobenius of $\cE^\natural/\widetilde{k}$. Put $\fS=\widehat\cS_{/\cE}$. We
fix an open neighborhood $U$ of $\pi^{-1}\cE^\natural$ in $\fS_\eta$ such
that $[\omega^\natural]$ has a representative $\omega^\natural\in
H^q_\dr(U\widehat\otimes_kK)$.

\textbf{Step 3.} We are now going to shrink $U$ such that $\omega^\natural$
has controlled behavior on $U\backslash\pi^{-1}\cE^\natural$. We may cover
$\fS$ by finitely many special open formal $k^\circ$-subschemes $\fS_i$
satisfying the following conditions: Each $\fS_i$ is \'{e}tale over
\[\Spf k^\circ[[t_0]]\langle t_1,\dots, t_r,t_{r+1},t_{r+1}^{-1},\dots,t_s,t_s^{-1}\rangle/(t_0\cdots t_r-\varpi)\]
for some $0\leq r=r_i\leq s$; $\cE_i\coloneqq\cE\cap\fS_s$ is affine; and if
we write $f_{i,j}$ for the image of $t_j$ in $\fS_i$, then
$\cE^\natural_i\coloneqq\cE^\natural\cap\fS_s$ is defined by the equations
$f_{i,0}=0$ and $f_{i,1}\cdots f_{i,r}\neq 0$. Define the formal
$k^\circ$-scheme $\fS_i^\natural$ via the following pullback diagram
\[\xymatrix{
\fS_i^\natural \ar[r]\ar[d]  &  \fS_i \ar[d] \\
\Spf k^\circ\langle t_1,t_1^{-1},\dots,t_s,t_s^{-1}\rangle \ar[r] &\Spf
k^\circ[[t_0]]\langle t_1,\dots,
t_r,t_{r+1},t_{r+1}^{-1},\dots,t_s,t_s^{-1}\rangle/(t_0\cdots t_r-\varpi).
}\] Then $(\fS_i^\natural)_\eta=\pi^{-1}\cE_i^\natural$ in $\fS_{i\eta}$. For
$0<\epsilon<1$, denote by $\fS_{i\eta}(\epsilon)$ the open subset of
$\fS_{i\eta}$ defined by the inequality $|f_{i,0}|<|\varpi|^{1-\epsilon}$.
Then $\fS_{i\eta}(\epsilon)$ form a fundamental system of open neighborhoods
of $(\fS_i^\natural)_\eta$ in $\fS_{i\eta}$. In fact, the open subset
$\fS_{i\eta}(\epsilon)$ does not depend on the choice of the \'{e}tale
coordinates as above. We choose an open neighborhood $U_i$ of
$(\fS_i^\natural)_\eta$ in $\fS_\eta$ contained in $U$, together with an
absolute Frobenius lifting $\phi_i\colon U_i\to U$ satisfying properties
\begin{enumerate}[(a)]
  \item $\phi_i^*f_{i,j}=f_{i,j}^{p^{2f}}$ for $j=1,\dots,r$ (as in
      \cite{Chi98}*{Lemma 3.1.1});

  \item $|(\phi_i^*g-g^{p^{2f}})(x)|<1$ for all regular functions $g$ on
      $\fS_i^\natural$ and all $x\in U_i$ at which both $g$ and
      $\phi_i^*g$ are defined (as \cite{Berk07}*{Lemma 6.1.1});

  \item $(\phi_i^*-p^{fw})^M\omega^\natural=0$ in
      $H^q_\dr(U_i\widehat\otimes_kK)$ for some integer $M\geq 1$.
\end{enumerate}

Since $U_i\cap\fS_{i\eta}$ is an open neighborhood of $(\fS_i^\natural)_\eta$
in $\fS_{i\eta}$, there exists some $\epsilon_i>0$ such that
$\fS_{i\eta}(\epsilon_i)\subset U_i$. Take $\epsilon=\min\{\epsilon\}>0$, and
replace $U$ by the union
$\fS_\eta(\epsilon)\coloneqq\bigcup_i\fS_{i\eta}(\epsilon)$ in $\fS_\eta$,
which is an intrinsically defined open neighborhood of $\pi^{-1}\cE^\natural$
in $\fS_\eta$. We suppose that $\epsilon$ is very close to $0$ in terms of
$p^f,s,|\varpi|$. Now we replace $W^\natural$ by
$W^\natural\cap(\delta^\natural)^{-1}(\fS_\eta(\epsilon)\widehat\otimes_kK)$.
By construction, we may remove a Zariski closed subset of $W^\natural$ of
dimension at most $s+t-1$ such that the resulting morphism $\bD\times_K
W^\natural\to \bD\times_K W\xrightarrow{\alpha} X$ is \'{e}tale. In
particular, $(\bD\times_K W^\natural;(0,y^\natural))$ is an object of
$\Et(X;x)$.

\textbf{Step 4.} It remains to show the following claim:

For every point $u\in W^\natural$, there exists a fundamental chart
$(\bD',(\cY',\cD'),(D',\delta'),W',\alpha';y')$ of $(W^\natural;u)$ such that
$\alpha'^*(\delta^\natural)^*\omega^\natural$ belongs to
$H^q_{(w)}(\bD',(\cY',\cD'),(D',\delta'),W')$.

We start similarly as in the proof of Lemma \ref{le:initial}. By
\cite{Berk07}*{Proposition 2.3.1}, after taking a finite extension of $K$, we
may assume that $W^\natural=\bD\times_K X'$ and $u=(0,x')$ for a point $x'\in
X'$ with $t(x')=t(u)$ and $s(x')=s(u)$, where $X'$ is a smooth $K$-analytic
space of dimension $s(u)+t(u)$. Thus we have a morphism
$\delta^\natural\colon X'\simeq\{0\}\times_KX'\to
\fS_\eta(\epsilon)\widehat\otimes_kK$. If $\delta^\natural(u)$ belongs to
$(\pi^{-1}\cE^\natural)\widehat\otimes_kK$, the our claim follows in the same
way as Claim (i) in the proof of Lemma \ref{le:initial} (2). In general,
$\delta^\natural(u)$ belongs to $\fS_{i\eta}(\epsilon)\widehat\otimes_kK$ for
some $i$, and we assume that its reduction $\pi(\delta^\natural(u))$ belongs
to $(\cS_s^{[r']}\backslash\cS_s^{[r'+1]})\cap\cE_i$ for a unique $0\leq
r'\leq r_i$. (If $r'=0$, then we are back to the previous special case.)
Without lost of generality, we may assume that $r'=r_i=r$.

Let $\cF\subset\cE$ be the irreducible component of
$\cS_s^{[r]}\backslash\cS_s^{[r+1]}$ where $\pi(\delta^\natural(u))$ belongs
to. By shrinking $\fS_i$, we may assume that $\cF$ is defined by the
equations $f_{i,0}=\cdots=f_{i,r}=0$, and there exists an integrally smooth
$k$-affinoid algebra $F$ together with an isomorphism
\begin{align}\label{eq:splitting_temp}
\Spf F^\circ[[t_{i,0},\dots,t_{i,r}]]\simeq\widehat{\fS_i}_{/\cF}
\end{align}
of formal $k^\circ$-schemes, also sending $t_{i,j}$ to $f_{i,j}$. Therefore,
we have an isomorphism of graded $k$-algebras
\begin{align}\label{eq:decomp_temp}
H^\bullet_\dr(\fS_\eta,\pi^{-1}\cF)
\simeq
H^\bullet_\rig(\Spec\widetilde{F}/k)\otimes_kH^\bullet_\dr(\bE^r_\varpi),
\end{align}
where $\bE^r_\varpi$ is the $k$-analytic space in Example \ref{ex:torus}. By
\cite{GK02}*{Lemma 3} and the above isomorphism, the restriction map
\begin{align}\label{eq:decomp_temp_1}
H^\bullet_\dr(\fS_\eta,\pi^{-1}\cF)\to
H^\bullet_\dr(\fS_\eta(\epsilon),\fS_\eta(\epsilon)\cap\pi^{-1}\cF
\end{align}
is an isomorphism. Now it suffices to show that the class of
$\omega^\natural$ in
\[H^\bullet_\dr(\fS_\eta(\epsilon)\widehat\otimes_kK,\fS_\eta(\epsilon)\widehat\otimes_kK\cap\pi^{-1}\cF_{\widetilde{K}})
\simeq
H^\bullet_\dr(\fS_\eta(\epsilon),\fS_\eta(\epsilon)\cap\pi^{-1}\cF)\otimes_kK\]
is of weight $w$ with respect to the decomposition \eqref{eq:decomp_temp} and
the isomorphism \eqref{eq:decomp_temp_1}. Then our claim follows in the same
way as in the proof of Lemma \ref{le:initial} (2). Without lost of
generality, we now assume that $\omega^\natural$ is an element in
$H^\bullet_\dr(\fS_\eta(\epsilon),\fS_\eta(\epsilon)\cap\pi^{-1}\cF)$.

\textbf{Step 5.} To compute the weight, we use the Frobenius lifting
$\phi_i\colon U_i\to\fS_\eta(\epsilon)$ where $U_i\subset\fS_\eta(\epsilon)$
is an open neighborhood of $(\fS_i^\natural)_\eta$ in $\fS_\eta$, which might
be smaller than the one we start with. Assume that $U_i\cap\fS_{i\eta}$
contains $\fS_{i\eta}(\epsilon')$ for some $0<\epsilon'<\epsilon$. We
introduce more notations as follows: We fix a positive integer $N$ such that
$0<1/N<p^{-2f}\epsilon'$. Replacing $K$ by a finite extension, we may assume
that there exists a totally ramified extension $k_+/k$ contained in $K$ with
an element $\varpi_+\in k_+^\circ$ such that $\varpi_+^{rN}=\varpi$. We
consider the following $k_+$-affinoid algebras
\begin{align*}
F_0&=F\widehat\otimes_kk_+\langle\tau_1,\tau_1^{-1},\dots,\tau_r,\tau_r^{-1}\rangle, \\
F_1&=F\widehat\otimes_kk_+\left\langle\frac{t_{i,0}}{\varpi_+^{rN-r}},\frac{\varpi_+^{rN-r}}{t_{i,0}},
\frac{t_{i,1}}{\varpi_+},\frac{\varpi_+}{t_{i,1}},\dots,\frac{t_{i,r}}{\varpi_+},\frac{\varpi_+}{t_{i,r}}
\right\rangle/(t_{i,0}\cdots t_{i,r}-\varpi),\\
F_2&=F\widehat\otimes_kk_+\left\langle\frac{t_{i,0}}{\varpi_+^{rN-rp^{2f}}},\frac{\varpi_+^{rN-rp^{2f}}}{t_{i,0}},
\frac{t_{i,1}}{\varpi_+^{p^{2f}}},\frac{\varpi_+^{p^{2f}}}{t_{i,1}},\dots,\frac{t_{i,r}}{\varpi_+^{p^{2f}}},\frac{\varpi_+^{p^{2f}}}{t_{i,r}}
\right\rangle/(t_{i,0}\cdots t_{i,r}-\varpi).
\end{align*}
Note that $F_0$ is integrally smooth. We have natural isomorphisms
\begin{align*}
\rho_1&\colon F_1\xrightarrow{\sim} F_0,
\quad t_{i,j}\mapsto\varpi_+\tau_j, 1\leq j\leq r,\; t_{i,0}\mapsto\varpi_+^{rN-r}\prod_{j=1}^r\tau_j^{-1};\\
\rho_2&\colon F_2\xrightarrow{\sim} F_0,
\quad t_{i,j}\mapsto\varpi_+^{p^{2f}}\tau_j, 1\leq j\leq r,\; t_{i,0}\mapsto\varpi_+^{rN-rp^{2f}}\prod_{j=1}^r\tau_j^{-1}.
\end{align*}
For $\alpha=1,2$, we define a formal $k_+^\circ$-scheme
$\fS_i\langle\alpha\rangle$ via the following pullback diagram
\[\xymatrix{
\fS_i\langle\alpha\rangle \ar[r]\ar[d] & \widehat{\fS_i}_{/\cF}\widehat\otimes_{k^\circ}k_+^\circ \ar[d]^-{\eqref{eq:splitting_temp}}\\
\Spf F_\alpha^\circ \ar[r] & \Spf
F^\circ[[t_{i,0},\dots,t_{i,r}]]\otimes_{k^\circ}k_+^\circ },\] so that
$\fS_i\langle\alpha\rangle_\eta$ is canonically a strictly $k_+$-affinoid
domain in $U_i\widehat\otimes_kk_+$ by our choice of $N$. Moreover,
$\rho_\alpha$ induces an isomorphism, denoted again by $\rho_\alpha$,
\[\rho_\alpha\colon \Spf F_0^\circ\xrightarrow{\sim}\fS_i\langle\alpha\rangle\]
of formal $k_+^\circ$-schemes. Properties (a) and (b) of the Frobenius
lifting $\phi_i$ implies that it induces by restriction a morphism
$\phi_i\colon\fS_i\langle1\rangle_\eta\to\fS_i\langle2\rangle_\eta$, and the
composition
$\rho_{2\eta}^{-1}\circ\phi_i\circ\rho_{1\eta}\colon\cM(F_0)\to\cM(F_0)$ is a
Frobenius lifting. We fix a smooth $k_+$-affinoid germ $(V,\cM(F_0))$.

Note that for $\alpha=1,2$, we have isomorphisms
\[H^\bullet_\dr(\fS_\eta\widehat\otimes_kk_+,\pi^{-1}\cF)
\xrightarrow{\sim}
H^\bullet_\dr(\fS_\eta(\epsilon)\widehat\otimes_kk_+,\fS_\eta(\epsilon)\widehat\otimes_kk_+\cap\pi^{-1}\cF)
\xrightarrow{\sim}
H^\bullet_\dr(U_i\widehat\otimes_kk_+,\fS_i\langle\alpha\rangle_\eta)\] by
\cite{GK02}*{Lemma 3}. In particular, we may equip
$H^\bullet_\dr(U_i\widehat\otimes_kk_+,\fS_i\langle\alpha\rangle_\eta)$ with
a weight decomposition inherited from \eqref{eq:decomp_temp}. By construction
and \cite{Bos81}*{Corollary 1}, we have
\begin{itemize}
  \item a morphism
      $\rho_1^\dag\colon(V,\cM(F_0))\to(U_i\widehat\otimes_kk_+,\fS_i\langle1\rangle_\eta)$
      such that $\rho_1^\dag\res_{\cM(F_0)}$ is very close to
      $\rho_{1\eta}$ which induces the same morphism on the special
      fiber, and moreover the induced restriction map
      \[(\rho_1^\dag)^*\colon H^\bullet_\dr(U_i\widehat\otimes_kk_+,\fS_i\langle1\rangle_\eta)\to
      H^\bullet_\rig(\Spec\widetilde{F_0}/k_+)\] is an isomorphism
      respecting weights,

  \item a morphism
      $\rho_2^\dag\colon(U_i\widehat\otimes_kk_+,\fS_i\langle2\rangle_\eta)\to(V,\cM(F_0))$
      such that $\rho_2^\dag\res_{\fS_i\langle2\rangle_\eta}$ is very
      close to $\rho_{2\eta}^{-1}$ (not $\rho_{2\eta}$ !) which induces
      the same morphism on the special fiber, and moreover the induced
      restriction map
      \[(\rho_2^\dag)^*\colon H^\bullet_\rig(\Spec\widetilde{F_0}/k_+)\to
      H^\bullet_\dr(U_i\widehat\otimes_kk_+,\fS_i\langle2\rangle_\eta)\] is
      an isomorphism respecting weights.
\end{itemize}
In summary, we have weight preserving isomorphisms
\[\resizebox{16cm}{!}{\xymatrix{
& H^\bullet_\dr(\fS_\eta(\epsilon)\widehat\otimes_kk_+,\fS_\eta(\epsilon)\widehat\otimes_kk_+\cap\pi^{-1}\cF)\ar[ld]\ar[rd] \\
H^\bullet_\dr(U_i\widehat\otimes_kk_+,\fS_i\langle1\rangle_\eta) \ar[rd]_-{(\rho_1^\dag)^*}  &&
H^\bullet_\dr(U_i\widehat\otimes_kk_+,\fS_i\langle2\rangle_\eta) \\
& H^\bullet_\rig(\Spec\widetilde{F_0}/k_+). \ar[ur]_-{(\rho_2^\dag)^*} }}\]
We will identify the top three objects in the above commutative diagram.
Recall that we regard $\omega^\natural$ as an element in
$H^\bullet_\dr(\fS_\eta(\epsilon)\widehat\otimes_kk_+,\fS_\eta(\epsilon)\widehat\otimes_kk_+\cap\pi^{-1}\cF)$.
Let $\omega_0$ be the element in $H^q_\rig(\Spec\widetilde{F_0}/k_+)$ such
that $(\rho_2^\dag)^*\omega_0=\omega^\natural$. By Property (c) of the
Frobenius lifting $\phi_i$, we have that
$((\rho_1^\dag)^*\circ\phi_i^*\circ(\rho_2^\dag)^*-p^{fw})^M\omega_0=0$.
However, $\rho_2^\dag\circ\phi_i\circ\rho_1^\dag\colon
(V,\cM(F_0))\to(V,\cM(F_0))$ is a Frobenius lifting of the Frobenius
endomorphism of $\Spec\widetilde{F_0}$ over $\widetilde{k_+}=\widetilde{k}$.
Therefore, $\omega_0$ and hence $\omega^\natural$ are of weight $w$. The
lemma is finally proved!
\end{proof}

\begin{remark}
From the proof of Theorem \ref{th:weight}, we know that the support of
$(\Omega^{q,\cl}_X/\rd\Omega^{q-1}_X)_w$ is contained in the subset $\{x\in
X\res s(x)\geq 2q-w, s(x)+t(x)\geq q\}$.
\end{remark}

\section{Logarithmic differential forms}
\label{ss:log}

In this section, we study the behavior of logarithmic differential forms in
the rigid cohomology. Based on this and Theorem \ref{th:weight}, we finish
the proof of Theorem \ref{th:1} for both topologies.

Let $k$ be a finite extension of $\bQ_p$. Let $\cS$ be a proper strictly
semi-stable scheme over $k^\circ$ of dimension $s$. We fix an irreducible
component $\cE$ of $\cS_s$ and let $\cE_1,\dots,\cE_M$ be all other
irreducible components that intersect $\cE$. For a subset
$I\subset\{1,\dots,M\}$, put $\cE_I=(\bigcap_{i\in I}\cE_i)\cap\cE$ (in
particular, $\cE_\emptyset=\cE$) and
$\cE_I^\heartsuit=\cE_I\backslash\cS_s^{[|I|+1]}$. For two subsets $I,J$ of
$\{1,\dots,M\}$, we write $I\prec J$ if $I\subset J$ and numbers in
$J\backslash I$ are all greater than those in $I$.

For $I\subset\{1,\dots,M\}$, we have the open immersion
$\cE_I^\heartsuit\subset\cE_I\backslash\cS_s^{[|I|+2]}$, whose compliment is
$\coprod_{I\subset J,|J|=|I|+1}\cE_J^\heartsuit$. Thus we have maps
\[H^\bullet_\rig(\cE_I^\heartsuit/k)\to \bigoplus_{I\subset J,|J|=|I|+1}H^{\bullet+1}_{\cE_J^\heartsuit,\rig}(\cE_I\backslash\cS_s^{[|I|+2]}/k)
\xrightarrow{\sim}\bigoplus_{I\subset
J,|J|=|I|+1}H^{\bullet-1}_\rig(\cE_J^\heartsuit/k),\] where the second map is
the Gysin isomorphism. In the above composite map, denote by $\xi^I_J$ the
induced map from $H^\bullet_\rig(\cE_I^\heartsuit/k)$ to the component
$H^{\bullet-1}_\rig(\cE_J^\heartsuit/k)$ if $I\prec J$, and the zero map if
not.

In general, for $I\prec J$, there is a unique strictly increasing sequence
$I=I_0\prec I_1\prec\cdots\prec I_{|J\backslash I|}=J$ and we define
\[\xi^I_J\coloneqq\xi^{I_{|J\backslash I|-1}}_J\circ\dots\circ\xi^I_{I_1}\colon
H^\bullet_\rig(\cE_I^\heartsuit/k)\to H^{\bullet-|J\backslash
I|}_\rig(\cE_J^\heartsuit/k),\] and $\xi^I_J=0$ if $I\prec J$ does not
hold. Together, for $i\leq j$, they induce a map
\begin{align}\label{eq:gamma}
\xi^i_j\colon\bigoplus_{|I|=i}H^\bullet_\rig(\cE_I^\heartsuit/k)\to\bigoplus_{|J|=j}H^{\bullet+i-j}_\rig(\cE_J^\heartsuit/k),
\end{align}
such that $\xi^i_j\res_{H^\bullet_\rig(\cE_I^\heartsuit/k)}$ is the direct
sum of $\xi^I_J$ for all $J$ with $|J|=j$. First, we have the following
lemma.

\begin{lem}\label{le:highest}
Let notation be as above. For every $0\leq q\leq s$, the restriction of
\[\xi^0_q\colon H^q_\rig(\cE^\heartsuit/k)\to\bigoplus_{|J|=q}H^0_\rig(\cE_J^\heartsuit/k)\]
to $H^q_\rig(\cE^\heartsuit/k)_{2q}$ is injective.
\end{lem}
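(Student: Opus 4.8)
The plan is to prove, by induction on the number $M$ of divisor components, the following slight generalization: for every smooth proper $\widetilde k$-scheme $\cE$ equipped with a strict normal crossing divisor $\bigcup_{i=1}^{M}Z_i$, with $\cE_I$, $\cE_I^\heartsuit$ and the residue maps $\xi^I_J$ defined exactly as in the text (so each $\cE_I$ is smooth and proper and $\cE_I^\heartsuit$ is its dense open stratum), the map $\xi^0_q$ is injective on $H^q_\rig(\cE^\heartsuit/k)_{2q}$; specializing $Z_i=\cE_i\cap\cE$ recovers the lemma. Throughout I would replace $k$ freely by a finite extension so the relevant Frobenius structures are defined, which changes neither the cohomology groups nor their weights. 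For the base case $M=0$ one has $\cE^\heartsuit=\cE$, which is smooth and proper, so $H^q_\rig(\cE/k)$ is pure of weight $q$; hence $H^q_\rig(\cE^\heartsuit/k)_{2q}=0$ for $q\geq 1$, while for $q=0$ the map $\xi^0_0$ is the identity.

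For the inductive step I would single out the component $Z_1$ of smallest index: put $U\coloneqq\cE\setminus\bigcup_{i\geq 2}Z_i$ and $D\coloneqq Z_1\cap U$, so that $U$ is smooth, $D$ is a closed smooth divisor of $U$ that coincides with $\cE_{\{1\}}^\heartsuit$, and $\cE^\heartsuit=U\setminus D$. The localization sequence of $(U,D)$ together with the Gysin isomorphism then gives an exact sequence
\[
H^q_\rig(U/k)\xrightarrow{\ \iota^*\ }H^q_\rig(\cE^\heartsuit/k)\xrightarrow{\ \xi^\emptyset_{\{1\}}\ }H^{q-1}_\rig(\cE_{\{1\}}^\heartsuit/k),
\]
whose second arrow is precisely the residue $\xi^\emptyset_{\{1\}}$; it is Frobenius-equivariant, and it carries the weight-$w$ part of its source into the weight-$(w-2)$ part of its target (the connecting map preserves weight, the Gysin isomorphism lowers it by $2$, consistently with the stated weight ranges). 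Both $(\cE,\{Z_i\}_{i\geq 2})$ and $(\cE_{\{1\}},\{Z_i\cap\cE_{\{1\}}\}_{i\geq 2})$ are again instances of the generalization with $M-1$ components, so the inductive hypothesis applies to each.

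Given $x\in H^q_\rig(\cE^\heartsuit/k)_{2q}$ with $\xi^0_q(x)=0$, I would proceed as follows. Because $\prec$ forces every chain from $\emptyset$ to a set $J$ to add its elements in increasing order, whenever $1\in J$ the unique chain begins $\emptyset\prec\{1\}\prec\cdots\prec J$; thus $\xi^\emptyset_J$ factors as the iterated residue attached to the chain $\{1\}\prec\cdots\prec J$---which takes place entirely inside $\cE_{\{1\}}$---composed with $\xi^\emptyset_{\{1\}}$, and summing over $J\ni 1$ identifies that part of $\xi^0_q$ with $\Xi\circ\xi^\emptyset_{\{1\}}$, where $\Xi$ is the map ``$\xi^0_{q-1}$'' for $(\cE_{\{1\}},\{Z_i\cap\cE_{\{1\}}\}_{i\geq 2})$. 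Since $\xi^\emptyset_{\{1\}}(x)\in H^{q-1}_\rig(\cE_{\{1\}}^\heartsuit/k)_{2(q-1)}$ and $\xi^0_q(x)=0$ forces $\Xi(\xi^\emptyset_{\{1\}}(x))=0$, the inductive hypothesis for $\cE_{\{1\}}$ gives $\xi^\emptyset_{\{1\}}(x)=0$; hence $x=\iota^*(\tilde x)$ for some $\tilde x$, which by Frobenius-equivariance may be taken in $H^q_\rig(U/k)_{2q}$. Finally, for $J\subset\{2,\dots,M\}$ the maps defining $\xi^\emptyset_J$ are compatible with the open immersion $\cE^\heartsuit\hookrightarrow U$ and with its restrictions to the strata---the induced maps on the relevant $H^0$'s being isomorphisms, as restrictions to dense opens---so the vanishing of the remaining components of $\xi^0_q(x)$ says exactly that the ``$\xi^0_q$'' of $\tilde x$ for $(\cE,\{Z_i\}_{i\geq 2})$ vanishes; the inductive hypothesis for $\cE$ then gives $\tilde x=0$, and therefore $x=0$.

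The steps I expect to require the most care---none of them a genuine obstacle---are: identifying the connecting maps of the various localization sequences with the $\xi^I_J$ of \eqref{eq:gamma} and checking the weight shift; the associativity (graded functoriality) of iterated Gysin residues along nested strata, which is what makes $\xi^\emptyset_J$ factor through $\xi^\emptyset_{\{1\}}$; and the compatibility of all of this with restriction to open subschemes and with the Frobenius weight decompositions. The base case invokes purity of rigid cohomology for smooth proper schemes, which is standard.
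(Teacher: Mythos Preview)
Your argument is correct, but it takes a genuinely different route from the paper's. The paper gives a one-line direct analysis of the kernel: by repeatedly applying the localization sequence \eqref{eq:support} along the stratification $\cE\supset\cE^{[1]}\supset\cdots$, the kernel of $\xi^0_q$ admits a filtration whose graded pieces are subquotients of the groups $H^{q+|I|}_{\cE_I,\rig}(\cE/k)$ for $|I|<q$; since $\cE_I$ is smooth proper of codimension $|I|$ in the smooth proper $\cE$, Gysin and purity make each such group pure of weight $q+|I|<2q$, so the kernel carries no weight $2q$. Your approach instead runs an induction on the number $M$ of divisor components, peeling off the smallest-indexed divisor via a single localization triangle and applying the inductive hypothesis both on $\cE_{\{1\}}$ (to kill the residue $\xi^\emptyset_{\{1\}}(x)$) and on $\cE$ with one fewer divisor (to kill the lift $\tilde x$). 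The two arguments rest on the same ingredients---purity for smooth proper schemes and the weight shift in the Gysin isomorphism---but package them differently: the paper's filtration argument is shorter and avoids the compatibility checks you flag (factorization of $\xi^\emptyset_J$ through $\xi^\emptyset_{\{1\}}$, and compatibility of residues with the restriction $\cE^\heartsuit\hookrightarrow U$), while your induction is more self-contained and makes the role of each divisor transparent. Both are perfectly valid; the compatibilities you mark as needing care are indeed routine.
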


\begin{proof}
By the long exact sequence of cohomology with support \eqref{eq:support}, the
kernel of the map $\xi^0_q$ is a weight preserving extension of $k$-vector
spaces $H^{q+|I|}_{\cE_I,\rig}(\cE/k)$ for $|I|<q$. Therefore, the lemma
follows since $H^{q+|I|}_{\cE_I,\rig}(\cE/k)$ is pure of weight $q+|I|<2q$ by
\cite{Tsu99}*{Theorems 5.2.1 \& 6.2.5} (with constant coefficients).
\end{proof}

Denote by $Z^i(\cE)^\heartsuit$ the abelian group generated by $\cE_I$ with
$|I|=i$, modulo the subgroup generated by $\cE_I$ with $\cE_I=\emptyset$. Put
$Z(\cE)^\heartsuit=\bigoplus_{i=0}^M Z^i(\cE)^\heartsuit$. The image of
$\cE_I$ in $Z(\cE)^\heartsuit$ will be denoted by $[\cE_I]$. We define a
wedge product
\[\wedge\colon Z(\cE)^\heartsuit\otimes Z(\cE)^\heartsuit\to Z(\cE)^\heartsuit,\]
which is group homomorphism uniquely determined by the following conditions:
\begin{itemize}
  \item $Z_1\wedge Z_2=(-1)^{ij}Z_2\wedge Z_1$, if $Z_1\in
      Z^i(\cE)^\heartsuit$ and $Z_2\in Z^j(\cE)^\heartsuit$;
  \item $[\cE_I]\wedge[\cE_J]=0$ if $I\cap J\neq\emptyset$;
  \item $[\cE_I]\wedge[\cE_J]=[\cE_{I\cup J}]$ if $I\cap J=\emptyset$ and
      $I\prec I\cup J$.
\end{itemize}
It is easy to see that $\wedge$ is associative and maps
$Z^i(\cE)^\heartsuit\otimes Z^j(\cE)^\heartsuit$ into
$Z^{i+j}(\cE)^\heartsuit$. We have an (injective) class map
\[\cl^\heartsuit\colon Z(\cE)^\heartsuit\to \bigoplus_{I}H^0_\rig(\cE_I^\heartsuit/k)\simeq
\bigoplus_{I}k^{\oplus\pi_0(\cE_I^\heartsuit)}\]
sending $[\cE_I]$ to the canonical generator on (each connected component of)
$\cE_I^\heartsuit$.

For an element $f\in\cO^*(\cS_k^\an,\pi^{-1}\cE^\heartsuit)$, that is, an
invertible function on some open neighborhood of $\pi^{-1}\cE^\heartsuit$ in
$\cS_k^\an$, we can associate canonically an element $\DIV(f)\in
Z^1(\cE)^\heartsuit$. In fact, there exists an element $c\in k^\times$ such
that $|cf|=1$ on $\pi^{-1}\cE^\heartsuit$. Thus the reduction
$\widetilde{cf}$ is an element in $\cO^*_\cE(\cE^\heartsuit)$, and we define
$\DIV(f)$ to be the associated divisor of $\widetilde{cf}$, which is an
element in $Z^1(\cE)^\heartsuit$. Obviously, it does not depend on the choice
of $c$. Finally, note that by the definition of rigid cohomology, we have a
canonical isomorphism $H^\bullet_\dr(\cS_k^\an,\pi^{-1}\cE^\heartsuit)\simeq
H^\bullet_\rig(\cE^\heartsuit/k)$.

\begin{proposition}\label{pr:log}
Let notation be as above. Given
$f_1,\dots,f_q\in\cO^*(\cS_k^\an,\pi^{-1}\cE^\heartsuit)$, if we regard
$\frac{\rd{f_1}}{f_1}\wedge\cdots\wedge\frac{\rd{f_q}}{f_q}$ as an element in
$H^q_\dr(\cS_k^\an,\pi^{-1}\cE^\heartsuit)\simeq H^q_\rig(\cE^\heartsuit/k)$,
then we have
\begin{align}\label{eq:log}
\xi^0_q\(\frac{\rd{f_1}}{f_1}\wedge\cdots\wedge\frac{\rd{f_q}}{f_q}\)
=\cl^\heartsuit\(\DIV(f_1)\wedge\cdots\wedge\DIV(f_q)\).
\end{align}
\end{proposition}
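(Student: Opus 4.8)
The plan is to reduce the statement to the case $q=1$ by a compatibility argument, and then verify the case $q=1$ directly.

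First I would record the case $q=1$. Given $f\in\cO^*(\cS_k^\an,\pi^{-1}\cE^\heartsuit)$, normalize by $c\in k^\times$ so that $|cf|=1$ on $\pi^{-1}\cE^\heartsuit$; since $\frac{\rd f}{f}=\frac{\rd(cf)}{cf}$, we may assume $|f|=1$ there. Now for each $j\in\{1,\dots,M\}$ the local computation reduces to the one-variable picture: on the stratum near $\cE_{\{j\}}^\heartsuit$, the component $\xi^0_{\{1,\dots,M\}}$ restricted to $\cE_{\{j\}}$ is obtained from the boundary map $H^1_\rig(\cE^\heartsuit/k)\to H^2_{\cE_{\{j\}}^\heartsuit,\rig}(\cE\backslash\cS_s^{[2]}/k)\xrightarrow{\sim}H^0_\rig(\cE_{\{j\}}^\heartsuit/k)$ of \eqref{eq:support} followed by Gysin. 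Pulling back to a formal chart $\Spf F^\circ[[t_0]]\langle t_1,t_1^{-1},\dots\rangle/(t_0\cdots)$ adapted to $\cE$ and $\cE_{\{j\}}$ (as in Step 4 of the proof of Lemma \ref{le:stalk}), the function $f$ has a reduction $\widetilde f\in\cO^*_\cE(\cE^\heartsuit)$ whose order along $\cE_{\{j\}}^\heartsuit$ is the coefficient $n_j$ of $[\cE_{\{j\}}]$ in $\DIV(f)$; writing $f=u\cdot t_0^{n_j}$ with $u$ a unit whose reduction is defined and nonvanishing on $\cE_{\{j\}}^\heartsuit$, we get $\frac{\rd f}{f}=n_j\frac{\rd t_0}{t_0}+\frac{\rd u}{u}$. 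The residue/boundary map kills $\frac{\rd u}{u}$ (it extends across) and sends $\frac{\rd t_0}{t_0}$ to the canonical generator on $\cE_{\{j\}}^\heartsuit$, exactly as in the computation of $H^1_\rig(\bG_\r m/k)$ at the end of the proof of Lemma \ref{le:initial}. Thus $\xi^I_{I\cup\{j\}}\bigl(\frac{\rd f}{f}\bigr)=n_j\cdot\cl^\heartsuit([\cE_{I\cup\{j\}}])$ on each relevant stratum, which is precisely $\cl^\heartsuit(\DIV(f))$ in degree-one; this establishes \eqref{eq:log} for $q=1$.

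Next I would pass to general $q$ by multiplicativity. The key point is that the collection of maps $\xi^i_j$ assembles into a complex $\bigl(\bigoplus_{|I|=i}H^\bullet_\rig(\cE_I^\heartsuit/k),\xi\bigr)$ which is a differential graded algebra under the wedge product induced by $Z(\cE)^\heartsuit$, in such a way that $\cl^\heartsuit$ is a map of DGAs and the edge map $H^q_\dr(\cS_k^\an,\pi^{-1}\cE^\heartsuit)\to\bigoplus_{|I|=q}H^0_\rig(\cE_I^\heartsuit/k)$, i.e. $\xi^0_q$, is a ring homomorphism on the subalgebra generated by classes of the form $\frac{\rd f}{f}$. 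Concretely, I would check that $\xi^0_{q}\bigl(\alpha\wedge\frac{\rd f}{f}\bigr)=\xi^0_{q-1}(\alpha)\wedge\xi^1_q\bigl(\frac{\rd f}{f}\bigr)$ whenever $\alpha$ is a product of logarithmic forms and $f\in\cO^*$; this is a Leibniz-type identity for the Gysin/residue maps, using that $\frac{\rd f}{f}$ is a \emph{closed} logarithmic form so that the only contribution to its residue along a component $\cE_{\{j\}}$ is the order of vanishing, with no "lower order" correction terms. Granting this, induction on $q$ gives
\[
\xi^0_q\Bigl(\tfrac{\rd f_1}{f_1}\wedge\cdots\wedge\tfrac{\rd f_q}{f_q}\Bigr)
=\xi^1_q\Bigl(\tfrac{\rd f_1}{f_1}\Bigr)\wedge\cdots\wedge\xi^{q-1}_q\Bigl(\tfrac{\rd f_q}{f_q}\Bigr)
=\cl^\heartsuit\bigl(\DIV(f_1)\bigr)\wedge\cdots\wedge\cl^\heartsuit\bigl(\DIV(f_q)\bigr),
\]
and the right-hand side equals $\cl^\heartsuit\bigl(\DIV(f_1)\wedge\cdots\wedge\DIV(f_q)\bigr)$ because $\cl^\heartsuit$ respects $\wedge$ by construction.

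The main obstacle I expect is the Leibniz identity for the iterated Gysin maps $\xi^I_J$ — i.e. verifying that the residue of a product of logarithmic forms along a stratum $\cE_J$ factors as the product of the individual residues, with the correct signs dictated by the ordering convention $I\prec J$. This is where one must be careful: a priori the boundary maps in \eqref{eq:support} and the Gysin isomorphisms are only additive, and compatibility with products requires choosing the logarithmic representatives so that on an overlapping chart adapted simultaneously to several components one can write all the $f_i$ in the monomial form $u_i\prod_k t_k^{n_{ik}}$ at once. I would handle this by working on the formal charts $\fS_i$ of Step 3 of Lemma \ref{le:stalk}, where the relevant $\cE_J$ are cut out by coordinate hyperplanes, so that $\frac{\rd f_1}{f_1}\wedge\cdots\wedge\frac{\rd f_q}{f_q}$ becomes, modulo exact and locally-bounded terms, an honest wedge of $\frac{\rd t_k}{t_k}$'s with integer coefficients given by a minor of the order matrix $(n_{ik})$; the residue along $\cE_J$ then reads off exactly the corresponding minor, which is the coefficient of $[\cE_J]$ in $\DIV(f_1)\wedge\cdots\wedge\DIV(f_q)$. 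The sign bookkeeping matches the definition of $\wedge$ on $Z(\cE)^\heartsuit$ precisely because both the Gysin maps and the wedge product are normalized by the same increasing-sequence convention.
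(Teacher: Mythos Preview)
Your strategy differs from the paper's, and the gap is exactly where you flag it: the Leibniz/multiplicativity identity for the iterated Gysin maps $\xi^I_J$. Your displayed formula
\[
\xi^0_q\Bigl(\tfrac{\rd f_1}{f_1}\wedge\cdots\wedge\tfrac{\rd f_q}{f_q}\Bigr)
=\xi^1_q\Bigl(\tfrac{\rd f_1}{f_1}\Bigr)\wedge\cdots\wedge\xi^{q-1}_q\Bigl(\tfrac{\rd f_q}{f_q}\Bigr)
\]
does not typecheck: $\xi^1_q$ has source $\bigoplus_{|I|=1}H^\bullet_\rig(\cE_I^\heartsuit/k)$, not $H^1_\rig(\cE^\heartsuit/k)$, and it is unclear in what graded ring the right-hand side is meant to live. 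The underlying residue maps obey a genuine Leibniz rule $\xi(\alpha\wedge\beta)=\xi(\alpha)\wedge\beta|_D\pm\alpha|_D\wedge\xi(\beta)$, not the pure product rule you need; iterating this yields a sum over all ways of distributing the factors among ``residue'' and ``restriction'', and organizing that sum into the minor of the order matrix $(n_{ik})$ is exactly the content of the local monomial computation you defer to the last paragraph. So the inductive step is not yet a proof --- and carrying it out amounts to redoing, in less tidy language, the paper's local calculation.

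The paper bypasses both the $q=1$ base case and any Leibniz identity. Since both sides of \eqref{eq:log} are multilinear in the $f_i$ and $\frac{\rd f}{f}$ depends only on the reduction $\widetilde f$ (after normalizing $|f|=1$), one may check the identity locally near the generic point of each component of $\cE_J$ with $|J|=q$, pass to an affine chart \'etale over $\Spec k^\circ[T_0,\dots,T_q]/(T_0\cdots T_q-\varpi)$, and reduce to $f_i=T_i$. Functoriality of $\xi^I_J$ under pullback then reduces to the model scheme $\cS=\Spec k^\circ[T_0,\dots,T_q]/(T_0\cdots T_q-\varpi)$ itself. A further projection $g\colon\cS\to\cS'=\bA^q_{k^\circ}$ identifies the rigid-cohomological Gysin maps with their algebraic de~Rham analogues on $\bP^q$, where the identity $\alpha^0_q(\frac{\rd T_1}{T_1}\wedge\cdots\wedge\frac{\rd T_q}{T_q})=1$ is the classical iterated-residue computation. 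This handles all $q$ at once with no induction and no DGA bookkeeping --- so what the paper's approach buys is precisely avoidance of the sign and compatibility checks you would still have to do.
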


\begin{proof}
The question is local around the generic point of every connected component
of $\cE_I$ with $|I|=q$. Thus, we may assume that $\cS$ is affine and admits
a smooth morphism
\[f\colon\cS\to\Spec k^\circ[T_0,\dots,
T_q]/(T_0\cdots T_q-\varpi)\] where $\varpi$ is a uniformizer of $k$, such
that
\begin{itemize}
  \item $\cE=\cE_0$ and $\cE_i$ ($i=1,\dots,q$) are all the irreducible
      components of $\cS_s$ that intersect $\cE$, where $\cE_i$ is
      defined by the ideal $(f^*T_i,\varpi)$;
  \item $\cE_I$ is irreducible and nonempty for $I\subset\{1,\dots,q\}$.
\end{itemize}
Since $\frac{\rd{(cf)}}{cf}=\frac{\rd{f}}{f}$; both sides of \eqref{eq:log}
are multi-linear in
$f_1,\dots,f_q\in\cO^*(\cS_k^\an,\pi^{-1}\cE^\heartsuit)$; and $\frac{\rd
f}{f}=\frac{\rd f'}{f'}$ in $H^1_\rig(\cE^\heartsuit/k)$ if $|f|=|f'|=1$ on
$\pi^{-1}\cE^\heartsuit$ and $\widetilde{f}=\widetilde{f'}$, we may assume
that $f_i=f^*T_i$. Then as both sides of \eqref{eq:log} are functorial in $f$
under pullback, we may assume that $\cS=\Spec k^\circ[T_0,\dots,
T_q]/(T_0\cdots T_q-\varpi)$ and $f_i=T_i$.

Put $\cS'=\Spec k^\circ[T_1,\dots,T_q]$ and let $g\colon\cS\to\cS'$ be the
morphism sending $T_i$ to $T_i$ ($1\leq i\leq q$). For
$I\subset\{1,\dots,q\}$, let $\cE'_I$ be the closed subscheme of $\cS'_s$
defined by the ideal $(\varpi,T_i\res i\in I)$. Then $g$ induces an
isomorphism $\cE_I\simeq\cE'_I$. Similarly, we have maps
\[{\xi'}^I_J\colon H^\bullet_\rig(\cE'^\heartsuit_I/k)\to H^{\bullet+i-j}_\rig(\cE'^\heartsuit_J/k)\]
for $I\subset J$ and ${\xi'}^i_j$ for $i\leq j$, where
$\cE'^\heartsuit_I=g(\cE^\heartsuit_I)$. It is easy to see that
${\xi'}^I_J=\xi^I_J$ if we identify $H^\bullet_\rig(\cE'^\heartsuit_I/k)$
with $H^\bullet_\rig(\cE^\heartsuit_I/k)$ through $g^*$. Therefore, it
suffices to show the equality \eqref{eq:log} for $\cS'$, that is,
\[{\xi'}^0_q\(\frac{\rd{T_1}}{T_1}\wedge\cdots\wedge\frac{\rd{T_q}}{T_q}\)
=1\in H^0_\rig(\cE'_{\{1,\dots,q\}}/k)\simeq k.\] However, $\cS'$, which is
isomorphic to $\bA^q_{k^\circ}$ can be canonically embedded into the proper
smooth scheme $\bP^q_{k^\circ}$ over $k^\circ$. Thus, the rigid cohomology
$H^\bullet_\rig(\cE'^\heartsuit_I/k)$ and the map ${\xi'}^i_j$ can be
computed on $(\bP^q_k)^\an$. On the generic fiber $\cS'_k$, we similarly
define $\cT_I$ to be the closed subscheme $\Spec k[T_1,\dots,T_q]/(T_i\res
i\in I)$ of $\cS'_k$ for $I\subset\{1,\dots,q\}$, and
$\cT_I^\heartsuit=\cT_I\backslash\bigcup_{I\varsubsetneq J}\cT_J$. We may
similarly define maps $\alpha^I_J\colon H^\bullet_\dr(\cT_I^\heartsuit)\to
H^{\bullet-|J\backslash I|}_\dr(\cT_J^\heartsuit)$ and $\alpha^i_j$ via
algebraic de Rham cohomology theory. Then we have canonical vertical
isomorphisms rendering the diagram
\[\xymatrix{
H^\bullet_\dr(\cT_I^\heartsuit) \ar[r]^-{\alpha^I_J}\ar[d]_-{\simeq} &  H^{\bullet-|J\backslash I|}_\dr(\cT_J^\heartsuit) \ar[d]^-{\simeq} \\
H^\bullet_\rig(\cE'^\heartsuit_I/k) \ar[r]^-{{\xi'}^I_J} &  H^{\bullet-|J\backslash I|}_\rig(\cE'^\heartsuit_J/k)
}\] commutative. From the standard computation in algebraic de Rham cohomology, we have
\[\alpha^0_q\(\frac{\rd{T_1}}{T_1}\wedge\cdots\wedge\frac{\rd{T_q}}{T_q}\)
=1\in H^0_\dr(\cT_{\{1,\dots,q\}}),\] where $\cT_{\{1,\dots,q\}}$ is just the
point of origin. Thus, the proposition is proved.
\end{proof}

Now we are ready to prove Theorem \ref{th:1}. We begin with the case of
\'{e}tale cohomology and then the case of analytic topology.

\begin{proof}[Proof of Theorem \ref{th:1} in \'{e}tale topology]

As in the previous section, sheaves like $\cO_X$, $\fc_X$, and the de Rham
complex $(\Omega^\bullet_X,\rd)$ are understood in the \'{e}tale topology.

The direct sum decomposition has been proved in Theorem \ref{th:weight} (4).
Property (i) follows from Theorem \ref{th:weight} (3).

For Property (ii), the inclusion
$\Upsilon^q_X\subset(\Omega^{q,\cl}_X/\rd\Omega^{q-1}_X)_{2q}$ follows from
Theorem \ref{th:weight} (2) and Remark \ref{re:decomp}. Now we show that
$(\Omega^{1,\cl}_X/\rd\cO_X)_2\subset\Upsilon^1_X$. We check the inclusion on
stalks. Take a point $x\in X$ with $s=s(x)$ and $t=t(x)$. For every class
$[\omega]$ in the stalk of $(\Omega^{1,\cl}_X/\rd\cO_X)_2$ at $x$, we may
find a fundamental chart $(\bD,(\cY,\cD),(D,\delta),W,\cZ,\alpha;y)$ of
$(X;x)$ such that $[\omega]$ has a representative $\omega\in
H^1_{(2)}(\bD,(\cY,\cD),(D,\delta),W)$. Note that the decomposition
\eqref{eq:decomposition} specializes to the decomposition
\[H^1_\dr(\bD\times_L(W,\pi^{-1}\cD_{\widetilde{L}}))=H^1_\rig(\cD/L)\oplus H^1_\dr(\bE^t_\varpi\widehat\otimes_kL).\]
If the restriction of $\omega$ to
$H^1_\dr(\bD\times_L(W,\pi^{-1}\cD_{\widetilde{L}}))$ belongs to
$H^1_\dr(\bE^t_\varpi\widehat\otimes_kL)$, then we are done. Otherwise,
$\omega$ restricts to $H^1_\rig(\cD/L)_2$. It suffices to show that classes
in $H^1_\rig(\cD/L)_2$ can be represented by logarithmic differential of
invertible functions \'{e}tale locally, up to a constant multiple.

We repeat certain process in Step 2 of the proof of Lemma \ref{le:stalk}.
Choose a smooth $k^\circ$-algebra $D^\natural$ (of dimension $s$) such that
its $\varpi$-adic completion is $D^\circ$, a compactification $(\Spec
D^\natural)_k\hookrightarrow\overline{\cS}_k$ over $k$, and define
$\overline{\cS}$ to be the $k^\circ$-scheme $\overline{\cS}_k\coprod_{(\Spec
D^\natural)_k}\Spec D^\natural$. Then we obtain a finite extension $k'/k$, an
alteration $\cS^\natural\to\overline{\cS}_{k'^\circ}$ and a
$k'^\circ$-compactification $\cS^\natural\hookrightarrow\cS$ where $\cS$ is a
projective strictly semi-stable scheme over $k'^\circ$ such that
$\cS\backslash\cS^\natural$ is a strict normal crossing divisor of $\cS$. We
may further assume that all irreducible components of $\cS_s$ are
geometrically irreducible. To ease notation, we replace $k$ by $k'$ and
possibly $L$ by a finite extension. We may fix an irreducible component $\cE$
of $\cS_s$ such that its generic point belongs to $\cS^\natural_s$ and maps
to the generic point of $\overline{\cS}_s\simeq\cD$. Thus there is a unique
point $\sigma_\cE\in(\widehat{\cS^\natural})_\eta$ such that
$\pi(\sigma_\cE)$ is the generic point of $\cE$.

Now we apply the setup in the beginning of this section to $\cS$ and $\cE$.
Note that $\cE\cap\cS^\natural_s$ coincides with $\cE^\heartsuit$. It
suffices to show that every class in $H^1_\rig(\cE^\heartsuit/k)_2$ can be
represented by the logarithmic differential of an invertible function on some
\'{e}tale neighborhood of $\sigma_\cE$. Put $\cE^{[i]}=\cE\cap\cS_s^{[i]}$
for $i\geq 1$. We have
$\cE^{[1]}\backslash\cE^{[2]}=\coprod_{i=1}^M\cE_{\{i\}}^\heartsuit$. There
are exact sequences
\begin{align}\label{eq:gysin}
H^1_\rig(\cE/k)\to H^1_\rig(\cE^\heartsuit/k)\to H^2_{\cE^{[1]},\rig}(\cE/k) \to H^2_\rig(\cE/k),
\end{align}
and
\begin{align*}
H^2_{\cE^{[2]},\rig}(\cE/k)\to H^2_{\cE^{[1]},\rig}(\cE/k) \to H^2_{\cE^{[1]}\backslash\cE^{[2]},\rig}(\cE\backslash\cE^{[2]}/k)
\to H^3_{\cE^{[2]},\rig}(\cE/k).
\end{align*}
As the codimension of $\cE^{[2]}$ in $\cE$ is at least $2$, we have
$H^2_{\cE^{[2]},\rig}(\cE/k)=H^3_{\cE^{[2]},\rig}(\cE/k)=0$. Thus,
\[H^2_{\cE^{[1]},\rig}(\cE/k)\simeq H^2_{\cE^{[1]}\backslash\cE^{[2]},\rig}(\cE\backslash\cE^{[2]}/k)
\simeq\bigoplus_{i=1}^M H^0_\rig(\cE^\heartsuit_{\{i\}}/k).\] Since the composition
\[\bigoplus_{i=1}^M H^0_\rig(\cE_{\{i\}}^\heartsuit/k)\simeq\bigoplus_{i=1}^M H^2_{\cE_{\{i\}},\rig}(\cE/k)\to
H^2_{\cE^{[1]},\rig}(\cE/k)\to\bigoplus_{i=1}^M
H^0_\rig(\cE_{\{i\}}^\heartsuit/k)\] is an isomorphism, we may replace the
term $H^2_{\cE^{[1]},\rig}(\cE/k)$ in \eqref{eq:gysin} by $\bigoplus_{i=1}^M
H^0_\rig(\cE_{\{i\}}/k)$, which is isomorphic to the $k$-vector space
$Z^1(\cE)^\heartsuit\otimes k$, and the boundary map
\[Z^1(\cE)^\heartsuit\otimes k\to H^2_\rig(\cE/k)\]
becomes the cycle class map in rigid cohomology. As $H^1_\rig(\cE/k)$ is of
pure weight $1$, we have the isomorphism
\begin{align}\label{eq:gysin1}
H^1_\rig(\cE^\heartsuit/k)_2\xrightarrow{\sim}\Ker(Z^1(\cE)^\heartsuit\otimes k\to H^2_\rig(\cE/k)).
\end{align}

Now take a divisor $D=\sum_{i=1}^Mc_i[\cE_{\{i\}}]$ with $c_i\in\bZ$ such
that its cycle class in $H^2_\rig(\cE/k)\simeq H^2_\r{\cris}(\cE/k)$ is
trivial. Then there exists some integer $\mu>0$ such that $\mu D$ is
algebraically equivalent to zero, and in particular $\cO_{\cE}(\mu D)$ is an
element in $\Pic^0_{\cE/\widetilde{k}}(\widetilde{k})$. Since
$\Pic^0_{\cE/\widetilde{k}}$ is a projective scheme over the finite field
$\widetilde{k}$, one may replace $\mu$ by some multiple such that
$\cO_\cE(\mu D)$ is a trivial line bundle. Therefore, there exists a function
$\tilde{f}\in\cO^*_\cE(\cE^\heartsuit)$ with $\DIV(\tilde{f})=\mu D$. We may
assume that that $\tilde{f}$ lifts to a function
$f\in\cO^*(\cS_k^\an,\pi^{-1}\cE^\heartsuit)$. (Otherwise, we may take an
affine open subscheme $\Spec D'$ of $\cS$ such that $(\Spec D')_s$ is densely
contained in $\cE^\heartsuit$ and $\tilde{f}\res_{(\Spec D')_s}$ lifts to a
function $f\in\cO(\Spec D'_k)$, and repeat the above process to $\Spec D'$.)
By Proposition \ref{pr:log}, $\frac{\rd{f}}{f}$ has image $D$ under the map
$H^1_\rig(\cE^\heartsuit/k)\to H^2_{\cE^{[1]},\rig}(\cE/k)\simeq
Z^1(\cE)^\heartsuit\otimes k$. Thus, (ii) is proved.

For Property (iii), when $X$ has dimension $1$, it follows from (the proof
of) \cite{Berk07}*{Theorem 4.3.1}. In general, it suffices to show that
$(\Omega^{1,\cl}_X/\rd\Omega^0_X)_1\subset\Psi_X$ by \cite{Berk07}*{Theorem
4.5.1 (i)} and Theorem \ref{th:weight} (4). However, this follows from the
definition of $\Psi_X$, Theorem \ref{th:weight} (5), and the case of curves.
\end{proof}

\begin{proof}[Proof of Theorem \ref{th:1} in analytic topology]

Now sheaves like $\cO_X$, $\fc_X$, $\Upsilon_X^q$, and the de Rham complex
$(\Omega^\bullet_X,\rd)$ are understood in the analytic topology. The
corresponding objects in the \'{e}tale topology will be denoted by
$\cO_{X_{\et}}$, $\fc_{X_{\et}}$, $\Upsilon_{X_{\et}}^q$, and
$(\Omega^\bullet_{X_{\et}},\rd)$.

Note that we have a canonical morphism $\nu\colon X_{\et}\to X$ of sites, and
$\cO_X=\nu_*\cO_{X_{\et}}$, $\fc_X=\nu_*\fc_{X_{\et}}$,
$\Omega^q_X=\nu_*\Omega^q_{X_{\et}}$ for every $q$. We claim that the
canonical map
$\Omega^{q,\cl}_X/\rd\Omega^{q-1}_X\to\nu_*(\Omega^{q,\cl}_{X_{\et}}/\rd\Omega^{q-1}_{X_{\et}})$
is an isomorphism. It will follow from: (a)
$\Omega^{q,\cl}_X=\nu_*\Omega^{q,\cl}_{X_{\et}}$ as subsheaves of
$\Omega^q_X$; (b) $\rd\Omega^{q-1}_X=\nu_*\rd\Omega^{q-1}_{X_{\et}}$ as
subsheaves of $\Omega^q_X$; (c) $\rR^1\nu_*\rd\Omega^{q-1}_{X_{\et}}=0$.
Assertion (a) is obvious. Both (b) and (c) will follow from the general fact
that $\rR^i\nu_*\sF=0$ for $i>0$ and any sheaf of $\bQ$-vector spaces $\sF$
on $X_{\et}$. In fact for every $x\in X$, we have
$(\rR^i\nu_*\sF)_x=H^i(\cH(x),i_x^{-1}\sF)$, where $\cH(x)$ is the completed
residue field of $x$ and $i_x\colon\cM(\cH(x))\to X$ is the canonical
morphism, and we know that the profinite Galois cohomology
$H^i(\cH(x),i_x^{-1}\sF)$ is torsion hence trivial for $i>0$.

Now for $w\in\bZ$, we define
$(\Omega^{q,\cl}_X/\rd\Omega^{q-1}_X)_w=\nu_*(\Omega^{q,\cl}_{X_{\et}}/\rd\Omega^{q-1}_{X_{\et}})_w$.
Then we have a decomposition
\[\Omega^{q,\cl}_X/\rd\Omega^{q-1}_X=\bigoplus_{w\in\bZ}(\Omega^{q,\cl}_X/\rd\Omega^{q-1}_X)_w,\]
stable under base change and functorial in $X$ and satisfying Property (i).

For Property (ii), we have the inclusion
$\Upsilon^q_X\subset\nu_*\Upsilon^q_{X_{\et}}$ as subsheaves of
$\Omega^{q,\cl}_X/\rd\Omega^{q-1}_X$, which is canonically isomorphic to
$\nu_*(\Omega^{q,\cl}_{X_{\et}}/\rd\Omega^{q-1}_{X_{\et}})$. Thus, we have
the inclusion of sheaves
$\Upsilon^q_X\subset(\Omega^{q,\cl}_X/\rd\Omega^{q-1}_X)_{2q}$. When $q=1$,
we have to show that $\nu_*\Upsilon^1_{X_{\et}}\subset\Upsilon^1_X$. We check
this on the stalk at an arbitrary point $x\in X$. Take an element $[\omega]$
in $(\nu_*\Upsilon^1_{X_{\et}})_x$. We may assume that it has a
representative $\omega\in\Omega^1_X(U)$ for some open neighborhood $U$ of $x$
satisfies $\alpha^*\omega=\frac{\rd f'}{f'}+\rd g'$ for some finite \'{e}tale
surjective morphism $\alpha\colon U'\to U$ and $f'\in\cO^*(U')$,
$g'\in\cO(U')$. Then $\omega=\deg(\alpha)^{-1}\frac{\rd f}{f}+\rd g$ where
$f$ (resp.\ $g$) is the multiplicative (resp.\ additive) trace of $f'$
(resp.\ $g'$) along $\alpha$.
\end{proof}

\section{Tropical cycle class map}
\label{ss:cycle}

In this section, we study the sheaf
$\Ker(\rd''\colon\sA_X^{q,0}\to\sA_X^{q,1})$ and its relation with de Rham
cohomology sheaves. We construct tropical cycle class maps and show their
compatibility with integration. In this section, sheaves like $\cO_X$,
$\fc_X$, and the de Rham complex $(\Omega^\bullet_X,\rd)$ are understood in
the analytic topology.

\begin{definition}[Sheaf of rational Milnor $K$-theory]\label{de:milnor}
Let $(X,\cO_X)$ be a ringed site. We define the \emph{$q$-th sheaf of
rational Milnor $K$-theory} $\sK^q_X$ for $(X,\cO_X)$ to be the sheaf
associated to the presheaf assigning an open $U$ in $X$ to
$K^M_q(\cO_X(U))\otimes\bQ$ (\cite{Sou85}*{\Sec 6.1}). Here,
$K^M_q(\cO_X(U))$ is the abelian group generated by the symbols
$\{f_1,\dots,f_q\}$ where $f_1,\dots,f_q\in\cO_X^*(U)$, modulo the relations
\begin{itemize}
  \item
      $\{f_1,\dots,f_if'_i,\dots,f_q\}=\{f_1,\dots,f_i,\dots,f_q\}+\{f_1,\dots,f'_i,\dots,f_q\}$,

  \item $\{f_1,\dots,f,\dots,1-f,\dots,f_q\}=0$.
\end{itemize}
\end{definition}

\begin{example}\label{ex:scheme}
Let $\cX$ be a smooth scheme of finite type over an arbitrary field $k$ of
dimension $n$. Then by \cite{Sou85}*{\Sec 6.1, Remarque}, we have an
isomorphism
\begin{align}\label{eq:class_milnor}
\cl_\sK\colon\CH^q(\cX)_\bQ\coloneqq\CH^q(\cX)\otimes\bQ\xrightarrow{\sim}H^q(\cX,\sK^q_\cX)
\end{align}
for every integer $q$. It can be viewed as a universal cycle class map.

If $\cZ$ is an irreducible closed subscheme of $\cX$ of codimension $q$ that
is a locally complete intersection, then $\cl_\sK(\cZ)$ has an explicit
description as follows: Choose a finite affine open covering $\cU_i$ of $\cX$
and $f_{i1},\dots,f_{iq}\in\cO_\cX(\cU_i)$ such that $\cZ\cap\cU_i$ is
defined by the ideal $(f_{i1},\dots,f_{iq})$. Let $\cU_{ij}$ be the
nonvanishing locus of $f_{ij}$. Then $\{\cU_{ij}\res j=1,\dots,q\}$ is an
open covering of $\cU_i\backslash\cZ$. Thus the element
$\{f_{i1},\dots,f_{iq}\}\in K_q^M(\cO_\cX(\bigcap_{j=1}^q\cU_{ij}))$ gives
rise to an element in $H^{q-1}(\cU_i\backslash\cZ,\sK_\cX^q)$ and hence in
$H^q_{\cZ\cap\cU_i}(\cU_i,\sK_\cX^q)$. One can show that the image in
$H^q_{\cZ\cap\cU_i}(\cU_i,\sK_\cX^q)$ does not depend on the choice of
$\{f_{i1},\dots,f_{iq}\}$. Therefore, we obtain a class $c(\cZ)$ in
$H^0(\cX,\underline{H}^q_\cZ(\cX,\sK_\cX^q))$. By
\cite{Sou85}*{Th\'{e}or\`{e}me 5}, we know that the map
$\underline{H}^i(\cX,\sK_\cX^q)\to
\underline{H}^i(\cX\backslash\cZ,\sK_\cX^q)$ is a bijection (resp.\ an
injection) if $i\leq q-2$ (resp.\ $i=q-1$), and thus
$\underline{H}^i_\cZ(\cX,\sK_\cX^q)=0$ for $i\leq q-1$. Thus, the local to
global spectral sequence induces an isomorphism $H^q_\cZ(\cX,\sK_\cX^q)\simeq
H^0(\cX,\underline{H}^q_\cZ(\cX,\sK_\cX^q))$. Then $\cl_\sK(\cZ)$ is the
image of $c(\cZ)$ under the map
$H^0(\cX,\underline{H}^q_\cZ(\cX,\sK_\cX^q))\simeq H^q_\cZ(\cX,\sK_\cX^q)\to
H^q(\cX,\sK_\cX^q)$.
\end{example}

We recall some facts from the theory of real forms on non-Archimedean
analytic spaces developed by Chambert-Loir and Ducros in \cite{CLD12}. (See
also \cite{Gub13} for a slightly different formulation.) Let $X$ be a
$K$-analytic space. There is a bicomplex
$(\sA_X^{\bullet,\bullet},\rd',\rd'')$ of sheaves of real vector spaces on
(the underlying topological space of) $X$, where $\sA_X^{q,q'}$ is the
\emph{sheaf of $(q,q')$-forms} (\cite{CLD12}*{\Sec 3.1}). Moreover, they
define another bicomplex $(\sD_X^{\bullet,\bullet},\rd',\rd'')$ of sheaves of
real vector spaces on $X$, where $\sD_X^{q,q'}$ is the \emph{sheaf of
$(q,q')$-currents}, together with a canonical map
\[\kappa_X\colon(\sA_X^{\bullet,\bullet},\rd',\rd'')\to (\sD_X^{\bullet,\bullet},\rd',\rd'')\]
of bicomplexes given by integration (\cite{CLD12}*{\Sec 4.2 \& \Sec 4.3}). It
is known that $\sA_X^{q,q'}=\sD_X^{q,q'}=0$ unless $0\leq q,q'\leq\dim(X)$.

\begin{definition}[Dolbeault cohomology]
Let $X$ be a $K$-analytic space. We define the \emph{Dolbeault cohomology}
(of forms) to be
\[H^{q,q'}_{\sA}(X)\coloneqq\frac{\Ker(\rd''\colon\sA_X^{q,q'}(X)\to\sA_X^{q,q'+1}(X))}
{\IM(\rd''\colon\sA_X^{q,q'-1}(X)\to\sA_X^{q,q'}(X))},\] and the
\emph{Dolbeault cohomology} (of currents) to be
\[H^{q,q'}_{\sD}(X)\coloneqq\frac{\Ker(\rd''\colon\sD_X^{q,q'}(X)\to\sD_X^{q,q'+1}(X))}
{\IM(\rd''\colon\sD_X^{q,q'-1}(X)\to\sD_X^{q,q'}(X))},\] together with an
induced map $\kappa_X\colon H^{q,q'}_{\sA}(X)\to H^{q,q'}_{\sD}(X)$.
\end{definition}

By \cite{Jel16}*{Corollary 4.6} and \cite{CLD12}*{Corollaire 3.3.7}, the
complex $(\sA_X^{q,\bullet},\rd'')$ is a fine resolution of
$\Ker(\rd''\colon\sA^{q,0}_X\to\sA^{q,1}_X)$. In particular, we have a
canonical isomorphism
\[H^\bullet(X,\Ker(\rd''\colon\sA^{q,0}_X\to\sA^{q,1}_X))\simeq H^{q,\bullet}_\sA(X).\]
Suppose that $X$ is of dimension $n$. By definition, we have a bilinear
pairing
\[\sD^{q,q'}_X(U)\times \sA^{n-q,n-q'}_X(U)_c\to\bR,\]
for every open $U\subset X$, where $\sA^{n-q,n-q'}_X(U)_c\subset
\sA^{n-q,n-q'}_X(U)$ is the subset of forms whose support is compact and
disjoint from the boundary of $X$. In particular, if $X$ is compact and
without boundary, then we have an induced pairing
\[\langle\;,\;\rangle_X\colon H^{q,q'}_\sA(X)\times H^{n-q,n-q'}_\sA(X)\to\bR.\]

\begin{definition}\label{de:kernel}
Let $X$ be a $K$-analytic space. We have the sheaf of rational Milnor
$K$-theory $\sK^\bullet_X$ for the ringed topological space $(X,\cO_X)$
(Definition \ref{de:milnor}).
\begin{enumerate}
  \item We define a map of sheaves
      \[\tau^q_X\colon\sK_X^q\to\Ker(\rd''\colon\sA^{q,0}_X\to\sA^{q,1}_X)\]
      as follows. For a symbol $\{f_1,\dots,f_q\}\in\sK_X^q(U)$ with
      $f_1,\dots,f_q\in\cO_X^*(U)$, we have the induced moment morphism
      $(f_1,\dots,f_q)\colon U\to(\bG_{\r{m},K}^\an)^q$. Composing with
      the evaluation map $-\log|\;|\colon(\bG_{\r{m},K}^\an)^q\to\bR^q$,
      we obtain a continuous map
      \[\trop_{\{f_1,\dots,f_q\}}\colon U\to\bR^q.\]
      If we endow the target with coordinates $x_1,\dots,x_q$ where
      $x_i=-\log|f_i|$, then we define
      \[\tau^q_X(\{f_1,\dots,f_q\})=\rd x_1\wedge\cdots\wedge\rd x_q\in\Ker(\rd''\colon\sA^{q,0}_X(U)\to\sA^{q,1}_X(U)).\]
      It is easy to see that $\tau^q_X$ factors through the relations of
      Milnor $K$-theories, and thus induces a map of corresponding
      sheaves.

  \item If $X$ is moreover smooth, then we define another map of sheaves
      \[\lambda^q_X\colon\sK_X^q\to\Omega^{q,\cl}_X/\rd\Omega^{q-1}_X\]
      as follows. For a symbol $\{f_1,\dots,f_q\}\in\sK_X^q(U)$ with
      $f_1,\dots,f_q\in\cO_X^*(U)$, we put
      \[\lambda^q_X(\{f_1,\dots,f_q\})=\frac{\rd f_1}{f_1}\wedge\cdots\wedge\frac{\rd f_q}{f_q},\]
      where the right-hand side is regarded as an element in
      $\Omega^{q,\cl}_X(U)$ and hence in
      $(\Omega^{q,\cl}_X/\rd\Omega^{q-1}_X)(U)$. It is easy to see that
      $\lambda^q_X$ factors through the relations of Milnor $K$-theories,
      and thus induces a map of corresponding sheaves.

  \item We introduce the following quotient sheaves:
      \[\sT_X^q=\sK_X^q/\Ker\tau^q_X,\quad\sL_X^q=\sK_X^q/\Ker\lambda^q_X\]
      whenever the maps are defined.
\end{enumerate}
\end{definition}

\begin{proposition}\label{pr:kernel}
Let $K$ be a non-Archimedean field and $X$ a smooth $K$-analytic space. Then
$\tau^q_X$ induces an isomorphism
\[\sT_X^q\otimes_\bQ\bR\simeq\Ker(\rd''\colon\sA^{q,0}_X\to\sA^{q,1}_X).\]
\end{proposition}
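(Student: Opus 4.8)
The plan is to show that the morphism of sheaves $\sT^q_X\otimes_\bQ\bR\to\Ker(\rd''\colon\sA^{q,0}_X\to\sA^{q,1}_X)$ induced by $\tau^q_X$ — well defined because $\tau^q_X$ factors through $\sT^q_X=\sK^q_X/\Ker\tau^q_X$ by construction — is an isomorphism. As the assertion is local, I would fix a point $x\in X$, shrink $X$ to a small neighbourhood of $x$, and invoke the Chambert-Loir--Ducros formalism \cite{CLD12} to put everything in a single tropical chart: after shrinking there exist $g_1,\dots,g_N\in\cO_X^*(X)$ whose moment map $\varphi=\trop_{\{g_1,\dots,g_N\}}\colon X\to\bR^N$ has image a $\bQ$-rational (indeed integral) polyhedral subset $P=\varphi(X)$, with $\varphi^*$ intertwining both differentials, and such that $\sA^{q,0}_X$ is, near $x$, generated over $\sA^{0,0}_X$ by the finitely many $\rd''$-closed forms $\varphi^*(\rd x_I)=\tau^q_X(\{g_{i_1},\dots,g_{i_q}\})=\rd(-\log|g_{i_1}|)\wedge\cdots\wedge\rd(-\log|g_{i_q}|)$, $|I|=q$ (here $x_i=-\log|g_i|$). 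Moreover one may take $\{g_1,\dots,g_N\}$ to contain any prescribed finite set of invertible functions.

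For surjectivity I would use that, in such a chart, a $\rd''$-closed germ $\omega$ of a $(q,0)$-form can be written $\omega=\sum_\alpha h_\alpha\,\varphi^*(\rd x_{I_\alpha})$ with $h_\alpha\in\sA^{0,0}_{X,x}$ and distinct $I_\alpha$; by the structure of the chart (minimality of $P$) the generators occurring can be taken independent enough that $0=\rd''\omega=\sum_\alpha(\rd''h_\alpha)\wedge\varphi^*(\rd x_{I_\alpha})$ forces each $\rd''h_\alpha=0$, i.e. each $h_\alpha$ locally constant. Hence $\omega$ is locally an $\bR$-linear combination of the $\tau^q_X(\{g_{i_1},\dots,g_{i_q}\})$, so it lies in the image of $\sK^q_X\otimes_\bQ\bR$, a fortiori of $\sT^q_X\otimes_\bQ\bR$.

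Injectivity is the main point. Suppose a germ of $\sT^q_X\otimes_\bQ\bR$ at $x$ maps to zero; lifting, write it as $\sum_{\ell=1}^m r_\ell\otimes\sigma_\ell$ with $r_1,\dots,r_m\in\bR$ linearly independent over $\bQ$ and $\sigma_\ell\in\sK^q_X$ represented near $x$ by $\bQ$-linear combinations of symbols, so that $\sum_\ell r_\ell\,\tau^q_X(\sigma_\ell)=0$ on a neighbourhood of $x$. Taking the chart to contain all invertible functions occurring in the $\sigma_\ell$, we get $\tau^q_X(\sigma_\ell)=\varphi^*\beta_\ell$ with $\beta_\ell=\sum_{|I|=q}c^{(\ell)}_I\,\rd x_I$ a constant-coefficient $(q,0)$-superform on $\bR^N$ whose coefficients $c^{(\ell)}_I$ lie in $\bQ$ ($\tau^q_X$ of a symbol has $\bZ$-coefficients and $\sigma_\ell$ is a $\bQ$-combination of symbols). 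Thus $\varphi^*\bigl(\sum_\ell r_\ell\beta_\ell\bigr)=0$, and, $P$ being minimal for $\varphi$, this means $\sum_\ell r_\ell\beta_\ell$ annihilates $\Lambda^q L_\sigma$ for every polyhedron $\sigma$ of $P$ near $\varphi(x)$, where $L_\sigma\subset\bR^N$ is the direction space of $\sigma$. Each $L_\sigma$ is $\bQ$-rational, so the subspace $K_\sigma\subset\Lambda^q(\bR^N)^*$ annihilating $\Lambda^q L_\sigma$ is defined over $\bQ$, hence so is $K\coloneqq\bigcap_\sigma K_\sigma$. Since $K$ is defined over $\bQ$, the $\beta_\ell$ are rational, and the $r_\ell$ are $\bQ$-linearly independent, the relation $\sum_\ell r_\ell\beta_\ell\in K\otimes_\bQ\bR$ forces each $\beta_\ell\in K$ (compare coordinates with respect to a $\bQ$-basis of $\bR$ containing the $r_\ell$); therefore $\varphi^*\beta_\ell=0$, i.e. $\tau^q_X(\sigma_\ell)=0$, so each $\sigma_\ell$ is zero in $\sT^q_X$ and the original germ was zero. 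Since injectivity and surjectivity may be checked on stalks, this proves the proposition.

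I expect the real work to sit in the first two paragraphs, namely the amalgamation of finitely many moment maps into one chart with $\bQ$-rational polyhedral image, the precise sense in which $\varphi^*$ is injective on constant-coefficient superforms modulo the polyhedral structure of $P$, and the local generation of $(q,0)$-forms over $\sA^{0,0}_X$ by logarithmic forms (with enough independence of the generators to run the surjectivity step) — all of which I would extract from \cite{CLD12} and \cite{Jel16}. Granting these inputs, surjectivity is routine and injectivity reduces to the elementary linear algebra over $\bQ$ versus $\bR$ carried out above, the point being that the relations among the forms $\tau^q_X(\{g_{i_1},\dots,g_{i_q}\})$ are forced by, and hence defined over $\bQ$ with, the rational polyhedral geometry of $P$.
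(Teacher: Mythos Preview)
Your overall strategy --- pass to a tropical chart, use the $\bQ$-rationality of the polyhedral image, and check the isomorphism on stalks --- is the same as the paper's. Your injectivity argument via $\bQ$-linear independence of the $r_\ell$ and rationality of the direction spaces $L_\sigma$ is correct and is exactly the content behind the paper's one-line deduction.

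The gap is in your surjectivity paragraph. Writing a $\rd''$-closed germ as $\omega=\sum_\alpha h_\alpha\,\varphi^*(\rd x_{I_\alpha})$ and then asserting that ``the generators can be taken independent enough that $\rd''\omega=0$ forces each $\rd''h_\alpha=0$'' is precisely the point that does \emph{not} come for free: on a polyhedral complex $P\subset\bR^N$ of dimension $<N$, the forms $\varphi^*(\rd x_I)$ are not independent over $\sA^{0,0}_X$, so the coefficients $h_\alpha$ are not uniquely determined and you cannot read off $\rd''h_\alpha=0$ term by term. What is actually true --- and what you need --- is that every $\rd''$-closed $(q,0)$-germ at $x$ is represented by a \emph{constant-coefficient} superform, i.e.\ by an element of $\Hom_\bQ\bigl(\sum_{\sigma_U\prec\tau}\wedge^q\dL(\tau),\bR\bigr)$ for some basic chart $(U;f_1,\dots,f_N)$ around~$x$. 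This is not in \cite{CLD12} or \cite{Jel16} (the latter gives the $\rd''$-Poincar\'e lemma, a different statement); the paper invokes \cite{JSS15}*{Proposition~3.16} for exactly this identification. Once you have that description, both halves are immediate: surjectivity because constant-coefficient forms are visibly in the image of $\tau^q_X\otimes\bR$, and injectivity because the map $\Hom_\bQ(\wedge^q T_{N,\bQ},\bR)\to\Ker(\rd'')_x$ factors through the rational quotient $\Hom_\bQ(\sum_\tau\wedge^q\dL(\tau),\bR)$, which encodes your observation that the relations are defined over~$\bQ$. So replace your ``independence'' step by the citation of \cite{JSS15}*{Proposition~3.16}, and the proof goes through.
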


\begin{proof}
If suffices to show the isomorphism on stalks. We fix a point $x\in X$ with
$s=s(x)$ and $t=t(x)$. We first describe a typical section of
$\Ker(\rd''\colon\sA^{q,0}_X\to\sA^{q,1}_X)$ around $x$. We say a collection
of data $(U;f_1,\dots,f_N)$ where $U$ is an open neighborhood of $x$ and
$f_1,\dots,f_N\in\cO^*_X(U)$ is \emph{basic at $x$} if, under the induced
tropicalization map
\[\trop_U\colon
U\xrightarrow{f_1,\dots,f_N}(\bG_{\r{m},K}^\an)^N\xrightarrow{-\log|\;|}
T_N\otimes_\bZ\bR\simeq\bR^N\] where $T_N$ is the cocharacter lattice of
$\bG_{\r{m}}^N$, there exists a rational polyhedral complex $\cC$ of
dimension $s+t$ with a unique minimal polyhedron $\sigma_U$, which is of
dimension $t$, such that $\trop_U(U)$ is an open subset of $\cC$ and
$\trop_U(x)$ is contained in $\sigma_U$. For every polyhedron $\tau$ of
$\cC$, we denote by $\dL(\tau)$ the underlying linear $\bQ$-subspace of
$T_{N,\bQ}\coloneqq T_N\otimes_\bZ\bQ$. Then we have an inclusion
\[\sum_{\sigma_U\prec\tau\in\cC}\wedge^q\dL(\tau)\subset\wedge^qT_{N,\bQ}\]
of $\bQ$-vector spaces, and thus a map
\[\Hom_\bQ(\wedge^qT_{N,\bQ},\bR)\to\Hom_\bQ(\sum_{\sigma_U\prec\tau\in\cC}\wedge^q\dL(\tau),\bR).\]
By \cite{JSS15}*{Proposition 3.16}, the canonical map
\[\Hom_\bQ(\wedge^qT_{N,\bQ},\bR)\to\Ker(\rd''\colon\sA^{q,0}_X(U)\to\sA^{q,1}_X(U))\]
factors through
$\Hom_\bQ(\sum_{\sigma_U\prec\tau\in\cC}\wedge^q\dL(\tau),\bR)$, and moreover
every element in the stalk
$\Ker(\rd''\colon\sA^{q,0}_{X,x}\to\sA^{q,1}_{X,x})$ has a representative in
$\Hom_\bQ(\sum_{\sigma_U\prec\tau\in\cC}\wedge^q\dL(\tau),\bR)$ for some
basic data $(U;f_1,\dots,f_N)$. This implies that the induced map
$\sT_X^q\otimes_\bQ\bR\simeq\Ker(\rd''\colon\sA^{q,0}_X\to\sA^{q,1}_X)$ is
injective, as well as surjective since elements in
$\Hom_\bQ(\wedge^qT_{N,\bQ},\bQ)$ are in the image of $\tau_X^q$.
\end{proof}

\begin{remark}
Proposition \ref{pr:kernel} implies that for all $q,q'\geq 0$, we have a
canonical isomorphism
\[H^{q'}(X,\sT_X^q\otimes_\bQ\bR)\simeq H^{q,q'}_\sA(X).\]
In particular, the real vector space $H^{q,q'}_\sA(X)$ has a canonical
rational structure coming from the isomorphism
$H^{q'}(X,\sT_X^q\otimes_\bQ\bR)\simeq H^{q'}(X,\sT_X^q)\otimes_\bQ\bR$.
\end{remark}

\begin{definition}[Tropical cycle class map]\label{de:tropical}
Let $K$ be a non-Archimedean field and $\cX$ a smooth scheme over $K$.
\begin{enumerate}
  \item The \emph{tropical cycle class map (in forms)} $\cl_\sA$ is
      defined to be the
      composition
      \begin{multline*}
      \cl_\sA\colon\CH^q(\cX)_\bQ\xrightarrow{\cl_\sK}H^q(\cX,\sK_\cX^q)\to
      H^q(\cX^\an,\sK_{\cX^\an}^q)\\
      \xrightarrow{H^q(\cX^\an,\tau^q_{\cX^\an})}
      H^q(\cX^\an,\sT_{\cX^\an}^q)\to
      H^q(\cX^\an,\sT_{\cX^\an}^q\otimes_\bQ\bR)\xrightarrow{\sim}
      H^{q,q}_\sA(\cX^\an),
      \end{multline*}
      which can be regarded as a cycle class map valued in Dolbeault
      cohomology of forms.

  \item The \emph{tropical cycle class map (in currents)} $\cl_\sD$ is
      defined to be the further composition
      \[\cl_\sD\colon H^q(\cX,\sK_\cX^q)\xrightarrow{\cl_\sA}H^{q,q}_\sA(\cX^\an)
      \xrightarrow{\kappa_{\cX^\an}}H^{q,q}_\sD(\cX^\an),\] which can be
      regarded as a cycle class map valued in Dolbeault cohomology of
      currents.
\end{enumerate}
It is clear that both $\cl_\sA$ and $\cl_\sD$ are homomorphisms of graded
$\bQ$-algebras.
\end{definition}

The following theorem establishes the compatibility of tropical cycle class
maps and integration, which can be viewed as a tropical version of Cauchy
formula in multi-variable complex analysis.

\begin{theorem}\label{th:cycle}
Let $K$ be a non-Archimedean field and $\cX$ a smooth scheme over $K$ of
dimension $n$. Then for every algebraic cycle $\cZ$ of $\cX$ of codimension
$q$, we have the equality
\[\langle\cl_\sD(\cZ),\omega\rangle_{\cX^\an}=\int_{\cZ^\an}\omega\]
for every $\rd''$-closed form $\omega\in \sA^{n-q,n-q}_{\cX^\an}(\cX^\an)_c$
with compact support.
\end{theorem}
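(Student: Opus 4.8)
The plan is to show that $\cl_\sD(\cZ)$ is represented by the integration current $\delta_{\cZ^\an}$, i.e.\ the $(q,q)$-current $\omega\mapsto\int_{\cZ^\an}\omega$. Granting this, the theorem is immediate: $\delta_{\cZ^\an}$ is $\rd''$-closed because for $\beta\in\sA^{n-q,n-q-1}_{\cX^\an}(\cX^\an)_c$ one has $\int_{\cZ^\an}\rd''\beta=\int_{\cZ^\an}\rd\beta=0$ by Stokes' theorem ($\cZ^\an$ is boundaryless, $\beta$ has compact support, and the missing $\rd'\beta$-term vanishes on $\cZ^\an$ for bidegree reasons), whence $\langle\cl_\sD(\cZ),\omega\rangle_{\cX^\an}=\langle\delta_{\cZ^\an},\omega\rangle_{\cX^\an}=\int_{\cZ^\an}\omega$ for every $\rd''$-closed $\omega$. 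By $\bQ$-linearity of $\cl_\sD$ and of $\cZ\mapsto\delta_{\cZ^\an}$ one reduces to $\cZ$ integral of codimension $q$; deleting the non-l.c.i.\ locus of $\cZ$, which has codimension $\geq q+1$ in $\cX$, affects neither $\delta_{\cZ^\an}$ nor, by the vanishing of $\underline{H}^{<q}_\cZ(\cX,\sK^q_\cX)$ recalled in Example~\ref{ex:scheme}, the class $\cl_\sK(\cZ)$. So I may fix an affine open cover $\{\cU_i\}$ of $\cX$ together with regular sequences $f_{i1},\dots,f_{iq}\in\cO_\cX(\cU_i)$ with $\cZ\cap\cU_i=V(f_{i1},\dots,f_{iq})$ as in Example~\ref{ex:scheme}.

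Next I would follow the construction through local cohomology. Put $\sH^q_\sD=\Ker(\rd''\colon\sD^{q,0}_{\cX^\an}\to\sD^{q,1}_{\cX^\an})$, which is fine-resolved by $(\sD^{q,\bullet}_{\cX^\an},\rd'')$, and let $\sH^q_\sA$ be the analogous kernel for $\sA^{\bullet,\bullet}$, so that $\kappa$ induces an inclusion $\sH^q_\sA\hookrightarrow\sH^q_\sD$. The class $\cl_\sK(\cZ)$ is the image of a section $c(\cZ)$ of the supported sheaf $\underline{H}^q_\cZ(\cX,\sK^q_\cX)$; analytifying and applying the composite $\sK^q\xrightarrow{\tau^q}\sT^q$, $\sT^q\otimes_\bQ\bR\simeq\sH^q_\sA$ (Proposition~\ref{pr:kernel}), $\sH^q_\sA\hookrightarrow\sH^q_\sD$ produces a section $\sigma_\cZ$ of $\underline{H}^q_{\cZ^\an}(\cX^\an,\sH^q_\sD)$, and $\cl_\sD(\cZ)$ is the image of $\sigma_\cZ$ under the canonical edge map $H^0(\cX^\an,\underline{H}^q_{\cZ^\an}(\cX^\an,\sH^q_\sD))\to H^q_{\cZ^\an}(\cX^\an,\sH^q_\sD)\to H^{q,q}_\sD(\cX^\an)$. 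On the other hand $\delta_{\cZ^\an}$, being a global $\rd''$-cocycle in $\sD^{q,q}_{\cX^\an}$ supported on $\cZ^\an$, determines a section $\sigma'_\cZ$ of the same sheaf whose image under the same edge map is $[\delta_{\cZ^\an}]$. Thus it suffices to prove $\sigma_\cZ=\sigma'_\cZ$, which is local on $\cX^\an$; on $\cU_i^\an$ the section $\sigma_\cZ$ is computed from the Milnor symbol $\{f_{i1},\dots,f_{iq}\}\in\sK^q\bigl(\bigcap_j\cU_{ij}^\an\bigr)$ — which $\tau^q$ sends to the $\rd''$-closed $(q,0)$-form $\rd x_{i1}\wedge\dots\wedge\rd x_{iq}$ with $x_{ij}=-\log|f_{ij}|$, on the locus where all the $f_{ij}$ are invertible — via the connecting morphism of the \v{C}ech--Dolbeault double complex attached to the cover $\{\cU_{ij}^\an\}_j$ of $\cU_i^\an\setminus\cZ^\an$ and the resolution $(\sD^{q,\bullet}_{\cX^\an},\rd'')$.

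The core is then a non-Archimedean Poincaré--Lelong computation. For $q=1$: a lift of $\rd x_{i1}$ to $\cU_i^\an$ is the $(1,0)$-current $-\rd'[\log|f_{i1}|]$ (with $[\log|f_{i1}|]$ the current attached to the continuous function $\cU_i^\an\to[-\infty,\infty)$), and $\rd''\bigl(-\rd'[\log|f_{i1}|]\bigr)=\rd'\rd''[\log|f_{i1}|]=\delta_{\DIV(f_{i1})^\an}=\delta_{\cZ^\an\cap\cU_i^\an}$ by the Poincaré--Lelong formula of Chambert-Loir and Ducros \cite{CLD12} (in their sign normalization; $f_{i1}$ may be taken to generate the ideal of the reduced divisor $\cZ\cap\cU_i$). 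For general $q$ I would argue along the flag $\cU_i=\cZ_0\supset\cZ_1=V(f_{i1})\supset\dots\supset\cZ_q=\cZ\cap\cU_i$, in which $f_{i,j+1}$ restricts to a non-zerodivisor on $\cZ_j$ cutting out $\cZ_{j+1}$: the \v{C}ech--Dolbeault connecting morphism applied to $\rd x_{i1}\wedge\dots\wedge\rd x_{iq}$ factors as a composite of $q$ residue/connecting steps, each an instance of the $q=1$ computation performed on a neighborhood of $\cZ_j^\an$, which together produce $\delta_{\cZ_q^\an}=\delta_{\cZ^\an\cap\cU_i^\an}$. A convenient alternative, which I would use if this iterated bookkeeping proves cumbersome, is to observe that near the smooth locus of $\cZ$ the morphism $g=(f_{i1},\dots,f_{iq})\colon\cU_i\to\bA^q_K$ is smooth with $\cZ\cap\cU_i=g^{-1}(0)$, so that both $\cl_\sD(\cZ\cap\cU_i)$ and $\delta_{\cZ^\an\cap\cU_i^\an}$ are pulled back along $g$ from $\bA^q_K$ (by functoriality of the whole construction together with $g^*\delta_W=\delta_{g^{-1}W}$ for smooth $g$), reducing the identity to the single model case of the origin in $\bA^q_K$, which one treats directly on its tropicalization.

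I expect the main obstacle to be exactly this local Poincaré--Lelong step: verifying, inside the calculus of forms and currents of \cite{CLD12}, that the \v{C}ech--Dolbeault connecting procedure applied to $\rd x_{i1}\wedge\dots\wedge\rd x_{iq}$ returns precisely $\delta_{\cZ^\an\cap\cU_i^\an}$ with the correct normalization and no spurious constant — here the absence of a curvature term, present in the Archimedean Poincaré--Lelong formula, is what makes the normalization come out on the nose — together with a careful treatment of the iterated residues and push-forwards along the flag $\cZ_0\supset\dots\supset\cZ_q$ (or, in the alternative route, of the compatibility of integration currents with smooth pullback). The remaining ingredients — compatibility of the local-cohomology picture with analytification, and the harmless vanishing of the lower local-cohomology sheaves of $\sH^q_\sD$ along $\cZ^\an$ that follows from the support structure of currents — are routine.
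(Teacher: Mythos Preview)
Your proposal and the paper's proof share the same reduction (pass to the smooth locus of an integral $\cZ$, choose local regular sequences $(f_{i1},\dots,f_{iq})$, and work in local cohomology via the \v{C}ech picture coming from Example~\ref{ex:scheme}); your first two paragraphs match the paper's Steps~1--3 almost verbatim. Where you diverge is the core local computation. You propose to identify the local class in $\underline{H}^q_{\cZ^\an}(\sH^q_\sD)$ directly with the integration current $\delta_{\cZ^\an}$, using the Poincar\'e--Lelong formula of \cite{CLD12} for $q=1$ and then either iterating along a flag $\cZ_0\supset\cdots\supset\cZ_q$ or pulling back from the model case $\{0\}\subset\bA^q_K$. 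The paper never invokes Poincar\'e--Lelong or a current primitive like $[\log|f|]$; instead it computes the pairing directly: it writes down an explicit Dolbeault representative $\theta\in\sA^{q,q-1,\cl}_U(U\setminus Z)$ of $\tau^q(\{f_1,\dots,f_q\})$ by iterated \v{C}ech--$\rd''$ lifting, uses the adjunction $\langle\delta''[\theta],\omega\rangle_U=\int_{U\setminus Z}\theta\wedge\rd''\omega$, and then applies Stokes' formula $q$ times on the nested $\epsilon$-tubes $V^I_\epsilon=\{|f_i|=\epsilon\ (i\in I),\ |f_j|\leq\epsilon\ (j\notin I)\}$ to telescope this to $\int_{V^{\{1,\dots,q\}}_\epsilon}\tau^q(\{f_1,\dots,f_q\})\wedge\omega$; a final tropical computation (the tropicalization of the $\epsilon$-tube is a product $\trop(Z)\times[-\log\epsilon,\infty)^q$ for small $\epsilon$, by \cite{CLD12}*{Proposition~4.6.6}) identifies this with $\int_Z\omega$.

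What each approach buys: yours is more conceptual and, for $q=1$, reduces at once to a packaged result in \cite{CLD12}; the pullback-from-$\bA^q$ variant is clean but still owes you the model computation for the origin in $\bA^q_K$, which is essentially the paper's Step~6. The paper's $\epsilon$-tube argument stays entirely within forms and boundary integrals, avoiding any appeal to pullback of integration currents along non-proper (only locally smooth) maps or to current-valued primitives, at the price of a more hands-on Stokes bookkeeping. One caution on your flag route: the intermediate $\cZ_j=V(f_{i1},\dots,f_{ij})$ need not be smooth for an arbitrary regular sequence, and the CLD Poincar\'e--Lelong formula is stated on smooth spaces; since $\cZ$ itself is smooth you can arrange all $\cZ_j$ to be smooth by a generic linear change of the $f_{ij}$ (locally), but this should be said. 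Likewise, in the pullback route you are implicitly using excision to restrict to the open neighbourhood of $\cZ$ on which $(f_{i1},\dots,f_{iq})$ is a submersion before invoking $g^*\delta_0=\delta_{g^{-1}(0)}$.
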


\begin{proof}
We may assume that $\cZ$ is prime, that is, a reduced irreducible closed
subscheme of $\cX$ of codimension $q$. Let $\cZ_{\r{sing}}\subset\cZ$ be the
singular locus, which is a closed subscheme of $\cX$ of codimension $>q$. Put
$\cU=\cX\backslash\cZ_{\r{sing}}$,
$\cZ_{\r{sm}}=\cZ\backslash\cZ_{\r{sing}}$, $X=\cX^\an$, $U=\cU^\an$, and
$Z=\cZ_{\r{sm}}^\an$. In particular, $Z$ is a Zariski closed subset of $U$.
To ease notation, we put
\[\sA^{q,q',\cl}_X=\Ker(\rd''\colon\sA_X^{q,q'}(X)\to\sA_X^{q,q'+1}(X)),\quad
\sD^{q,q',\cl}_X=\Ker(\rd''\colon\sD_X^{q,q'}(X)\to\sD_X^{q,q'+1}(X)).\] We
fix a form $\omega\in\sA^{q,q,\cl}_X(X)_c$. By \cite{CLD12}*{Lemme 3.2.5},
$\omega$ belongs to $\sA^{q,q,\cl}_X(U)_c$.

\textbf{Step 1.} Using Example \ref{ex:scheme}, we describe explicitly the
class $\cl_\sD(\cZ_{\r{sm}})$. We choose a finite affine open covering
$\cU_i$ of $\cU$ and $f_{i1},\dots,f_{iq}\in\cO_\cU(\cU_i)$ such that
$\cZ_{\r{sm}}\cap\cU_i$ is defined by the ideal $(f_{i1},\dots,f_{iq})$. Let
$\cU_{ij}$ be the nonvanishing locus of $f_{ij}$. Put $U_i=\cU_i^\an$ and
$U_{ij}=\cU_{ij}^\an$. Then $\{ U_{ij}\res j=1,\dots,q\}$ is an open covering
of $U_i\backslash Z$. Thus the element $\tau^q_U(\{f_{i1},\dots,f_{iq}\})$
gives rise to an element in $H^{q-1}(U_i\backslash Z,\sA^{q,0,\cl}_U)\simeq
H^{q-1}(U_i\backslash Z,\sA^{q,\bullet}_U)$, and we denote its image under
the composite map
\[H^{q-1}(U_i\backslash
Z,\sA^{q,\bullet}_U)\to H^{q-1}(U_i\backslash Z,\sD^{q,\bullet}_U) \to
H^q_{Z\cap U_i}(U_i,\sD^{q,\bullet}_U)\] by $c(Z)_i$. It is easy to see that
$c(Z)_i$ does not depend on the choice of $f_{i1},\dots,f_{iq}$. Therefore,
$\{c(Z)_i\}$ gives rise to an element $c(Z)\in
H^0(U,\underline{H}^q_Z(\sD^{q,\bullet}_U))$. Again by \cite{CLD12}*{Lemme
3.2.5}, $\underline{H}^i_Z(\sD^{q,\bullet}_U)=0$ for $i<q$, and we have an
isomorphism $H^q_Z(U,\sD^{q,\bullet}_U)\simeq
H^0(U,\underline{H}^q_Z(\sD^{q,\bullet}_U))$. The image of $c(Z)$ in
$H^q(U,\sD^{q,\bullet}_U)\simeq H^{q,q}_\sD(U)$ coincides with
$\cl_\sD(\cZ_{\r{sm}})$.

\textbf{Step 2.} We study $H^q_{U_i\cap Z}(U_i,\sD^{q,\bullet}_U)$ in more
details. Put
\[\sD^{q,\bullet}_{U,Z}(U_i)=\Ker(\sD^{q,\bullet}_U(U_i)\to\sD^{q,\bullet}_U(U_i\backslash Z)),\]
with the induced differential $\rd''$, and put
\[H^{q,q'}_{Z,\sD}(U_i)=\frac{\Ker(\rd''\colon\sD^{q,q'}_{U,Z}(U_i)\to\sD^{q,q'+1}_{U,Z}(U_i))}
{\IM(\rd''\colon\sD^{q,q'-1}_{U,Z}(U_i)\to\sD^{q,q'}_{U,Z}(U_i))}.\] As
$\sD^{q,\bullet}_U$ is a complex of flasque sheaves, we have the following
commutative diagram
\[\xymatrix{
H^{q,q'-1}_\sD(U_i\backslash Z) \ar[r]^-{\delta''}\ar[d]_-{\simeq} & H^{q,q'}_{Z,\sD}(U_i) \ar[r]\ar[d]^-{\simeq}
& H^{q,q'}_\sD(U_i) \ar[d]^-{\simeq} \ar[r]\ar[d]^-{\simeq} & H^{q,q'}_\sD(U_i\backslash Z) \ar[d]^-{\simeq} \\
H^{q'-1}(U_i\backslash Z,\sD^{q,\bullet}_U) \ar[r] & H^{q'}_{U_i\cap
Z}(U_i,\sD^{q,\bullet}_U) \ar[r] & H^{q'}(U_i,\sD^{q,\bullet}_U) \ar[r] &
H^{q'}(U_i\backslash Z,\sD^{q,\bullet}_U). }\] In particular, when $q'=q$ we have
\[H^q_{U_i\cap Z}(U_i,\sD^{q,\bullet}_U)\simeq
H^{q,q}_{Z,\sD}(U_i)\simeq
\Ker(\sD^{q,q,\cl}_U(U_i)\to\sD^{q,q}_U(U_i\backslash
Z))\subset\sD^{q,q,\cl}_U(U_i).\] Let
$\theta_i\in\sA^{q,q-1,\cl}_U(U_i\backslash Z)$ be a Dolbeault representative
of $\tau^q_U(\{f_{i1},\dots,f_{iq}\})$ as a cohomology class in
$H^{q-1}(U_i\backslash Z,\sA^{q,0,\cl}_U)$, with induced class $[\theta_i]\in
H^{q,q-1}_\sD(U_i\backslash Z)$. By partition of unity, we may write
$\omega=\sum_i\omega_i$ with $\omega_i\in\sA^{n-q,n-q}_U(U_i)_c$. Note that
\[\langle\cl_\sD(\cZ),\omega\rangle_{\cX^\an}=\langle\cl_\sD(\cZ_{\r{sm}}),\omega\rangle_U
=\langle\delta''([\theta]),\omega\rangle_U=\sum_i\langle\delta''([\theta]),\omega_i\rangle_U
=\sum_i\langle\delta''([\theta_i]),\omega_i\rangle_U\] and
\[\quad \int_{\cZ^\an}\omega=\sum_i\int_{U_i\cap Z}\omega_i.\] To prove the
theorem, it suffices to show that
\[\langle\delta''([\theta_i]),\omega_i\rangle_U=\int_{U_i\cap Z}\omega_i\]
for every $i$.

\textbf{Step 3.} In what follows, we suppress the subscript $i$. We summarize
our data as follows:
\begin{itemize}
  \item an affine smooth scheme $\cU$ over $K$ of dimension $n$, with
      $U=\cU^\an$,

  \item a smooth irreducible closed subscheme $\cZ$ of codimension $q$
      defined by the ideal $(f_1,\dots,f_q)$ where
      $f_1,\dots,f_q\in\cO_\cU(\cU)$, with $Z=\cZ^\an$,

  \item $\theta\in\sA^{q,q-1,\cl}_U(U\backslash Z)$ a Dolbeault
      representative of $\tau^q_U(\{f_1,\dots,f_q\})$ as a cohomology
      class in $H^{q-1}(U\backslash Z,\sA^{q,\bullet}_U)$, and

  \item $\omega\in\sA^{n-q,n-q}_U(U)_c$.
\end{itemize}
Our goal is to show that
\[\langle\delta''([\theta]),\omega\rangle_U=\int_Z\omega.\]
Here we recall that $[\theta]\in H^{q,q-1}_\sD(U\backslash Z)$ is the class
induced by $\theta$, and $\delta''\colon H^{q,q-1}_\sD(U\backslash Z)\to
H^{q,q}_{Z,\sD}(U)$ is the coboundary map, in which the target
$H^{q,q}_{Z,\sD}(U)$ is a subspace of $\sD^{q,q,\cl}_U(U)$. As $Z$ is a
closed Zariski subset of $U$ of codimension $q$, the image of
$\sA^{n-q,n-q}_U(U)_c$ under $\rd''$ is in $\sA^{n-q,n-q+1,\cl}_U(U\backslash
Z)_c$. By definition, the following diagram
\[\xymatrix{
H^{q,q-1}_\sD(U\backslash Z)\!\!\!\!\!\!\!\!\!\! \ar[d]_-{\delta''} & \times &
\!\!\!\!\!\!\!\!\!\!\sA^{n-q,n-q+1,\cl}_U(U\backslash Z)_c \ar[rr]^-{\langle\;,\;\rangle_U} && \bR \ar@{=}[d] \\
\sD^{q,q,\cl}_U(U)\!\!\!\!\!\!\!\!\!\! & \times & \!\!\!\!\!\!\!\!\!\!\sA^{n-q,n-q}_U(U)_c
\ar[u]_-{\rd''}\ar[rr]^-{\langle\;,\;\rangle_U} && \bR }\] is
commutative. Therefore, we have
\[\langle\delta''([\theta]),\omega\rangle_U=\int_{U\backslash Z}\theta\wedge\rd''\omega.\]
Thus it suffices to show that
\[\int_{U\backslash Z}\theta\wedge\rd''\omega=\int_Z\omega.\]
Obviously, the equality does not depend on the choice of the Dolbeault
representative.

\textbf{Step 4.} Let $\cU_i\subset\cU$ be the nonvanishing locus of $f_i$.
Then we have an open covering $\underline{U}=\{U_i\}$ of $U\backslash Z$,
where $U_i=\cU_i^\an$. For $I\subset\{1,\dots,q\}$, put $U_I=\bigcap_{i\in
I}U_i$.

Let us recall the construction of a Dolbeault representative $\theta$. We
inductively construct elements $\theta_i\in H^{q-i-1}(U\backslash
Z,\sA_U^{q,i,\cl})$ represented by an (alternative) closed \v{C}ech cocycle
\[\theta_i=\{\theta_{i,I}\in\sA^{q,i,\cl}_U(U_I)\res |I|=q-i\}\] for
$i=0,\dots,q-1$. The class $\theta_0$ is simply
\[\{\theta_{0,\{1,\dots,q\}}=\tau^q_U(\{f_1,\dots,f_q\})\in\sA^{q,0,\cl}_U(U_{\{1,\dots,q\}})\}.\]
Suppose that we have $\theta_{i-1}$ for some $1\leq i\leq q-1$. By
Poincar\'{e} lemma, we have an exact sequence
\[0\to\sA^{q,i-1,\cl}_U\to\sA^{q,i-1}_U\to\sA^{q,i,\cl}_U\to 0.\]
As $\sA^{q,i-1}_U$ is a fine sheaf, the \v{C}ech cohomology
$H^{q-i}(\underline{U},\sA^{q,i-1}_U)$ is trivial. Thus there exists
$\vartheta_i=\{\vartheta_{i,J}\in\sA^{q,i-1}_U(U_J)\res|J|=q-i\}$ with
$\delta_{\underline{U}}\vartheta_i=\theta_{i-1}$, where
$\delta_{\underline{U}}$ denotes the \v{C}ech differential for the covering
$\underline{U}$. Now we set
$\theta_i=\rd''\vartheta_i\coloneqq\{\rd''\vartheta_{i,J}\in\sA^{q,i,\cl}_U(U_J)\res|J|=q-i\}$.
The last closed \v{C}ech cocycle
$\theta_{q-1}=\{\theta_{q-1,\{i\}}\in\sA^{q,q-1,\cl}_U(U_i)\res
i=1,\dots,q\}$ is simply a Dolbeault representative of
$\tau^q_U(\{f_1,\dots,f_q\})\in H^{q-1}(U\backslash Z,\sA^{q,\bullet}_U)$.

For $\epsilon>0$ and $I\subset\{1,\dots,q\}$, put
\[V_\epsilon^I=\{x\in U\res f_i(x)\in \partial\overline{D(0,\epsilon)},i\in I;\; f_j(x)\in \overline{D(0,\epsilon)},j\not\in I\},\]
and $U_\epsilon=\overline{U\backslash V^\emptyset_\epsilon}$. Here,
$\overline{D(0,\epsilon)}$ is the closed disc of radius $\epsilon$ with
center at zero, and $\overline{U\backslash V^\emptyset_\epsilon}$ is the
closure of $U\backslash V^\emptyset_\epsilon$ in $U$. As
$\rd''\omega\in\sA^{n-q,n-q+1}_U(U\backslash Z)_c$, there is a real number
$\epsilon_0>0$ such that $\rd''\omega=0$ on $V_{\epsilon_0}^\emptyset$. Thus
for every $0<\epsilon<\epsilon_0$, we have
\begin{align}\label{eq:cycle_1}
\int_{U\backslash Z}\theta\wedge\rd''\omega=\int_{\overline{U\backslash V^\emptyset_\epsilon}}\theta\wedge\rd''\omega=
-\int_{\overline{U\backslash V^\emptyset_\epsilon}}\rd''(\theta\wedge\omega)=-\int_{U_\epsilon}\rd''(\theta_{q-1}\wedge\omega).
\end{align}
Since $U_\epsilon$ is a closed subset of $U$, the forms $\omega$ and hence
$\theta\wedge\omega$ have compact support on $U_\epsilon$.

\textbf{Step 5.} Now we have to use integration on boundaries $V^I_\epsilon$
and the corresponding Stokes' formula. We use the formulation of boundary
integration through contraction as in \cite{Gub13}*{\Sec 2}. We consider
first a tropical chart $\trop_W\colon
W\to(\bG_{\r{m},K}^\an)^N\xrightarrow{-\log|\;|}\bR^N$, where $W$ is an open
subset of $U_\epsilon$. Since $V^I_\epsilon$ is a $K^I_\epsilon$-analytic
space of dimension $n-|I|$ for some extension $K^I_\epsilon/K$ of
non-Archimedean fields, the image $\sigma_I\coloneqq\trop_W(W\cap
V^I_\epsilon)$ consists of closed faces of codimension $|I|$ of $\trop_W(W)$.
For every $i\in I$, we choose a tangent vector $\omega_i$ for the closed face
$\sigma_{\{i\}}$ of $\sigma_\emptyset$ of codimension $1$, as defined in
\cite{Gub13}*{2.8}.

Suppose that $I=\{m_1,\dots,m_j\}$ where $1\leq m_1\leq\cdots\leq m_j\leq q$.
If $\alpha$ is an $(n,n-i)$-superform on $W$ with compact support, then we
define
\[\int_{\sigma_I}\alpha\coloneqq
\int_{\sigma_I}\langle\alpha;-\omega_{m_1},\dots,-\omega_{m_j}\rangle_{\{1,\dots,j\}}.\]
It is easy too see that the above integral does not depend on the choice of
$\omega_i$; however, it does depend on the order. We may patch the above
integral to define the integral $\int_{V^I_\epsilon}\alpha$ for an
$(n,n-|I|)$-form $\alpha$ on $V^I_\epsilon$ with compact support. The
negative signs for $\omega_i$ ensure that we have the following Stokes'
formula
\[\int_{V^I_\epsilon}\rd''\alpha=\sum_{j\not\in I}(-1)^{(j,I\cup\{j\})}\int_{V^{I\cup\{j\}}_\epsilon}\alpha\]
for an $(n,n-|I|-1)$-form $\alpha$ on $V^I_\epsilon$ with compact support,
for $|I|\geq 1$. Here, $(j,J)$ is the position from the rear of the index $j$
when $J$ is ordered in the usual manner. However, for the initial Stokes'
formula, we have
\[\int_{U_\epsilon}\rd''\alpha=-\int_{\partial U_\epsilon}\alpha=-\sum_{|I|=1}\int_{V^I_\epsilon}\alpha\]
for an $(n,n-1)$-form $\alpha$ on $U_\epsilon$ with compact support.

In particular, we have
\begin{align}\label{eq:cycle_2}
-\int_{U_\epsilon}\rd''(\theta_{q-1}\wedge\omega)=\int_{\partial U_\epsilon}\theta_{q-1}\wedge\omega
=\sum_{|I|=1}\int_{V^I_\epsilon}\theta_{q-1,I}\wedge\omega.
\end{align}
In general, for $1\leq i\leq q-1$, we have
\begin{align*}
\sum_{|I|=i}\int_{V^I_\epsilon}\theta_{q-i,I}\wedge\omega&=\sum_{|I|=i}\int_{V^I_\epsilon}\rd''\vartheta_{q-i,I}\wedge\omega\\
&=\sum_{|I|=i}\int_{V^I_\epsilon}\rd''(\vartheta_{q-i,I}\wedge\omega)\\
&=\sum_{|I|=i}\sum_{j\not\in I}(-1)^{(j,I\cup\{j\})}\int_{V^{I\cup\{j\}}_\epsilon}\vartheta_{q-i,I}\wedge\omega\\
&=\sum_{|J|=i+1}\int_{V^J_\epsilon}\sum_{j\in J}(-1)^{(j,J)}\vartheta_{q-i,J\backslash\{j\}}\wedge\omega\\
&=\sum_{|J|=i+1}\int_{V^J_\epsilon}(\delta_{\underline{U}}\vartheta_{q-i})_J\wedge\omega\\
&=\sum_{|J|=i+1}\int_{V^J_\epsilon}\theta_{q-(i+1),J}\wedge\omega.
\end{align*}
Combining with \eqref{eq:cycle_1}, \eqref{eq:cycle_2}, we have
\begin{align}\label{eq:cycle_3}
\int_{U\backslash Z}\theta\wedge\rd''\omega=\int_{V^{\{1,\dots,q\}}_\epsilon}\theta_{0,\{1,\dots,q\}}\wedge\omega
=\int_{V^{\{1,\dots,q\}}_\epsilon}\tau^q_U(\{f_1,\dots,f_q\})\wedge\omega
\end{align}
for every $0<\epsilon<\epsilon_0$.

\textbf{Step 6.} By \eqref{eq:cycle_3}, the theorem is reduced to the formula
\begin{align}\label{eq:cycle_4}
\int_{V^{\{1,\dots,q\}}_\epsilon}\tau^q_U(\{f_1,\dots,f_q\})\wedge\omega
=\int_Z\omega
\end{align}
for sufficiently small $\epsilon>0$. We may choose a finite admissible
covering of $U$ by affinoid domains $W_k$, a tropical chart
$\trop_{W_k}\colon
W_k\to(\bG_{\r{m},K}^\an)^{N_k}\xrightarrow{-\log|\;|}\bR^{N_k}$, an
$(n-q,n-q)$-superform $\alpha_k$ on $\trop_{W_k}(W_k)$ whose support is
contained in the interior of $\trop_{W_k}(W_k)$, such that
$\omega=\sum_k\trop_{W_k}^*\alpha_k$. It suffices to check \eqref{eq:cycle_4}
on each $W_k$. Now we fix an arbitrary $k$ and suppress it from notation.
Suppose that the moment morphism $W\to (\bG_{\r{m},K}^\an)^N$ is defined by
functions $g_1,\dots,g_N\in\cO^*_U(W)$. To check \eqref{eq:cycle_4}, we may
assume that the morphism $(f_1,\dots,f_q)\colon W\to (\bA^q_K)^\an$ is purely
of relative dimension $n-q$ and $W_{\b{0}}\neq\emptyset$, where $W_{\b{0}}$
is the fiber over the origin. Put $W_\epsilon=W\cap
(V^\emptyset_\epsilon\backslash Z)$.

Applying \cite{CLD12}*{Proposition 4.6.6} successively, we know that there is
some $\delta>0$, such that $\trop_W(W_\delta)_n$ is isomorphic to
$\trop_W(W_{\b{0}})_{n-q}\times[-\log\delta,+\infty)^q$. Here, for a
polyhedral complex $\cC$ of dimension $n$, we denote by $\cC_n$ the union of
all polyhedra of dimension $n$. Therefore, \eqref{eq:cycle_4} follows for
every $0<\epsilon<\delta$, as on $\trop_W(W_\delta)$ we may take $\omega_i$
to be $-\frac{\partial}{\partial x_i}$, where $(x_1,\dots,x_q)$ is the
natural coordinate on $[-\log\delta,+\infty)^q$.
\end{proof}

\begin{corollary}\label{co:cycle}
Let $K$ be a non-Archimedean field and $\cX$ a proper smooth scheme over $K$
of dimension $n$. Then for every algebraic cycle $\cZ$ of $\cX$ of dimension
$0$, we have
\[\int_{\cX^\an}\cl_\sA(\cZ)=\deg\cZ.\]
\end{corollary}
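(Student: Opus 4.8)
The plan is to deduce the corollary directly from Theorem~\ref{th:cycle} by using the constant function $1$ as test form. Since $\cX$ is proper over $K$, its analytification $\cX^\an$ is a compact $K$-analytic space of pure dimension $n$ without boundary; hence $1\in\sA^{0,0}_{\cX^\an}(\cX^\an)$ is a $\rd''$-closed form whose support (all of $\cX^\an$) is compact and disjoint from the empty boundary, so it is a legitimate test form in Theorem~\ref{th:cycle} with $q=n$, and moreover the functional $\int_{\cX^\an}$ on $H^{n,n}_\sA(\cX^\an)$ is defined.

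Applying Theorem~\ref{th:cycle} to the $0$-cycle $\cZ$ (which has codimension $q=n$) with $\omega=1$ gives $\langle\cl_\sD(\cZ),1\rangle_{\cX^\an}=\int_{\cZ^\an}1$. For the left-hand side I would unwind the definitions: by Definition~\ref{de:tropical} one has $\cl_\sD(\cZ)=\kappa_{\cX^\an}(\cl_\sA(\cZ))$, and $\kappa_{\cX^\an}$ carries a form $\eta$ to the current $T\mapsto\int_{\cX^\an}\eta\wedge T$; choosing a $\rd''$-closed representative $\eta$ of $\cl_\sA(\cZ)$, this yields $\langle\cl_\sD(\cZ),1\rangle_{\cX^\an}=\int_{\cX^\an}\eta\wedge 1=\int_{\cX^\an}\cl_\sA(\cZ)$. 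For the right-hand side, since $\cZ=\sum_z n_z[z]$ is supported on finitely many closed points $z$ of $\cX$, the analytic space $\cZ^\an$ is the corresponding finite disjoint union of points $\cM(\kappa(z))$ with multiplicities $n_z$, and integrating the constant $1$ over this $0$-dimensional space returns $\sum_z n_z[\kappa(z):K]=\deg\cZ$. Combining, $\int_{\cX^\an}\cl_\sA(\cZ)=\deg\cZ$.

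The only nontrivial point is the last identity $\int_{\cZ^\an}1=\deg\cZ$, i.e.\ that the Chambert-Loir--Ducros integration on a $0$-dimensional $K$-analytic space is normalized so that a point $\cM(L)$ with $[L:K]<\infty$ contributes $[L:K]$; by linearity in $\cZ$ it suffices to treat a single closed point, where it is immediate from the definition of integration of $(0,0)$-forms in \cite{CLD12} (or, if one prefers, from compatibility of integration with the structure morphism $\cZ^\an\to\cM(K)$). All remaining steps are formal manipulations with Theorem~\ref{th:cycle} and Definition~\ref{de:tropical}.
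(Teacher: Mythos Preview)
Your proposal is correct and is exactly the argument the paper has in mind: the corollary is stated immediately after Theorem~\ref{th:cycle} with no proof, so the intended deduction is precisely to take $\omega=1$ (legitimate since $\cX^\an$ is compact without boundary), unwind $\langle\cl_\sD(\cZ),1\rangle_{\cX^\an}=\int_{\cX^\an}\cl_\sA(\cZ)$ via the definition of $\kappa_{\cX^\an}$, and use that $\int_{\cZ^\an}1=\deg\cZ$ by the normalization of integration over $0$-dimensional spaces in \cite{CLD12}.
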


The last result in this section establishes the relation of maps $\tau_X^q$
and $\lambda^q_X$.

\begin{theorem}\label{th:kernel}
Let $K$ be a non-Archimedean field embeddable into $\bC_p$, and $X$ a smooth
$K$-analytic space. Then $\Ker\tau^q_X=\Ker\lambda_X^q$. In other words, we
have a canonical isomorphism $\sT_X^q\simeq\sL_X^q$.
\end{theorem}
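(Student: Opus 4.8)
The plan is to compute both kernels on stalks and to identify each, for every sufficiently small neighbourhood equipped with enough invertible functions, with the kernel of one and the same combinatorial symbol map attached to a semi-stable model. Fix $x\in X$, put $s=s(x)$, $t=t(x)$, and use Lemma~\ref{le:initial} to replace $(X;x)$ by the source $\bD\times_L W$ of a fundamental chart $(\bD,(\cY,\cD),(D,\delta),W,\alpha;y)$, with new base point $y$. Since $\dim_xX=s+t$ the poly-disc $\bD$ has dimension $0$, so we may take $X=W$ an open neighbourhood of $V\coloneqq\pi^{-1}\cD$ and $x$ the point of $V$ reducing to the generic point of $\cD$. (For a nontrivial $\bD$ one first observes that every invertible function near $y$ is a unit of $W$ times a $1$-unit, whose logarithmic derivative is exact and whose absolute value is locally constant, so that $\lambda^q_X$ and $\tau^q_X$ on germs at $y$ are pulled back from $W$ along a projection admitting a section, hence injectively.)

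Next I run the semi-stable reduction used in the proof of Theorem~\ref{th:1}(ii): lift $\cD$ to a smooth $k^\circ$-scheme, compactify, and apply \cite{dJ96}*{Theorem~8.2} to obtain, after finite extensions of $k$ and $L$, a projective strictly semi-stable $k^\circ$-scheme $\cS$ with $\cS\backslash\cS^\natural$ a strict normal crossing divisor and an irreducible component $\cE$ of $\cS_s$ lying over the generic point of $\cD$; I adopt the notation $\cE_I^\heartsuit$, $\xi^I_J$, $\xi^0_q$, $\DIV$, $\cl^\heartsuit$, $Z^\bullet(\cE)^\heartsuit$ of Section~\ref{ss:log}. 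After shrinking, finitely many invertible functions $f_1,\dots,f_N$ witnessing germs of sections of $\sK^q_X$ near $x$ restrict to invertible functions near $\pi^{-1}\cE^\heartsuit$ and so have divisors in $Z^1(\cE)^\heartsuit$. Let $\Lambda$ be the $\bQ$-vector space spanned by the components $\cE_{\{i\}}$ of $\cS_s^{[1]}\cap\cE$ together with the coordinates $T_0,\dots,T_t$ of the factor $\bE^t_\varpi$ of \eqref{eq:decomposition}, modulo $[T_0]+\cdots+[T_t]=0$; let $v(f_j)\in\Lambda$ collect the toric exponents of $f_j$ and the vanishing orders $\ord_{\cE_{\{i\}}}(\widetilde{c_jf_j})$; and let $\mu^q$ be the symbol map $\{f_{j_1},\dots,f_{j_q}\}\mapsto v(f_{j_1})\wedge\cdots\wedge v(f_{j_q})$ from the subgroup of $\sK^q_X(U)$ generated by the symbols in $f_1,\dots,f_N$ to $\wedge^q\Lambda$.

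The core of the proof is that, for every such basic list, $\Ker\lambda^q_X$ and $\Ker\tau^q_X$ intersected with that subgroup both equal $\Ker\mu^q$. For $\lambda^q$: by Theorem~\ref{th:1}(ii) its image lies in $\Upsilon^q_X\subset(\Omega^{q,\cl}_X/\rd\Omega^{q-1}_X)_{2q}$, a direct summand of $\Omega^{q,\cl}_X/\rd\Omega^{q-1}_X$, which by the argument in the proof of Theorem~\ref{th:weight}(3) restricts injectively at $x$ into the weight-$2q$ part $\bigoplus_j H^j_\rig(\cD/L)_{2j}\otimes_L H^{q-j}_\dr(\bE^t_\varpi)$ of $H^q_\dr(W,V)$ given by \eqref{eq:decomposition}; expanding $\bigwedge_i\tfrac{\rd f_{j_i}}{f_{j_i}}$ through Künneth and using that $H^\bullet_\dr(\bE^t_\varpi)$ is the exterior algebra on the toric part of $\Lambda$, that restriction along the alteration is injective on top-weight rigid cohomology, that $\xi^0_j$ is injective on $H^j_\rig(\cE^\heartsuit/k)_{2j}$ (Lemma~\ref{le:highest}), that $\cl^\heartsuit$ is injective, and that $\xi^0_j$ of a wedge of logarithmic forms equals $\cl^\heartsuit$ of the corresponding wedge of divisors (Proposition~\ref{pr:log}), one identifies the restriction of $\lambda^q$ with $\mu^q$ up to injections, giving $\Ker\lambda^q_X\cap(\text{that subgroup})=\Ker\mu^q$. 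For $\tau^q$: by the proof of Proposition~\ref{pr:kernel} (through \cite{JSS15}*{Proposition~3.16}) the form $\tau^q_X(\{f_{j_1},\dots,f_{j_q}\})$ vanishes near $x$ iff $\rd x_{j_1}\wedge\cdots\wedge\rd x_{j_q}$ vanishes on $\sum_{\sigma_U\prec\tau\in\cC}\wedge^q\dL(\tau)$, and the tangent directions $\dL(\tau)$ of $\trop_U(U)$ at $\trop_U(x)$ are spanned by the toric directions of $\bE^t_\varpi$ together with the divisorial directions pointing from the generic point of $\cE$ toward the $\cE_{\{i\}}$, along the $i$-th of which the slope of $-\log|f_j|$ equals $\ord_{\cE_{\{i\}}}(\widetilde{c_jf_j})$; under the resulting identification of $\sum_{\sigma_U\prec\tau\in\cC}\dL(\tau)$ with the dual of $\Lambda$ this vanishing is exactly $\mu^q(\{f_\bullet\})=0$, so $\Ker\tau^q_X\cap(\text{that subgroup})=\Ker\mu^q$ as well. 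Multilinearity carries both identifications from single symbols to arbitrary finite $\bQ$-linear combinations, and since $x$ is arbitrary the subsheaves $\Ker\lambda^q_X$ and $\Ker\tau^q_X$ coincide, yielding the canonical isomorphism $\sT^q_X\simeq\sL^q_X$.

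The step I expect to be the main obstacle is the dictionary used in the last paragraph: matching the polyhedral complex $\cC$ of Proposition~\ref{pr:kernel} near $\trop_U(x)$ — in particular the linear span of its faces through the minimal polyhedron $\sigma_U$ — with the combinatorial space $\Lambda$ built from the toric coordinates of $\bE^t_\varpi$ and the boundary components of the model $\cS$ of $\cD$, and the compatible identification of the tropicalization slope of $f_j$ with the divisor class $v(f_j)$. This requires tracing through the construction of the fundamental chart and of $\cS$ carefully enough to see that every tangent direction of $\trop_U(U)$ off $\sigma_U$ is a specialization toward a single codimension-one stratum. Granting that, the remaining ingredients — injectivity of the auxiliary restriction maps, the Künneth bookkeeping, and the multilinear-algebra passage from symbols to sums — are routine.
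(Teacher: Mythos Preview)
Your proposal is essentially correct and follows the same route as the paper: both reduce to a fundamental chart, pass to a proper strictly semi-stable model $\cS$ via de Jong, and identify both kernels stalkwise with a common combinatorial invariant built from the toric exponents in $\bE^t_\varpi$ and the divisor classes $\DIV(g_{ij})\in Z^1(\cE)^\heartsuit$, using Proposition~\ref{pr:log} together with Lemma~\ref{le:highest} on the de~Rham side and the polyhedral description of $\Ker\rd''$ on the tropical side. The paper's presentation differs only in bookkeeping: instead of your single symbol map $\mu^q$ valued in $\wedge^q\Lambda$, it writes the pulled-back tropical form explicitly as $\breve\alpha^*\tau^q_X(F)=\sum_{I\subset\{1,\dots,t\}}\rd x_I\wedge\breve\beta^*\zeta_I$ via three concrete tropical charts and checks $\zeta_I\res_{\trop_W(W)}=0$ term by term, which is exactly the dictionary you flag as the main obstacle, made explicit rather than abstract. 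One small point you gloss over by ``replacing $(X;x)$ by $\bD\times_LW$'': after showing $\alpha^*\lambda^q_X(F)$ is exact on the \'etale chart you still need to descend to $X$ in the analytic topology; the paper handles this by restricting to a finite \'etale piece $V'\to U'$ and tracing a primitive $(q-1)$-form.
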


\begin{proof}
It suffices to check the equality on stalks. Thus we fix a point $x\in X$
with $s=s(x)$ and $t=t(x)$.

\textbf{Step 1.} Let $U$ be an open neighborhood of $x$. Take an element
$F=\sum_{i=1}^N c_i\{f_{i1},\dots,f_{iq}\}\in\sK^q_X(U)$ where $c_i\in\bQ$
and $f_{ij}\in\cO^*_X(U)$. By \cite{Berk07}*{Propositions 2.1.1, 2.3.1},
K\"{u}nneth formula, and (the proof of) Theorem \ref{th:1} (ii), there exist
\begin{itemize}
  \item a proper strictly semi-stable scheme $\cS$ over $k^\circ$ of
      dimension $s$, where $k$ is a finite extension of $\bQ_p$;

  \item an irreducible component $\cE$ of $\cS_s$ that is geometrically
      irreducible;

  \item an open neighborhood $W$ of $\pi^{-1}\cE_{\widetilde{L}}$ in
      $\cS_L^\an$ where $L$ is a finite extension of $K$ containing $k$;

  \item a closed subset $\cZ$ of dimension at most $s-1$ of $\cS_k$;

  \item a point $y\in V\coloneqq \bD\times\prod_{k=1}^tB(0;r_k,R_k)\times
      W$ which projects to $\sigma_\cE$ in $W$;

  \item a morphism $\alpha\colon V\to U$ that is \'{e}tale away from
      $\bD\times\prod_{k=1}^tB(0;r_k,R_k)\times(W\cap\cZ_L^\an)$, such
      that $\alpha(y)=x$;

  \item for each $i,j$, integers $d_{ij1},\dots,d_{ijt}$ and
      $g_{ij}\in\cO^*(W,\pi^{-1}\cE_{\widetilde{L}})$, such that
      \[\alpha^*\frac{\rd f_{ij}}{f_{ij}}-\frac{\rd\(\beta^*g_{ij}\prod_{k=1}^tT_k^{d_{ijk}}\)}{\beta^*g_{ij}\prod_{k=1}^tT_k^{d_{ijk}}}\]
      is an exact $1$-form on $V$. Here, $T_k$ is the coordinate function
      on $B(0;r_k,R_k)$ for $1\leq k\leq t$, which will be regarded as a
      function in $\cO^*(V)$ via the obvious pullback; and $\beta\colon
      V\to W$ is the projection morphism.
\end{itemize}
In particular, if we put $h_{ij}=\beta^*g_{ij}\prod_{k=1}^nT_k^{d_{ijk}}$,
then $|\alpha^*f_{ij}\cdot h^{-1}_{ij}|$ is equal to a constant
$c_{ij}\in\bR_{>0}$ on $V$.

\textbf{Step 2.} We define three tropical charts as follows.
\begin{itemize}
  \item The first one uses $f_{ij}$ ($1\leq i\leq N,1\leq j\leq q$),
      which induce a moment morphism $U\to(\bG_{\r{m},K}^\an)^{Nq}$, and
      thus a tropicalization map $\trop_U\colon
      U\to(\bG_{\r{m},K}^\an)^{Nq}\xrightarrow{-\log|\;|}\bR^{Nq}$.

  \item The second one uses functions $g_{ij}$ ($1\leq i\leq N,1\leq
      j\leq q$), which induce a moment morphism
      $W\to(\bG_{\r{m},L}^\an)^{Nq}$, and thus a tropicalization map
      $\trop_W\colon
      W\to(\bG_{\r{m},L}^\an)^{Nq}\xrightarrow{-\log|\;|}\bR^{Nq}$.

  \item The third one uses functions $T_k$ ($1\leq k\leq t$) and
      $\beta^*g_{ij}$ ($1\leq i\leq N,1\leq j\leq q$), which induce a
      moment morphism $V\to(\bG_{\r{m},L}^\an)^{t+Nq}$, and thus a
      tropicalization map $\trop_V\colon
      V\to(\bG_{\r{m},L}^\an)^{t+Nq}\xrightarrow{-\log|\;|}\bR^{t+Nq}$.
\end{itemize}
We have a commutative diagram
\[\xymatrix{
W \ar[rr]^-{\trop_W} && \bR^{Nq} \\
V \ar[d]_-{\alpha}\ar[u]^-{\beta}\ar[rr]^-{\trop_V} &&  \bR^{t+Nq} \ar[d]^-{\breve\alpha}\ar[u]_-{\breve\beta} \\
U \ar[rr]^-{\trop_U} && \bR^{Nq} }\] in which $\breve\alpha$ sends a point
$(x_k,x_{ij})\in\bR^t\times\bR^{Nq}$ to $(y_{ij})$ where $y_{ij}=-\log
c_{ij}+y_{ij}+\sum_{k=1}^tx_k$, and $\breve\beta$ is the projection onto the
last $Nq$ factors. Note that
\[\tau^q_X(F)=\sum_{i=1}^Nc_i\bigwedge_{j=1}^q\rd y_{ij},\]
and thus
\[\breve\alpha^*\tau^q_X(F)=\sum_{i=1}^Nc_i\bigwedge_{j=1}^q
\(\rd x_{ij}+\sum_{k=1}^td_{ijk}\rd x_k\)\] as a $q$-form on $\bR^{t+Nq}$. We
may write $\breve\alpha^*\tau^q_X(F)=\sum_{I\subset\{1,\dots,t\},|I|\leq
q}\rd x_I\wedge\breve\beta^*\zeta_I$ for some $(q-|I|)$-form $\zeta_I$ on
$\bR^{Nq}$.

\textbf{Step 3.} We show that $(\Ker\lambda_X^q)_x\subset(\Ker\tau^q_X)_x$.
Thus we assume that $\lambda^q_X(F)$ is an exact $q$-form on $U$ and we need
to show that $\tau^q_X(F)=0$ on a possibly smaller open neighborhood of $x$.
It suffices to that $\breve\alpha^*\tau^q_X(F)=0$ when restricted to
$\trop_V(V)$. This is true as, by Proposition \ref{pr:log}, we have that
$\zeta_I=0$ when restricted to $\trop_W(W)$ for every $I$.

\textbf{Step 4.} We show that $(\Ker\tau^q_X)_x\subset(\Ker\lambda_X^q)_x$.
Thus we may assume that $\tau^q_X(F)=0$ when restricted to $\trop_U(U)$ and
we need to show that $\lambda^q_X(F)$ is an exact $q$-form on a possibly
smaller open neighborhood of $x$. Then $\breve\alpha^*\tau^q_X(F)=0$ when
restricted to $\trop_V(V)$, and thus $\zeta_I=0$ when restricted to
$\trop_W(W)$ for every $I$. By Proposition \ref{pr:log}, the image of
$\alpha^*\lambda^q_X(F)$ in $H^q_\dr(V)$ is $0$ after possibly replacing $W$
by a smaller open neighborhood of $\pi^{-1}\cD_{\widetilde{L}}$, as the map
$\xi^0_q$ \eqref{eq:gamma} is injective on $H^q_\rig(\cE^\heartsuit/k)_{2q}$.
In particular, there is an open neighborhood $V'$ of $y$ in $V$ such that the
induced morphism $\alpha\colon V'\to U'$ is finite \'{e}tale where $U'\subset
U$ is the image of $\alpha\res_{V'}$, and
$\alpha^*\lambda^q_X(F)\res_{V'}=\rd\omega'$ for some $(q-1)$-form $\omega'$
on $V'$. Thus $\lambda^q_X(F)\res_{U'}=\deg(\alpha\res_{V'})^{-1}\rd\omega$
where $\omega$ is the trace of $\omega'$ along $\alpha\colon V'\to U'$. The
theorem follows.
\end{proof}

\section{Cohomological triviality}
\label{ss:triviality}

In this section, we study the relation between algebraic de Rham cycle
classes and tropical cycle classes.

In this section, sheaves like $\cO_X$, $\fc_X$, $\Upsilon_X^q$, and the de
Rham complex $(\Omega^\bullet_X,\rd)$ are understood in the analytic
topology. We fix an embedding $\bR\hookrightarrow\bC_p$ throughout this
section. Moreover, we have to use adic topology. By \cite{Sch12}*{Theorem
2.24}, we may associate to a $K$-analytic space $X$ an adic space $X^\ad$,
and we have a canonical continuous map $\gamma_X\colon X^\ad\to X$ of
topological spaces.

\begin{lem}\label{le:log}
Let $K$ be a non-Archimedean field embeddable into $\bC_p$, and $X$ a smooth
$K$-analytic space. Then the canonical map
$\sL^q_X\otimes_\bQ\fc_X\to\Upsilon_X^q$ is an isomorphism for every $q\geq
0$.
\end{lem}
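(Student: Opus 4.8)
The plan is to dispose of well-definedness and surjectivity formally, and then to prove injectivity by a stalk computation which, through the weight decomposition and the alteration machinery of Section~\ref{ss:log}, reduces to the injectivity of a combinatorial cycle class map.

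\emph{Well-definedness and surjectivity.} The natural map sends $c\otimes[\{f_1,\dots,f_q\}]$ to $c\,\frac{\rd f_1}{f_1}\wedge\cdots\wedge\frac{\rd f_q}{f_q}$. The Milnor $K$-theory relations are already killed in $\sL^q_X=\IM(\lambda^q_X)$, and if $\frac{\rd f_1}{f_1}\wedge\cdots\wedge\frac{\rd f_q}{f_q}=\rd\eta$ locally, then $c\,\rd\eta=\rd(c\eta)$ because $\rd c=0$ (as $\fc_X=\Ker(\rd\colon\cO_X\to\Omega^1_X)$); hence the map descends to $\sL^q_X\otimes_\bQ\fc_X$. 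Surjectivity is immediate from the definition of $\Upsilon^q_X$ as the subsheaf of $\Omega^{q,\cl}_X/\rd\Omega^{q-1}_X$ generated by the sections $\sum c_i\,\frac{\rd f_{i1}}{f_{i1}}\wedge\cdots\wedge\frac{\rd f_{iq}}{f_{iq}}$, since $\sL^q_X$ is itself generated, as a sheaf, by classes of such logarithmic forms.

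\emph{Reduction of injectivity.} As $\sL^q_X$ is a sheaf of $\bQ$-vector spaces with an injection $\sL^q_X\hookrightarrow\Omega^{q,\cl}_X/\rd\Omega^{q-1}_X$, and $\fc_X$ is flat over $\bQ$, it suffices to prove that $\sL^q_X\otimes_\bQ\fc_X\to\Omega^{q,\cl}_X/\rd\Omega^{q-1}_X$ is injective; by Theorem~\ref{th:1}(ii) the image lies in the $\fc_X$-submodule $(\Omega^{q,\cl}_X/\rd\Omega^{q-1}_X)_{2q}$, and injectivity may be tested on the stalk at a point $x\in X$, with $s=s(x)$, $t=t(x)$. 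So fix $\bQ$-linearly independent $e_1,\dots,e_m\in(\fc_X)_x$ and $v_1,\dots,v_m\in(\sL^q_X)_x$ with $\sum_i e_iv_i=0$; on a suitable \'etale neighborhood of $x$ we represent $v_i$ by a logarithmic form $\eta_i=\lambda^q_X(\kappa_i)$ and $e_i$ by a section of $\fc_X$, so that $\sum_i e_i\eta_i=\rd\xi$ for some $\xi\in\Omega^{q-1}_X$, and we must deduce that every $\eta_i$ becomes exact after shrinking.

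\emph{Passage to a semistable model.} Arguing as in the proof of Theorem~\ref{th:1}(ii) and in Step~1 of the proof of Theorem~\ref{th:kernel}, we pull back along an \'etale morphism $\alpha$ from a neighborhood inside a fundamental chart $\bD\times_L W$, and then, via the compactification-and-alteration construction of Section~\ref{ss:log} (a proper strictly semistable $\cS$ over $k^\circ$ with $k/\bQ_p$ finite, a geometrically irreducible component $\cE\subset\cS_s$), we arrange that under the K\"unneth decomposition~\eqref{eq:decomposition} of the germ cohomology $H^q_\dr(\bD\times_L(W,\pi^{-1}\cD_{\widetilde{L}}))$ each $\alpha^*\eta_i$ becomes a combination of products of the classes $\frac{\rd T_k}{T_k}$ (from the toric factor $\bE^t_\varpi$) with logarithmic classes in $H^\bullet_\rig(\cE^\heartsuit/k)_{2\bullet}$ attached to units on a neighborhood of $\pi^{-1}\cE^\heartsuit$, while the relation $\sum_i e_i\,\alpha^*\eta_i=0$ survives in the ambient de~Rham cohomology (after deleting a closed subset of lower dimension to keep $\alpha$ \'etale, exactly as in Step~3 of the proof of Lemma~\ref{le:stalk}). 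Applying Proposition~\ref{pr:log} together with the explicit basis of $H^\bullet_\dr(\bE^t_\varpi)$, each graded K\"unneth component of the relation yields an identity $\sum_i e_i\,\cl^\heartsuit(\zeta_{i,S})=0$ among images of combinatorial cycles $\zeta_{i,S}\in Z^\bullet(\cE)^\heartsuit\otimes\bQ$.

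\emph{Combinatorial conclusion, and the main obstacle.} Since the de~Rham complex and all the maps $\xi$ of Section~\ref{ss:log} are $\fc_X$-linear, and since $\cl^\heartsuit$ is the class map of the \emph{free} $\bQ$-module $Z^\bullet(\cE)^\heartsuit\otimes\bQ$ sending each basis element $[\cE_I]$ to a $\bQ$-rational generator, the identities $\sum_i e_i\,\cl^\heartsuit(\zeta_{i,S})=0$ force, by $\bQ$-flatness of $\fc_X$ and $\bQ$-independence of the $e_i$, that $\zeta_{i,S}=0$ for all $i$ and $S$. By Lemma~\ref{le:highest}, $\xi^0_\bullet$ is injective on the weight-$2\bullet$ part, so every K\"unneth component of $\alpha^*\eta_i$ vanishes, whence $\alpha^*\eta_i=0$ on a smaller neighborhood; a trace argument along the finite \'etale part of $\alpha$, as at the end of the proof of Theorem~\ref{th:kernel}, then gives $\eta_i\in\rd\Omega^{q-1}_X$ locally, i.e.\ $v_i=0$. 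The delicate point—requiring care beyond citing the results above—is the bookkeeping with $\fc_X$: one must verify that the $\alpha$-pullback, the passage to the semistable model, and the rigid-cohomology and K\"unneth identifications are all compatible with the $\fc_X$-module structures, so that the single relation $\sum_i e_i\eta_i=\rd\xi$ descends to $\fc_X$-linear relations among the genuinely $\bQ$-rational classes $\cl^\heartsuit(\zeta_{i,S})$; this rests precisely on the $\fc_X$-linearity of everything in the de~Rham complex and on the flatness of $\fc_X$ over $\bQ$.
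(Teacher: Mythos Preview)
Your proposal is correct and follows essentially the same route as the paper: reduce to stalks, pass via a fundamental chart and an alteration to a proper strictly semi-stable model $\cS$ with component $\cE$, use Proposition~\ref{pr:log} to land the K\"unneth components of the logarithmic classes in the $\bQ$-rational lattice $Z^\bullet(\cE)^\heartsuit\otimes\bQ$ via $\cl^\heartsuit$, exploit the $\bQ$-rationality together with Lemma~\ref{le:highest} to kill the relevant pieces, and descend by a trace along a finite \'etale piece of $\alpha$. The only cosmetic difference is that you fix the $\fc_X$-coefficients $e_i$ to be $\bQ$-linearly independent and conclude that each $v_i=0$, whereas the paper fixes the $v_l=\lambda^q_X(F^l)$ and concludes they are $\bQ$-linearly dependent; these are dual formulations of the same linear-algebra step over the $\bQ$-lattice $\bigoplus_J\bQ^{\pi_0(\cE_J^\heartsuit)}\subset\bigoplus_J H^0_\rig(\cE_J^\heartsuit/L)$.
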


\begin{proof}
By definition, it suffices to show that the map
$\sL^q_X\otimes_\bQ\fc_X\to\Upsilon_X^q$ is injective on stalks. Thus we fix
a point $x\in X$ with $s=s(x)$ and $t=t(x)$. Take an element $\sum_{l=1}^M
b_l \lambda_X^q(F^l)\in\sL^q_X(U)\otimes_\bQ\fc_X(U)$ such that $F=0$ in
$\Upsilon_X^q(U)$, where $U$ is a connected open neighborhood of $x$, and
$b_l\in\fc_X(U)$, $F^l\in\sK^q_X(U)$. It suffices to show that possibly after
shrinking $U$, the elements $\lambda_X^q(F^l)$ are linearly dependent in
$\Upsilon_X^q(U)$ over $\bQ$.

Write $F^l=\sum_{i=1}^{N_l} c^l_i\{f^l_{i1},\dots,f^l_{iq}\}$ where
$c^l_i\in\bQ$ and $f^l_{ij}\in\cO^*_X(U)$. We copy Step 1 of the proof of
Theorem \ref{th:kernel} to the element $F\coloneqq\sum_{l=1}^M b_lF^l$. Then
for every $I\subset\{1,\dots,t\}$ with $|I|\leq q$, we have that
\begin{align}\label{eq:dependence}
\sum_{l=1}^Mb_l\sum_{i=1}^{N_l}c^l_i\sum_{\jmath}\epsilon_\jmath\(\prod_{k\in I}d^l_{i\jmath(k)k}\)
\cl^\heartsuit\(\bigwedge_{j\not\in\IM\jmath}\DIV g^l_{ij}\)\in
\bigoplus_{J,|J|=q-|I|}H^0_\rig(\cE_J^\heartsuit/L)
\end{align}
vanishes, for some finite extension of non-Archimedean fields $L/\fc_X(U)$.
Here, $\jmath$ is taken over all injective maps $I\to\{1,\dots,q\}$; the
multi-wedge product $\bigwedge_{j\not\in\IM\jmath}\DIV g^l_{ij}$ is taken in
the increasing order for the index $j$; and $\epsilon_\jmath\in\{\pm1\}$ is
determined by $\jmath$. Note that $H^0_\rig(\cE_J^\heartsuit/L)$ is
canonically isomorphic to $\bQ^{\oplus\pi_0(\cE_J^\heartsuit)}\otimes_\bQ L$,
and for every $l$,
\[\sum_{i=1}^{N_l}c^l_i\sum_{\jmath}\epsilon_\jmath\(\prod_{k\in I}d^l_{i\jmath(k)k}\)
\cl^\heartsuit\(\bigwedge_{j\not\in\IM\jmath}\DIV g^l_{ij}\)\in
\bigoplus_{J,|J|=q-|I|}\bQ^{\oplus\pi_0(\cE_J^\heartsuit)}.\] Thus, there
exist $b'_l\in\bQ$, not all being zero, such that \eqref{eq:dependence}
vanishes for every $I$ if we replace $b_l$ by $b'_l$.

This implies that there is an open neighborhood $V'$ of $y$ in $V$ such that
the induced morphism $\alpha\colon V'\to U'$ is finite \'{e}tale where
$U'\subset U$ is the image of $\alpha\res_{V'}$, and
$\alpha^*\lambda^q_X(F')=\rd\omega'$ for some $\omega'\in\Omega^{q-1}(V')$
where $F'=\sum_{l=1}^M b'_l \lambda_X^q(F^l)$. Then
$\lambda^q_X(F')=\deg(\alpha)^{-1}\rd\omega$ where $\omega$ is the trace of
$\omega'$ along $\alpha\colon V'\to U'$. The lemma follows.
\end{proof}

The following theorem shows the finiteness of $H^{1,1}_\sA$ and studies the
tropical cycle class of line bundles.

\begin{theorem}\label{th:line}
Let $\cX$ be a proper smooth scheme over $\bC_p$. Then
\begin{enumerate}
  \item $H^{1,1}_\sA(\cX^\an)$ is finite dimensional;

  \item for a line bundle $\cL$ on $\cX$ whose (algebraic) de Rham Chern
      class $\cl_\dr(\cL)\in H^2_\dr(\cX)$ is trivial, we have
      $\cl_\sA(\cL)=0$.
\end{enumerate}
\end{theorem}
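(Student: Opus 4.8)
The plan is to express $H^{1,1}_\sA(\cX^\an)$ through the sheaf $\sL^1_{\cX^\an}$ and the weight-$2$ piece $\Upsilon_{\cX^\an}$ of the first de Rham cohomology sheaf, and then to feed in the finiteness of the algebraic de Rham cohomology of $\cX$ together with the vanishing of $\cl_\dr(\cL)$. First record the identifications: by the remark after Proposition \ref{pr:kernel} and by Theorem \ref{th:kernel}, $H^{1,1}_\sA(\cX^\an)\simeq H^1(\cX^\an,\sT^1_{\cX^\an}\otimes_\bQ\bR)\simeq H^1(\cX^\an,\sL^1_{\cX^\an})\otimes_\bQ\bR$, the last step because $\bR$ is flat over $\bQ$ and $\cX^\an$ is quasi-compact. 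By Lemma \ref{le:log} with $q=1$ we have $\Upsilon_{\cX^\an}\simeq\sL^1_{\cX^\an}\otimes_\bQ\fc_{\cX^\an}$; since $\bC_p$ is algebraically closed, $\fc_{\cX^\an}$ is the constant sheaf $\bC_p$ on each connected component, so writing $\bC_p$ as the filtered union of its finite-dimensional $\bQ$-subspaces and using quasi-compactness again gives $H^i(\cX^\an,\Upsilon_{\cX^\an})\simeq H^i(\cX^\an,\sL^1_{\cX^\an})\otimes_\bQ\bC_p$ for all $i$. Finally, by Theorem \ref{th:1} in the analytic topology $\Upsilon_{\cX^\an}$ is a direct summand of $\Omega^{1,\cl}_{\cX^\an}/\rd\cO_{\cX^\an}$.

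\textbf{Part (1).} Using the second hypercohomology (conjugate) spectral sequence $E_2^{p,r}=H^p(\cX^\an,\Omega^{r,\cl}_{\cX^\an}/\rd\Omega^{r-1}_{\cX^\an})\Rightarrow H^{p+r}_\dr(\cX^\an)$, one sees that $H^1(\cX^\an,\Omega^{1,\cl}_{\cX^\an}/\rd\cO_{\cX^\an})=E_2^{1,1}$ is an extension of a subquotient of $H^2_\dr(\cX^\an)$ by a subspace of $H^3(\cX^\an,\fc_{\cX^\an})$. Now $H^2_\dr(\cX^\an)$ is finite-dimensional, being identified with $H^2_\dr(\cX)$ by GAGA for the proper smooth scheme $\cX$, and $H^3(\cX^\an,\fc_{\cX^\an})=H^3(\cX^\an,\bC_p)$ is finite-dimensional because $\cX^\an$ has the homotopy type of a finite CW-complex. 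Hence $H^1(\cX^\an,\Omega^{1,\cl}_{\cX^\an}/\rd\cO_{\cX^\an})$ is finite-dimensional, and so is its direct summand $H^1(\cX^\an,\Upsilon_{\cX^\an})\simeq H^1(\cX^\an,\sL^1_{\cX^\an})\otimes_\bQ\bC_p$; therefore $H^1(\cX^\an,\sL^1_{\cX^\an})$ is a finite-dimensional $\bQ$-vector space and $H^{1,1}_\sA(\cX^\an)=H^1(\cX^\an,\sL^1_{\cX^\an})\otimes_\bQ\bR$ is finite-dimensional.

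\textbf{Part (2).} Let $c(\cL)$ be the image of the class of $\cL$ under $\Pic(\cX)_\bQ\xrightarrow{\cl_\sK}H^1(\cX,\sK^1_\cX)\to H^1(\cX^\an,\sK^1_{\cX^\an})\to H^1(\cX^\an,\sT^1_{\cX^\an})=H^1(\cX^\an,\sL^1_{\cX^\an})$, so that $\cl_\sA(\cL)$ is the image of $c(\cL)$ under $H^1(\cX^\an,\sL^1_{\cX^\an})\to H^1(\cX^\an,\sL^1_{\cX^\an})\otimes_\bQ\bR$; it suffices to show $c(\cL)=0$. The natural map $H^1(\cX^\an,\sL^1_{\cX^\an})\to H^1(\cX^\an,\Omega^{1,\cl}_{\cX^\an}/\rd\cO_{\cX^\an})$ is injective: it factors through $H^1(\cX^\an,\Upsilon_{\cX^\an})$, the second map is injective since $\Upsilon_{\cX^\an}$ is a direct summand, and the first is injective because in the long exact sequence of $0\to\sL^1_{\cX^\an}\to\Upsilon_{\cX^\an}\to\sL^1_{\cX^\an}\otimes_\bQ(\fc_{\cX^\an}/\underline{\bQ})\to 0$ the connecting map vanishes, as $H^0(\cX^\an,\Upsilon_{\cX^\an})=H^0(\cX^\an,\sL^1_{\cX^\an})\otimes_\bQ\bC_p$ surjects onto $H^0(\cX^\an,\sL^1_{\cX^\an})\otimes_\bQ(\bC_p/\bQ)$. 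So it remains to check that the image of $c(\cL)$ in $H^1(\cX^\an,\Omega^{1,\cl}_{\cX^\an}/\rd\cO_{\cX^\an})$ is zero. By the definition of $\lambda^1_{\cX^\an}$ and $\sL^1_{\cX^\an}$, this image is $H^1(\bar\ell)$ applied to the class of $\cL$, where $\bar\ell\colon\cO^*_{\cX^\an}\to\Omega^{1,\cl}_{\cX^\an}/\rd\cO_{\cX^\an}$ sends $f$ to the class of $\rd f/f$. On the other hand $\cl_\dr(\cL)\in H^2_\dr(\cX)=H^2_\dr(\cX^\an)$ is, by its construction and GAGA, the image of the class of $\cL$ under $H^1(\cX^\an,\cO^*_{\cX^\an})\xrightarrow{f\mapsto \rd f/f}H^1(\cX^\an,\Omega^{1,\cl}_{\cX^\an})\to H^2_\dr(\cX^\an)$; tracing this through the canonical truncation $\tau_{\le 1}\Omega^\bullet_{\cX^\an}$ of the de Rham complex, whose degree-one cohomology sheaf is $\Omega^{1,\cl}_{\cX^\an}/\rd\cO_{\cX^\an}$ and whose degree-two hypercohomology injects into $H^2_\dr(\cX^\an)$, shows that the class $H^1(\bar\ell)(\cL)$ maps injectively to $\cl_\dr(\cL)$. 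Since $\cl_\dr(\cL)=0$, we obtain $H^1(\bar\ell)(\cL)=0$, hence $c(\cL)=0$ and $\cl_\sA(\cL)=0$.

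\textbf{Main obstacle.} The delicate point is the comparison in Part (2) between the algebraic de Rham cycle class $\cl_\dr(\cL)$ and the analytic class $H^1(\bar\ell)(\cL)$: this needs GAGA for de Rham cohomology of proper smooth schemes over $\bC_p$ together with a careful treatment of the conjugate (canonical) truncation of the de Rham complex, and it relies on the injectivity $H^1(\cX^\an,\sL^1_{\cX^\an})\hookrightarrow H^1(\cX^\an,\Upsilon_{\cX^\an})$, which in turn uses the precise description of $\fc_{\cX^\an}$ as the constant sheaf $\bC_p$. For Part (1), the remaining external input is the finiteness of $H^\bullet(\cX^\an,\bC_p)$, i.e. the finiteness of the homotopy type of the Berkovich analytification of a proper scheme.
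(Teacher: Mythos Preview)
Your proof is correct and follows essentially the same strategy as the paper: both parts rest on embedding $H^1(\cX^\an,\sL^1_{\cX^\an})\otimes_\bQ\bC_p\simeq H^1(\cX^\an,\Upsilon_{\cX^\an})$ as a direct summand of $H^1(\cX^\an,\Omega^{1,\cl}_{\cX^\an}/\rd\cO_{\cX^\an})$, and then controlling the latter via the conjugate spectral sequence, GAGA, and the finite homotopy type of $\cX^\an$.

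There is one genuine, if minor, difference in Part (2). The paper first argues that the coboundary $\delta\colon H^1(\cX^\an,\Omega^{1,\cl}_{\cX^\an}/\rd\cO_{\cX^\an})\to H^3(\cX^\an,\bC_p)$ kills $\cl(\cL)$ by factoring through the \emph{algebraic} side and using $H^3(\cX,\bC_p)=0$; only then does it compare with $\cl_\dr(\cL)$ inside $H^2_\dr/H^2(\cX^\an,\bC_p)$. You bypass this detour: since the dlog map $\cO^*_{\cX^\an}\to\Omega^{1,\cl}_{\cX^\an}$ already lands in the degree-one term of $\tau_{\le 1}\Omega^\bullet_{\cX^\an}$, the class of $\cL$ lifts canonically to $H^2(\cX^\an,\tau_{\le 1}\Omega^\bullet_{\cX^\an})$, which injects into $H^2_\dr(\cX^\an)$ because $H^1$ of $\tau_{\ge 2}\Omega^\bullet_{\cX^\an}$ vanishes. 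This is slightly cleaner and avoids the appeal to scheme-theoretic constant-sheaf cohomology. Two small remarks: your sentence ``$H^1(\bar\ell)(\cL)$ maps injectively to $\cl_\dr(\cL)$'' is imprecise (the injection is on $H^2(\cX^\an,\tau_{\le 1}\Omega^\bullet_{\cX^\an})$, from which both classes are images); and in Part (1) the extension describing $E_2^{1,1}$ goes the other way---$E_\infty^{1,1}$ (a subquotient of $H^2_\dr$) is the \emph{sub}, and the image of $d_2$ in $H^3(\cX^\an,\bC_p)$ is the \emph{quotient}---though this does not affect the finiteness conclusion.
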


\begin{proof}
We put $X=\cX^\an$. By Theorem \ref{th:1} and Lemma \ref{le:log}, we know
that $H^1(X,\sL_X^1\otimes_\bQ\bC_p)\simeq H^1(X,\sL_X^1)\otimes_\bQ\bC_p$ is
a direct summand of $H^1(X,\Omega^{1,\cl}_X/\rd\cO_X)$.

For (1), it suffices to show that
$\dim_{\bC_p}H^1(X,\Omega^{1,\cl}_X/\rd\cO_X)<\infty$. In fact, we have a
spectral sequence $E^{p,q}_r$ abutting to
$H^\bullet_\dr(X)=H^\bullet(X,\Omega^\bullet_X)$ with the second page terms
$E^{p,q}_2=H^p(X,\Omega^{q,\cl}_X/\rd\Omega^{q-1}_X)$. Thus, it suffices to
show that both $H^3(X,\bC_p)$ and $H^2(X,\Omega^\bullet_X)$ are finite
dimensional. Since the homotopy type of $X$ is a finite CW complex,
$\dim_{\bC_p}H^i(X,\bC_p)<\infty$ for every $i\in\bZ$. By GAGA,
$H^i(X,\Omega^\bullet_X)$ is canonically isomorphic to the algebraic de Rham
cohomology $H^i_\dr(\cX)$ for every $i$, and thus finite dimensional.

For (2), note that the map $\cl_\sA\colon \CH^q(\cX)_\bQ\to H^{q,q}_\sA(X)$
factors through $H^q(X,\sL^q_X)$. We denote by $\cl(\cL)$ the corresponding
class in $H^1(X,\sL^1_X)$. It suffices to show that $\cl(\cL)$ is zero in
$H^1(X,\Omega^{1,\cl}_X/\rd\cO_X)$. Now we regard $\cl(\cL)$ as an element in
the latter cohomology group. Note that $\cl(\cL)$ maps to zero under the
coboundary map $\delta\colon H^1(X,\Omega^{1,\cl}_X/\rd\cO_X)\to
H^3(X,\bC_p)$, as the composite map $\CH^1(\cX)_\bQ\to H^3(X,\bC_p)$ fits
into the following commutative diagram
\[\xymatrix{
\CH^1(\cX)_\bQ \ar[r] & H^1(\cX,\Omega^{1,\cl}_\cX/\rd\cO_\cX) \ar[r]\ar[d]& H^1(X,\Omega^{1,\cl}_X/\rd\cO_X) \ar[d]^-{\delta} \\
& H^3(\cX,\bC_p) \ar[r] & H^3(X,\bC_p) }\] in which $H^3(\cX,\bC_p)$
vanishes. Thus, it suffices to show that the image of $\cl(\cL)$ vanishes
under the map $\Ker(\delta\colon H^1(X,\Omega^{1,\cl}_X/\rd\cO_X)\to
H^3(X,\bC_p))\to H^2(X,\Omega_X^\bullet)/H^2(X,\bC_p)$. However, by comparing
the definitions of two cycle class maps, we know that it is also the image of
$\cl_\dr(\cL)$ under the composite map $H^2_\dr(\cX)\simeq
H^2(X,\Omega_X^\bullet)\to H^2(X,\Omega_X^\bullet)/H^2(X,\bC_p)$, thus
vanishes.
\end{proof}

The following lemma is the analytic version of the corresponding statement in
the algebraic setting.

\begin{lem}\label{le:top}
Let $K$ be a non-Archimedean field. Let $\cX$ be a geometrically connected
proper smooth scheme over $K$ of dimension $n$. Then we have
$H^n(\cX^\an,\Omega^n_{\cX^\an}/\rd\Omega^{n-1}_{\cX^\an}) \simeq K$.
\end{lem}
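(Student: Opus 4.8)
The plan is to identify the group in question with the top de Rham cohomology $H^{2n}_\dr(\cX^\an)$ and then compute the latter by GAGA together with the classical algebraic fact. Write $X=\cX^\an$. Since $\Omega^q_X=0$ for $q>n$, the top cohomology sheaf of $(\Omega^\bullet_X,\rd)$ is $\Omega^{n,\cl}_X/\rd\Omega^{n-1}_X=\Omega^n_X/\rd\Omega^{n-1}_X$, while $\Omega^{q,\cl}_X/\rd\Omega^{q-1}_X=0$ for $q>n$. Because $\cX$ is proper, $X$ is compact Hausdorff, hence paracompact, of covering dimension $\dim X=n$; consequently $H^p(X,\sF)=0$ for every $p>n$ and every sheaf of abelian groups $\sF$ on $X$. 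I would then run the hypercohomology spectral sequence $E_2^{p,q}=H^p(X,\Omega^{q,\cl}_X/\rd\Omega^{q-1}_X)\Rightarrow H^{p+q}_\dr(X)$, already used in the proof of Theorem \ref{th:line}. In total degree $2n$, a term $E_2^{p,q}$ is nonzero only if $q\le n$ (so that the sheaf is nonzero) and $p\le n$ (by the vanishing above), forcing $p=q=n$; moreover every differential $d_r$ into or out of $E_r^{n,n}$ has its other endpoint in a bidegree whose first coordinate exceeds $n$ or whose second coordinate exceeds $n$, hence vanishes. Thus $E_2^{n,n}=E_\infty^{n,n}$ is the unique graded piece of the abutment in degree $2n$, which gives a canonical isomorphism
\[
H^n\bigl(X,\ \Omega^n_X/\rd\Omega^{n-1}_X\bigr)\ \xrightarrow{\ \sim\ }\ H^{2n}_\dr(X).
\]

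Next I would compute $H^{2n}_\dr(X)$. By GAGA for the proper scheme $\cX$ (as invoked in the proof of Theorem \ref{th:line}), $H^i_\dr(X)=H^i(X,\Omega^\bullet_X)\simeq H^i_\dr(\cX)$ for all $i$, so in particular $H^{2n}_\dr(X)\simeq H^{2n}_\dr(\cX)$. Finally, $H^{2n}_\dr(\cX)\simeq K$ is the classical algebraic statement this lemma mirrors: choose a finitely generated subfield $k_0\subset K$ over which $\cX$ descends to a smooth proper $\cX_0$ with $H^0(\cX_0,\cO_{\cX_0})=k_0$ (so $\cX_0$ is geometrically connected, being also geometrically reduced); flat base change for the bounded coherent de Rham complex gives $H^{2n}_\dr(\cX)=H^{2n}_\dr(\cX_0)\otimes_{k_0}K$, and fixing an embedding $\iota\colon k_0\hookrightarrow\bC$ together with Grothendieck's comparison theorem yields $\dim_{k_0}H^{2n}_\dr(\cX_0)=\dim_\bC H^{2n}\bigl((\cX_0\otimes_{k_0,\iota}\bC)(\bC);\bC\bigr)=1$, the last space being a compact connected oriented real $2n$-manifold (one may upgrade this to a canonical identification with $k_0$ through the trace map). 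Combining the three isomorphisms gives $H^n(\cX^\an,\Omega^n_{\cX^\an}/\rd\Omega^{n-1}_{\cX^\an})\simeq K$.

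The only genuinely non-formal input on the analytic side is the vanishing $H^p(X,\sF)=0$ for $p>n$, i.e.\ that the covering dimension of $\cX^\an$ is at most $n$; I expect this to be the point requiring the most care, and I would justify it by citing \cite{Berk90} for the equality $\dim_{\mathrm{top}}\cX^\an=\dim\cX$ together with the classical Grothendieck-type vanishing theorem for paracompact spaces of finite covering dimension. Everything else is spectral-sequence bookkeeping, GAGA, and the standard computation of top algebraic de Rham cohomology.
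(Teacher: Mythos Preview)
Your argument is correct and follows the same architecture as the paper: use the hypercohomology spectral sequence $E_2^{p,q}=H^p(X,\Omega^{q,\cl}_X/\rd\Omega^{q-1}_X)\Rightarrow H^{p+q}_\dr(X)$, observe that in total degree $2n$ only $E_2^{n,n}$ survives once one knows $H^p(X,\sF)=0$ for $p>n$, and then invoke GAGA together with $H^{2n}_\dr(\cX)\simeq K$.

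The one substantive divergence is in how the vanishing $H^p(X,\sF)=0$ for $p>n$ is established. You appeal directly to the topological (covering) dimension of the Berkovich space and classical vanishing for paracompact spaces; the paper instead passes to the adic space $X^\ad$, identifies it with the inverse limit $\varprojlim_\fX\fX$ over formal models of $X$, and then uses that each $\fX$ is a Noetherian topological space of dimension at most $n$ together with Grothendieck vanishing and a colimit argument. Your route is more direct; the paper's route has the incidental advantage of setting up exactly the adic-space and formal-model machinery that is reused later in the proof of Theorem~\ref{th:trivial}. (The paper also simply asserts $H^{2n}_\dr(\cX)\simeq K$ without the spreading-out and Betti-comparison argument you supply.)
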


\begin{proof}
Put $X=\cX^\an$. By the spectral sequence
$E^{p,q}_2=H^p(X,\Omega^{q,\cl}_X/\rd\Omega^{q-1}_X)\Rightarrow
H^{p+q}(X,\Omega^\bullet_X)$ and the GAGA comparison isomorphism
$H^\bullet_\dr(X)\simeq H^\bullet_\dr(\cX)$, it suffices to show that
$H^i(X,\sF)=0$ for $i>n$ and every abelian sheaf $\sF$ on $X$. Then we have
$H^n(X,\Omega^n_X/\rd\Omega^{n-1}_X)\simeq H^{2n}_\dr(\cX)\simeq K$.

By \cite{Berk93}*{Proposition 1.3.6 \& Lemma 1.6.2} and \cite{Sch12}*{Theorem
2.21}, we have a canonical isomorphism $H^i(X,\sF)\simeq
H^i(X^\ad,\gamma_X^{-1}\sF)$.

In fact, we will show that $H^i(X^\ad,\sF)=0$ for $i>n$ and every abelian
sheaf $\sF$ on $X^\ad$. Recall that a formal model of $X$ is a proper flat
formal $K^\circ$-scheme $\fX$ with an isomorphism $\fX_\eta\simeq X$. A
formal model $\fX$ induces a continuous map $\gamma_\fX\colon X^\ad\to\fX$.
By \cite{Sch12}*{Theorem 2.22} and \cite{SP}*{094L, 0A2Z}, we have an
isomorphism $X^\ad\simeq\varprojlim_\fX\fX$ of spectral spaces, where the
(cofiltered) limit is taken over all formal models $\fX$ of $X$. By
\cite{SP}*{0A37}, we have an isomorphism
$\varinjlim_{\fX}H^i(\fX,\gamma_{\fX*}\sF)\simeq H^i(X^\ad,\sF)$. Now as (the
underlying space of) $\fX$ is a Noetherian topological space of dimension (at
most) $n$, it follows that $H^i(\fX,\gamma_{\fX*}\sF)=0$ for $i>n$ by
Grothendieck vanishing theorem \cite{SP}*{02UZ}. The lemma then follows.
\end{proof}

\begin{definition}
Let $X$ be a compact smooth $\bC_p$-analytic space of dimension $n$. We have
a total integration map
\[\int_X\colon H^n(X,\sT^n_X\otimes_\bQ\bR)\simeq H^{n,n}_\sA(X)\to\bR.\]
By the isomorphism $\sT^n_X\simeq\sL^n_X$ in Theorem \ref{th:kernel}, Lemma
\ref{le:log}, and by extending the above map linearly over $\bC_p$, we obtain
a $\bC_p$-linear map
\[\Tr_X^\sA\colon H^n(X,\Upsilon^n_X)\simeq H^n(X,\sL^n_X\otimes_\bQ\bC_p)\to\bC_p,\]
called the \emph{trace map} for $X$.
\end{definition}

The following proposition can be regarded as certain algebraicity property of
the transcendental map of integration.

\begin{proposition}\label{pr:trace}
Let $k\subset\bC_p$ be a discrete non-Archimedean subfield, and $\cX$ a
geometrically connected proper smooth scheme over $k$ of dimension $n$. If we
put $\cX_\ra=\cX\otimes_k\bC_p$, then the map $\Tr_{\cX^\an_\ra}^\sA$ factors
through the canonical map
\[H^n(\cX_\ra^\an,\Upsilon^n_{\cX_\ra^\an})\to
H^n(\cX_\ra^\an,\Omega^n_{\cX_\ra^\an}/\rd\Omega^{n-1}_{\cX_\ra^\an})\simeq\bC_p,\]
where we have used Lemma \ref{le:top} for the last isomorphism.

In particular,
$H^n(\cX_\ra^\an,(\Omega^n_{\cX_\ra^\an}/\rd\Omega^{n-1}_{\cX_\ra^\an})_w)\simeq\bC_p$
if $w=2n$, and is trivial otherwise.
\end{proposition}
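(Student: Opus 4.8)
Write $X=\cX_\ra^\an$; note $\cX_\ra$ is geometrically connected, proper and smooth over $\bC_p$ of dimension $n$. Since $q=n$ is the top degree, $\Omega^{n,\cl}_X=\Omega^n_X$, so by Theorem \ref{th:1} we have $\Omega^n_X/\rd\Omega^{n-1}_X=\bigoplus_{w=n}^{2n}(\Omega^n_X/\rd\Omega^{n-1}_X)_w$ with $\Upsilon^n_X\subseteq(\Omega^n_X/\rd\Omega^{n-1}_X)_{2n}$. By Lemma \ref{le:top} the space $H^n(X,\Omega^n_X/\rd\Omega^{n-1}_X)$ is one–dimensional over $\bC_p$; concretely the edge map of the Hodge--de Rham spectral sequence $E_2^{p,q}=H^p(X,\Omega^{q,\cl}_X/\rd\Omega^{q-1}_X)\Rightarrow H^{p+q}_\dr(X)$ identifies it with $H^{2n}_\dr(X)\simeq H^{2n}_\dr(\cX_\ra)\simeq\bC_p$, this edge map being an isomorphism because $\Omega^{>n}_X=0$ and $H^{>n}(X,-)=0$ force $E_2^{n,n}=E_\infty^{n,n}$. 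Applying $H^n(X,-)$ to the weight decomposition, $\bigoplus_{w=n}^{2n}H^n\big(X,(\Omega^n_X/\rd\Omega^{n-1}_X)_w\big)\simeq\bC_p$.

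\textbf{The key sheaf computation.} Set $\sQ\coloneqq(\Omega^n_X/\rd\Omega^{n-1}_X)_{2n}/\Upsilon^n_X$. I claim $\supp\sQ\subseteq\{x\in X\res s(x)\geq 2\}$. By the remark following Theorem \ref{th:weight} the sheaf $(\Omega^n_X/\rd\Omega^{n-1}_X)_{2n}$ is supported on $\{s(x)+t(x)\geq n\}=\{s(x)+t(x)=n\}$ (as always $s(x)+t(x)\leq n$), so fix $x$ with $s(x)+t(x)=n$ and $s(x)\leq 1$; it suffices to see $\sQ_x=0$. Using a fundamental chart at $x$ and the Künneth isomorphism \eqref{eq:decomposition}, the only surviving term in weight $2n$ is $H^{s(x)}_\rig(\cD/k)_{2s(x)}\otimes_k H^{t(x)}_\dr(\bE^{t(x)}_\varpi)$, where $H^{t(x)}_\dr(\bE^{t(x)}_\varpi)$ is the one–dimensional top logarithmic class (Example \ref{ex:torus}). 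If $s(x)=0$ this stalk is already logarithmic, hence lies in $\Upsilon^n_X$; if $s(x)=1$ then $\cD$ is a smooth (geometrically irreducible) curve and $H^1_\rig(\cD/k)_2$ is spanned by forms $\tfrac{\rd f}{f}$ with $f\in\cO^*$ — this is the $q=1$ case, via Proposition \ref{pr:log} and Lemma \ref{le:highest}, exactly as in the proof of Theorem \ref{th:1}(ii) — so again the stalk lies in $\Upsilon^n_X$. Thus $\sQ_x=0$, proving the claim. Since $\{x\in X\res s(x)\geq 2\}$ has topological dimension at most $n-2$ (dimension theory of Berkovich spaces), $H^i(X,\sQ)=0$ for $i\geq n-1$, and the long exact sequence of $0\to\Upsilon^n_X\to(\Omega^n_X/\rd\Omega^{n-1}_X)_{2n}\to\sQ\to 0$ gives an isomorphism $H^n(X,\Upsilon^n_X)\xrightarrow{\ \sim\ }H^n\big(X,(\Omega^n_X/\rd\Omega^{n-1}_X)_{2n}\big)$.

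\textbf{Nonvanishing via a cycle class.} As $\bC_p$ is algebraically closed, $\cX_\ra$ has a $\bC_p$–rational point $z$; let $[z]\in\CH^n(\cX_\ra)$, of degree $1$. By construction (Definition \ref{de:tropical}) $\cl_\sA([z])$ lies in the rational subspace $H^n(X,\sL^n_X)=H^n(X,\sT^n_X)$, which maps to $H^n(X,\Upsilon^n_X)$ via Theorem \ref{th:kernel} and Lemma \ref{le:log}. Let $\iota\colon H^n(X,\Upsilon^n_X)\to H^n(X,\Omega^n_X/\rd\Omega^{n-1}_X)\simeq\bC_p$ be the canonical map; by the previous paragraph it factors as $H^n(X,\Upsilon^n_X)\xrightarrow{\sim}H^n\big(X,(\Omega^n_X/\rd\Omega^{n-1}_X)_{2n}\big)\hookrightarrow H^n(X,\Omega^n_X/\rd\Omega^{n-1}_X)$. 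Since both $\cl_\sA$ and the algebraic de Rham cycle class $\cl_\dr$ factor through the Milnor $K$–theory class $\cl_\sK([z])$ followed by the map $\lambda^n$ — and these are compatible with analytification and GAGA, as in the proof of Theorem \ref{th:line}(2) — one gets $\iota(\cl_\sA([z]))=\cl_\dr([z])\in H^{2n}_\dr(\cX_\ra)\simeq\bC_p$, which is $\deg[z]=1$. Hence $\iota\neq 0$, so $H^n\big(X,(\Omega^n_X/\rd\Omega^{n-1}_X)_{2n}\big)\neq 0$; combining with the first paragraph, $H^n\big(X,(\Omega^n_X/\rd\Omega^{n-1}_X)_{2n}\big)\simeq\bC_p$ is all of the one–dimensional space and $H^n\big(X,(\Omega^n_X/\rd\Omega^{n-1}_X)_w\big)=0$ for $w\neq 2n$ — this is the final assertion — and moreover $\iota$ is an isomorphism.

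\textbf{Conclusion and the main obstacle.} By the last two paragraphs $\iota$ is an isomorphism of $\bC_p$–vector spaces, and the trace map $\Tr^\sA_{\cX_\ra^\an}\colon H^n(X,\Upsilon^n_X)\to\bC_p$ is $\bC_p$–linear, so it automatically factors through $\iota$; this is the first assertion. Furthermore Corollary \ref{co:cycle} gives $\Tr^\sA_{\cX_\ra^\an}(\cl_\sA([z]))=\deg[z]=1=\iota(\cl_\sA([z]))$, and since $\cl_\sA([z])$ spans the one–dimensional space $H^n(X,\Upsilon^n_X)$, in fact $\Tr^\sA_{\cX_\ra^\an}$ coincides with $\iota$. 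The step I expect to be the real work is the second paragraph: establishing $\supp\sQ\subseteq\{s(x)\geq 2\}$ (which combines the top–weight stalk computation underlying Theorem \ref{th:weight} with the $q=1$ case of Theorem \ref{th:1}(ii) applied at the codimension–$(n-1)$ strata of the skeleton locus $\{s(x)+t(x)=n\}$) together with the dimension estimate $\dim\{s(x)\geq 2\}\leq n-2$. Everything else is formal once the weight decomposition (Theorem \ref{th:1}), Lemma \ref{le:top}, and the comparison of $\cl_\sA$ with $\cl_\dr$ are in hand.
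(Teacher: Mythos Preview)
Your overall architecture matches the paper's: define the cokernel $\sQ=(\Omega^n_X/\rd\Omega^{n-1}_X)_{2n}/\Upsilon^n_X$ (the paper calls it $\Xi^n_X$), show it is supported on $\{s(x)\geq 2\}$, and then argue that $H^{n-1}$ and $H^n$ of this sheaf vanish so that $H^n(X,\Upsilon^n_X)\xrightarrow{\sim}H^n(X,(\Omega^n_X/\rd\Omega^{n-1}_X)_{2n})$. Your support computation and your nonvanishing argument via a rational point (which is a nice alternative to the paper's ``write down a form with nonzero integral'') are fine.

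The genuine gap is the sentence ``Since $\{x\in X\mid s(x)\geq 2\}$ has topological dimension at most $n-2$, $H^i(X,\sQ)=0$ for $i\geq n-1$.'' The paper explicitly flags this as the obstruction: the author writes that this vanishing \emph{would} follow if one had semi-stable resolution, but in its absence an ad hoc argument is required. The problem is that $\{s(x)\geq 2\}$ is not a closed subset of $X$, so you cannot simply identify $H^i(X,\sQ)$ with cohomology on a paracompact subspace of covering dimension $\leq n-2$; there is no off-the-shelf dimension bound that applies here. What the paper actually proves is weaker but sufficient: for each class $\alpha\in H^{n-1}(X,\Xi^n_X)$, one represents $\alpha$ by a \v{C}ech cocycle, refines the cover using an integral model, takes an alteration $f\colon X'\to X$ to a \emph{strictly semi-stable} model (via de Jong), and then checks by explicit local computation on the tubes of curve strata $\cC\subset\cY'^{[n-1]}_s$ that $f^*\alpha=0$ in $H^{n-1}(X',\Xi^n_{X'})$. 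The factorization of $\Tr_X^\sA$ then follows from the commutative square
\[
\xymatrix{
H^{n-1}(X,\Xi^n_X)\ar[r]\ar[d]_{f^*} & H^n(X,\Upsilon^n_X)\ar[rr]^-{\Tr_X^\sA}\ar[d]^{f^*} && \bC_p\ar[d]^{\times\deg f}\\
H^{n-1}(X',\Xi^n_{X'})\ar[r] & H^n(X',\Upsilon^n_{X'})\ar[rr]^-{\Tr_{X'}^\sA} && \bC_p.
}
\]
So the alteration is chosen \emph{after} the class, and one never establishes $H^{n-1}(X,\Xi^n_X)=0$ outright. Your proposal skips exactly this work; to repair it you would need either to supply the missing cohomological-dimension argument for sheaves supported on $\{s(x)\geq 2\}$ (which the author evidently could not do without semi-stable resolution), or to reproduce the alteration-and-trace maneuver from the paper.
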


\begin{proof}
We put $X=\cX_\ra^\an$. Define $\Xi^n_X$ to be the quotient sheaf in the
following exact sequence
\[0\to \Upsilon^n_X \to (\Omega^n_X/\rd\Omega^{n-1}_X)_{2n}\to \Xi^n_X \to 0.\]
It is functorial in $X$. The best hope is that $\Xi^n_X$ is trivial; but we
do not know so far. However, one can show that $\Xi^n_X$ is supported on
$\{x\in X\res s(x)\geq 2\}$. This suggests that one should expect
$H^i(X,\Xi^n_X)=0$ for $i\geq n-1$, which suffices for the proposition. In
fact, such vanishing result can be proved if we have semi-stable resolution
instead of alteration. In the absence of semi-stable resolution, we need an
\emph{ad hoc} argument.

We may assume that $\cX$ is projective, as we will eventually take an
alteration of $\cX$. Take a cohomology class $\alpha\in H^{n-1}(X,\Xi^n_X)$.
Since $X$ is (Hausdorff and) compact, by \cite{SP}*{09V2, 01FM}, there is a
finite open covering $\underline{U}=\{U_i\res i=1,\dots,N\}$ of $X$ such that
$\alpha$ is represented by an (alternative) \v{C}ech cocycle
$\underline\alpha=\{\alpha_I\in\Xi^n_X(U_I)\res I\subset\{1,\dots,N\},
|I|=n\}$ on $\underline{U}$, where $U_I=\bigcap_{i\in I}U_i$ as always. By
refining $\underline{U}$, we may assume that $\alpha_I$ is in the image of
the map $(\Omega^n_X/\rd\Omega^{n-1}_X)(U_I)^\r{pre}_{2n}\to\Xi^n_X(U_I)$ for
every $I$ (See Definition \ref{de:weight} for the notation). By
\cite{Pay09}*{Theorem 4.2}, taking blow-ups, and possibly taking a finite
extension of $k$ inside $\bC_p$, we have a (proper flat) integral model $\cY$
of $\cX$ such that if $\cZ_1,\dots,\cZ_M$ are all reduced irreducible
components of $\cY_s$, then the covering
$\{\pi^{-1}\cZ_i\widehat\otimes_k\bC_p\res i=1,\dots,M\}$ refines
$\underline{U}$. By \cite{dJ96}*{Theorem 8.2}, possibly after taking further
finite extension of $k$ inside $\bC_p$, we have a (proper) strictly
semi-stable scheme $\cY'$ over $k^\circ$ with an alteration $\cY'\to \cY$.
For simplicity, we may also assume that every irreducible component of
$\cY'^{[t]}_s$ ($0\leq t\leq n$) is geometrically irreducible. In particular,
if we denote by $\cZ'_1,\dots,\cZ'_{M'}$ all irreducible components of
$\cY'_s$, then the covering $\{\pi^{-1}\cZ'_i\widehat\otimes_k\bC_p\res
i=1,\dots,M'\}$ refines $f^{-1}\underline{U}\coloneqq\{f^{-1}U_i\res
i=1,\dots,N\}$. We fix an index function
$\varrho\colon\{1,\dots,M'\}\to\{1,\dots,N\}$ for the refinement; in other
words, $\pi^{-1}\cZ'_i\widehat\otimes_k\bC_p$ is contained in
$f^{-1}U_{\varrho(i)}$.

We fix a uniformizer $\varpi$ of $k$, and put $X'=(\cY'\otimes_k\bC_p)^\an$.
We claim that $f^*\alpha=0$, where $f^*\alpha$ is the canonical image of
$f^{-1}\alpha$ in $H^{n-1}(X',\Xi^n_{X'})$.

For every $1\leq i\leq M'$ and $0<\epsilon<1$, we denote by $U_i(\epsilon)$
the open subset of $\pi^{-1}\cZ'_i\widehat\otimes_k\bC_p\subset X'$ as in
Step 3 in the proof of Lemma \ref{le:stalk}. Then we have that
\[\pi^{-1}\cZ'_i\widehat\otimes_k\bC_p=\bigcup_{0<\epsilon<1}U_i(\epsilon),\quad
\pi^{-1}(\cZ'_i\backslash\cX'^{[1]}_s)\widehat\otimes_k\bC_p=\bigcap_{0<\epsilon<1}U_i(\epsilon).\]
By definition, $\underline{U}(\epsilon)\coloneqq\{U_i(\epsilon)\res 1\leq
i\leq M'\}$ form an open covering if and only if $\epsilon>1/2$. Fix a real
number $1/2<\epsilon<1$. We study a typical $n$-fold intersection of
$\underline{U}(\epsilon)$. Without lost of generality, we consider
$U_{\{1,\dots,n\}}(\epsilon)\coloneqq\bigcap_{i=1}^nU_i(\epsilon)$. If
$\bigcap_{i=1}^n\cZ'_i=\emptyset$, then
$U_{\{1,\dots,n\}}(\epsilon)=\emptyset$. So we may assume that
$\bigcap_{i=1}^n\cZ'_i=\bigsqcup_{l=1}^L\cC_l$, where each $\cC_l$ is a
geometrically irreducible proper smooth curve over $\widetilde{k}$.

Take a typical member $\cC$ of $\{\cC_1,\dots,\cC_L\}$ and put
$U_\cC(\epsilon)=U_{\{1,\dots,n\}}(\epsilon)\cap\pi^{-1}\cC\widehat\otimes_k\bC_p$.
Recall the $k$-analytic space $\bE^{n-1}_\varpi$ defined in Example
\ref{ex:torus}. Let $\bE^t_\varpi(\epsilon)\subset\bE^t_\varpi$ be the
subspace such that $|T_i|<|\varpi|^{1-\epsilon}$ for every $0\leq i\leq t$.
By \cite{GK02}*{Lemma 3}, the canonical map $H^\bullet_\dr(\bE^t_\varpi)\to
H^\bullet_\dr(\bE^t_\varpi(\epsilon))$ is an isomorphism. Put
$\cC^\heartsuit=\cC\backslash\cX'^{[n]}_s$ and
$U_\cC^\heartsuit(\epsilon)=U_\cC(\epsilon)\cap\pi^{-1}\cC^\heartsuit\widehat\otimes_k\bC_p$.
By the proof of \cite{GK02}*{Theorem 2.3}, we have isomorphisms
\begin{align*}
H^\bullet_\dr(U_\cC(\epsilon),U_\cC^\heartsuit(\epsilon))&\simeq
\Tot(H^\bullet_\dr(\bE^{n-1}_\varpi(\epsilon)) \otimes_k
H^\bullet_\rig(\cC^\heartsuit/k))\otimes_k\bC_p\\
&\simeq\Tot(H^\bullet_\dr(\bE^{n-1}_\varpi) \otimes_k
H^\bullet_\rig(\cC^\heartsuit/k))\otimes_k\bC_p
\end{align*}
of graded $\bC_p$-vector spaces. By Theorem \ref{th:1} (ii), there is an open
neighborhood $U_1$ of $U_\cC^\heartsuit(\epsilon)$ in $U_\cC(\epsilon)$ such
that the image of $f^{-1}\alpha_{\{\varrho(1),\dots,\varrho(n)\}}\res_{U_1}$
in $\Xi_{X'}^\an(U_1)$ zero. Put $U_2\coloneqq
U_\cC(\epsilon)\cap\pi^{-1}(\cC\backslash\cC^\heartsuit)\widehat\otimes_k\bC_p$.
Then $H^\bullet_\dr(U_2)$ is isomorphic to a finite copy of
$H^\bullet_\dr(\bE^n_\varpi)\otimes_k\bC_p$, and in particular, the image of
$f^{-1}\alpha_{\{\varrho(1),\dots,\varrho(n)\}}\res_{U_2}$ in
$\Xi_{X'}^\an(U_2)$ is zero. Finally, note that $U_\cC(\epsilon)=U_1\cup
U_2$, which implies that $f^*\alpha=0$.

Going back to $X$, we have the following commutative diagram
\[\xymatrix{
H^{n-1}(X,\Xi^n_X) \ar[r]\ar[d]_-{f^*} & H^n(X,\Upsilon^n_X) \ar[rr]^-{\Tr_X^\sA}\ar[d]^-{f^*} && \bC_p \ar[d]^-{\deg(f)} \\
H^{n-1}(X',\Xi^n_{X'}) \ar[r] & H^n(X',\Upsilon^n_{X'})
\ar[rr]^-{\Tr_{X'}^\sA} && \bC_p, }\] where the right vertical arrow is the
multiplication by $\deg(f)$. Therefore, $\Tr_X^\sA$ factors through the map
$H^n(X,\Upsilon^n_X)\to H^n(X,\Omega^n_X/\rd\Omega^{n-1}_X)$.

The last statement follows from the combination of
\begin{itemize}
  \item $\Tr_X^\sA$ is surjective as one can write down an $(n,n)$-form
      on $X$ with nonzero total integral;

  \item
      $H^n(X,\Omega^n_X/\rd\Omega^{n-1}_X)=\bigoplus_{w\in\bZ}H^n(X,(\Omega^n_X/\rd\Omega^{n-1}_X)_w)$
      by Theorem \ref{th:1};

  \item $H^n(X,\Omega^n_X/\rd\Omega^{n-1}_X)\simeq\bC_p$ by Lemma
      \ref{le:top}; and

  \item the image of $H^n(X,\Upsilon^n_X)\to
      H^n(X,\Omega^n_X/\rd\Omega^{n-1}_X)$ is contained in
      $H^n(X,(\Omega^n_X/\rd\Omega^{n-1}_X)_{2n})$.
\end{itemize}
\end{proof}

The next theorem shows that algebraic cycles that are cohomologically trivial
in the algebraic de Rham cohomology are cohomologically trivial in of
Dolbeault cohomology of currents as well.

\begin{theorem}\label{th:trivial}
Let $k\subset\bC_p$ be a finite extension of $\bQ_p$ and $\cX$ a proper
smooth scheme over $k$ of dimension $n$. Let $\cZ$ be an algebraic cycle of
$\cX$ of codimension $q$ such that $\cl_\dr(\cZ)=0$. If we put
$\cX_\ra=\cX\otimes_k\bC_p$ and $\cZ_\ra=\cZ\otimes_k\bC_p$, then
$\cl_\sD(\cZ_\ra)=0$, that is,
\[\int_{\cZ_\ra^\an}\omega=0\]
for every $\rd''$-closed form $\omega\in\sA^{n-q,n-q}(\cX_\ra^\an)$.
\end{theorem}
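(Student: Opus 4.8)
The plan is to transport the vanishing of the transcendental integral into a statement about the algebraic de Rham cycle class, using Theorem \ref{th:cycle} and Proposition \ref{pr:trace} to replace integration by an algebraically accessible map, and then feed in the hypothesis $\cl_\dr(\cZ)=0$. One may assume $\cZ$ is prime and, after replacing $k$ by a finite extension inside $\bC_p$ and passing to a connected component, that $\cX$ is geometrically connected; put $X=\cX_\ra^\an$. By Theorem \ref{th:cycle},
\[\int_{\cZ_\ra^\an}\omega=\langle\cl_\sD(\cZ_\ra),\omega\rangle_X=\int_X\cl_\sA(\cZ_\ra)\wedge\omega,\]
and since $\cX$ is proper and smooth, $X$ is compact without boundary, so the right side depends only on the cup product class $\cl_\sA(\cZ_\ra)\cup[\omega]\in H^{n,n}_\sA(X)$, where $[\omega]\in H^{n-q,n-q}_\sA(X)$ is the Dolbeault class of $\omega$. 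Using Theorem \ref{th:kernel} (the isomorphism $\sT^\bullet_X\simeq\sL^\bullet_X$), Lemma \ref{le:log}, and Theorem \ref{th:1}(ii), under $\sL^q_X\hookrightarrow\Upsilon^q_X\hookrightarrow\Omega^{q,\cl}_X/\rd\Omega^{q-1}_X$ the class $\cl_\sA(\cZ_\ra)$ corresponds to the image $c_{\cZ_\ra}$ of $\cl_\sK(\cZ_\ra)$ under $\lambda^q_X$, and $[\omega]$ corresponds to a class $[\omega]'\in H^{n-q}(X,\Omega^{n-q,\cl}_X/\rd\Omega^{n-q-1}_X)$. As $\tau^\bullet$, $\lambda^\bullet$ and the Milnor $K$-product are compatible, and $\sL^q\cup\sL^{n-q}\subset\sL^n$, $\Upsilon^q\cup\Upsilon^{n-q}\subset\Upsilon^n$, the product $\cl_\sA(\cZ_\ra)\cup[\omega]$ corresponds to $c_{\cZ_\ra}\cup[\omega]'\in H^n(X,\Upsilon^n_X)$. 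Then, by the definition of the trace map and Proposition \ref{pr:trace}, $\int_X\cl_\sA(\cZ_\ra)\wedge\omega$ is the image of $c_{\cZ_\ra}\cup[\omega]'$ under the canonical map $H^n(X,\Upsilon^n_X)\to H^n(X,\Omega^n_X/\rd\Omega^{n-1}_X)$, the latter group being (isomorphic to) $\bC_p$ by Lemma \ref{le:top}. So it will suffice to prove $c_{\cZ_\ra}\cup[\omega]'=0$ in $H^n(X,\Omega^n_X/\rd\Omega^{n-1}_X)$.

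The next step is a truncation argument. Write $\cH^j_X=\Omega^{j,\cl}_X/\rd\Omega^{j-1}_X$ and let $\sigma_{\ge q}\Omega^\bullet_X$, $\tau_{\ge q}\Omega^\bullet_X$ be the stupid and canonical truncations, related by $\sigma_{\ge q}\Omega^\bullet_X\twoheadrightarrow\tau_{\ge q}\Omega^\bullet_X$ and by the inclusions $\Omega^{q,\cl}_X[-q]\hookrightarrow\sigma_{\ge q}\Omega^\bullet_X$ and $\cH^q_X[-q]\hookrightarrow\tau_{\ge q}\Omega^\bullet_X$, all compatible. Since $\Omega^{>n}_X=0$ one has $\tau_{\ge n}\Omega^\bullet_X=\cH^n_X[-n]$, and since $\Omega^\bullet_X$ is a sheaf of commutative differential graded algebras its canonical filtration is multiplicative, giving products $\tau_{\ge q}\Omega^\bullet_X\otimes^{\bL}\tau_{\ge n-q}\Omega^\bullet_X\to\cH^n_X[-n]$ restricting to the wedge product $\cH^q_X[-q]\otimes^{\bL}\cH^{n-q}_X[-(n-q)]\to\cH^n_X[-n]$. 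Taking $H^{2n}(X,-)$, the cup product $H^q(X,\cH^q_X)\times H^{n-q}(X,\cH^{n-q}_X)\to H^n(X,\cH^n_X)$ factors through the images of the classes in $H^{2q}(X,\tau_{\ge q}\Omega^\bullet_X)$ and $H^{2(n-q)}(X,\tau_{\ge n-q}\Omega^\bullet_X)$. Hence it is enough to show the image of $c_{\cZ_\ra}$ in $H^{2q}(X,\tau_{\ge q}\Omega^\bullet_X)$ vanishes; and by the compatible square relating $\Omega^{q,\cl}_X[-q]$, $\sigma_{\ge q}\Omega^\bullet_X$, $\cH^q_X[-q]$ and $\tau_{\ge q}\Omega^\bullet_X$, that image is the image under $H^{2q}(X,\sigma_{\ge q}\Omega^\bullet_X)\to H^{2q}(X,\tau_{\ge q}\Omega^\bullet_X)$ of the class $\widetilde c_{\cZ_\ra}\in H^{2q}(X,\sigma_{\ge q}\Omega^\bullet_X)$ obtained from $\cl_\sK(\cZ_\ra)$ via $\sK^q_X\to\Omega^{q,\cl}_X$ and $\Omega^{q,\cl}_X[-q]\hookrightarrow\sigma_{\ge q}\Omega^\bullet_X$. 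So it will be enough to prove $\widetilde c_{\cZ_\ra}=0$.

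For this last point I would descend to $\cX/k$. The class $\widetilde c_{\cZ_\ra}$ and its algebraic avatar $\widetilde c_\cZ\in H^{2q}(\cX,\sigma_{\ge q}\Omega^\bullet_\cX)$ come from the local complete intersection description of $\cl_\sK$ (Example \ref{ex:scheme}) and so are compatible with flat base change and analytification; since all terms of $\sigma_{\ge q}\Omega^\bullet$ are coherent, GAGA and flat base change give $H^{2q}(\cX,\sigma_{\ge q}\Omega^\bullet_\cX)\otimes_k\bC_p\simeq H^{2q}(\cX_\ra,\sigma_{\ge q}\Omega^\bullet_{\cX_\ra})\simeq H^{2q}(X,\sigma_{\ge q}\Omega^\bullet_X)$, carrying $\widetilde c_\cZ\otimes 1$ to $\widetilde c_{\cZ_\ra}$. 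Over the characteristic-zero field $k$ the Hodge--de Rham spectral sequence of the proper smooth $\cX$ degenerates at $E_1$, so $H^{2q}(\cX,\sigma_{\ge q}\Omega^\bullet_\cX)\to H^{2q}_\dr(\cX)$ is injective, and by the standard compatibility of the algebraic de Rham cycle class with the logarithmic (Milnor $K$-theory) construction, the image of $\widetilde c_\cZ$ there is $\cl_\dr(\cZ)$. Since $\cl_\dr(\cZ)=0$, it follows that $\widetilde c_\cZ=0$, hence $\widetilde c_{\cZ_\ra}=0$, hence $c_{\cZ_\ra}\cup[\omega]'=0$, and finally $\int_{\cZ_\ra^\an}\omega=0$.

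I expect the main obstacle to be the bookkeeping of the first two reductions: checking that the chain of isomorphisms of Theorem \ref{th:kernel}, Lemma \ref{le:log} and Theorem \ref{th:1} really carries $\cl_\sA(\cZ_\ra)$ to $\lambda^q_X(\cl_\sK(\cZ_\ra))$ and is compatible with all cup products, so that Proposition \ref{pr:trace} gets applied to the correct class; and pinning down the classical compatibility of the de Rham cycle class with the symbol class in $H^{2q}(\cX,\sigma_{\ge q}\Omega^\bullet_\cX)$ together with its stability under base change and GAGA. The truncation manipulations and the final input $\cl_\dr(\cZ)=0$ are then purely formal.
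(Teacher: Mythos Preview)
Your opening reduction is fine: via Theorem \ref{th:cycle}, Proposition \ref{pr:trace}, Theorem \ref{th:kernel} and Lemma \ref{le:log}, the integral equals the image of $c_{\cZ_\ra}\cup[\omega]'$ in $H^n(X,\Omega^n_X/\rd\Omega^{n-1}_X)$, and this is indeed the starting point of the paper's own proof (its Step~1). Your observation that $\widetilde c_{\cZ_\ra}=0$ in $H^{2q}(X,\sigma_{\ge q}\Omega^\bullet_X)$, deduced from Hodge--de Rham degeneration on $\cX/k$ together with GAGA, is also correct and gives the vanishing of the image of $c_{\cZ_\ra}$ under the edge map $\alpha_1\colon H^q(X,\cH^q_X)\to H^{2q}(X,\tau_{\ge q}\Omega^\bullet_X)$.

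The genuine gap is your assertion that the cup product $H^q(X,\cH^q_X)\times H^{n-q}(X,\cH^{n-q}_X)\to H^n(X,\cH^n_X)$ factors through $H^{2q}(X,\tau_{\ge q}\Omega^\bullet_X)\times H^{2(n-q)}(X,\tau_{\ge n-q}\Omega^\bullet_X)$. Multiplicativity of the canonical filtration means $\tau_{\le i}\Omega^\bullet\cdot\tau_{\le j}\Omega^\bullet\subset\tau_{\le i+j}\Omega^\bullet$ for the \emph{sub}complexes; it does not yield a product on the \emph{quotients} $\tau_{\ge i}\Omega^\bullet$. Concretely, the degree-$n$ piece of your proposed map would have to send $[\alpha]\otimes[\beta]\in(\Omega^q/\rd\Omega^{q-1})\otimes(\Omega^{n-q}/\rd\Omega^{n-q-1})$ to $[\alpha\wedge\beta]\in\Omega^n/\rd\Omega^{n-1}$, but replacing $\alpha$ by $\alpha+\rd\gamma$ changes the target by $(-1)^q[\gamma\wedge\rd\beta]$, which need not vanish when $\beta$ is not closed. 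What \emph{does} exist is a pairing $\tau_{\ge q}\Omega^\bullet\otimes^{\bL}\tau_{\le n-q}\Omega^\bullet\to\cH^n_X[-n]$, because $\tau_{\le n-q}\Omega^\bullet\subset\Omega^\bullet$ is a subcomplex and $\tau_{\le q-1}\Omega^\bullet\cdot\tau_{\le n-q}\Omega^\bullet\subset\tau_{\le n-1}\Omega^\bullet$. Using that pairing, the vanishing $\alpha_1(c_{\cZ_\ra})=0$ only kills $c_{\cZ_\ra}\cup[\omega]'$ for $[\omega]'$ in the image of the edge map $\alpha_2\colon H^{2(n-q)}(X,\tau_{\le n-q}\Omega^\bullet_X)\to H^{n-q}(X,\cH^{n-q}_X)$, and $\alpha_2$ is \emph{not} surjective on the analytic space $X$ in general (the relevant spectral sequence does not degenerate).

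This is exactly why the paper's proof is long: its Steps~2--4 are devoted to showing that, after passing to adic topology and to an alteration $f\colon\cX'\to\cX$ with $\cX'$ projective strictly semi-stable, the class $f^*\omega$ (which by construction lies in the image of $\sL^{n-q}$) \emph{does} lift along $\alpha_2$. The argument uses that on a semi-stable model the complex $\Omega^\bullet_{X',\cY'}$ carries a global Frobenius action via log-crystalline cohomology, and a weight computation (claims (a) and (b) in the paper's Step~3) forces the relevant transgression differentials to vanish on classes coming from $\sK^{n-q}$. In other words, the hard half of the theorem lies on the $\omega$-side, not on the $\cZ$-side; your argument handles the $\cZ$-side cleanly but skips the $\omega$-side entirely.
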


We need some preparation before the proof of the theorem. We start from the
following lemma.

\begin{lem}\label{le:finite}
Let the assumption and notation be as in Theorem \ref{th:trivial}. Then for
every $i$ and $q$, the canonical map
\[\varinjlim_{k'}H^i(\cX_{k'}^\an,\sT^q_{\cX_{k'}^\an})\to H^i(\cX_\ra^\an,\sT^q_{\cX_\ra^\an})\]
is an isomorphism, where the colimit is taken over all finite extensions $k'$
of $k$ in $\bC_p$.
\end{lem}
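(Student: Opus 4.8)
The plan is to combine a base‑field‑change comparison of the sheaves $\sT^q$ with continuity of sheaf cohomology along cofiltered limits of spectral spaces, in the spirit of the proof of Lemma \ref{le:top}. For a finite extension $k'/k$ inside $\bC_p$ write $\rho_{k'}\colon\cX_\ra^\an\to\cX_{k'}^\an$ for the projection, and recall that $\overline{\bQ_p}=\bigcup_{k'}k'$ is dense in $\bC_p=\widehat{\overline{\bQ_p}}$. Each $\rho_{k'}$ is a proper surjection (\cite{Berk93}), and $\cX_{k'}^\an$ is compact since $\cX$ is proper. A compatible family of points of the $\cX_{k'}^\an$ amounts to a compatible family of multiplicative seminorms on the $k'$‑algebras $\cH(x_k)\otimes_kk'$ (over a fixed point $x_k\in\cX^\an$), which together define a bounded multiplicative seminorm on $\cH(x_k)\otimes_k\overline{\bQ_p}$ and hence, by density, a unique one on the completion $\cH(x_k)\widehat\otimes_k\bC_p$; so $\cX_\ra^\an\to\varprojlim_{k'}\cX_{k'}^\an$ is a continuous bijection of compact Hausdorff spaces, i.e.\ a homeomorphism. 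Passing to adic spaces (\cite{Sch12}*{Theorem 2.24}) and checking locally on affinoid charts, where $\cX_\ra^\an$ is built from completions of $\varinjlim_{k'}$ of $k'$‑affinoid algebras, one obtains $(\cX_\ra^\an)^\ad=\varprojlim_{k'}(\cX_{k'}^\an)^\ad$ as spectral spaces with spectral transition maps. Using $H^i(Y^\an,\sF)\simeq H^i((Y^\an)^\ad,\gamma_Y^{-1}\sF)$ as in Lemma \ref{le:top} together with \cite{SP}*{0A37} and \cite{SP}*{0A36}, the lemma reduces to the sheaf isomorphism $\varinjlim_{k'}\rho_{k'}^{-1}\sT^q_{\cX_{k'}^\an}\xrightarrow{\sim}\sT^q_{\cX_\ra^\an}$.

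I would check this on stalks. Fix $x\in\cX_\ra^\an$ with images $x_{k'}\in\cX_{k'}^\an$ and consider $a\colon\varinjlim_{k'}(\sT^q_{\cX_{k'}^\an})_{x_{k'}}\to(\sT^q_{\cX_\ra^\an})_x$. For surjectivity, a germ in the target is the class of $\sum_jc_j\{f_{j1},\dots,f_{jq}\}$ with $f_{ji}$ invertible on an affinoid neighbourhood $V$ of $x$; the neighbourhoods of $x$ of the form $V_0\widehat\otimes_{k''}\bC_p$ with $V_0$ a $k''$‑affinoid ($k''/k$ finite) are cofinal, and on such a $V$ one may, by density, choose $g_{ji}\in\cO^*(V_0\widehat\otimes_{k''}k''')$ for some finite $k'''$ with $\sup_V|f_{ji}-g_{ji}|<\inf_V|f_{ji}|$, so that $|f_{ji}/g_{ji}|=1$ on $V$; by multilinearity $\{f_{j1},\dots,f_{jq}\}$ and $\{g_{j1},\dots,g_{jq}\}$ differ by symbols each containing a unit of constant absolute value $1$, which lie in $\Ker\tau^q$ because the associated coordinate $-\log|f_{ji}/g_{ji}|$ is locally constant; hence the germ comes from $(\sT^q_{\cX_{k'''}^\an})_{x_{k'''}}$. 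For injectivity, since $\bR$ is faithfully flat over $\bQ$ it suffices that $a\otimes_\bQ\bR$ be injective, and by Proposition \ref{pr:kernel} this is the natural map $\varinjlim_{k'}\Ker(\rd''\colon\sA^{q,0}_{\cX_{k'}^\an}\to\sA^{q,1}_{\cX_{k'}^\an})_{x_{k'}}\to\Ker(\rd''\colon\sA^{q,0}_{\cX_\ra^\an}\to\sA^{q,1}_{\cX_\ra^\an})_x$. Since filtered colimits are exact, it is enough to know $\varinjlim_{k'}\rho_{k'}^{-1}\sA^{q,j}_{\cX_{k'}^\an}\xrightarrow{\sim}\sA^{q,j}_{\cX_\ra^\an}$ for $j=0,1$: surjectivity is the same density/perturbation argument (a germ of a $(q,j)$‑form near $x$ is $\trop_V^*\beta$ for a tropical chart whose moment functions, perturbed within sup‑norm error below their infima on $V$ — which leaves $\trop_V$ unchanged — descend to a finite extension), and injectivity uses that $\rho_{k'}$ is surjective, so a tropical chart over $\cX_{k'}^\an$ and its pullback over $\cX_\ra^\an$ have the same polyhedral image, together with the fact from the Chambert‑Loir–Ducros formalism (cf.\ the proof of Proposition \ref{pr:kernel} and \cite{JSS15}) that $\trop_V^*\beta$ vanishes near a point $y$ if and only if $\beta$ vanishes near $\trop_V(y)$ in that image — a condition independent of the ground field.

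The step I expect to be most delicate is the passage to adic spaces in the first paragraph: verifying that $\{(\cX_{k'}^\an)^\ad\}_{k'}$ is genuinely a cofiltered system of spectral spaces with spectral transition maps and limit $(\cX_\ra^\an)^\ad$, so that \cite{SP}*{0A37} applies; this rests on a local computation for Huber rings that are completions of filtered colimits of $k'$‑affinoid algebras. A secondary point needing care is isolating from \cite{CLD12} the precise, ground‑field‑independent criterion for the vanishing of a pulled‑back tropical form used in the injectivity argument. The remaining ingredients — density of $\overline{\bQ_p}$ in $\bC_p$, the perturbation estimates, exactness of filtered colimits, and the reduction through Proposition \ref{pr:kernel} — are routine.
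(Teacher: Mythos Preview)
Your proposal is correct and follows essentially the same route as the paper: pass to adic spaces, identify $(\cX_\ra^\an)^\ad\simeq\varprojlim_{k'}(\cX_{k'}^\an)^\ad$ as spectral spaces, apply \cite{SP}*{0A37}, and reduce to a sheaf isomorphism on stalks established by approximating invertible functions so that the ratio has constant norm $1$. The paper's proof is terser---it packages the approximation step into a citation of \cite{Berk07}*{Lemma 2.1.3 (ii)} and does not spell out injectivity separately---whereas you route injectivity through Proposition \ref{pr:kernel} and a base-change comparison of the form sheaves $\sA^{q,j}$; this is a valid but slightly heavier detour, since the same approximation fact (symbols differing by units of constant absolute value lie in $\Ker\tau^q$) already shows that the kernel of $\tau^q$ is insensitive to the passage from $k'$ to $\bC_p$.
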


\begin{proof}
We have an isomorphism of spectral spaces
$\cX_\ra^\ad\simeq\varprojlim_{k'}\cX_{k'}^\ad$. Thus by \cite{SP}*{0A37}, it
suffices to show that the canonical map
$\varinjlim_{k'}\varsigma_{k'}^{-1}\gamma^{-1}_{\cX_{k'}^\an}\sT^q_{\cX_{k'}^\an}\to\gamma^{-1}_{\cX_\ra^\an}\sT^q_{\cX_\ra^\an}$
is an isomorphism, where $\varsigma_{k'}\colon\cX_\ra^\ad\to\cX_{k'}^\ad$ is
the canonical map. However, this follows from the fact that for every
$f\in\cO^*(\cX_\ra^\an,V)$ where $V$ is a rational affinoid domain, there is
a function $g\in\cO^*(\cX_{k'}^\an,V')$ for some $k'$ such that
$\varsigma_{k'}^{-1}(V')=V$ and $f^{-1}\cdot\varsigma_{k'}^*g$ has norm $1$
on some open neighborhood of $V$. Here, we have used \cite{Berk07}*{Lemma
2.1.3 (ii)}.
\end{proof}

We review some facts about cup products from \cite{SP}*{01FP}. Let $X$ be a
topological space, $k$ a field, $n\geq 0$ an integer. Let $\Omega$ be a sheaf
of $k$-vector spaces on $X$. Suppose that we have two bounded complexes
$\sF^\bullet,\sG^\bullet$ of sheaves of $k$-vector spaces on $X$, with a map
of complexes of sheaves of $k$-vector spaces
\[\chi\colon\Tot(\sF^\bullet\otimes_k\sG^\bullet)\to\Omega[n].\]
Then we have a bilinear pairing
\[\cup_\chi\colon H^i(X,\sF^\bullet)\times H^{2n-i}(X,\sG^\bullet)\to H^{2n}(X,\Omega[n])=H^n(X,\Omega)\]
for every $i\in\bZ$. Now suppose that we have four bounded complexes
$\sF^\bullet_j,\sG^\bullet_j$ ($j=1,2$) of sheaves of $k$-vector spaces on
$X$, maps $\alpha_1\colon\sF_1^\bullet\to\sF_2^\bullet$,
$\alpha_2\colon\sG_2^\bullet\to\sG_1^\bullet$, and
$\chi_j\colon\Tot(\sF^\bullet_j\otimes_k\sG^\bullet_j)\to\Omega[n]$
($j=1,2$), such that $\chi_1\circ(\r{id}_{\sF_1^\bullet}\otimes
\alpha_2)=\chi_2\circ(\alpha_1\otimes\r{id}_{\sG_2^\bullet})$. Then we have
the following commutative diagram
\begin{align}\label{eq:cup}
\xymatrix{
H^i(X,\sF^\bullet_1)\!\!\!\!\!\!\!\!\!\! \ar[d]_-{H^i(X,\alpha_1)} & \times &
\!\!\!\!\!\!\!\!\!\!H^{2n-i}(X,\sG^\bullet_1) \ar[rr]^-{\cup_{\chi_1}} && H^n(X,\Omega) \ar@{=}[d] \\
H^i(X,\sF^\bullet_2)\!\!\!\!\!\!\!\!\!\! & \times &
\!\!\!\!\!\!\!\!\!\!H^{2n-i}(X,\sG^\bullet_2)
\ar[u]_-{H^{2n-i}(X,\alpha_2)}\ar[rr]^-{\cup_{\chi_2}} && H^n(X,\Omega)  }
\end{align}
for every $i\in\bZ$.

\begin{proof}[Proof of Theorem \ref{th:trivial}]
Without lost of generality, we may assume that $\cX$ is geometrically
irreducible over $k$ (of dimension $n$). Put $X=\cX^\an$.

\textbf{Step 1.} By Proposition \ref{pr:kernel} and Theorem \ref{th:kernel},
we have the following commutative diagram
\[\xymatrix{
H^{q,q}_\sA(\cX_\ra^\an)\times H^{n-q,n-q}_\sA(\cX_\ra^\an) \ar[r]^-{\cup}\ar[d]_-{\simeq}& H^{n,n}_\sA(\cX_\ra^\an) \ar[d]^-{\simeq} \\
H^q(\cX_\ra^\an,\sT_{\cX_\ra^\an}^q\otimes_\bQ\bR) \times H^{n-q}(\cX_\ra^\an,\sT_{\cX_\ra^\an}^{n-q}\otimes_\bQ\bR)
\ar[r]^-{\cup}\ar[d] & H^n(\cX_\ra^\an,\sT_{\cX_\ra^\an}^n\otimes_\bQ\bR) \ar[d]\\
H^q(\cX_\ra^\an,\Omega^{q,\cl}_{\cX_\ra^\an}/\rd\Omega^{q-1}_{\cX_\ra^\an}) \times
H^{n-q}(\cX_\ra^\an,\Omega^{n-q,\cl}_{\cX_\ra^\an}/\rd\Omega^{n-q-1}_{\cX_\ra^\an})
\ar[r]^-{\cup}& H^n(\cX_\ra^\an,\Omega^n_{\cX_\ra^\an}/\rd\Omega^{n-1}_{\cX_\ra^\an}),
}\] in which the first cup product is induced by the wedge product of real forms.

To prove the theorem, it suffices to consider an arbitrary element $\omega\in
H^{n-q}(\cX_\ra^\an,\sT_{\cX_\ra^\an}^{n-q})$. In view of Lemma
\ref{le:finite}, after replacing $k$ by a finite extension in $\bC_p$, we may
assume that $\omega\in H^{n-q}(X,\sT_X^{n-q})$.

Note that the tropical cycle class map $\cl_\sA$ (Definition
\ref{de:tropical}) factors as
\[\CH^q(\cX)_\bQ\to H^q(X,\sT_X^q)\to H^q(\cX_\ra^\an,\sT_{\cX_\ra^\an}^q)
\to H^q(\cX_\ra^\an,\sT_{\cX_\ra^\an}^q\otimes_\bQ\bR)\simeq
H^{q,q}_\sA(\cX_\ra^\an),\] in which we denote the first map by $\cl_\sT$. By
Theorem \ref{th:cycle} and Proposition \ref{pr:trace}, it suffices to show
that the image of $\cl_\sT(\cZ)\cup\omega$ in
$H^n(X,\Omega^n_X/\rd\Omega^{n-1}_X)$, which is isomorphic to $k$ by Lemma
\ref{le:top}, is zero. Denote by $\zeta$ the image of $\cl_\sT(\cZ)$ in
$H^q(X,\Omega^{q,\cl}_X/\rd\Omega^{q-1}_X)$, and regard $\omega$ as in
$H^{n-q}(X,\Omega^{n-q,\cl}_X/\rd\Omega^{n-q-1}_X)$. In fact, we can prove
that $\zeta\cup\omega=0$ if we have semi-stable resolution instead of
alteration. In the absence of semi-stable resolution, we need an \emph{ad
hoc} argument.

\textbf{Step 2.} To proceed, we need the adic topology of $X$. Recall that we
have a continuous map $\gamma_X\colon X^\ad\to X$. Let
$(\Omega^\bullet_{X^\ad},\rd)$ the de Rham complex on $X^\ad$. Then we have a
canonical map
$\gamma_X^{-1}(\Omega^\bullet_X,\rd)\to(\Omega^\bullet_{X^\ad},\rd)$ of
complexes of sheaves of $k$-vector spaces on $X^\ad$. Denote by $\zeta_\ad$
(resp.\ $\omega_\ad$) the image of $\zeta$ (resp.\ $\omega$) under the
canonical map
\[H^i(X,\Omega^{i,\cl}_X/\rd\Omega^{i-1}_X)\to H^i(X^\ad,\Omega^{i,\cl}_{X^\ad}/\rd\Omega^{i-1}_{X^\ad})\]
for $i=q$ (resp.\ $i=n-q$). Note that when $i=n$, the above map is an
isomorphism, by the same argument for Lemma \ref{le:top}.

We claim that there exists an alteration $f\colon\cX'\to\cX$ possibly after a
finite extension of $k$ in $\bC_p$, such that $f^*\omega_\ad$ is in the image
of the canonical map
\[H^{2n-2q}(X'^\ad,\tau_{\leq n-q}\Omega^\bullet_{X'^\ad})
\to H^{n-q}(X'^\ad,\Omega^{n-q,\cl}_{X'^\ad}/\rd\Omega^{n-q-1}_{X'^\ad}),\] where $X'=\cX'^\an$.

Assuming the above claim, we deduce the theorem as follows. Applying
\eqref{eq:cup} to $X'^\ad$ and the sheaf
$\Omega\coloneqq\Omega^n_{X'^\ad}/\rd\Omega^{n-1}_{X'^\ad}$, we obtain the
following commutative diagram
\[\xymatrix{
H^q(X'^\ad,\Omega^{q,\cl}_{X'^\ad}/\rd\Omega^{q-1}_{X'^\ad})\!\!\!\!\!\!\!\!\!\! \ar[d]_-{\alpha_1} & \times &
\!\!\!\!\!\!\!\!\!\!H^{n-q}(X'^\ad,\Omega^{n-q,\cl}_{X'^\ad}/\rd\Omega^{n-q-1}_{X'^\ad}) \ar[rr] && H^n(X'^\ad,\Omega) \ar@{=}[d] \\
H^{2q}(X'^\ad,\tau_{\geq q}\Omega^\bullet_{X'^\ad})\!\!\!\!\!\!\!\!\!\!  & \times &
\!\!\!\!\!\!\!\!\!\!H^{2n-2q}(X'^\ad,\tau_{\leq n-q}\Omega^\bullet_{X'^\ad})
\ar[u]_-{\alpha_2}\ar[d]^-{\beta_2}\ar[rr] && H^n(X'^\ad,\Omega) \ar@{=}[d] \\
H^{2q}(X'^\ad,\Omega^\bullet_{X'^\ad})\!\!\!\!\!\!\!\!\!\!  \ar[u]^-{\beta_1}
& \times & \!\!\!\!\!\!\!\!\!\!H^{2n-2q}(X'^\ad,\Omega^\bullet_{X'^\ad})
\ar[rr] && H^n(X'^\ad,\Omega) }\] in which the maps among various complexes
of sheaves are defined in the obvious way. By the above claim, there exists
$\omega'\in H^{2n-2q}(X'^\ad,\tau_{\leq n-q}\Omega^\bullet_{X'^\ad})$ such
that $\alpha_2(\omega')=f^*\omega_\ad$. Thus,
\[f^*\zeta_\ad\cup f^*\omega_\ad = f^*\zeta_\ad\cup\alpha_2(\omega')
=\alpha_1(f^*\zeta_\ad)\cup\omega'=\beta_1(\cl_\dr(f^*\cZ))\cup\omega'
=\cl_\dr(f^*\cZ)\cup\beta_2(\omega'),\] where we regard $\cl_\dr(f^*\cZ)$ as
an element in $H^{2q}(X'^\ad,\Omega^\bullet_{X'^\ad})$ under the comparison
map (which is in fact an isomorphism)
\[H^{2q}_\dr(\cX')=H^{2q}(\cX',\Omega^\bullet_{\cX'})\to
H^{2q}(X'^\ad,\Omega^\bullet_{X'^\ad}).\] As $\cl_\dr(\cZ)=0$, we have
$\cl_\dr(f^*\cZ)=0$ and hence $f^*\zeta_\ad\cup f^*\omega_\ad=0$. Thus,
$f^*\zeta\cup f^*\omega=0$, and in particular,
\[\int_{\cZ_\ra^\an}\omega=\deg(f)^{-1}\int_{(f^*\cZ)_\ra^\an}f^*\omega=0.\]
The theorem is proved.

\textbf{Step 3.} Now we fucus on the claim in Step 2. For an integral model
$\cY$ of $X$, define $\sK^q_{X,\cY}$ to be the sheaf on $\cY_s$ associated to
the presheaf
\[\cU\mapsto \varinjlim_{\pi^{-1}\cU\subset U} K^M_q(\cO_X(U))\otimes\bQ,\quad\cU\subset\cY_s\]
where the colimit is taken over all open neighborhoods $U$ of $\pi^{-1}\cU$
in $X$. We remark that there is a canonical morphism
$\sK^q_{X,\cY}\to\gamma_{\cY*}\gamma^{-1}_X\sK^q_X$ which is in general not
an isomorphism, where $\gamma_\cY\colon X^\ad\to\cY_s$ is the induced
continuous map. Put
$\Omega^{\dag,q}_{X,\cY}=\gamma_{\cY*}\gamma^{-1}_X\Omega^q_X$. Then we have
a complex of sheaves of $k$-vector spaces
$(\Omega^{\dag,\bullet}_{X,\cY},\rd)$ on $\cY_s$. And we have a canonical map
\[\lambda_{X,\cY}^q\colon \sK^q_{X,\cY}\to \Omega^{\dag,q,\cl}_{X,\cY}/\rd\Omega^{\dag,q-1}_{X,\cY}\]
similar to Definition \ref{de:kernel}. Denote by $\sL_{X,\cY}^q$ the image
sheaf of $\lambda_{X,\cY}^q$ in the above map. Since sheafification commutes
with pullback and taking colimit, we have a canonical isomorphism
\[\varinjlim_\cY\gamma_\cY^{-1}\sK^q_{X,\cY}\simeq\gamma^{-1}_X\sK^q_X\] of
sheaves on $X^\ad$, where the filtered colimit is taken over all integral
models $\cY$ of $X$. On the other hand, we have an obvious isomorphism
\[\varinjlim_\cY\gamma_\cY^{-1}\Omega^{\dag,\bullet}_{X,\cY}\simeq\gamma_X^{-1}\Omega^\bullet_X.\]
Passing to the quotient, we have a canonical isomorphism
\[\varinjlim_\cY\gamma_\cY^{-1}\sL_{X,\cY}^q\simeq\gamma_X^{-1}\sL_X^q.\]

Note that originally, $\omega$ belongs to $H^{n-q}(X,\sT_X^{n-q})\simeq
H^{n-q}(X,\sL_X^{n-q})$. By a similar argument for Lemma \ref{le:top}, there
is an integral model $\cY$ of $X$ such that $\omega$ is in the image of the
canonical map
\[H^{n-q}(\cY_s,\sL_{X,\cY}^{n-q})
\to H^{n-q}(X^\ad,\gamma^{-1}\sL_X^{n-q})\simeq H^{n-q}(X,\sL_X^{n-q}).\] By
\cite{dJ96}*{Theorem 8.2}, possibly after taking further finite extension of
$k$ inside $\bC_p$, we have a projective strictly semi-stable scheme $\cY'$
over $k^\circ$ with an alteration $f\colon\cY'\to\cY$. Put $X'=\cY'^\an$. If
we put $\Omega^q_{X',\cY'}=\gamma_{\cY'*}\Omega^q_{X'^\ad}$, then
$(\Omega^\bullet_{X',\cY'},\rd)$ is a complex of sheaves of $k$-vector spaces
on $\cY'_s$ and we have a canonical map
$(\Omega^{\dag,\bullet}_{X',\cY'},\rd)\to (\Omega^\bullet_{X',\cY'},\rd)$.
The claim in Step 2 will follow if we can show that the composite map
\begin{multline}\label{eq:trivial}
H^{n-q}(\cY'_s,\sL_{X',\cY'}^{n-q}) \to
H^{n-q}(\cY'_s,\Omega^{\dag,n-q,\cl}_{X',\cY'}/\rd\Omega^{\dag,n-q-1}_{X',\cY'})\\
\to H^{n-q}(\cY'_s,\Omega^{n-q,\cl}_{X',\cY'}/\rd\Omega^{n-q-1}_{X',\cY'})
\to H^{2n-2q+1}(\cY'_s,\tau_{\leq n-q-1}\Omega^\bullet_{X',\cY'})
\end{multline}
is zero. The advantage of $(\Omega^\bullet_{X',\cY'},\rd)$ is that the
\emph{entire} complex admits a canonical Frobenius action. More precisely, we
fix a uniformizer $\varpi$ of $k$; let $\cY'^\times_s$ be the log scheme
$\cY'_s$ equipped with log structure as in \cite{HK94}*{(2.13.2)}; and $\Spf
W(\widetilde{k})^\times$ be the formal log scheme $\Spf W(\widetilde{k})$
equipped with log structure $1\mapsto 0$. Here, we use Zariski topology
in the construction of log schemes and log crystal sites instead of \'etale one in \cite{HK94}.
There is a canonical morphism
$u\colon(\cY'^\times_s/\Spf W(\widetilde{k})^\times)_{\logcris}\to\cY'_s$ of
sites. Then by (the proof of) \cite{HK94}*{Theorem 5.1}, we have a canonical isomorphism
\[\rR u_*\cO_{\cY'^\times_s/\Spf W(\widetilde{k})^\times}^{\logcris}\otimes_{W(\widetilde{k})} k\simeq(\Omega^\bullet_{X',\cY'},\rd)\]
in the derived category of abelian sheaves on $\cY'_s$,
where $\cO_{\cY'^\times_s/\Spf W(\widetilde{k})^\times}^{\logcris}$ denotes the
structure sheaf in the log crystal site.
Since $\cO_{\cY'^\times_s/\Spf
W(\widetilde{k})^\times}$ admits a Frobenius action over
$\Spec\widetilde{k}$, we obtain a Frobenius action on the entire complex
$(\Omega^\bullet_{X',\cY'},\rd)$ in the derived category.

For $w\in\bZ$, denote by
$(\Omega^{q,\cl}_{X',\cY'}/\rd\Omega^{q-1}_{X',\cY'})_w$ the maximal subsheaf
of $\Omega^{q,\cl}_{X',\cY'}/\rd\Omega^{q-1}_{X',\cY'}$ generated by sections
of generalized weight $w$. We claim that
\begin{enumerate}[(a)]
  \item the image of the canonical map
      $\sK^q_{X',\cY'}\to\sL_{X',\cY'}^q\to
      \Omega^{q,\cl}_{X',\cY'}/\rd\Omega^{q-1}_{X',\cY'}$ is contained in
      the subsheaf
      $(\Omega^{q,\cl}_{X',\cY'}/\rd\Omega^{q-1}_{X',\cY'})_{2q}$ for
      every $q$;

  \item the image of the canonical map
      $\Omega^{\dag,q,\cl}_{X',\cY'}/\rd\Omega^{\dag,q-1}_{X',\cY'}\to
      \Omega^{q,\cl}_{X',\cY'}/\rd\Omega^{q-1}_{X',\cY'}$ is contained in
      the subsheaf
      $\bigoplus_{w=0}^{2q}(\Omega^{q,\cl}_{X',\cY'}/\rd\Omega^{q-1}_{X',\cY'})_w$
      for every $q$.
\end{enumerate}
Then the triviality of the map \eqref{eq:trivial} follows easily from an
argument of spectral sequences. In fact, we have a spectral sequence
$E^{p,q}_r$ abutting to $H^\bullet(\cY'_s,\Omega^\bullet_{X',\cY'})$ equipped
with a Frobenius action, such that
$E^{p,q}_2=H^p(\cY'_s,\Omega^{q,\cl}_{X',\cY'}/\rd\Omega^{q-1}_{X',\cY'})$.
By (a) and (b), the restriction of all differentials $\rd^{n-q,n-q}_r$ in the
spectral sequence with $r\geq 2$ to the image of the map
\[H^{n-q}(\cY'_s,\sL_{X',\cY'}^{n-q})\to
H^{n-q}(\cY'_s,\Omega^{n-q,\cl}_{X',\cY'}/\rd\Omega^{n-q-1}_{X',\cY'})=E^{n-q,n-q}_2\]
is zero by weight consideration. Thus, \eqref{eq:trivial} is the zero map.

\textbf{Step 4.} The last step is devoted to the proof of the two claims (a)
and (b) in Step 3. We remark that they are not formal consequences of Theorem
\ref{th:1}.

By definition, $\Omega^{q,\cl}_{X',\cY'}/\rd\Omega^{q-1}_{X',\cY'}$ is the
sheaf on $\cY'_s$ associated to the presheaf $\cU\mapsto
H^q_\dr(\pi^{-1}\cU)$, and
$\Omega^{\dag,q,\cl}_{X',\cY'}/\rd\Omega^{\dag,q-1}_{X',\cY'}$ is the sheaf
on $\cY'_s$ associated to the presheaf $\cU\mapsto H^q_\dr(X',\pi^{-1}\cU)$
by \cite{Berk07}*{Lemma 5.2.1}. We check (a) and (b) on stalks and thus fix a
point $x\in\cY'_s$.

To prove (a), it suffices to consider the case where $q=1$. Let
$\cU\subset\cY'$ be an open affine neighborhood of $x$. Take
$f\in\cO^*(X',\pi^{-1}\cU)$; we have to show that the image of $\frac{\rd
f}{f}$ in $H^1_\dr(\pi^{-1}\cU_s)$ is of generalized weight $2$ for a
possibly smaller open neighborhood $\cU$ of $x$. First, we may replace $f$ by
the restriction of an (algebraic) function $f\in\cO(\cU_k)$ without changing
the image of $\frac{\rd f}{f}$ in $H^1_\dr(\pi^{-1}\cU_s)$. Since $\cY'$ is
projective, we may choose a closed embedding
$\cY'\hookrightarrow\bP^N_{k^\circ}$ into a projective space. Choose an open
affine neighborhood $\cV$ of $x$ in $\bP^N_{k^\circ}$ such that
$\cV\cap\cY'\subset\cU$ and $f\res_{\cV\cap\cY'}=g\res_{\cV\cap\cY'}$ for
some $g\in\cO^*(\cV_k)$. Since $\bP^N_{k^\circ}$ is smooth, by Remark
\ref{re:decomp}, $\frac{\rd g}{g}$ belongs to $H^1_\rig(\cV_s/k)_2$ and thus
its image in $H^1_\dr(\pi^{-1}\cV_s)$ is of generalized weight $2$. By
functoriality of log crystal sites for the morphism $\cY'\to\bP^N_{k^\circ}$,
we conclude that the image of $\frac{\rd f}{f}$ in
$H^1_\dr(\pi^{-1}(\cV\cap\cY')_s)$ is of generalized weight $2$. Here,
$\pi^{-1}(\cV\cap\cY')_s$ is the inverse image in $X'$.

Claim (b) is a consequence of \cite{GK05}*{Theorem 0.1} and
\cite{Chi98}*{Theorem 2.3}. In fact, we have a functorial map of spectral
sequences $\pres{\prime}E^{p,q}_r\to\pres{\prime\prime}E^{p,q}_r$ abutting to
$H^\bullet_\dr(X',\pi^{-1}\cU)\to H^\bullet_\dr(\pi^{-1}\cU)$ with the first
page being
\[\pres{\prime}E^{p,q}_1=H^q_\rig(\cU_s^{(p)}/\Spf
W(\widetilde{k})^\times)\otimes_{W(\widetilde{k})[1/p]}k
\to\pres{\prime\prime}E^{p,q}_1=H^q_{\logcris}(\cU_s^{(p)}/\Spf
W(\widetilde{k})^\times)\otimes_{W(\widetilde{k})}k,\] where $\cU_s^{(p)}$ is
the disjoint union of irreducible components of $\cU_s^{[p]}$, equipped with
the induced log structure from $\cY'^\times_s$. By \cite{GK05}*{Theorem 3.1,
Lemma 4.6} and \cite{Chi98}*{Theorem 2.3}, we know that the weights of (the
finite dimensional $k$-vector space) $\pres{\prime}E^{p,q}_1$ are in the
range $[q,2q]$, and thus the weights of $H^q_\dr(X',\pi^{-1}\cU)$ are in the
range $[0,2q]$.
\end{proof}

\appendix

\begin{bibdiv}
\begin{biblist}

\bib{SP}{book}{
  label={SP},
  author={The Stacks Project Authors},
  title={Stacks Project},
  note={available at \url{http://math.columbia.edu/algebraic_geometry/stacks-git/}},
}

\bib{Berk90}{book}{
   author={Berkovich, Vladimir G.},
   title={Spectral theory and analytic geometry over non-Archimedean fields},
   series={Mathematical Surveys and Monographs},
   volume={33},
   publisher={American Mathematical Society, Providence, RI},
   date={1990},
   pages={x+169},
   isbn={0-8218-1534-2},
   review={\MR{1070709}},
}

\bib{Berk93}{article}{
   author={Berkovich, Vladimir G.},
   title={\'Etale cohomology for non-Archimedean analytic spaces},
   journal={Inst. Hautes \'Etudes Sci. Publ. Math.},
   number={78},
   date={1993},
   pages={5--161 (1994)},
   issn={0073-8301},
   review={\MR{1259429}},
}

\bib{Berk96}{article}{
   author={Berkovich, Vladimir G.},
   title={Vanishing cycles for formal schemes. II},
   journal={Invent. Math.},
   volume={125},
   date={1996},
   number={2},
   pages={367--390},
   issn={0020-9910},
   review={\MR{1395723}},
   doi={10.1007/s002220050078},
}

\bib{Berk04}{article}{
   author={Berkovich, Vladimir G.},
   title={Smooth $p$-adic analytic spaces are locally contractible. II},
   conference={
      title={Geometric aspects of Dwork theory. Vol. I, II},
   },
   book={
      publisher={Walter de Gruyter GmbH \& Co. KG, Berlin},
   },
   date={2004},
   pages={293--370},
   review={\MR{2023293}},
}

\bib{Berk07}{book}{
   author={Berkovich, Vladimir G.},
   title={Integration of one-forms on $p$-adic analytic spaces},
   series={Annals of Mathematics Studies},
   volume={162},
   publisher={Princeton University Press, Princeton, NJ},
   date={2007},
   pages={vi+156},
   isbn={978-0-691-12862-7},
   isbn={0-691-12862-6},
   review={\MR{2263704}},
   doi={10.1515/9781400837151},
}

\bib{Bert97}{article}{
   label={Bert97},
   author={Berthelot, Pierre},
   title={Finitude et puret\'e cohomologique en cohomologie rigide},
   language={French},
   note={With an appendix in English by Aise Johan de Jong},
   journal={Invent. Math.},
   volume={128},
   date={1997},
   number={2},
   pages={329--377},
   issn={0020-9910},
   review={\MR{1440308}},
   doi={10.1007/s002220050143},
}

%\bib{BO74}{article}{
%   author={Bloch, Spencer},
%   author={Ogus, Arthur},
%   title={Gersten's conjecture and the homology of schemes},
%   journal={Ann. Sci. \'Ecole Norm. Sup. (4)},
%   volume={7},
%   date={1974},
%   pages={181--201 (1975)},
%   issn={0012-9593},
%   review={\MR{0412191}},
%}

\bib{Bos81}{article}{
   author={Bosch, Siegfried},
   title={A rigid analytic version of M. Artin's theorem on analytic
   equations},
   journal={Math. Ann.},
   volume={255},
   date={1981},
   number={3},
   pages={395--404},
   issn={0025-5831},
   review={\MR{615859}},
   doi={10.1007/BF01450712},
}

\bib{CLD12}{article}{
   author={Chambert-Loir, A.},
   author={Ducros, A.},
   title={Formes diff\'{e}rentielles r\'{e}alles et courants sur les espaces de Berkovich},
   note={\href{http://arxiv.org/abs/1204.6277}{arXiv:math/1204.6277}},
   date={2012},
}

\bib{Chi98}{article}{
   author={Chiarellotto, Bruno},
   title={Weights in rigid cohomology applications to unipotent
   $F$-isocrystals},
   language={English, with English and French summaries},
   journal={Ann. Sci. \'Ecole Norm. Sup. (4)},
   volume={31},
   date={1998},
   number={5},
   pages={683--715},
   issn={0012-9593},
   review={\MR{1643966}},
   doi={10.1016/S0012-9593(98)80004-9},
}

\bib{dJ96}{article}{
   author={de Jong, A. J.},
   title={Smoothness, semi-stability and alterations},
   journal={Inst. Hautes \'Etudes Sci. Publ. Math.},
   number={83},
   date={1996},
   pages={51--93},
   issn={0073-8301},
   review={\MR{1423020}},
}

\bib{GK02}{article}{
   author={Grosse-Kl{\"o}nne, Elmar},
   title={Finiteness of de Rham cohomology in rigid analysis},
   journal={Duke Math. J.},
   volume={113},
   date={2002},
   number={1},
   pages={57--91},
   issn={0012-7094},
   review={\MR{1905392}},
   doi={10.1215/S0012-7094-02-11312-X},
}

\bib{GK05}{article}{
   author={Grosse-Kl{\"o}nne, Elmar},
   title={Frobenius and monodromy operators in rigid analysis, and
   Drinfeld's symmetric space},
   journal={J. Algebraic Geom.},
   volume={14},
   date={2005},
   number={3},
   pages={391--437},
   issn={1056-3911},
   review={\MR{2129006}},
   doi={10.1090/S1056-3911-05-00402-9},
}

\bib{Gub13}{article}{
   author={Gubler, W.},
   title={Forms and currents on the analytification of an algebraic variety (after Chambert-Loir and Ducros)},
   note={\href{http://arxiv.org/abs/1303.7364}{arXiv:math/1303.7364}},
   date={2013},
}

%\bib{GK14}{article}{
%   author={Gubler, W.},
%   author={K\"{u}nnemann, K.},
%   title={A tropical approach to non-archimedean Arakelov theory},
%   note={\href{http://arxiv.org/abs/1406.7637}{arXiv:math/1406.7637}},
%   date={2014},
%}

%\bib{Hub96}{book}{
%   author={Huber, Roland},
%   title={\'Etale cohomology of rigid analytic varieties and adic spaces},
%   series={Aspects of Mathematics, E30},
%   publisher={Friedr. Vieweg \& Sohn, Braunschweig},
%   date={1996},
%   pages={x+450},
%   isbn={3-528-06794-2},
%   review={\MR{1734903}},
%   doi={10.1007/978-3-663-09991-8},
%}

\bib{HK94}{article}{
   author={Hyodo, Osamu},
   author={Kato, Kazuya},
   title={Semi-stable reduction and crystalline cohomology with logarithmic
   poles},
   note={P\'eriodes $p$-adiques (Bures-sur-Yvette, 1988)},
   journal={Ast\'erisque},
   number={223},
   date={1994},
   pages={221--268},
   issn={0303-1179},
   review={\MR{1293974}},
}

\bib{Jel16}{article}{
   author={Jell, P.},
   title={A Poincar\'e lemma for real-valued differential forms on Berkovich
   spaces},
   journal={Math. Z.},
   volume={282},
   date={2016},
   number={3-4},
   pages={1149--1167},
   issn={0025-5874},
   review={\MR{3473662}},
   doi={10.1007/s00209-015-1583-8},
}

\bib{JSS15}{article}{
   author={Jell, P.},
   author={Shaw, K.},
   author={Smacka, J.},
   title={Superforms, tropical cohomology and Poincar\'{e} duality},
   note={\href{http://arxiv.org/abs/1512.07409}{arXiv:math/1512.07409}},
   date={2015},
}

\bib{LS07}{book}{
   author={Le Stum, Bernard},
   title={Rigid cohomology},
   series={Cambridge Tracts in Mathematics},
   volume={172},
   publisher={Cambridge University Press, Cambridge},
   date={2007},
   pages={xvi+319},
   isbn={978-0-521-87524-0},
   review={\MR{2358812}},
   doi={10.1017/CBO9780511543128},
}

\bib{Pay09}{article}{
   author={Payne, Sam},
   title={Analytification is the limit of all tropicalizations},
   journal={Math. Res. Lett.},
   volume={16},
   date={2009},
   number={3},
   pages={543--556},
   issn={1073-2780},
   review={\MR{2511632}},
   doi={10.4310/MRL.2009.v16.n3.a13},
}

\bib{Sch12}{article}{
   author={Scholze, Peter},
   title={Perfectoid spaces},
   journal={Publ. Math. Inst. Hautes \'Etudes Sci.},
   volume={116},
   date={2012},
   pages={245--313},
   issn={0073-8301},
   review={\MR{3090258}},
   doi={10.1007/s10240-012-0042-x},
}

\bib{Sou85}{article}{
   author={Soul{\'e}, Christophe},
   title={Op\'erations en $K$-th\'eorie alg\'ebrique},
   language={French},
   journal={Canad. J. Math.},
   volume={37},
   date={1985},
   number={3},
   pages={488--550},
   issn={0008-414X},
   review={\MR{787114}},
   doi={10.4153/CJM-1985-029-x},
}

\bib{Tsu99}{article}{
   author={Tsuzuki, Nobuo},
   title={On the Gysin isomorphism of rigid cohomology},
   journal={Hiroshima Math. J.},
   volume={29},
   date={1999},
   number={3},
   pages={479--527},
   issn={0018-2079},
   review={\MR{1728610}},
}

\end{biblist}
\end{bibdiv}

\end{document}